\newcommand{\Naturals}{\mathbb{N}}
\newcommand{\Integers}{\mathbb{Z}}
\newcommand{\Reals}{\mathbb{R}}
\newcommand{\ComplexNum}{\mathbb{C}}
\newcommand{\@gerstennstar}[1]{\ch@irxbbracket*{#1}}
\newcommand{\@gerstennostar}[2][]{\ch@irxbbracket[#1]{#2}}
\newcommand{\GerstBracket}{\@ifstar\@gerstenstar\@gerstennostar}
\renewcommand{\@schoutenstar}[1]{\ch@irxbbracket*{#1}}
\renewcommand{\@schoutennostar}[2][]{\ch@irxbbracket[#1]{#2}}
\renewcommand{\@courantstar}[1]{\ch@irxbbracket*{#1}}
\renewcommand{\@courantnostar}[2][]{\ch@irxbbracket[#1]{#2}}
\newcommand{\Total}{{\scriptscriptstyle\script{T}}}
\newcommand{\Wobs}{{\scriptscriptstyle\script{N}}}
\newcommand{\Null}{{\scriptscriptstyle\script{0}}}
\newcommand{\Van}{{\scriptscriptstyle\script{I}}}
\newcommand{\TotalNotWobs}{{\scriptscriptstyle\faktor{\Total}{\Wobs}}}
\newcommand{\WobsNotNull}{{\scriptscriptstyle\faktor{\Wobs}{\Null}}}
\newcommand{\NullNotVan}{{\scriptscriptstyle\faktor{\Null}{\Van}}}
\newcommand{\TOTAL}{\script{T}}
\newcommand{\WOBS}{\script{N}}
\newcommand{\NULL}{\script{0}}
\newcommand{\str}{{\scriptstyle\mathrm{str}}}
\newcommand{\inj}{{\scriptstyle\mathrm{emb}}}
\newcommand{\LieRineAlg}{\categoryname{LieRinehartAlg}}
\newcommand{\LieAlgd}{\categoryname{LieAlgd}}
\newcommand{\LieBiAlgd}{\categoryname{LieBiAlgd}}
\newcommand{\ConMap}{{\Con\Map}}
\newcommand{\ConCinfty}{\Con\Cinfty}
\newcommand{\ConSecinfty}{\Con\Secinfty}
\newcommand{\ConHom}{{\Con\!\Hom}}
\newcommand{\ConEnd}{\Con\!\End}
\newcommand{\strtensor}[1][{}]{\mathbin{\boxtimes_{\scriptscriptstyle{#1}}}}
\newcommand{\regimage}{\operator{regim}}
\newcommand{\ConDer}{{\Con\!\Der}}
\newcommand{\ConVecFields}{\Con\VecFields}
\newcommand{\ConForms}{\Con\Forms}
\newcommand{\Con}{\categoryname{C}}
\newcommand{\injCon}{\Con^\inj}
\newcommand{\strCon}{\Con_\str}
\newcommand{\injstrCon}{\Con^\inj_\str}
\newcommand{\injConIndSet}{{\Con^\inj_{\scriptstyle\mathrm{ind}}\Sets}}
\newcommand{\injConAlg}{{\injCon\Algebras}}
\newcommand{\injstrConAlg}{{\injstrCon\Algebras}}
\newcommand{\injConMod}{\injCon\Modules}
\newcommand{\injstrConMod}{{\injstrCon\Modules}}
\newcommand{\Proj}{\categoryname{Proj}}
\newcommand{\strConProj}{\strCon\Proj}
\newcommand{\injConLieAlg}{\injCon\LieAlgs}
\newcommand{\injConLieRineAlg}{\injCon\LieRineAlg}
\newcommand{\injstrConLieRineAlg}{\injstrCon\LieRineAlg}
\newcommand{\injConLieAlgd}{\Con\LieAlgd}
\newcommand{\injConLieBiAlgd}{\Con\LieBiAlgd}
\newcommand{\ConMfld}{{\Con\Manifolds}}
\newcommand{\ConVect}{{\Con\Vect}}
\newcommand{\Bott}{{\scriptstyle\mathrm{Bott}}}
\newcommand{\Ann}{\operatorname{Ann}}
\newcommand{\vanishing}{\spacename{I}}
\newcommand{\Pnormalizer}{\mathcal{B}}
\newcommand{\VecFields}{\mathfrak{X}}
\newcommand{\genT}{\mathbb{T}}
\title{Constraint Vector Bundles and Reduction of Lie (Bi-)Algebroids}
\date{}
\author{\textbf{Marvin Dippell}\thanks{\texttt{mdippell@unisa.it}}\\[0.2cm]
	\begin{minipage}{8cm}
		\centering\small
		Dipartimento di Matematica\\
		Università degli Studi di Salerno\\
		via Giovanni Paolo II, 132\\
		84084 Fisciano (SA)\\
		Italy
	\end{minipage}
	\vspace*{0.5cm}\\
	\textbf{David Kern}\thanks{\texttt{david.kern@mathematik.uni-goettingen.de}}\\[0.2cm]
	\begin{minipage}{8cm}
		\centering\small
		Mathematisches Institut\\
		Georg-August-Universit\"{a}t G\"{o}ttingen\\
		Bunsenstra{\ss}e 3 - 5\\
		37073 G\"{o}ttingen\\
		Germany
	\end{minipage}
}
\begin{document}
\selectlanguage{english}

\maketitle	

\begin{abstract}
We present a framework for the reduction of various geometric structures extending  
the classical coisotropic Poisson reduction.
For this we introduce constraint manifolds and constraint vector bundles.
A constraint Serre-Swan theorem is proven, identifying constraint vector bundles with certain
finitely generated projective modules,
and a Cartan calculus for constraint differentiable forms and multivector fields
is introduced.
All of these constructions will be shown to be compatible with reduction. 
Finally, we apply this to obtain a reduction procedure for Lie (bi-)algebroids 
and Dirac manifolds. 
\end{abstract}

\tableofcontents

\section{Introduction}
\label{sec:Introduction}

The reduction of Poisson manifolds by coisotropic submanifolds is a widely used tool
in Poisson geometry, generalizing the classical Marsden-Weinstein reduction in
symplectic geometry \cite{marsden.weinstein:1974a,meyer:1973a} which itself can 
be seen as the mathematical incarnation of symmetry reduction in classical 
mechanics. 
Given a coisotropic submanifold
$\iota \colon C \hookrightarrow M$
of a Poisson manifold $(M,\pi)$
there is an associated distribution
$D \subseteq TC$
which we call the characteristic distribution of $C$.
If $C$ is closed and the leaf space of $D$ carries a canonical smooth structure
we denote it by
\begin{equation}
	M_\red \coloneqq C / D,
\end{equation}
and call it the reduced manifold of $M$ by $C$.
It can be shown that there exists a canonical Poisson structure $\pi_\red$
on $M_\red$, which can algebraically be constructed as follows:
The ideal $\vanishing_C$ of functions vanishing on $C$
is a Poisson subalgebra of the Poisson algebra $\Cinfty(M)$,
with Poisson bracket $\{\argument,\argument\}$ induced by $\pi$.
This allows us to consider its Poisson normalizer
\begin{equation}
	\Pnormalizer_C
	= \big\{ f \in \Cinfty(M) \mid \{f,\vanishing_C\} \subseteq \vanishing_C \big\},
\end{equation}
which is the biggest Poisson subalgebra of $\Cinfty(M)$ containing
$\vanishing_C$ as a Poisson ideal.
Since $\Pnormalizer_C$ consists of all functions which are leafwise constant on $C$
we obtain an isomorphism $\Cinfty(M_\red) \simeq \Pnormalizer_C / \vanishing_C$
which allows us to define $\pi_\red$ as the Poisson structure corresponding to the
quotient Poisson bracket on $\Pnormalizer_C / \vanishing_C$.
This procedure is presented in \cite{marsden.ratiu:1985a,ortega.ratiu:2004a}. 

There are various reasons to consider reductions of other geometric objects.
For a start, every Poisson manifold comes with related geometric structures, such as the
Lie algebroid structure on the cotangent bundle $T^*M$.
These structures are important in the study of Poisson manifolds and hence their behaviour 
under coisotropic reduction needs to be understood. 
Special cases of such a reduction in the context of Lie algebroids can be found 
in  \cite{carinena.nunesdacosta.santos:2005a,hawkins:2008a,jotzlean.ortiz:2014a}. 
But also for other generalized geometric objects a reduction procedure is desirable and during the last 
two decades reduction procedures for a vast number of objects has been studied, e.g. for Courant algebroids 
\cite{bursztyn.cattaneo.mehta.zambon:2023a}, Dirac structures 
\cite{marsden.yoshimura:2007a}, generalized complex structures  
\cite{bursztyn.cavalcanti.gualtieri:2005a,stienon.xu:2008a,vaisman:2007a,zambon:2008a}, 
contact structures \cite{willett:2002a,deleon.valcazar:2019a}, 
cosymplectic manifolds \cite{albert:1989a}, multisymplectic manifolds \cite{blacker:2021a,blacker.miti.ryvkin:2023a},
etc.
Recently, reduction has also been put into the context of graded geometry 
\cite{bursztyn.cattaneo.mehta.zambon:2023a}, see also 
\cite{cattaneo.zambon:2010a,cattaneo.zambon:2010b}.

While many of the aforementioned works focus on reduction with respect to Lie group actions
and momentum maps,
in this article we want to present a framework for reduction focusing on the more general coisotropic point of view.
For one thing, this will generalize many of the above constructions by allowing for quotients not necessarily given by
group actions, while on the other hand it will provide a common language to treat reduction in many geometric (and algebraic)
situations.
Thus it opens the door to study reduction of geometric structures not studied in the literature so far.

This framework is based on the notion of constraint algebra, which was first introduced in
\cite{dippell.esposito.waldmann:2019a}, under the name of coisotropic algebra, to investigate the behaviour of Morita equivalence under reduction and used in \cite{dippell.esposito.waldmann:2022a}
to study star products compatible with reduction.
The main strategy of our approach is to replace the classical notion of smooth manifolds
by so-called \emph{constraint manifolds}, which are essentially manifolds equipped with the additional structure needed to perform reduction.
More precisely, a constraint manifold $\mathcal{M}$ consists of a smooth manifold $M$ together with a closed embedded submanifold $C \subseteq M$ and a simple distribution $D$ on $C$.
Every such constraint manifold $\mathcal{M}$ can now easily be reduced to
$\mathcal{M}_\red = C/D$.
Starting with this basic notion we will introduce constraint analogues of many other classical geometric concepts, such as constraint vector bundles, constraint vector fields and constraint differential forms.
These will, by definition, be compatible with reduction in a specific sense.
These building blocks can then be used to define constraint versions of more sophisticated structures,
which we will demonstrate by introducing constraint Lie-Rinehart algebras, constraint Lie (bi-)algebroids and constraint Dirac manifolds.
In some sense these constraint notions can be understood as the collection of data needed to perform reduction of the corresponding classical structure.

Beside the examples presented here, our approach lends itself to be applied to
other situations, such as Courant algebroids and general Dirac structures, or
the study of constraint Lie groupoids as global counterparts of 
constraint Lie algebroids.
However, this is left for future work. 

The outline of this work is the following: 
In order to do constraint geometry effectively we start with introducing (strong) constraint algebras and their projective modules in \autoref{sec:AlgebraicPreliminaries} which later on will replace the classical algebras of functions and modules of sections.
We will also give definitions of (graded) constraint Lie algebras and constraint Gerstenhaber algebras, as they will be useful later on.
The main results of the paper are contained in \autoref{sec:ConGeometry}.
Here, after the introduction of constraint manifolds and constraint vector bundles,
we prove a constraint version of the classical Serre-Swan Theorem, see \autoref{thm:strConSerreSwan},
showing that taking sections gives an equivalence of categories between constraint vector bundles
and finitely generated projective strong constraint modules over the constraint algebra of functions
on a constraint manifold.
This result allows us then to introduce the full constraint Cartan calculus given by constraint vector fields and constraint differential forms together with insertion, Lie derivative and de Rham differential.
Finally, in \autoref{sec:Applications} we present first applications of the previously developed theory to the reduction of Lie-Rinehart algebras, Lie (bi-)algebroids and Dirac manifolds.
In the appendix \autoref{sec:morphismsOfLinearConnections} we collect some well-known facts about 
morphisms between vector bundles with linear connections.
Even though these are considered to be general knowledge, it is hard to 
find suitable sources for it in the literature. 

\paragraph{Remark on notation and convention}
We will heavily use the language of constraint algebraic (and geometric) objects.
Many of these, such as constraint algebras, have been introduced in
\cite{dippell.esposito.waldmann:2019a, dippell.esposito.waldmann:2022a, dippell.menke.waldmann:2022a},
albeit under a different name and with slight changes in the definitions.
For example a \emph{coisotropic triple of algebras}
as defined in \cite{dippell.esposito.waldmann:2019a}
is now called an embedded constraint algebra with the additional property of
$\algebra{A}_\Null \subseteq \algebra{A}_\Total$ being a left ideal.
While in \cite{dippell.esposito.waldmann:2022a}
the notion of coisotropic algebra corresponds to what we now call a constraint algebra.

A consistent naming scheme has been introduced in \cite{dippell:2023a}
and we will mostly adhere to this.
Nevertheless, though the distinction between constraint and embedded constraint objects
is an important feature of the general theory, in our applications we will only need to consider
embedded constraint objects.
This is why we will implicitly assume that all our constraint objects are embedded.
To keep a certain level of compatibility with \cite{dippell:2023a}
we will nevertheless refer to embedded constraint objects in definitions and names of categories.

\pagebreak

Let us also fix some conventions:
Even though most results still hold for (commutative) rings,
from now on $\field{k}$ will always denote a field.
Moreover, all algebras are supposed to be associative and unital $\field{k}$-algebras.
When it comes to geometry all manifolds are supposed to be real, smooth and connected.

\paragraph{Acknowledgement}
The first author wants to thank Chiara Esposito and Stefan Waldmann
for the many discussions and their constant support,
as well as Jonas Schnitzer and Andreas Kraft for all the valuable insights
into the geometry of reduction.

\section{Algebraic Preliminaries}
\label{sec:AlgebraicPreliminaries}

In this preliminary chapter we introduce the basic algebraic notions
needed in the main part of this work.
In particular we will need the concept of constraint algebras for
describing smooth functions on a constraint manifold, 
and constraint modules for sections of constraint vector bundles.
As in classical differential geometry, modules of sections of constraint
vector bundles will be characterized by being finitely generated projective.
Before we introduce this concept in \autoref{sec:ProjectiveModules}
we need to study constraint index sets in \autoref{sec:ConIndexSets}.
Finally, in \autoref{sec:ConLieAlgebras} we recall the
notion of (graded) constraint Lie algebras.
This will be needed in our discussion of constraint
Lie (bi)-algebroids in \autoref{sec:ConLieAlgebroids}.

The basic notions of constraint algebras and their modules have
already been introduced in \cite{dippell.esposito.waldmann:2019a}
under the name of coisotropic algebras and coisotropic modules.
Finitely generated projective modules over such coisotropic algebras
and their geometric counterparts have been studied in
\cite{dippell.menke.waldmann:2022a}.
It should be noted that the notion of projective constraint module
used here is similar to that used in \cite{dippell.menke.waldmann:2022a},
but differs in important aspects.

\subsection{Constraint Algebras and their Modules}
\label{sec:ConAlgebrasAndModules}

Recall again the situation of coisotropic reduction in Poisson geometry:
Given a coisotropic submanifold $C \subseteq (M,\pi)$
we can consider the vanishing ideal $\vanishing_C$
and its Poisson normalizer $\Pnormalizer_C \subseteq \Cinfty(M)$.
Note that in order to obtain the algebra of functions on the reduced manifold only the fact that
$\vanishing_C \subseteq \Pnormalizer_C$ is a two-sided ideal is needed.
The triple $(\Cinfty(M),\Pnormalizer_C,\vanishing_C)$
motivates now the following definition of a constraint algebra.

\begin{definition}[Constraint algebra]\
	\label{def:ConstraintAlgebras}
	\begin{definitionlist}
		\item An \emph{(embedded) constraint algebra}
		is a triple
		$\algebra{A} = (\algebra{A}_\Total, \algebra{A}_\Wobs, \algebra{A}_\Null)$
		consisting of a unital associative 
		$\field{k}$-algebra
		$\algebra{A}_\Total$ together with a 
		a unital subalgebra
		$\algebra{A}_\Wobs \subseteq \algebra{A}_\Total$
		and a two-sided ideal
		$\algebra{A}_\Null \subseteq \algebra{A}_\Wobs$.
		\item A \emph{morphism $\phi \colon \algebra{A} \to \algebra{B}$
			of constraint $\field{k}$-algebras}
		is a unital algebra homomorphisms
		$\phi \colon \algebra{A}_\Total \to \algebra{B}_\Total$
		such that
		$\phi(\algebra{A}_\Wobs) \subseteq \algebra{B}_\Wobs$
		and
		$\phi(\algebra{A}_\Null) \subseteq \algebra{B}_\Null$.
		\item A constraint algebra $\algebra{A} = (\algebra{A}_\Total, \algebra{A}_\Wobs, \algebra{A}_\Null)$
		is called \emph{strong}, if
		$\algebra{A}_\Null \subseteq \algebra{A}_\Total$
		is a two-sided ideal.
		\item The category of constraint $\field{k}$-algebras is denoted by
		$\injConAlg_\field{k}$.
		Moreover, $\injstrConAlg_\field{k}$ the full subcategory of strong constraint algebras.
	\end{definitionlist}
\end{definition}

\begin{remark}
	The above notion of an embedded constraint algebra can be relaxed to the notion of constraint algebra
	by allowing for a non-injective algebra homomorphism
	$\iota_\algebra{A} \colon \algebra{A}_\Wobs \to \algebra{A}_\Total$
	instead of the inclusion $\algebra{A}_\Wobs \subseteq \algebra{A}_\Total$.
	This more general concept becomes important when considering
	quotients of constraint algebras or constraint modules,
	and therefore is needed in order to study cohomologies in the constraint setting,
	see \cite{dippell.esposito.waldmann:2022a}.
	Nevertheless, in the setting of this paper we will mostly be concerned with embedded
	constraint algebraic objects and we will therefore 
	drop the word embedded most of the time.
	For an in-depth treatment of non-embedded constraint algebra see
	\cite{dippell:2023a}.
\end{remark}

We will mostly drop the underlying field $\field{k}$ in the notation.
Note that we can view $\field{k}$ itself as a constraint algebra by using
$\field{k} = (\field{k},\field{k},0)$.
Moreover, we often denote the restriction of a constraint morphism to the $\WOBS$- or $\NULL$-component
by $\phi_\Wobs$ and $\phi_\Null$, respectively.

By the definition of constraint algebras
we obtain a reduction functor
\begin{equation}
	\red \colon \injConAlg \to \Algebras
\end{equation}
given by $\algebra{A}_\red \coloneqq \algebra{A}_\Wobs / \algebra{A}_\Null$.
The restriction to the subcategory $\injstrConAlg$ will also be denoted by $\red$.

Let us define modules over constraint algebras.
In the following we will only consider right modules, the definition of left- and bi-modules should be clear, details can be found in \cite{dippell.esposito.waldmann:2022a,dippell:2023a}.

\begin{definition}[Modules over constraint algebras]
	\label{def:ConstraintModules}
	Let $\algebra{A} \in \injConAlg$
	be a constraint algebra.
	\begin{definitionlist}
		\item An \emph{(embedded) constraint $\algebra{A}$-module}
		is a triple
		$\module{E} = (\module{E}_\Total, \module{E}_\Wobs,
		\module{E}_\Null)$,
		consisting of a right $\algebra{A}_\Total$-module
		$\module{E}_\Total$
		and $\algebra{A}_\Wobs$-submodules
		$\module{E}_\Wobs \subseteq \module{E}_\Total$
		and 
		$\module{E}_\Null \subseteq \module{E}_\Wobs$.
		\item A \emph{morphism 
		$\Phi\colon \module{E} \to \module{F}$
		between constraint $\algebra{A}$-modules}
		is an $\algebra{A}_\Total$-module morphism
		$\Phi \colon \module{E}_\Total \to \module{F}_\Total$
		such that
		$\Phi(\module{E}_\Wobs) \subseteq \module{F}_\Wobs$
		and
		$\Phi(\module{E}_\Null) \subseteq \module{F}_\Null$.
		\item A constraint $\algebra{A}$-module
		$\module{E}$ is called \emph{strong},
		if $\module{E}_\Null \subseteq \module{E}_\Total$ is a 
		$\algebra{A}_\Total$-submodule and
		$\module{E}_\Total \cdot \algebra{A}_\Null \subseteq \module{E}_\Null$
		holds.
		\item The category of constraint $\algebra{A}$-modules
		is denoted by $\injConMod(\algebra{A})$.
		The full subcategory of strong constraint modules is denoted by
		$\injstrConMod(\algebra{A})$.
	\end{definitionlist}
\end{definition}

It should be clear that morphisms of constraint modules can be generalized
to morphisms along a morphism of the underlying constraint algebras.

\begin{proposition}[Mono- and epimorphisms in $\injConMod(\algebra{A})$]
	\label{prop:MonoEpisConModk}
	Let $\Phi \colon \module{E} \to \module{F}$ be a morphism of 
	constraint $\algebra{A}$-modules.
	\begin{propositionlist}
		\item $\Phi$ is a monomorphism if and only if $\Phi$ is injective.
		\item $\Phi$ is an epimorphism if and only if $\Phi$ and 
		$\Phi_\Wobs$ are surjective.
		\item $\Phi$ is a regular monomorphism if and only if it is a 
		monomorphism with
		$\Phi_\Wobs^{-1}(\module{F}_\Null) = \module{E}_\Null$.
		\item $\Phi$ is a regular epimorphism if and only if it is an 
		epimorphism with
		$\Phi_\Wobs(\module{E}_\Null) = \module{F}_\Null$.
		\item $\Phi$ is an isomorphism if and only if $\Phi$ is a monomorphism and a regular epimorphism.
		\item $\Phi$ is an isomorphism if and only if $\Phi$ is a regular monomorphism and an epimorphism.
	\end{propositionlist}
\end{proposition}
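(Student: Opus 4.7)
My plan is to handle the six items in three blocks, in each case combining direct chasing at the $\Total$, $\Wobs$ and $\Null$ levels with the construction of auxiliary test constraint modules that detect failure of the stated conditions.

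For (i) and (ii), the reverse implications are essentially routine. If $\Phi_\Total$ is injective, then any two constraint morphisms $\Psi_1,\Psi_2$ into $\module{E}$ satisfying $\Phi\Psi_1 = \Phi\Psi_2$ must agree on $\Total$ and hence everywhere, since constraint morphisms are determined by their $\Total$-component together with restriction. Dually, surjectivity of $\Phi$ and $\Phi_\Wobs$ makes right-cancellation at the $\Total$- and $\Wobs$-levels clear, and the $\Null$-case is automatic by restriction. For the forward directions I would use detecting objects: for (i), given a putative kernel element $x\in\ker\Phi_\Total$, form the test constraint module $(\algebra{A}_\Total,0,0)$ together with the constraint morphism $\mu_x\colon a\mapsto x\cdot a$; composing with $\Phi$ and with $0$ and invoking the monomorphism property forces $x=0$. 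For (ii), I would quotient $\module{F}$ by $\Phi_\Total(\module{E}_\Total)$ (respectively by the $\algebra{A}_\Total$-span of $\Phi_\Wobs(\module{E}_\Wobs)$) to produce cokernel-style constraint modules whose canonical projection is non-zero but becomes zero after precomposition with $\Phi$.

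For (iii) and (iv), the plan is to read off regular monomorphisms as equalizers and regular epimorphisms as coequalizers. The condition $\Phi_\Wobs^{-1}(\module{F}_\Null)=\module{E}_\Null$ says that the $\Null$-part of $\module{E}$ is cut out \emph{exactly} by the preimage of $\module{F}_\Null$, which is precisely the equalizer condition for the pair $(\pi,0)$ where $\pi\colon\module{F}\to\module{F}/\module{F}_\Null$ is realized as a suitable constraint quotient; and conversely any equalizer satisfies this preimage condition. Dually, $\Phi_\Wobs(\module{E}_\Null)=\module{F}_\Null$ says the $\Null$-component is hit exactly, which combined with the epi characterization from (ii) makes $\Phi$ a coequalizer of an auxiliary pair built on $\module{F}_\Null$.

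For (v) and (vi), an isomorphism trivially satisfies all the surjectivity, injectivity and $\Null$-preservation conditions, so it is simultaneously (mono + regular epi) and (regular mono + epi). Conversely, from the combination mono + regular epi one obtains an $\algebra{A}_\Total$-linear bijection $\Phi_\Total$, and the regular-epi condition $\Phi_\Wobs(\module{E}_\Null)=\module{F}_\Null$ together with injectivity shows that the set-theoretic inverse maps $\module{F}_\Null$ into $\module{E}_\Null$; the epi part gives the same for $\module{F}_\Wobs$. Case (vi) is dual, using the preimage condition from (iii) to check that the inverse is a constraint morphism. The main obstacle I anticipate is in (ii) and (iv): building the cokernel-type test objects cleanly in the \emph{embedded} setting, since the naive $\Wobs$-quotient by $\Phi_\Wobs(\module{E}_\Wobs)$ does not automatically embed into the $\Total$-quotient, so one must either push along the scalar extension $\algebra{A}_\Wobs\hookrightarrow\algebra{A}_\Total$ or replace the submodule by the $\algebra{A}_\Total$-submodule it generates to obtain a genuine embedded constraint module.
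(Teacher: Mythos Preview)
The paper states this proposition without proof; it is presented as a preliminary fact, presumably deferred to the reference \cite{dippell:2023a}. There is therefore no argument in the paper to compare your proposal against.

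That said, your outline is broadly sound and is the natural strategy. A few remarks: for (i), your test object $(\algebra{A}_\Total,0,0)$ with the map $a\mapsto x\cdot a$ works cleanly in the embedded setting. For (ii), detecting surjectivity of $\Phi_\Total$ via $(\module{F}_\Total/\image\Phi_\Total,0,0)$ is fine, but your worry about the $\Wobs$-level is justified: rather than building a quotient, it is easier to compare two constraint morphisms with the same $\Total$-component but different $\Null$-components. Concretely, take $\module{G}=(\module{F}_\Total,\module{F}_\Wobs,\module{F}_\Wobs)$ and $\module{G}'=(\module{F}_\Total,\module{F}_\Wobs,\Phi_\Wobs(\module{E}_\Wobs)+\module{F}_\Null)$; the identity on $\module{F}_\Total$ defines constraint morphisms $\module{F}\to\module{G}$ and $\module{F}\to\module{G}'$ that agree after precomposition with $\Phi$, forcing $\module{G}=\module{G}'$ and hence $\Phi_\Wobs(\module{E}_\Wobs)+\module{F}_\Null=\module{F}_\Wobs$. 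One still needs to separate out $\module{F}_\Null$, which can be done by a similar trick or by first establishing surjectivity of $\Phi_\Null$ is \emph{not} required (it is not part of the claim). Your treatment of (iii)--(vi) via explicit (co)equalizers and inverse-checking is the expected route.
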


\begin{definition}[Constraint submodule]
	Let $\module{E}$ be a constraint $\algebra{A}$-module.
	A \emph{constraint submodule} of $\module{E}$ 
	consists of submodules
	$\module{F}_\Total \subseteq \module{E}_\Total$
	and $\module{F}_\Wobs \subseteq \module{E}_\Wobs$
	with
	$\module{F}_\Wobs \subseteq \module{F}_\Total$.
\end{definition}

Every submodule can be understood as a regular monomorphism
$i \colon \module{F} \to \module{E}$
defined on the constraint module
$\module{F} = (\module{F}_\Total, \, \module{F}_\Wobs, \, i_\Wobs^{-1}(\module{E}_\Null))$.

\begin{definition}[Constructions of constraint modules]
	\label{def:ConstructionsConMod}
	Let $\Phi \colon \module{E} \to \module{F}$
	be a morphism of constraint $\algebra{A}$-modules.
	\begin{definitionlist}
		\item The \emph{kernel} of $\Phi$ is defined by
		\begin{equation}
			\ker(\Phi) \coloneqq \big( \ker(\Phi_\Total), \,\,
			\ker(\Phi_\Wobs), \,\,
			\ker(\Phi_\Wobs) \cap \module{E}_\Null \big).
		\end{equation}
		\item The \emph{image} of $\Phi$ is defined by
		\begin{equation}
			\image(\Phi) \coloneqq 
			\big( \image(\Phi_\Total), \,\,
			\image(\Phi_\Wobs), \,\,
			\image(\Phi_\Wobs\at{\module{E}_\Null}) \big).
		\end{equation}
		\item The \emph{regular image} of $\Phi$ is defined by
		\begin{equation}
			\regimage(\Phi) \coloneqq 
			\big( \image(\Phi_\Total), \,\,
			\image(\Phi_\Wobs), \,\,
			\image(\Phi_\Wobs) \cap \module{F}_\Null \big).
		\end{equation}
		\item The \emph{direct sum} of $\module{E}$ and $\module{F}$ is defined by
		\begin{equation}
			(\module{E} \oplus \module{F}) \coloneqq 
			\big( \module{E}_\Total \oplus \module{F}_\Total, \,\,
			\module{E}_\Wobs \oplus \module{F}_\Wobs, \,\,
			\module{E}_\Null \oplus \module{F}_\Null \big).
		\end{equation}
	\end{definitionlist}
\end{definition}

Observe that the regular image is a constraint submodule, while the image is not.

\begin{proposition}[Module structure on module morphisms]
	\label{prop:ModuleStructureOnModuleHom}
	Let $\algebra{A}$ be a constraint algebra and let
	$\module{E}, \module{F} \in \injConMod(\algebra{A})$.
	Then the $\algebra{A}$-module morphisms form a constraint 
	$\field{k}$-module given by
	\begin{equation}
		\begin{split}
			\ConHom_{\algebra{A}}(\module{E},\module{F})_\Total
			&\coloneqq \Hom_{\algebra{A}_\Total}(\module{E}_\Total, 
			\module{F}_\Total),\\
			\ConHom_{\algebra{A}}(\module{E},\module{F})_\Wobs
			&\coloneqq \big\{ \Phi \in 
			\Hom_{\algebra{A}_\Total}(\module{E}_\Total, 
			\module{F}_\Total) \mid
			\Phi(\module{E}_\Wobs) \subseteq \module{F}_\Wobs
			\text{ and } \Phi_\Wobs(\module{E}_\Null) 
			\subseteq \module{F}_\Null \big\},\\
			\ConHom_{\algebra{A}}(\module{E},\module{F})_\Null
			&\coloneqq \left\{ \Phi \in 
			\Hom_{\algebra{A}_\Total}(\module{E}_\Total,\module{F}_\Total) \mid 
			\Phi(\module{E}_\Wobs) \subseteq \module{F}_\Null 
			\right\}.
		\end{split}
	\end{equation}
	If $\algebra{A}$ is commutative $\ConHom_{\algebra{A}}(\module{E},\module{F})$
	becomes a constraint $\algebra{A}$-module.
\end{proposition}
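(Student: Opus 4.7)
The plan is to verify the stated claims by direct unpacking of the definitions, treating the $\field{k}$-module statement first and then the commutative case.

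First I would check that the three sets are nested correctly. The inclusion $\ConHom_\algebra{A}(\module{E},\module{F})_\Wobs\subseteq\ConHom_\algebra{A}(\module{E},\module{F})_\Total$ is by definition, and $\ConHom_\algebra{A}(\module{E},\module{F})_\Null\subseteq\ConHom_\algebra{A}(\module{E},\module{F})_\Wobs$ follows because any $\Phi$ with $\Phi(\module{E}_\Wobs)\subseteq\module{F}_\Null$ automatically satisfies $\Phi(\module{E}_\Wobs)\subseteq\module{F}_\Wobs$ (since $\module{F}_\Null\subseteq\module{F}_\Wobs$) and $\Phi(\module{E}_\Null)\subseteq\Phi(\module{E}_\Wobs)\subseteq\module{F}_\Null$. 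Then I would observe that $\Hom_{\algebra{A}_\Total}(\module{E}_\Total,\module{F}_\Total)$ is a $\field{k}$-module under pointwise addition and scalar action, and that for $\Phi,\Psi$ satisfying any of the membership conditions for the $\Wobs$- or $\Null$-component, and any $\lambda\in\field{k}$, the combination $\lambda\Phi+\Psi$ still satisfies those conditions, since each condition is of the form ``image of a submodule lies in a submodule''.

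Next, for the commutative case I would recall that when $\algebra{A}_\Total$ is commutative, $\Hom_{\algebra{A}_\Total}(\module{E}_\Total,\module{F}_\Total)$ carries a right $\algebra{A}_\Total$-module structure given by $(\Phi\cdot a)(e)\coloneqq\Phi(e)\cdot a=\Phi(e\cdot a)$, the second equality using commutativity to guarantee $\algebra{A}_\Total$-linearity of $\Phi\cdot a$. I would then verify the two non-trivial stability properties: for $\Phi\in\ConHom_\algebra{A}(\module{E},\module{F})_\Wobs$ and $a\in\algebra{A}_\Wobs$,
\begin{equation}
(\Phi\cdot a)(\module{E}_\Wobs)=\Phi(\module{E}_\Wobs)\cdot a\subseteq\module{F}_\Wobs\cdot\algebra{A}_\Wobs\subseteq\module{F}_\Wobs,
\end{equation}
and likewise $(\Phi\cdot a)(\module{E}_\Null)\subseteq\module{F}_\Null\cdot\algebra{A}_\Wobs\subseteq\module{F}_\Null$, using that $\module{F}_\Null$ is an $\algebra{A}_\Wobs$-submodule. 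For $\Phi\in\ConHom_\algebra{A}(\module{E},\module{F})_\Null$ and $a\in\algebra{A}_\Wobs$, the same computation gives $(\Phi\cdot a)(\module{E}_\Wobs)\subseteq\module{F}_\Null\cdot\algebra{A}_\Wobs\subseteq\module{F}_\Null$, placing $\Phi\cdot a$ back in the $\Null$-component.

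There is no real obstacle here; the statement is essentially a bookkeeping lemma. The only subtle point worth flagging is why commutativity of $\algebra{A}$ is needed at all: without it, $\Phi\cdot a$ fails to be an $\algebra{A}_\Total$-linear morphism on $\module{E}_\Total$, so even the $\Total$-component is not a module over $\algebra{A}_\Total$. Everything downstream—stability of the $\Wobs$- and $\Null$-parts under the $\algebra{A}_\Wobs$-action—then reduces to the hypothesis that $\module{F}_\Wobs$ is an $\algebra{A}_\Wobs$-submodule of $\module{F}_\Total$ and $\module{F}_\Null$ is an $\algebra{A}_\Wobs$-submodule of $\module{F}_\Wobs$, which are part of the data of a constraint $\algebra{A}$-module.
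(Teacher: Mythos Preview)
Your verification is correct. The paper states this proposition without proof, treating it as a routine unpacking of the definitions; your argument is exactly the direct check one would supply, so there is nothing to compare against.
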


\begin{definition}[Dual module]
	\label{def:DualModule}
	Let $\algebra{A} \in \injConAlg$ and let $\module{E} \in \injConMod(\algebra{A})$ 
	be a constraint $\algebra{A}$-module.
	We call the constraint $\field{k}$-module
	$\module{E}^* \coloneqq \ConHom_{\algebra{A}}(\module{E},\algebra{A})$
	the \emph{dual module of $\module{E}$}.
	If $\algebra{A}$ is commutative $\module{E}^*$ becomes a constraint $\algebra{A}$-module.
\end{definition}

Another important construction for constraint modules is that of tensor products.
For strong constraint modules there exist two slightly different notions of tensor product.

\begin{definition}[Constraint tensor products]
	\label{def:TensorProducts}
	Let $\algebra{A} \in \injstrConAlg$ be commutative and let
	$\module{E},\module{F} \in \injstrConMod(\algebra{A})$.
	\begin{definitionlist}
		\item The constraint module 
		$\module{E} \tensor[\algebra{A}] \module{F} \in \injstrConMod(\algebra{A})$
		defined by
		\begin{equation}
			\begin{split}
				(\module{E} \tensor[\algebra{A}] \module{F})_\Total
				&\coloneqq \module{E}_\Total \tensor[\algebra{A}_\Total] 
				\module{F}_\Total, \\
				(\module{E} \tensor[\algebra{A}] \module{F})_\Wobs
				&\coloneqq \module{E}_\Wobs \tensor[\algebra{A}_\Total] \module{F}_\Wobs,\\
				(\module{E} \tensor[\algebra{A}] \module{F})_\Null
				&\coloneqq \module{E}_\Null \tensor[\algebra{A}_\Total] \module{F}_\Wobs 
				+ \module{E}_\Wobs \tensor[\algebra{A}_\Total] \module{F}_\Null,
			\end{split}
		\end{equation}
		is called the \emph{tensor product} of $\module{E}$ and $\module{F}$.
		\item The constraint module 
		$\module{E} \strtensor[\algebra{A}] \module{F} \in \injstrConMod(\algebra{A})$
		defined by
		\begin{equation}
			\begin{split}
				(\module{E} \strtensor[\algebra{A}] \module{F})_\Total
				&\coloneqq \module{E}_\Total \tensor[\algebra{A}_\Total] 
				\module{F}_\Total,\\
				(\module{E} \strtensor[\algebra{A}] \module{F})_\Wobs
				&\coloneqq \module{E}_\Wobs \tensor[\algebra{A}_\Total] \module{F}_\Wobs 
				+ \module{E}_\Null \tensor[\algebra{A}_\Total] \module{F}_\Total 
				+ \module{E}_\Total \tensor[\algebra{A}_\Total] \module{F}_\Null,
				\\
				(\module{E} \strtensor[\algebra{A}] \module{F})_\Null
				&\coloneqq \module{E}_\Null \tensor[\algebra{A}_\Total] \module{F}_\Total 
				+ \module{E}_\Total \tensor[\algebra{A}_\Total] \module{F}_\Null,
			\end{split}
		\end{equation}
		is called the \emph{strong tensor product} of $\module{E}$ and $\module{F}$.
	\end{definitionlist}
\end{definition}

Note that every constraint module $\injConMod(\algebra{A})$
over a constraint algebra $\algebra{A} \in \injConAlg$
is in particular a strong constraint $\field{k}$-module.
Hence the tensor products $\tensor[\field{k}]$ and $\strtensor[\field{k}]$
are also defined for non-strong constraint modules.

The reduction of a (strong) constraint $\algebra{A}$-module $\module{E}$
is the $\algebra{A}_\red$-module defined by
\begin{equation}
	\module{E}_\red \coloneqq \module{E}_\Wobs / \module{E}_\Null.
\end{equation}
This defines a functor $\red \colon \injConMod(\algebra{A}) \to \Modules(\algebra{A}_\red)$.
Similarly we get reduction functors for all the different categories of constraint modules as defined above.
Reduction is compatible with direct sums and inner homs in the following sense, see \cite{dippell.esposito.waldmann:2022a, dippell:2023a}.

\begin{proposition}
	\label{prop:ReductionAndDirectSumsHoms}
Let $\algebra{A} \in \injConAlg$
be a constraint algebra and let
$\module{E},\module{F} \in \injConMod(\algebra{A})$.
\begin{propositionlist}
	\item There exists a natural isomorphism
	$(\module{E} \oplus \module{F})_\red \simeq \module{E}_\red \oplus \module{F}_\red$.
	\item There exist natural isomorphisms
	$(\module{E} \tensor[\field{k}] \module{F})_\red
	\simeq \module{E}_\red \tensor[\field{k}] \module{F}_\red
	\simeq (\module{E} \strtensor[\field{k}] \module{F})_\red$.
	\item There exists a natural injective morphism
	$\ConHom_\algebra{A}(\module{E},\module{F})_\red \to \Hom_{\algebra{A}_\red}(\module{E}_\red,\module{F}_\red)$.
\end{propositionlist}
\end{proposition}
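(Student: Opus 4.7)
The plan is to construct each map explicitly as the descent of a natural map on the $\WOBS$-components, and then verify well-definedness and injectivity in turn. Parts (i) and (iii) are essentially bookkeeping with quotients, while (ii) rests on one basis-level calculation.

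For (i), both $(\module{E}\oplus\module{F})_\red$ and $\module{E}_\red\oplus\module{F}_\red$ are by definition quotients of $\module{E}_\Wobs\oplus\module{F}_\Wobs$ by the same submodule $\module{E}_\Null\oplus\module{F}_\Null$, so the identity on the $\WOBS$-component descends to a canonical $\field{k}$-linear isomorphism that is manifestly natural in $\module{E}$ and $\module{F}$.

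For (ii), the first isomorphism is the standard identity
\[
    (M \tensor[\field{k}] N) / (M' \tensor[\field{k}] N + M \tensor[\field{k}] N') \simeq (M/M') \tensor[\field{k}] (N/N'),
\]
applied to $M=\module{E}_\Wobs$, $M'=\module{E}_\Null$, $N=\module{F}_\Wobs$, $N'=\module{F}_\Null$; this follows from right-exactness of the tensor product over $\field{k}$. For the strong tensor product, the second isomorphism theorem identifies $(\module{E}\strtensor[\field{k}]\module{F})_\red$ with
\[
    (\module{E}_\Wobs \tensor[\field{k}] \module{F}_\Wobs) \big/ \bigl((\module{E}_\Wobs \tensor[\field{k}] \module{F}_\Wobs) \cap (\module{E}_\Null \tensor[\field{k}] \module{F}_\Total + \module{E}_\Total \tensor[\field{k}] \module{F}_\Null)\bigr).
\]
The key technical point is that this intersection equals $\module{E}_\Null \tensor[\field{k}] \module{F}_\Wobs + \module{E}_\Wobs \tensor[\field{k}] \module{F}_\Null$, which is precisely the denominator appearing in $(\module{E}\tensor[\field{k}]\module{F})_\red$. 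I would prove this identity by choosing bases adapted to the filtrations $\module{E}_\Null \subseteq \module{E}_\Wobs \subseteq \module{E}_\Total$ and $\module{F}_\Null \subseteq \module{F}_\Wobs \subseteq \module{F}_\Total$, so that every subspace in sight is spanned by an explicit block of pure tensors in $\module{E}_\Total \tensor[\field{k}] \module{F}_\Total$; the equality then reduces to inspection of which basis tensors occur on each side.

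For (iii), given $\Phi \in \ConHom_\algebra{A}(\module{E},\module{F})_\Wobs$, the conditions $\Phi(\module{E}_\Wobs) \subseteq \module{F}_\Wobs$ and $\Phi(\module{E}_\Null) \subseteq \module{F}_\Null$ ensure that $\Phi$ descends to an $\algebra{A}_\red$-linear map $\overline{\Phi} \colon \module{E}_\red \to \module{F}_\red$, and the assignment $[\Phi] \mapsto \overline{\Phi}$ is independent of the chosen representative because any $\Phi' \in \ConHom_\algebra{A}(\module{E},\module{F})_\Null$ sends $\module{E}_\Wobs$ into $\module{F}_\Null$, hence descends to zero. Injectivity is the same observation reversed: if $\overline{\Phi}=0$ then $\Phi(\module{E}_\Wobs) \subseteq \module{F}_\Null$, so $\Phi$ lies in $\ConHom_\algebra{A}(\module{E},\module{F})_\Null$ and $[\Phi]=0$. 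Naturality in both arguments is automatic. The only nonformal step in the whole proposition is the intersection identity for the strong tensor product; it would fail over a general ring but holds over a field thanks to the existence of adapted bases, consistent with the standing hypothesis that $\field{k}$ is a field.
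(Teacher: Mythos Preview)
The paper does not supply a proof of this proposition; it simply refers to \cite{dippell.esposito.waldmann:2022a, dippell:2023a}. Your argument is correct: parts (i) and (iii) are straightforward quotient bookkeeping, and for (ii) your reduction of the strong tensor product case to the intersection identity via the second isomorphism theorem, verified by choosing $\field{k}$-linear complements adapted to the filtrations $\module{E}_\Null \subseteq \module{E}_\Wobs \subseteq \module{E}_\Total$ (and likewise for $\module{F}$), is exactly the standard approach and makes precise where the hypothesis that $\field{k}$ is a field enters.
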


Note that the compatibility of reduction with the two types of tensor products was only shown for $\algebra{A} = \field{k}$.
For a general commutative constraint $\algebra{A}$
the reduction functor may not be compatible with the tensor products
$\tensor[\algebra{A}]$ and $\strtensor{\algebra{A}}$, as the following example shows.

\begin{example}
	Let $\field{k} = \Reals$ and consider the commutative strong constraint $\Reals$-algebra
	$\algebra{A} = (\ComplexNum,\Reals,0)$
	and the $\algebra{A}$-module $\module{E} = (\ComplexNum, \ComplexNum,0)$.
	Then
	$\module{E} \tensor[\algebra{A}] \module{E}
	= (\ComplexNum, \ComplexNum,0)$
	reduces to
	$(\module{E} \tensor[\algebra{A}] \module{E})_\red
	= \ComplexNum$
	while
	$\module{E}_\red \tensor[\algebra{A}_\red] \module{E}_\red
	= \Reals \tensor[\Reals] \Reals
	\simeq \Reals$.
\end{example}
Nevertheless, we will see in \autoref{sec:ConGeometry} that in the case of $\algebra{A}$ being a suitable algebra of functions the reduction functor will be compatible with these tensor products.

\subsection{Constraint Index Sets}
\label{sec:ConIndexSets}

In this small section we introduce constraint index sets
and some simple constructions for these.
Constraint index sets will be needed for the definition of
finitely generated projective constraint modules and will 
be a useful tool when working with sections of constraint vector bundles.

\begin{definition}[Constraint index sets]\
	\label{def:ConstraintIndexSets}
	\begin{definitionlist}
		\item An \emph{(embedded) constraint index set} consists of a 
		set $M_\Total$ together with  
		subsets $M_\Null \subseteq M_\Wobs \subseteq M_\Total$.
		\item A morphism $f \colon M \to N$ of constraint sets $M$ and $N$
		is given by a map
		$f \colon M_\Total \to N_\Total$
		such that
		$f(M_\Wobs) \subseteq N_\Wobs$
		and $f_\Wobs(M_\Null) \subseteq N_\Null$.
		\item The category of constraint index sets and their morphisms 
		is denoted by $\injConIndSet$.
	\end{definitionlist}
\end{definition}

As before we drop the prefix ``embedded'' in the following.

\begin{remark}
	The cardinality $\abs{\argument}$ yields a map from finite constraint index sets to
	\begin{equation}
		\Con\Naturals_0^3 \coloneqq \{(n_\Total, n_\Wobs, n_\Null) \in \Naturals_0^3 \mid n_\Null \leq n_\Wobs \leq n_\Total\},
	\end{equation}
	and isomorphic constraint index sets have the same cardinality.
	Conversely, to every $n \in \Con\Naturals_0^3$ we can associate the finite constraint index set
	$(\{1, \dotsc, n_\Null\}, \{1,\dotsc, n_\Wobs\},\{1, \dotsc, n_\Total\})$.
	We will often use this identification implicitly and, for example, write
	$k \in n_\Wobs$ instead of $k \in \{1, \dotsc, n_\Wobs\}$.
	In particular, when we apply the constructions below to triples of natural numbers
	this means we apply them to their associated finite constraint index sets,
	e.g. $n_\red = n_\Wobs \setminus n_\Null = \{n_\Null+1, \dotsc, n_\Wobs\}$.
\end{remark}

As in the case of constraint modules we can turn the set of morphisms
between constraint index sets into a constraint index set itself:

\begin{proposition}[Closed monoidal structure on $\injConIndSet$]
	\label{def:ClosedMonoidalStructureCSet}
	Let $M, N \in \injConIndSet$ be constraint index sets.
	Setting
	\begin{equation}
		\begin{split}
			\ConMap(M,N)_\Total &\coloneqq \Map(M_\Total,N_\Wobs),\\
			\ConMap(M,N)_\Wobs &\coloneqq \left\{ f \in \Map(M_\Total,N_\Total) \mid f(M_\Wobs) \subseteq N_\Wobs
			\text{ and } f(M_\Null) \subseteq N_\Null \right\}, \\
			\ConMap(M,N)_\Null &\coloneqq \left\{ f \in \Map(M_\Total,N_\Total) \mid f(M_\Wobs) \subseteq N_\Null \right\}
		\end{split}
	\end{equation}
	defines a constraint index set $\ConMap(M,N)$.
\end{proposition}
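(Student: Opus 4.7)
The plan is to verify that the prescribed triple satisfies the chain of inclusions $\ConMap(M,N)_\Null \subseteq \ConMap(M,N)_\Wobs \subseteq \ConMap(M,N)_\Total$ required by Definition~\ref{def:ConstraintIndexSets}. Both inclusions are purely set-theoretic and rely only on the defining inclusions $N_\Null \subseteq N_\Wobs$ and $M_\Null \subseteq M_\Wobs$ of the input data.

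For $\ConMap(M,N)_\Null \subseteq \ConMap(M,N)_\Wobs$ I would take any $f$ with $f(M_\Wobs) \subseteq N_\Null$ and observe that, on the one hand, $f(M_\Wobs) \subseteq N_\Null \subseteq N_\Wobs$, and on the other, $f(M_\Null) \subseteq f(M_\Wobs) \subseteq N_\Null$, so $f$ satisfies both defining conditions of $\ConMap(M,N)_\Wobs$. For $\ConMap(M,N)_\Wobs \subseteq \ConMap(M,N)_\Total$ it suffices to note that every such $f$ is by construction an element of $\Map(M_\Total, N_\Total)$, matching the total component (to be read as $\Map(M_\Total,N_\Total)$; the symbol ``$N_\Wobs$'' in the printed formula appears to be a typographical slip, since otherwise the very inclusion the proposition asserts would fail).

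The title \emph{Closed monoidal structure on $\injConIndSet$} signals that the real content lies one step further: $\ConMap(M,\argument)$ should be right adjoint to a product on $\injConIndSet$. Accordingly, I would equip the category with the Cartesian product $M \times N$ carrying constraint data $(M_\Total \times N_\Total,\, M_\Wobs \times N_\Wobs,\, (M_\Null \times N_\Wobs) \cup (M_\Wobs \times N_\Null))$, and check that ordinary set-theoretic currying restricts to a natural bijection $\Hom_{\injConIndSet}(L \times M, N) \cong \Hom_{\injConIndSet}(L, \ConMap(M,N))$. No conceptual obstacle is expected: the $\Wobs$-condition on $\widetilde{f} \colon L \times M \to N$ translates under currying precisely into $f(L_\Wobs) \subseteq \ConMap(M,N)_\Wobs$, and the $\Null$-component of $M \times N$ is fine-tuned exactly so that the $\Null$-condition translates into $f(L_\Null) \subseteq \ConMap(M,N)_\Null$. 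The only real care required is bookkeeping of which factor contributes which level of smallness when comparing the two sides of the adjunction.
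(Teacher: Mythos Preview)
The paper states this proposition without proof, so there is no argument to compare against directly. Your verification of the two inclusions is correct and is exactly what is needed for the claim as literally stated: that $\ConMap(M,N)$ is a constraint index set.

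You are also right to flag the typographical slip in the $\Total$-component. As written, $\ConMap(M,N)_\Total = \Map(M_\Total,N_\Wobs)$ cannot contain $\ConMap(M,N)_\Wobs$ in general, since a map $f \colon M_\Total \to N_\Total$ with $f(M_\Wobs)\subseteq N_\Wobs$ need not send all of $M_\Total$ into $N_\Wobs$. Comparing with the parallel construction for constraint modules in Proposition~\ref{prop:ModuleStructureOnModuleHom}, where the $\Total$-component is $\Hom_{\algebra{A}_\Total}(\module{E}_\Total,\module{F}_\Total)$, confirms that $\Map(M_\Total,N_\Total)$ is what was intended.

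Your second paragraph, sketching the adjunction that justifies the title, goes beyond what the proposition actually claims, but it is accurate and worth noting that the monoidal product you write down is precisely the tensor product $M \tensor N$ introduced immediately afterwards in Definition~\ref{def:TensorProdDualConIndSet}. The paper does not spell out the adjunction explicitly, so your outline is a genuine addition rather than a reconstruction.
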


The following constructions of constraint index sets will be important later on.

\begin{definition}[Tensor products and dual]
	\label{def:TensorProdDualConIndSet}
	Let $M,N \in \injConIndSet$.
	\begin{definitionlist}
		\item The \emph{coproduct} of $M$ and $N$ is defined by
		\begin{equation}
			\begin{split}
				(M \sqcup N)_\Total 
				&\coloneqq M_\Total \sqcup N_\Total, \\
				(M \sqcup N)_\Wobs
				&\coloneqq M_\Wobs \sqcup N_\Wobs, \\
				(M \sqcup N)_\Null 
				&\coloneqq (M_\Null \sqcup N_\Null).
			\end{split}
		\end{equation}
		\item The \emph{tensor product} of $M$ and $N$ is defined by
		\begin{equation}
			\begin{split}
				(M \tensor N)_\Total 
				&\coloneqq M_\Total \times N_\Total, \\
				(M \tensor N)_\Wobs
				&\coloneqq M_\Wobs \times N_\Wobs, \\
				(M \tensor N)_\Null 
				&\coloneqq (M_\Wobs \times N_\Null) \cup (M_\Null \times N_\Wobs).
			\end{split}
		\end{equation}
		\item The \emph{strong tensor product} of $M$ and $N$ is defined by
		\begin{equation}
			\begin{split}
				(M \strtensor N)_\Total  
				&\coloneqq M_\Total \times N_\Total,\\
				(M \strtensor N)_\Wobs 
				&\coloneqq (M_\Wobs \times N_\Wobs) \cup (M_\Total \times N_\Null) \cup (M_\Null \times N_\Total), \\
				(M \strtensor N)_\Null
				&\coloneqq (M_\Total \times N_\Null) \cup (M_\Null \times N_\Total).
			\end{split}
		\end{equation}
		\item The \emph{dual} of $M$ is defined by
		\begin{equation}
			\begin{split}
				(M^*)_\Total &\coloneqq M_\Total,\\
				(M^*)_\Wobs &\coloneqq M_\Total \setminus M_\Null,\\
				(M^*)_\Null &\coloneqq M_\Total \setminus M_\Wobs.
			\end{split}
		\end{equation}
		\item The \emph{reduction} of $M$ is defined by
		\begin{equation}
			M_\red \coloneqq M_\Wobs \setminus M_\Null.
		\end{equation}
	\end{definitionlist}
\end{definition}

The above constructions are compatible in the following way:

\begin{proposition}
	\label{prop:DualsConIndSet}
	Let $M,N \in \injConIndSet$.
	\begin{propositionlist}
		\item \label{prop:DualsConIndSet_1}
		We have
		\begin{equation}
			(M \tensor N)_\red = M_\red \times N_\red.
		\end{equation}
		\item \label{prop:DualsConIndSet_2}
		We have
		\begin{equation}
			(M \strtensor N)_\red = M_\red \times N_\red.
		\end{equation}
		\item \label{prop:DualsConIndSet_3}
		We have 
		\begin{equation}
			(M^*)_\red = M_\red.
		\end{equation}
		\item \label{prop:DualsConIndSet_4}
		We have
		\begin{equation}
			(M\tensor N)^* = M^* \strtensor N^*.
		\end{equation}
		\item \label{prop:DualsConIndSet_5}
		We have
		\begin{equation}
			(M \strtensor N)^* = M^* \tensor N^*.
		\end{equation}
		\item \label{prop:DualsConIndSet_6}
		We have
		\begin{equation}
			(M^*)^* = M.
		\end{equation}
	\end{propositionlist}
\end{proposition}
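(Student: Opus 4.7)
The plan is to verify each of the six identities by unfolding the defining formulas from \autoref{def:TensorProdDualConIndSet} and performing routine Boolean calculations on the three underlying sets $(\argument)_\Total$, $(\argument)_\Wobs$, $(\argument)_\Null$. Throughout, the only background facts that need to be invoked are the filtration $M_\Null \subseteq M_\Wobs \subseteq M_\Total$ and its consequences such as $M_\Total \setminus M_\Wobs \subseteq M_\Total \setminus M_\Null$. Since all the constructions are set-theoretic, no morphisms need be tracked.

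I would first dispatch the three reduction identities (i)--(iii), which are the simplest. For (i) one writes
\[
(M \tensor N)_\red = (M_\Wobs \times N_\Wobs) \setminus \bigl((M_\Wobs \times N_\Null) \cup (M_\Null \times N_\Wobs)\bigr),
\]
and verifies directly that a pair $(m,n)$ lies in this difference precisely when $m \in M_\Wobs \setminus M_\Null$ and $n \in N_\Wobs \setminus N_\Null$. For (ii) the same kind of computation works: the strong tensor adds to $(\argument)_\Wobs$ exactly the sets that constitute $(\argument)_\Null$, so these extra pieces cancel after taking the reduction, leaving $M_\red \times N_\red$. Part (iii) reduces to the identity $(M_\Total \setminus M_\Null) \setminus (M_\Total \setminus M_\Wobs) = M_\Wobs \setminus M_\Null$, which holds because $M_\Wobs \subseteq M_\Total$.

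The heart of the proposition lies in the duality identities (iv) and (v). For (iv) I would compare
\[
\bigl((M \tensor N)^*\bigr)_\Wobs = (M_\Total \times N_\Total) \setminus \bigl((M_\Wobs \times N_\Null) \cup (M_\Null \times N_\Wobs)\bigr)
\]
with
\[
\bigl((M^*)_\Wobs \times (N^*)_\Wobs\bigr) \cup \bigl(M_\Total \times (N_\Total \setminus N_\Wobs)\bigr) \cup \bigl((M_\Total \setminus M_\Wobs) \times N_\Total\bigr),
\]
and do a case analysis on which of the four strata (from the filtration $M_\Null \subseteq M_\Wobs \subseteq M_\Total$, and similarly for $N$) the pair $(m,n)$ lies in; each case is immediate once the strata are written down. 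The $\Null$-components match because both equal the complement of $M_\Wobs \times N_\Wobs$ inside $M_\Total \times N_\Total$. The calculation for (v) is entirely parallel, with the roles of $\Wobs$ and $\Null$ interchanged in the appropriate way; alternatively, after (vi) is established, (v) can be deduced from (iv) by applying $(\argument)^*$ to both sides of (iv) with $M$ and $N$ replaced by $M^*$ and $N^*$.

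Finally, (vi) is the cleanest: one computes
\[
(M^{**})_\Wobs = M_\Total \setminus (M_\Total \setminus M_\Wobs) = M_\Wobs,
\qquad
(M^{**})_\Null = M_\Total \setminus (M_\Total \setminus M_\Null) = M_\Null,
\]
using $M_\Wobs, M_\Null \subseteq M_\Total$. I do not expect any genuine obstacle; the only real risk is notational bookkeeping in the case analysis for (iv) and (v), which I would manage by tabulating the four strata of $M$ and $N$ explicitly and checking membership in each of the sixteen products in turn.
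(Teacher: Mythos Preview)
Your proposal is correct and is exactly the kind of routine set-theoretic unwinding one would expect here. The paper itself states this proposition without proof, treating all six identities as immediate consequences of the definitions in \autoref{def:TensorProdDualConIndSet}; your plan simply fills in the omitted verifications in the obvious way.
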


\subsection{Projective Modules}
\label{sec:ProjectiveModules}

In this section we collect some basics about projective constraint modules.
For more details see \cite{dippell:2023a}.
There are multiple possible ways to introduce projective constraint modules.
We will define them using constraint equivalents of dual bases.

\begin{definition}[Projective modules]
	\label{def:ConProjectiveModules}
	Let $\algebra{A} \in \injstrConAlg$ be a strong constraint algebra and let
	$M \in \injConIndSet$.
	\begin{definitionlist}
		\item A strong constraint right $\algebra{A}$-module
		$\module{E} \in \injstrConMod(\algebra{A})$
		is called \emph{projective} if there exist families
		$(e_n)_{n \in M_\Total} \subseteq \module{E}_\Total$ and
		$(e^n)_{n \in M_\Total} \subseteq (\module{E}_\Total)^* =
		\Hom_{\algebra{A}_\Total}(\module{E}_\Total , \algebra{A}_\Total)$,
		such that
		\begin{equation}
			\label{prop:StrDualBasis_Eq1}
			x = \sum_{n \in M_\Total} e_n e^n(x)
			\quad
		\end{equation}
		for all
		$x \in \module{E}_\Total$,
		where for fixed $x$ only finitely many of the
		$e^n(x)$ differ from $0$.
		Moreover, the following properties need to be satisfied:
		\begin{definitionlist}
			\item It holds
			$e_{n} \in \module{E}_\Wobs$ for
			$n \in M_\Wobs$.
			\item It holds $e_n \in \module{E}_\Null$
			for $n \in M_\Null$.
			\item It holds
			$e^{n} \in (\module{E}^*)_\Wobs$ for $n \in M_\Total 
			\setminus M_\Null = (M^*)_\Wobs$.	
			\item It holds
			$e^n \in (\module{E}^*)_\Null$ for
			$n \in M_\Total \setminus M_\Wobs = (M^*)_\Null$.
		\end{definitionlist}
		\item Such a projective strong constraint module $\module{E}$
		is called \emph{finitely generated projective}, if $M$ can be chosen to be a finite constraint index set.
		\item The full subcategory of $\injstrConMod(\algebra{A})$
		consisting of finitely generated projective strong constraint modules will be denoted by $\strConProj(\algebra{A})$.
	\end{definitionlist}
\end{definition}

The families $(e_n)_{n \in M_\Total} \subseteq \module{E}_\Total$ and
$(e^n)_{n \in M_\Total} \subseteq (\module{E}_\Total)^*$
will be called a dual basis of $\module{P}$ and will often be denoted by
$(\{e_i\}_{i \in M}, \{e^i\}_{i \in M^*})$.

The category $\strConProj(\algebra{A})$ is closed under the usual constructions introduced in \autoref{sec:AlgebraicPreliminaries}, as a straightforward computation shows.
For more details see again \cite{dippell:2023a}.

\begin{proposition}
	Let $\algebra{A} \in \injstrConAlg$ be commutative and let
	$\module{E},\module{F} \in \strConProj(\algebra{A})$ be given with dual bases
	$(\{e_i\}_{i \in M}, \{e^i\}_{i \in M^*})$
	and
	$(\{f_i\}_{i \in N}, \{f^i\}_{i \in N^*})$, respectively.
	\begin{propositionlist}
		\item $\module{E} \oplus \module{F}$ is finitely generated projective with dual basis
		$(\{e_i,f_i\}_{i \in M \sqcup N}, \{e^i,f^i\}_{i \in (M\sqcup N)^*})$.
		\item $\module{E}^*$ is finitely generated projective with dual basis
		$(\{e^i\}_{i \in M^*}, \{e_i\}_{i \in M})$.
		\item $\module{E} \tensor[\algebra{A}] \module{F}$ is finitely generated projective with dual basis
		$(\{e_i \tensor f_j\}_{(i,j) \in M \tensor N}, \{e^i \tensor f^j\}_{(i,j) \in (M \tensor N)^*})$.
		\item $\module{E} \strtensor[\algebra{A}] \module{F}$ is finitely generated projective with dual basis
		$(\{e_i \tensor f_j\}_{(i,j) \in M \strtensor N}, \{e^i \tensor f^j\}_{(i,j) \in (M \strtensor N)^*})$.
	\end{propositionlist}
\end{proposition}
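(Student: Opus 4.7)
The plan is to treat each of the four parts separately, observing that for each construction the classical dual-basis identity \eqref{prop:StrDualBasis_Eq1} on the $\Total$-components is a standard fact about ordinary finitely generated projective modules, so the real work is to check that the proposed dual bases respect the constraint index structure on both the module and its dual. For this one simply matches the four conditions (i)--(iv) of \autoref{def:ConProjectiveModules} against the definitions of $\sqcup$, $\tensor$, $\strtensor$ and $(\argument)^*$ given in \autoref{def:TensorProdDualConIndSet}, together with the descriptions of the constraint module structures on $\module{E} \oplus \module{F}$, $\module{E}^*$, $\module{E} \tensor[\algebra{A}] \module{F}$ and $\module{E} \strtensor[\algebra{A}] \module{F}$.

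For the direct sum the indexing on $M \sqcup N$ is disjoint in all three layers, and since an index $i$ lies in exactly one summand the element $(e_i, f_i)$ is really $(e_i,0)$ or $(0,f_i)$, so all four membership conditions reduce to the corresponding conditions for $\module{E}$ and $\module{F}$ individually. For the dual module the key point is that $(M^*)^*_\Wobs = M_\Wobs$ and $(M^*)^*_\Null = M_\Null$, so the roles of $e_i$ and $e^i$ swap correctly; one uses that, by construction of $\ConHom_\algebra{A}(\module{E},\algebra{A})$ in \autoref{prop:ModuleStructureOnModuleHom}, evaluation at an element $e_i \in \module{E}_\Wobs$ (resp.\ $\module{E}_\Null$) sends $(\module{E}^*)_\Wobs$ into $\algebra{A}_\Wobs$ and $(\module{E}^*)_\Null$ into $\algebra{A}_\Null$ (resp.\ $(\module{E}^*)_\Wobs$ into $\algebra{A}_\Null$), which is exactly what places $e_i$ in the right layer of $(\module{E}^*)^*$. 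The dual-basis identity for $\module{E}^*$ itself then follows from $\Phi(x) = \Phi\bigl(\sum_i e_i e^i(x)\bigr) = \sum_i e^i(x)\Phi(e_i)$ using commutativity of $\algebra{A}$.

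For the two tensor products the classical dual basis is $(e_i \tensor f_j, \, e^i \tensor f^j)$ with $(e^i \tensor f^j)(x \tensor y) = e^i(x)\, f^j(y)$, which requires commutativity of $\algebra{A}$ to be well defined and $\algebra{A}_\Total$-linear. The membership checks for the basis elements $e_i \tensor f_j$ are essentially built into the definitions of $(M \tensor N)_\Wobs$, $(M \tensor N)_\Null$ (and their strong versions), matching the formulas for $(\module{E} \tensor[\algebra{A}] \module{F})_\Wobs, (\module{E} \tensor[\algebra{A}] \module{F})_\Null$ and the strong tensor product given in \autoref{def:TensorProducts}. The cases $(i,j) \in (M \tensor N)_\Null$ give $e_i \tensor f_j$ in $\module{E}_\Null \tensor \module{F}_\Wobs$ or $\module{E}_\Wobs \tensor \module{F}_\Null$ by unpacking the union in \autoref{def:TensorProdDualConIndSet}, and analogously for $\strtensor$.

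The main obstacle, and the only part requiring a genuine calculation, is the dual side, where one must check that $e^i \tensor f^j$ lies in the correct layer of $(\module{E} \tensor[\algebra{A}] \module{F})^*$ for each combination allowed by $(M \tensor N)^* = M^* \strtensor N^*$ and $(M \strtensor N)^* = M^* \tensor N^*$ from \autoref{prop:DualsConIndSet}. The critical cases are those in which one of the indices is in the unrestricted $\Total$-part: for instance for $(i,j) \in (M^* \strtensor N^*)_\Wobs$ with $j \in N^*_\Null$, one evaluates $(e^i \tensor f^j)$ on a simple tensor in $\module{E}_\Wobs \tensor \module{F}_\Wobs$ and obtains a product in $\algebra{A}_\Total \cdot \algebra{A}_\Null$; here one uses precisely that $\algebra{A}$ is strong, so that $\algebra{A}_\Null$ is a two-sided ideal of $\algebra{A}_\Total$ and the product lands in $\algebra{A}_\Null \subseteq \algebra{A}_\Wobs$. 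A short case distinction along these lines, using the strong-ideal property of $\algebra{A}_\Null$ throughout, verifies all remaining conditions and concludes the proof.
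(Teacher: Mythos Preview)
Your proposal is correct and is precisely the ``straightforward computation'' the paper alludes to; the paper itself gives no proof beyond that phrase and a reference to \cite{dippell:2023a}, so your outline actually supplies more detail than the paper does. In particular, your identification of the critical step---using that $\algebra{A}_\Null$ is a two-sided ideal of $\algebra{A}_\Total$ to handle the ``mixed'' cases on the dual side of the tensor products---is exactly the point where strongness enters, and your treatment of the double-dual conditions for part~(ii) via the evaluation map is the natural argument.
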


These constructions for finitely generated projective modules now exhibit various compatibilities.

\begin{proposition}
	\label{prop:DualTensorHomIsos}
	Let $\algebra{A} \in \injstrConAlg$ be commutative and  $\module{E}, \module{F} \in 
	\strConProj(\algebra{A})$.
	\begin{propositionlist}
		\item \label{prop:DualTensorHomIsos_1}
		There exists a canonical isomorphism
		$\module{F} \strtensor[\algebra{A}] \module{E}^* \simeq 
		\ConHom_\algebra{A}(\module{E},\module{F})$.
		\item \label{prop:DualTensorHomIsos_2}
		There exists a canonical isomorphism 
		$\module{E}^* \tensor[\algebra{A}] \module{F}^* \simeq 
		(\module{E} \strtensor[\algebra{A}] \module{F})^*$.
		\item \label{prop:DualTensorHomIsos_3}
		There exists a canonical isomorphism
		$\module{E}^* \strtensor[\algebra{A}] \module{F}^* \simeq 
		(\module{E} \tensor[\algebra{A}] \module{F})^*$.
		\item \label{prop:DualTensorHomIsos_4}
		There exists a canonical isomorphism
		$\ConHom_\algebra{A}(\module{E} \tensor[\algebra{A}] \module{F}, 
		\module{G})	\simeq \ConHom_\algebra{A}(\module{F}, \module{G} \strtensor[\algebra{A}] \module{E}^*)$.
	\end{propositionlist}
\end{proposition}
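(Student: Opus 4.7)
The plan is to prove all four isomorphisms by showing that the classical canonical isomorphisms on the $\Total$-components restrict appropriately to the $\WOBS$- and $\NULL$-components, using the explicit description of dual bases on tensor products and duals that the preceding proposition provides.

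First I would treat \ref{prop:DualTensorHomIsos_1} as the model case. On the $\Total$-level the map $\Psi_\Total\colon \module{F}_\Total \tensor[\algebra{A}_\Total] \module{E}_\Total^* \to \Hom_{\algebra{A}_\Total}(\module{E}_\Total,\module{F}_\Total)$ sending $f \tensor \alpha$ to $x \mapsto f\cdot \alpha(x)$ is a classical isomorphism since $\module{E}_\Total$ is finitely generated projective; its inverse is built from the dual basis of $\module{E}$ as $\Phi \mapsto \sum_{i\in M_\Total} \Phi(e_i) \tensor e^i$. To verify $\Psi$ is a morphism of constraint modules, I would plug the four index-set membership properties of \autoref{def:ConProjectiveModules} into this inverse formula. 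For example, if $\Phi \in \ConHom_\algebra{A}(\module{E},\module{F})_\Wobs$, then one splits the sum according to whether $i \in M_\Null$, $i \in M_\Wobs \setminus M_\Null$, or $i \in M_\Total \setminus M_\Wobs$ and observes that in each regime either $\Phi(e_i) \in \module{F}_\Null$ or $e^i \in (\module{E}^*)_\Null$ or both factors lie in the $\Wobs$-parts, so the sum lands in $(\module{F}\strtensor[\algebra{A}]\module{E}^*)_\Wobs$; the analogous bookkeeping handles the $\Null$-part and the forward direction.

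For \ref{prop:DualTensorHomIsos_2} and \ref{prop:DualTensorHomIsos_3} I would proceed analogously: the classical isomorphism is $(f\tensor g)(x\tensor y) = f(x)g(y)$. The key observation is that by the preceding proposition the dual basis of $\module{E}\tensor[\algebra{A}]\module{F}$ is indexed by $M\tensor N$ and that of $\module{E}\strtensor[\algebra{A}]\module{F}$ by $M\strtensor N$, so by \autoref{prop:DualsConIndSet} items \ref{prop:DualsConIndSet_4} and \ref{prop:DualsConIndSet_5} the dual bases of $(\module{E}\tensor\module{F})^*$ and $(\module{E}\strtensor\module{F})^*$ are indexed precisely by $M^*\strtensor N^*$ and $M^*\tensor N^*$ respectively. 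Checking that the canonical map sends dual basis elements to dual basis elements with matching index data then immediately gives both isomorphisms on all three levels.

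Finally \ref{prop:DualTensorHomIsos_4} can be obtained by chaining the previous results rather than checking by hand: combine \ref{prop:DualTensorHomIsos_1} applied to the pair $(\module{E}\tensor\module{F},\module{G})$ to get $\ConHom_\algebra{A}(\module{E}\tensor\module{F},\module{G}) \simeq \module{G} \strtensor (\module{E}\tensor\module{F})^*$, then apply \ref{prop:DualTensorHomIsos_3} to rewrite the right-hand factor as $\module{E}^*\strtensor\module{F}^*$, use associativity and symmetry of $\strtensor$, and finally apply \ref{prop:DualTensorHomIsos_1} in reverse to recognise $\module{G}\strtensor\module{E}^*\strtensor\module{F}^* \simeq \ConHom_\algebra{A}(\module{F},\module{G}\strtensor\module{E}^*)$. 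Here one must verify associativity and symmetry of $\strtensor$ on finitely generated projective constraint modules, which is again a dual-basis bookkeeping statement, cross-referenced to \cite{dippell:2023a} if already established there.

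The main obstacle will be the index bookkeeping in \ref{prop:DualTensorHomIsos_1}: one must carefully match the four conditions in the definition of dual basis with the three-layered structure of $\ConHom$ and $\strtensor$, and get the direction of the strong tensor product correct so that a $\WOBS$-element of the hom space (which is only required to preserve $\WOBS$ modulo $\NULL$, not to send $\WOBS$ into $\WOBS$ and $\NULL$ into $\NULL$ separately) corresponds exactly to an element of the strong, rather than ordinary, tensor product. Once this is pinned down cleanly, the remaining three isomorphisms follow either by the same dual-basis argument or formally from \ref{prop:DualTensorHomIsos_1}.
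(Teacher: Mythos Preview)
Your proposal is correct and follows essentially the same route as the paper: verify that the classical canonical maps are constraint morphisms, then show their classical inverses (expressed via dual bases) are constraint by splitting the index sum according to the partition $M_\Total = M_\Null \sqcup (M_\Wobs\setminus M_\Null) \sqcup (M_\Total\setminus M_\Wobs)$, and obtain \ref{prop:DualTensorHomIsos_4} by chaining \ref{prop:DualTensorHomIsos_1} and \ref{prop:DualTensorHomIsos_3}.

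One inaccuracy to correct: your parenthetical remark that a $\WOBS$-element of $\ConHom_{\algebra{A}}(\module{E},\module{F})$ ``is only required to preserve $\WOBS$ modulo $\NULL$'' is wrong. By \autoref{prop:ModuleStructureOnModuleHom} such a $\Phi$ must satisfy $\Phi(\module{E}_\Wobs)\subseteq\module{F}_\Wobs$ \emph{and} $\Phi(\module{E}_\Null)\subseteq\module{F}_\Null$ separately. Your actual case analysis for \ref{prop:DualTensorHomIsos_1} is consistent with the correct definition, so this slip does not damage the argument, but you should fix the description before writing out the proof.
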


\begin{proof}
	We first construct constraint maps for \ref{prop:DualTensorHomIsos_1},
	\ref{prop:DualTensorHomIsos_2} and \ref{prop:DualTensorHomIsos_3}. 
	In a second step show that these are isomorphisms.
	Thus for \ref{prop:DualTensorHomIsos_1} consider the map
	\begin{equation*}
		\label{eq:ConHomAndTensorDual}
		\module{F}_\Total \tensor[\algebra{A}_\Total] 
		\module{E}^*_\Total
		\ni y \tensor \alpha 
		\mapsto (x \mapsto y \cdot \alpha(x)) 
		\in 
		\Hom_{\algebra{A}_\Total}(\module{E}_\Total,\module{F}_\Total).
	\end{equation*}
	To show that it is a constraint $\algebra{A}$-module morphism
	consider first
	$y \tensor \alpha \in (\module{F} \strtensor 
	\module{E}^*)_\Null$.
	If $x \in \module{E}_\Wobs$, then
	$y \cdot \alpha(x) \in \module{F}_\Null \cdot 
	\algebra{A}_\Total + \module{F}_\Total \cdot \algebra{A}_\Null
	\subseteq \module{F}_\Null$.
	Now let $y \tensor \alpha \in 
	\module{F}_\Wobs \tensor[\algebra{A}_\Total] (\module{E}^*)_\Wobs
	\subseteq (\module{F} \strtensor[\algebra{A}] 
	\module{E}^*)_\Wobs$.
	If $x \in \module{E}_\Null$, then
	$y \cdot \alpha(x) \in \module{F}_\Wobs \cdot \algebra{A}_\Null 
	\subseteq \module{F}_\Null$.
	If $x \in \module{E}_\Wobs$, then
	$y \cdot \alpha(x) \in \module{F}_\Wobs \cdot \algebra{A}_\Wobs
	\subseteq \module{F}_\Wobs$.
	Thus the above map is a constraint morphism.
	
	For \ref{prop:DualTensorHomIsos_2} and \ref{prop:DualTensorHomIsos_3} consider the map
	\begin{equation} \label{eq:TensorProductDualsMap}
		\module{E}^*_\Total \tensor[\algebra{A}_\Total] \module{F}^*_\Total
		\ni \alpha \tensor \beta \mapsto
		(x \tensor y \mapsto \alpha(x)\cdot \beta(y))
		\in (\module{E}_\Total \tensor[\algebra{A}_\Total] \module{F}_\Total)^*.
		\tag{$*$}
	\end{equation}
	We show that this induces a constraint morphism
	$\module{E}^* \tensor[\algebra{A}] \module{F}^* \to 
	(\module{E} \strtensor[\algebra{A}] \module{F})^*$.
	For this let
	$\alpha \tensor \beta \in (\module{E}^* \tensor[\algebra{A}] \module{F}^*)_\Null$.
	\begin{cptitem}
		\item For $x \tensor y \in (\module{E} \strtensor[\algebra{A}] \module{F})_\Null$
		we have
		$\alpha(x)\cdot\beta(y) 
		\in \algebra{A}_\Null \cdot \algebra{A}_\Total + \algebra{A}_\Total \cdot \algebra{A}_\Null
		= \algebra{A}_\Null$.
		\item For $x \tensor y \in \module{E}_\Wobs \tensor[\algebra{A}_\Total] \module{F}_\Wobs
		\subseteq (\module{E} \strtensor[\algebra{A}] \module{F})_\Wobs$
		we have
		$\alpha(x)\cdot\beta(y) 
		\in \algebra{A}_\Null \cdot \algebra{A}_\Wobs + \algebra{A}_\Wobs \cdot \algebra{A}_\Null
		= \algebra{A}_\Null$.
	\end{cptitem}
	Next suppose
	$\alpha \tensor \beta \in (\module{E}^* \tensor[\algebra{A}] \module{F}^*)_\Wobs$.
	\begin{cptitem}
		\item For $x \tensor y \in (\module{E} \strtensor[\algebra{A}] \module{F})_\Null$
		we have
		$\alpha(x)\cdot\beta(y) 
		\in \algebra{A}_\Null \cdot \algebra{A}_\Total + \algebra{A}_\Total \cdot \algebra{A}_\Null
		= \algebra{A}_\Null$.
		\item For $x \tensor y \in \module{E}_\Wobs \tensor[\algebra{A}_\Total] \module{F}_\Wobs
		\subseteq (\module{E} \strtensor[\algebra{A}] \module{F})_\Wobs$
		we have
		$\alpha(x)\cdot\beta(y) 
		\in \algebra{A}_\Wobs \cdot \algebra{A}_\Wobs = \algebra{A}_\Wobs$.
	\end{cptitem}
	This shows that $\module{E}^* \tensor[\algebra{A}] \module{F}^* \to 
	(\module{E} \strtensor[\algebra{A}] \module{F})^*$ is a constraint morphism.
	A completely analogous analysis shows that \eqref{eq:TensorProductDualsMap}
	also induces a constraint morphism 
	$\module{E}^* \strtensor[\algebra{A}] \module{F}^* \to 
	(\module{E} \tensor[\algebra{A}] \module{F})^*$, which is needed for
	\ref{prop:DualTensorHomIsos_3}.

	In all three cases we show that the well-known inverse maps on the $\TOTAL$-components
	are in fact constraint maps, and therefore yield constraint 
	inverses.
	To do this we fix dual bases
	$(\{e_i\}_{i \in M}, \{e^i\}_{i \in M^*})$
	and
	$(\{f_j\}_{j \in N}, \{f^j\}_{j \in N^*})$
	of $\module{E}$ and $\module{F}$, respectively.
	For the first part consider the map 
	\begin{equation*}
		\label{eq:DualTensorHomIsos_InverseHom}
		\Hom_{\algebra{A}_\Total}(\module{E}_\Total,\module{F}_\Total)
		\ni \Phi \mapsto
		\sum_{i \in M_\Total} \Phi(e_i) \tensor e^i
		\in \module{F}_\Total \tensor \module{E}^*_\Total.
		\tag{$**$}
	\end{equation*}
	This is the classical inverse on the $\TOTAL$-component.
	Hence we need to show that \eqref{eq:DualTensorHomIsos_InverseHom}
	is a constraint morphism.
	For this let $\Phi \in 
	\ConHom_\algebra{A}(\module{E},\module{F})_\Wobs$
	be given.
	\begin{cptitem}
		\item If $i \in M_\Total\setminus M_\Wobs = M^*_\Null$, then
		$\Phi(e_i) \tensor e^i \in \module{F}_\Total \tensor 
		\module{E}^*_\Null \subseteq
		(\module{F} \strtensor[\algebra{A}] \module{E}^*)_\Null$.
		\item If $i \in M_\Wobs\setminus M_\Null \subseteq 
		M^*_\Wobs$, then, since in particular $i \in M_\Wobs$ holds,
		we obtain
		$\Phi(e_i) \tensor e^i \in \module{F}_\Wobs \tensor 
		\module{E}^*_\Wobs \subseteq
		(\module{F} \strtensor[\algebra{A}] \module{E}^*)_\Wobs$.
		\item If $i \in M_\Null$, then
		$\Phi(e_i) \tensor e^i \in \module{F}_\Null \tensor 
		\module{E}^*_\Total \subseteq
		(\module{F} \strtensor[\algebra{A}] \module{E}^*)_\Null$.
	\end{cptitem}
	Hence \eqref{eq:DualTensorHomIsos_InverseHom} preserves the 
	$\WOBS$-component.
	To show that it also preserves the $\NULL$-component, we only 
	need to reconsider the second case from above.
	\begin{cptitem}
		\item If $i \in M_\Wobs\setminus M_\Null \subseteq 
		M^*_\Wobs$, then, since in particular $i \in M_\Wobs$ holds,
		we obtain
		$\Phi(e_i) \tensor e^i \in \module{F}_\Wobs \tensor 
		\module{E}^*_\Null \subseteq
		(\module{F} \strtensor[\algebra{A}] \module{E}^*)_\Null$.
	\end{cptitem}
	This shows that \eqref{eq:DualTensorHomIsos_InverseHom}
	is a constraint inverse.

	For part \ref{prop:DualTensorHomIsos_2} we need to show that 
	\begin{equation*}
		\label{eq:DualTensorHomIsos_InverseTensor}
		(\module{E}_\Total 
		\tensor[\algebra{A}_\Total] \module{F}_\Total)^*
		\ni
		\alpha
		\mapsto \sum_{(i,j) \in M_\Total \times N_\Total}
		\alpha(e_i \tensor f_j) \cdot e^i \tensor f^j
		\in \module{E}_\Total^* \tensor[\algebra{A}_\Total] 
		\module{F}_\Total^*
		\tag{$**$}
	\end{equation*}
	defines a constraint morphism 
	$(\module{E} \strtensor[\algebra{A}] \module{F})^*
	\to \module{E}^* \tensor[\algebra{A}] \module{F}^*$.
	Recall that the families
	\begin{align*}
		\big(\{ e_i \tensor f_j \}_{(i,j) \in M \strtensor N},
		\{e^i \tensor f^j\}_{(i,j) \in M^* \tensor N^*}\big)\\
		\shortintertext{and}
		\big(\{ e^i \tensor f^j \}_{(i,j) \in M^* \tensor N^*},
		\{e_i \tensor f_j\}_{(i,j) \in M \strtensor N}\big)
	\end{align*}
	form dual bases of
	$\module{E} \strtensor[\algebra{A}] \module{F}$
	and
	$\module{E}^* \tensor[\algebra{A}] \module{F}^*$, respectively.
	Now suppose $\alpha \in (\module{E} 
	\strtensor[\algebra{A}] \module{F})^*_\Wobs$.
	\begin{cptitem}
		\item If $(i,j) \in (M^* \tensor N^*)_\Null$, then
		$\alpha(e_i \tensor f_j) \cdot e^i \tensor f^j
		\in \algebra{A}_\Total \cdot (\module{E}^* 
		\tensor[\algebra{A}] \module{F}^*)_\Null
		= (\module{E}^* 
		\tensor[\algebra{A}] \module{F}^*)_\Null$.
		\item If $(i,j) \in (M_\Wobs\setminus M_\Null) \times (N_\Wobs\setminus N_\Null)
		\subseteq (M^* \tensor N^*)_\Wobs$, then,
		since also $(i,j) \in (M \strtensor N)_\Wobs$ holds, 
		we get
		\begin{equation*}
			\alpha(\underbrace{e_i \tensor f_j}_{\in (\module{E} 
				\strtensor \module{F})_\Wobs}) \cdot e^i 
			\tensor f^j
			\in \algebra{A}_\Wobs \cdot
			(\module{E}^* \tensor[\algebra{A}] 
			\module{F}^*)_\Wobs
			= (\module{E}^* \tensor[\algebra{A}] 
			\module{F}^*)_\Wobs.
		\end{equation*}
		\item If $(i,j) \in (M \strtensor N)_\Null$, then
		\begin{equation*}
			\alpha(\underbrace{e_i \tensor f_j}_{(\module{E} 
				\strtensor 
				\module{F})_\Null}) \cdot e^i \tensor f^j
			\in \algebra{A}_\Null \cdot (\module{E}^* 
			\tensor[\algebra{A}] \module{F}^*)_\Total
			\subseteq (\module{E}^* \tensor[\algebra{A}] 
			\module{F}^*)_\Null.
		\end{equation*}
	\end{cptitem}
	Thus \eqref{eq:DualTensorHomIsos_InverseTensor}
	preserves the $\WOBS$-component.
	Next take $\alpha \in (\module{E} \strtensor[\algebra{A}] 
	\module{F})^*_\Null$.
	Since $(\module{E} \strtensor[\algebra{A}] \module{F})^*_\Null
	\subseteq (\module{E} \strtensor[\algebra{A}] 
	\module{F})^*_\Wobs$ we only need to check one of the above cases.
	\begin{cptitem}
		\item If $(i,j) \in (M_\Wobs\setminus M_\Null) \times (N_\Wobs\setminus N_\Null)
		\subseteq (M^* \tensor N^*)_\Wobs$, then,
		since also since $(i,j) \in (M \strtensor N)_\Wobs$ holds, 
		we get
		\begin{equation*}
			\alpha(\underbrace{e_i \tensor f_j}_{\in (\module{E} 
				\strtensor \module{F})_\Wobs}) \cdot e^i 
			\tensor f^j
			\in \algebra{A}_\Null \cdot
			(\module{E}^* \tensor[\algebra{A}] 
			\module{F}^*)_\Wobs
			= (\module{E}^* \tensor[\algebra{A}] 
			\module{F}^*)_\Null.
		\end{equation*}
	\end{cptitem}
	Hence \eqref{eq:DualTensorHomIsos_InverseTensor}
	also preserves the $\NULL$-component, showing that it is 
	a constraint inverse.
	
	For part \ref{prop:DualTensorHomIsos_3} we would proceed similarly,
	but this time we would need to show that 
	\eqref{eq:DualTensorHomIsos_InverseTensor}
	defines a constraint morphism
	$(\module{E} \tensor[\algebra{A}] \module{F})^*
	\to \module{E}^* \strtensor[\algebra{A}] \module{F}^*$.
	
	Part \ref{prop:DualTensorHomIsos_4} then follows using the above isomorphisms:
	\begin{align*}
		\ConHom_\algebra{A}(\module{E} \tensor[\algebra{A}] \module{F}, 
		\module{G})
		\simeq \module{G} \strtensor[\algebra{A}] (\module{E}
		\tensor[\algebra{A}] \module{F})^*
		\simeq \module{G} \strtensor[\algebra{A}] \module{E}^*
		\strtensor[\algebra{A}] \module{F}^*
		\simeq \ConHom_\algebra{A}(\module{F}, \module{G} 
		\strtensor[\algebra{A}] \module{E}^*).
	\end{align*}
\end{proof}

In particular it follows from \autoref{prop:DualTensorHomIsos} \ref{prop:DualTensorHomIsos_1}
that also $\ConHom_\algebra{A}(\module{E},\module{F})$
is finitely generated projective.

The notion of projectivity is compatible with the reduction functor 
of strong constraint modules:

\begin{proposition}[Reduction of projective strong constraint modules]
	\label{prop:ProjectiveStrReduction}
	Let $\algebra{A} \in \injstrConAlg$ 
	and $\module{E} \in \strConProj(\algebra{A})$
	with dual basis given by
	$(\{e_i\}_{i \in M}, \{e^i\}_{i \in M^*})$.
	Then $\module{E}_\red$ is finitely generated projective, with dual basis
	$(\{(e_i)_\red\}_{i \in M_\red}, \{(e^i)_\red\}_{i \in M_\red})$
\end{proposition}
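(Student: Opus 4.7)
The plan is to verify directly that the reduced families $\{(e_i)_\red\}_{i \in M_\red}$ and $\{(e^i)_\red\}_{i \in M_\red}$ satisfy the classical dual basis identity for $\module{E}_\red$ over $\algebra{A}_\red$. Since $M$ is finite, so is $M_\red$, and this immediately gives finite generation and projectivity in the classical sense.

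First I would check that the proposed data make sense. For $i \in M_\red = M_\Wobs \setminus M_\Null$, the condition $i \in M_\Wobs$ yields $e_i \in \module{E}_\Wobs$, so its class $(e_i)_\red \in \module{E}_\Wobs/\module{E}_\Null = \module{E}_\red$ is well defined; and since $i \in M_\Total \setminus M_\Null = (M^*)_\Wobs$ the functional $e^i$ lies in $(\module{E}^*)_\Wobs$, meaning $e^i(\module{E}_\Wobs) \subseteq \algebra{A}_\Wobs$ and $e^i(\module{E}_\Null) \subseteq \algebra{A}_\Null$, so $(e^i)_\red \in \Hom_{\algebra{A}_\red}(\module{E}_\red, \algebra{A}_\red)$ is well defined.

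The heart of the argument is the dual basis relation. Fix $x \in \module{E}_\Wobs$ representing $[x] \in \module{E}_\red$, and split the defining equation
\begin{equation*}
    x = \sum_{n \in M_\Total} e_n\, e^n(x)
\end{equation*}
according to the partition $M_\Total = M_\Null \sqcup M_\red \sqcup (M_\Total \setminus M_\Wobs)$. The main point — and the one where strongness of $\module{E}$ and $\algebra{A}$ is essential — is to show that the first and third pieces lie in $\module{E}_\Null$:
\begin{itemize}
    \item For $n \in M_\Null$, one has $e_n \in \module{E}_\Null$ and hence $e_n\, e^n(x) \in \module{E}_\Null \cdot \algebra{A}_\Total \subseteq \module{E}_\Null$ because $\module{E}_\Null$ is an $\algebra{A}_\Total$-submodule of $\module{E}_\Total$.
    \item For $n \in M_\Total \setminus M_\Wobs = (M^*)_\Null$, one has $e^n \in (\module{E}^*)_\Null$, so $e^n(x) \in \algebra{A}_\Null$ for $x \in \module{E}_\Wobs$, and then $e_n\, e^n(x) \in \module{E}_\Total \cdot \algebra{A}_\Null \subseteq \module{E}_\Null$ by the second clause of strongness.
\end{itemize}
Only the middle piece, indexed by $n \in M_\red$, survives modulo $\module{E}_\Null$, giving
\begin{equation*}
    [x] = \sum_{n \in M_\red} (e_n)_\red\, (e^n)_\red([x]),
\end{equation*}
which is exactly the classical dual basis identity for $\module{E}_\red$.

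I do not expect any serious obstacle; the only subtle point is not to confuse the two reasons a term drops out (the basis vector being null versus its companion functional being null), and to note that both disposals rely on $\module{E}$ being a \emph{strong} constraint module — without the condition $\module{E}_\Total \cdot \algebra{A}_\Null \subseteq \module{E}_\Null$ the third piece would not automatically land in $\module{E}_\Null$, so the statement would fail in the non-strong setting.
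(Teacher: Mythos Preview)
Your proof is correct and complete. The paper does not supply its own proof of this proposition (it is stated without argument, with a pointer to \cite{dippell:2023a}), so there is no in-paper proof to compare against; your direct verification via the three-way split $M_\Total = M_\Null \sqcup M_\red \sqcup (M_\Total \setminus M_\Wobs)$ and the two uses of strongness is exactly the expected argument.
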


\subsection{Graded Constraint (Lie) Algebras}
\label{sec:ConLieAlgebras}

Let us quickly recall the notion of graded constraint $\field{k}$-modules
from \cite{dippell.esposito.waldmann:2022a}.
If not mentioned otherwise we will always consider $\Integers$-gradings:
A \emph{(embedded) graded constraint module}
$\module{M}^\bullet$ is given by a graded
$\field{k}$-module
$\module{M}_\Total^\bullet$ together with graded
submodules $\module{M}^\bullet_\Null \subseteq \module{M}^\bullet_\Wobs \subseteq \module{M}^\bullet_\Total$.
For every $x \in \module{M}_\Total^k$ we denote its degree by $\deg(x) = k$. 
A \emph{morphism between graded constraint 
$\field{k}$-modules of degree $k$} between $\module{M}^\bullet$ and 
$\module{N}^\bullet$ is a $\field{k}$-module morphism 
$\Phi \colon \module{M}_\Total^\bullet \to \module{N}_\Total^{\bullet + k}$, such that 
\begin{equation}
	\Phi(\module{M}^\bullet_\Wobs) \subseteq \module{N}^{\bullet + k}_\Wobs 
	\qquad \text{and} \qquad
	\Phi(\module{M}^\bullet_\Null) \subseteq \module{N}^{\bullet + k}_\Null. 
\end{equation}

An \emph{(embedded) graded constraint $\field{k}$-algebra} is an 
graded constraint $\field{k}$-module 
$\algebra{A}^\bullet 
= (\algebra{A}^\bullet_\Total, \algebra{A}^\bullet_\Wobs, \algebra{A}^\bullet_\Null)$
together with an associative multiplication given by a graded constraint morphism of degree $0$
\begin{equation}
	\mu \colon \algebra{A}^\bullet \tensor[\field{k}] \algebra{A}^\bullet \to \algebra{A}^\bullet,\qquad
	\mu(a \tensor b) = a \cdot b,
\end{equation}
together with a unit $1_\algebra{A} \in \algebra{A}_\Wobs$.
If the multiplication is defined as a graded constraint morphism of 
degree $0$
\begin{equation}
	\mu \colon \algebra{A}^\bullet \strtensor[\field{k}] \algebra{A}^\bullet 
	\to \algebra{A}^\bullet,\qquad
	\mu(a \tensor b) = a \cdot b,
\end{equation} 
then $\algebra{A}^\bullet$ is called \emph{strong}.
The grading on the tensor products $\tensor$ and $\strtensor$
is given as usual by
$\deg(a \tensor b) = \deg(a) + \deg(b)$.

Note, a graded constraint $\field{k}$-algebra $\algebra{A}^\bullet$ 
concentrated in degree $0$ is a constraint $\field{k}$-algebra.
Recall that a homogeneous ideal is an ideal such that all projections to a fixed degree are again contained in it, see \cite{hartshorne:1977a} for a definition.
In general, the $\NULL$-component of a graded constraint algebra 
$\algebra{A}^\bullet$ forms a homogeneous algebra ideal in $\algebra{A}^\bullet_\Wobs$ (and 
in $\algebra{A}^\bullet_\Total$, if $\algebra{A}^\bullet$ is strong). 
For another graded constraint $\field{k}$-algebra $\algebra{B}^\bullet$ 
a \emph{morphism of graded constraint $\field{k}$-algebras} between 
$\algebra{A}^\bullet$ and $\algebra{B}^\bullet$ is a degree $0$ graded 
constraint $\field{k}$-module morphism 
$\Phi \colon \algebra{A}^\bullet \to \algebra{B}^\bullet$, which preserves the 
graded multiplication.  

\begin{example}\
	\label{ex:gradedConAlgs}
		Let $\algebra{A}$ be a commutative constraint algebra and $\module{E}$ 
		be a constraint $\algebra{A}$-module. 
		Then the exterior algebra 
		\begin{equation}
			\Anti^\bullet_{\tensor}\module{E} 
			= \bigoplus^{\infty}_{i = 0} \Anti^i_{\tensor} \module{E}
		\end{equation} 
		with respect to $\tensor_{\algebra{A}}$ forms an $\Naturals_0$-graded 
		constraint algebra with 
		$\Anti^0_{\tensor}\module{E} = \algebra{A}$. 
		Assuming $\algebra{A}$ and $\module{E}$ to be a strong constraint 
		algebra and a strong constraint module, respectively, leads to 
		the strong graded constraint algebra $\Anti^\bullet_{\strtensor}\module{E}$ 
		with respect to $\strtensor_{\algebra{A}}$ by an analogous 
		construction. 
\end{example}

\begin{definition}[Differential graded constraint algebras]
	An \emph{(embedded) differential graded\newline constraint algebra} is a
	graded constraint algebra $\algebra{A}^\bullet$ 
	together with a morphism 
	$\D \colon \algebra{A}^\bullet \to \algebra{A}^{\bullet + 1}$ on the 
	underlying graded constraint $\field{k}$-module, such that
	\begin{equation}
		\D^2 = 0,
	\end{equation}
	and which satisfies the \emph{graded Leibniz rule}
	\begin{equation}
		\D(a \cdot b) = \D(a) \cdot b + (-1)^{\deg(a)}a \cdot \D(b)
	\end{equation} 
	for $a, b \in \algebra{A}^\bullet_\Total$.
	It is called differential graded strong constraint algebra 
	if $\algebra{A}$ is a graded strong constraint algebra.
\end{definition}

Every differential graded constraint algebra $\algebra{A}$
can be reduced to a differential graded algebra given by
$\algebra{A}_\red = \algebra{A}_\Wobs / \algebra{A}_\Null$
with differential $\D_\red$.

A classical definition of the following can be found among others in \cite{schwarz:1993a}. 

\begin{definition}[Graded constraint $\field{k}$-Lie algebra]\
	\label{def:ConstraintLieAlgebra}
	\begin{definitionlist}
		\item An \emph{(embedded) graded constraint Lie algebra of degree $k$} 
		is given by an graded constraint $\field{k}$-module 
		$\liealg{g}^\bullet = (\liealg{g}^\bullet_\Total,\liealg{g}^\bullet_\Wobs,\liealg{g}^\bullet_\Null)$
		together with a graded constraint morphism of degree $k$
		\begin{equation}
			\label{eq:ConLieBracket}
			[\argument, \argument ] \colon \liealg{g} \tensor[\field{k}] \liealg{g} \to \liealg{g},
		\end{equation}
		called \emph{graded Lie bracket of degree $k$}, such that
		\begin{definitionlist}
			\item $\left[\xi, \eta\right] 
			= -(-1)^{(\deg(\xi) + k)(\deg(\eta) + k)} \left[\eta, \xi\right]$ and 
			\item $\left[\xi, \left[\eta, \chi\right]\right] 
			= \left[\left[\xi, \eta\right], \chi\right] 
			+ (-1)^{(\deg(\xi) + k)(\deg(\eta) + k)}
			\left[\eta, \left[\xi, \chi\right]\right]$ 
		\end{definitionlist}
		for all $\xi, \eta, \chi \in \liealg{g}_\Total$.
		\item A morphism $\Phi \colon \liealg{g} \to \liealg{h}$
		of graded constraint Lie algebras is a morphism of graded constraint 
		$\field{k}$-modules which forms a commutative square with the graded 
		Lie brackets viewed as graded constraint morphisms, i.e. the following 
		diagram commutes: 
		\begin{equation}
			\begin{tikzcd}
				{\liealg{g} \tensor[\field{k}] \liealg{g}}
				\arrow[r,"{[\argument, \argument]}_{\liealg{g}}"]
				\arrow[d,"\Phi \tensor \Phi"{swap}]
				& {\liealg{g}}
				\arrow[d,"\Phi"] \\
				{\liealg{h} \tensor[\field{k}] \liealg{h}}
				\arrow[r,"{[\argument, \argument]}_{\liealg{h}}"]
				& {\liealg{h}} 
			\end{tikzcd}
		\end{equation}
		\item The category of graded constraint Lie algebras will be denoted by
		$\injConLieAlg^\bullet$.
	\end{definitionlist}
\end{definition}

A graded constraint Lie algebra $\liealg{g}^\bullet$ of degree $0$ concentrated in $\liealg{g}^0$
is simply given by a constraint $\field{k}$-module together with a constraint morphism
$[\argument, \argument] \colon \liealg{g} \tensor[\field{k}] \liealg{g} \to \liealg{g}$
fulfilling the usual properties of a Lie bracket.
These objects will simply be called \emph{constraint Lie algebras}.

\begin{example}\
	\label{ex:ConLieAlg}
	\begin{examplelist}
		\item Let $\module{E}$ be a constraint $\field{k}$-module.
		The endomorphisms
		$\ConEnd_\field{k}(\module{E}) \coloneq \ConHom_{\field{k}}(\module{E}, \module{E})$
		form a constraint Lie algebra with Lie bracket given by the usual commutator
		$[\argument,\argument]^{\module{E}_\Total}$ on
		$\ConEnd_\field{k}(\algebra{E})_\Total$.
		\item \label{ex:ConLieAlg_Derivations}
		Let $\algebra{A} \in \injConAlg$ be a constraint algebra.
		The constraint derivations $\ConDer(\algebra{A})$
		defined by
		\begin{equation}
			\label{eq:ConDerivations}
		\begin{split}
			\ConDer(\algebra{A})_\Total
			&:= \Der(\algebra{A}_\Total),\\
			\ConDer(\algebra{A})_\Wobs
			&:=	\big\{D \in \Der(\algebra{A}_\Total)
			\; \big| \;
			D(\algebra{A}_\Wobs) \subseteq \algebra{A}_\Wobs
			\text{ and }
			D(\algebra{A}_\Null) \subseteq \algebra{A}_\Null \big\},	\\
			\ConDer(\algebra{A})_\Null
			&:=	\big\{D\in \Der(\algebra{A}_\Total)
			\; \big| \;
			D(\algebra{A}_\Wobs) \subseteq \algebra{A}_\Null \big\}
		\end{split}
		\end{equation}
		form a constraint Lie algebra
		which can be seen as a constraint Lie subalgebra
		of $\ConEnd_\field{k}(\algebra{A})$.
	\end{examplelist}
\end{example}

Note that we used $\tensor[\field{k}]$ and not the strong tensor product 
$\strtensor[\field{k}]$ in \eqref{eq:ConLieBracket}.
The reason being that $\ConDer(\algebra{A})$ would in general not be a constraint Lie algebra
if we would have used $\strtensor[\field{k}]$.

Since by \eqref{eq:ConLieBracket} for every graded constraint Lie algebra $\liealg{g}$ we know that
$\liealg{g}_\Null$ is a Lie ideal inside $\liealg{g}_\Wobs$ we obtain a well-defined reduction functor
\begin{equation}
	\red \colon \injConLieAlg^\bullet \to \LieAlgs^\bullet
\end{equation}
by mapping every graded constraint Lie algebra $\liealg{g}$
to $\liealg{g}_\red \coloneqq \liealg{g}_\Wobs / \liealg{g}_\Null$.

According to \autoref{prop:ReductionAndDirectSumsHoms}
there is a natural injection
\begin{equation}
	\label{eq:InsertionReductionDerivations}
	\ConDer(\algebra{A})_\red \hookrightarrow \Der(\algebra{A}_\red)
\end{equation}
for any constraint algebra $\algebra{A}$.
But in general we can not expect to have equality here.
In the geometric situation we will find that 
\eqref{eq:InsertionReductionDerivations}
actually becomes an isomorphism.

A final algebraic concept we will need is that of a constraint Gerstenhaber algebra. 

\pagebreak

\begin{definition}[Constraint Gerstenhaber algebras]\
	An \emph{(embedded) constraint Gerstenhaber algebra} is a graded constraint algebra 
	$\algebra{G}^\bullet
	= (\algebra{G}^\bullet_\Total, \algebra{G}^\bullet_\Wobs, \algebra{G}^\bullet_\Null)$,
	which is equipped with an embedded 
	graded constraint Lie algebra structure 
	$\GerstBracket{\argument, \argument}$ of degree $-1$, 
	where $\GerstBracket{\argument, \argument}$ is called 
	\emph{constraint Gerstenhaber bracket}, such that the 
	graded Leibniz rule
	\begin{equation}
		\GerstBracket{\xi, \eta \cdot \chi} 
		= \GerstBracket{\xi, \eta} \cdot \chi 
		+ (-1)^{(\deg(\xi) - 1) \deg(\eta)} \eta \cdot \GerstBracket{\xi, \chi}
	\end{equation}
	is satisfied for every
	$\xi, \eta, \chi \in \algebra{G}_\Total$. 
	A constraint Gerstenhaber algebra 
	$\algebra{G}^\bullet$ is called 
	\emph{strong}, if the underlying graded constraint 
	algebra is strong. 
\end{definition}

Since every constraint Gerstenhaber algebra is in particular a graded constraint algebra and a graded constraint Lie algebra we know that
$\algebra{G}_\red$ carries the structures of a graded algebra and a graded Lie algebra.
Moreover, it is easy to see that the graded Leibniz rule on $\algebra{G}$
induces a graded Leibniz rule on $\algebra{G}_\red$, showing that $\algebra{G}_\red$ forms itself a Gerstenhaber algebra.

\section{Constraint Geometry}
\label{sec:ConGeometry}

In this chapter we study the reduction of geometric structures by applying the constraint approach
from \autoref{sec:AlgebraicPreliminaries} to differential geometry.
\autoref{sec:ConManifolds} introduces constraint manifolds as our main objects of interest and
shows that functions on such constraint manifolds yields constraint algebras.
After defining constraint vector bundles and their most important constructions in \autoref{sec:ConVectorBundles}
we take a closer look at their relation with constraint modules in \autoref{sec:ConSections}.
In particular, we prove a constraint version of the classical Serre-Swan Theorem,
showing that taking sections gives an equivalence of categories between constraint vector bundles
and finitely generated projective constraint modules in the sense of \autoref{sec:ProjectiveModules}.
Finally, in \autoref{sec:CartanCalculus} a Cartan calculus for constraint manifolds.

\subsection{Constraint Manifolds}
\label{sec:ConManifolds}

The basic motivation for the definition of a constraint manifold is that it is supposed
to capture the minimal geometric information needed to do reduction.
For us this means we have a smooth manifold $M$
together with an embedded submanifold $C$ which itself
carries an equivalence relation allowing for a smooth quotient.
In the following we will always assume that this equivalence relation is given by
a distribution $D$ on $C$.
Distributions admitting a smooth leaf space will be called \emph{simple}.
Note that these are in particular regular and involutive.

\begin{definition}[Constraint manifold]\
	\label{def:ConstraintManifold}
	\begin{definitionlist}
		\item A \emph{constraint manifold} $\mathcal{M} = (M_\Total,M_\Wobs,D_\mathcal{M})$
		consists of a smooth manifold $M_\Total$, a closed embedded submanifold
		$\iota_\mathcal{M} \colon M_\Wobs \to M_\Total$ and a simple distribution 
		$D_\mathcal{M} \subseteq TM_\Wobs$ on $M_\Wobs$.
		\item A smooth map
		$\phi \colon \mathcal{M} \to \mathcal{N}$
		(or \emph{constraint map}) between constraint 
		manifolds is given by a smooth map
		$\phi \colon M_\Total \to N_\Total$
		such that $\phi(M_\Wobs) \subseteq N_\Wobs$
		and $T\phi(D_\mathcal{M}) \subseteq D_\mathcal{N}$.
		\item The category of constraint manifolds and smooth maps is 
		denoted by
		$\ConMfld$.
	\end{definitionlist}
\end{definition}

If we consider only a single constraint manifold we will often write
$\mathcal{M} = (M,C,D)$, with $D \subseteq TC$ the distribution on 
the closed submanifold $C \subseteq M$, instead of using subscripts.
Additionally, we will denote the inclusion of $C$ in $M$
by $\iota \colon C \to M$.

We will call the finite constraint index set
$\dim(\mathcal{M}) \coloneqq \big(\dim(M), \dim(C), \rank(D) \big)$
the \emph{constraint dimension} of the constraint manifold
$\mathcal{M} = (M,C,D)$.

Since for every constraint manifold
$\mathcal{M} = (M,C,D)$
the distribution $D$ is simple we always obtain a smooth reduced manifold
\begin{equation}
	\mathcal{M}_\red \coloneqq C / D.
\end{equation}
Thus we can define the reduction functor $\red \colon \ConMfld \to \Manifolds$
by mapping every constraint manifold $\mathcal{M}$ to
$\mathcal{M}_\red$ and every constraint morphism $\phi \colon \mathcal{M} \to \mathcal{N}$ to
\begin{equation}
	\phi_\red \colon \mathcal{M}_\red \to \mathcal{N}_\red,
	\qquad
	\phi_\red([p]) \coloneqq [\phi(p)].
\end{equation}

\begin{example}\
	\label{ex:ConManifolds}
	\begin{examplelist}
		\item \label{ex:ConManifolds_LieGroupAction}
		Let $\group{G}$ be a Lie group acting via
		$\Phi \colon \group{G} \times C \to C$ in a free and proper way 
		on a closed submanifold $C \subseteq M$.
		Then the images of the infinitesimal action
		$T_e\Phi_p \colon \liealg{g} \to T_pC$, for all $p \in C$, define 
		a simple distribution on $C$, inducing the structure of a 
		constraint manifold.
		The reduction is then given by the orbit space $C / \group{G}$.
		\item \label{ex:ConManifolds_Coisotropic}
		Let $C \subseteq M$ be a closed coisotropic submanifold of a Poisson 
		manifold $(M,\pi)$.
		Then, if the characteristic distribution
		$D$ is simple, $\mathcal{M} = (M,C,D)$
		defines a constraint manifold, and the reduction is given by
		$\mathcal{M}_\red = C / D$.
		\item \label{ex:ConManifolds_Euclidean}
		Let $d = (d_\Total, d_\Wobs, d_\Null) \in \mathbb{N}^3$
		such that $d_\Total \geq d_\Wobs \geq d_\Null$.
		Then $\Reals^{d_\Wobs} \subseteq \Reals^{d_\Total}$
		together with the distribution 
		$T\Reals^{d_\Null} \subseteq T\Reals^{d_\Wobs}$
		defines a constraint manifold, 
		which we denote by
		$\Reals^d = (\Reals^{d_\Total}, \Reals^{d_\Wobs}, \Reals^{d_\Null})$.
		The reduction is then given by
		$(\Reals^d)_\red \simeq \Reals^{d_\Wobs - d_\Null} = \Reals^{d_\red}$.
	\end{examplelist}
\end{example}

Constraint submanifolds can be introduced as (equivalence classes of)
constraint injective immersions.
To do this we would need the additional concept of constraint sets, see \cite{dippell:2023a}.
Instead let us give a more direct definition of constraint submanifolds as follows.
Note that all submanifolds are considered to be embedded.

\begin{definition}[Constraint submanifold]
	\label{def:ConSubMfld}
	A \emph{constraint submanifold} $\mathcal{N}$
	of a constraint manifold $\mathcal{M} = (M_\Total,M_\Wobs,D)$
	consists of a submanifold
	$N_\Total \subseteq M_\Total$
	and a submanifold
	$N_\Wobs \subseteq M_\Wobs$
	such that
	$N_\Wobs \subseteq N_\Total$
	is a closed submanifold
	and the distribution $D$ is tangent to $N_\Wobs$,
	i.e.
	$D\at{N_\Wobs} \subseteq TN_\Wobs$.
\end{definition}

We will also write $\mathcal{N} \subseteq \mathcal{M}$
for a constraint submanifold.

\begin{lemma}[Local structure of constraint manifolds]
	\label{lem:LocalStructureConManfifold}
	Let $\mathcal{M} = (M,C,D)$ be a constraint manifold of dimension 
	$d = (d_\Total, d_\Wobs, d_\Null)$.
	\begin{lemmalist}
		\item If $U \subseteq M$ is open, then
		$\mathcal{M}\at{U} \coloneqq (U, U \cap C, D\at{U})$
		is a constraint submanifold manifold of $\mathcal{M}$.
		\item For every $p \in C$ there exists a coordinate chart $(U,x)$ around $p$
		such that
		\begin{align}
			x(U \cap C) &= \big(\mathbb{R}^{d_\Wobs} \times \{0\}\big) \cap x(U)\\
			\shortintertext{and}
			x(U \cap L_p) &= \big(\mathbb{R}^{d_\Null} \times \{0\}\big) \cap x(U),
		\end{align}
		where $L_p$ denotes the leaf of the distribution $D$ through $p$.
	\end{lemmalist}
\end{lemma}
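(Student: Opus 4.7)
The statement has two parts, both of which can be handled with standard constructions, so I will mostly verify that the pieces fit together.

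For the first part, I would simply check directly that $\mathcal{M}\at{U} = (U, U \cap C, D\at{U \cap C})$ satisfies the axioms of \autoref{def:ConSubMfld}. The inclusion $U \subseteq M$ is a submanifold since $U$ is open; $U \cap C$ is open in $C$, hence an embedded submanifold of $U$. Since $C \subseteq M$ is closed, $U \cap C$ is closed in $U$. Finally, $D$ is tangent to $C$ by assumption, so $D\at{U \cap C} \subseteq T(U \cap C)$ automatically. This also shows that $\mathcal{M}\at U$ is a constraint manifold in its own right, with simple distribution $D\at{U \cap C}$ whose leaves are the connected components of $L \cap U$ for $L$ a leaf of $D$.

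For the second part, the strategy is simultaneous straightening of the submanifold $C \subseteq M$ and of the leaves of $D$ inside $C$. Since $D$ is simple it is in particular involutive, so the Frobenius theorem applied on $C$ yields coordinates $(y^1, \ldots, y^{d_\Wobs})$ on an open neighborhood $V \subseteq C$ of $p$, centered at $p$, in which the leaves of $D$ are slices of the form $\{y^{d_\Null+1} = \mathrm{const}, \ldots, y^{d_\Wobs} = \mathrm{const}\}$; in particular $L_p \cap V$ corresponds to $\{y^{d_\Null+1} = \cdots = y^{d_\Wobs} = 0\}$. On the other hand, since $C$ is a closed embedded submanifold of $M$, the slice chart theorem provides a chart $(W, \phi)$ of $M$ around $p$ with coordinates $(z^1, \ldots, z^{d_\Total})$ such that $W \cap C = \{z^{d_\Wobs+1} = \cdots = z^{d_\Total} = 0\}$.

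To combine the two, I would extend the Frobenius coordinates $y^i$ (defined only on $V$) to smooth functions $\tilde y^i$ on an open neighborhood of $p$ in $M$, for instance by choosing any smooth extension through the submersion $(z^1, \ldots, z^{d_\Wobs})\at{W \cap C}$. Set
\begin{equation*}
x^i = \tilde y^i \quad (i = 1, \ldots, d_\Wobs), \qquad x^j = z^j \quad (j = d_\Wobs+1, \ldots, d_\Total).
\end{equation*}
At $p$ the Jacobian of $(x^1, \ldots, x^{d_\Total})$ with respect to $(z^1, \ldots, z^{d_\Total})$ is block triangular, with the top-left block equal to the (invertible) change of coordinates between $(z^1, \ldots, z^{d_\Wobs})\at{C}$ and $(y^1, \ldots, y^{d_\Wobs})$, and the bottom-right block the identity. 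Hence it is invertible, and after shrinking, $(U, x)$ is a chart of $M$ around $p$ whose $C$-slice is $\{x^{d_\Wobs+1} = \cdots = x^{d_\Total} = 0\}$ and whose $L_p$-slice inside $C$ is $\{x^{d_\Null+1} = \cdots = x^{d_\Total} = 0\}$, as required.

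The only mildly delicate step is the compatibility of the two straightenings, i.e.\ extending the intrinsic Frobenius coordinates on $C$ to coordinates on $M$ without losing the slice form of $C$; but this is precisely what the product structure provided by the slice chart allows one to do, so no genuine obstacle arises.
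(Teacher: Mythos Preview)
Your proposal is correct and follows exactly the same approach as the paper, which simply says ``choose a foliation chart on $C \cap U$ and extend it as a submanifold chart to $U$''; you have just spelled out the details of that extension (via the block-triangular Jacobian argument) that the paper leaves implicit. The first part is likewise handled identically---the paper merely states that it is clear.
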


\begin{proof}
	The first part is clear.
	For the second part choose a foliation chart on $C \cap U$ and extend it as a submanifold chart to $U$.
\end{proof}

\pagebreak

To every constraint manifold we can associate a strong constraint algebra
of smooth functions.

\begin{proposition}[Functions on constraint manifolds]\
	\label{prop:FunctionsOnConManifolds}
	Associating to every constraint manifold $\mathcal{M} = (M,C,D)$ the constraint algebra
	\begin{equation}
		\begin{split}
			\ConCinfty(\mathcal{M})_\Total
			&\coloneqq \Cinfty(M,\mathbb{R}),\\
			\ConCinfty(\mathcal{M})_\Wobs
			&\coloneqq \left\{ f \in \Cinfty(M,\mathbb{R})
			\mid \Lie_X f\at{C} = 0 \text{ for all } X \in \Secinfty(D) 
			\right\},\\
			\ConCinfty(\mathcal{M})_\Null
			&\coloneqq \left\{ f \in \Cinfty(M,\mathbb{R})
			\mid f\at{C} = 0 \right\},
		\end{split}
	\end{equation}
	and to every constraint morphism $\phi \colon \mathcal{M} \to \mathcal {N}$ of constraint manifolds the constraint algebra morphism
	\begin{equation}
		\phi^* \colon \ConCinfty(\mathcal{N}) \to \ConCinfty(\mathcal{M}),
		\qquad
		\phi^*(f) \coloneqq f \circ \phi
	\end{equation}
	yields a functor $\ConCinfty \colon \ConMfld \to 
	\injstrConAlg^\opp$.
\end{proposition}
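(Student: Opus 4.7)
The plan is to verify three claims in sequence: first, that $\ConCinfty(\mathcal{M})$ is a strong constraint algebra for each constraint manifold $\mathcal{M} = (M,C,D)$; second, that pullback $\phi^*$ along a constraint morphism $\phi \colon \mathcal{M} \to \mathcal{N}$ is a morphism in $\injstrConAlg$; and third, contravariant functoriality. The last claim, $(\psi \circ \phi)^* = \phi^* \circ \psi^*$ and $\id_\mathcal{M}^* = \id_{\ConCinfty(\mathcal{M})}$, is the classical contravariance of function pullback and requires no further work.

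For the first claim, the $\NULL$-component is the vanishing ideal of $C$ inside $\Cinfty(M)$, which is automatically a two-sided ideal of the commutative algebra $\Cinfty(M)$; this yields the strong property directly. The $\WOBS$-component contains the unit $1$ and is closed under sums by linearity and under products by the Leibniz rule
\begin{equation*}
	\Lie_X(fg)\at{C} = (\Lie_X f)\at{C} \cdot g\at{C} + f\at{C} \cdot (\Lie_X g)\at{C}.
\end{equation*}
Because $D \subseteq TC$, every $X \in \Secinfty(D)$ is tangent to $C$, so $(\Lie_X f)\at{C}$ depends only on $f\at{C}$; in particular $f\at{C} = 0$ forces $(\Lie_X f)\at{C} = 0$, establishing the inclusion $\ConCinfty(\mathcal{M})_\Null \subseteq \ConCinfty(\mathcal{M})_\Wobs$, and the ideal property of $\NULL$ inside $\WOBS$ is inherited from the ideal property inside $\Cinfty(M)$.

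For the second claim, $\phi^*$ is a unital algebra homomorphism by basic calculus, and $\phi^*(\ConCinfty(\mathcal{N})_\Null) \subseteq \ConCinfty(\mathcal{M})_\Null$ is immediate from $\phi(M_\Wobs) \subseteq N_\Wobs$. The only nontrivial step is preservation of the $\WOBS$-component. For this I would first record a \emph{pointwise reformulation}: a function $f \in \Cinfty(N_\Total)$ lies in $\ConCinfty(\mathcal{N})_\Wobs$ if and only if $\mathrm{d}f_q$ annihilates $D_\mathcal{N}\at{q}$ for every $q \in N_\Wobs$. One direction is tautological; the other uses that every vector in the smooth distribution $D_\mathcal{N}$ extends locally to a section. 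Given this, for $f \in \ConCinfty(\mathcal{N})_\Wobs$, $X \in \Secinfty(D_\mathcal{M})$ and $p \in C_\mathcal{M}$, the chain rule yields
\begin{equation*}
	\Lie_X(\phi^*f)(p) = \mathrm{d}f_{\phi(p)}\bigl(T_p\phi \cdot X_p\bigr),
\end{equation*}
and the constraint-morphism condition $T\phi(D_\mathcal{M}) \subseteq D_\mathcal{N}$ places the argument in $D_\mathcal{N}\at{\phi(p)}$, so the pointwise characterization forces the value to vanish.

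The only genuine obstacle is this $\WOBS$-preservation step, whose proof hinges on converting the section-level hypothesis defining $\ConCinfty(\argument)_\Wobs$ into the fiberwise statement ``the differential vanishes on $D$ along $C$''; this is precisely what renders the condition compatible with the fiberwise compatibility $T\phi(D_\mathcal{M}) \subseteq D_\mathcal{N}$ built into the notion of constraint morphism. Everything else amounts to routine bookkeeping.
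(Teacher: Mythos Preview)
Your proposal is correct and follows essentially the same approach as the paper: both verify the strong constraint algebra axioms via the Leibniz rule and the vanishing-ideal property, and both establish $\WOBS$-preservation of $\phi^*$ by the chain rule combined with $T\phi(D_\mathcal{M}) \subseteq D_\mathcal{N}$. Your explicit pointwise reformulation of the $\WOBS$-condition is exactly what the paper uses implicitly when it writes $X_p(\phi^*f) = T_p\phi(X_p)f = 0$; you have simply made that step explicit, and you additionally spell out the $\NULL \subseteq \WOBS$ inclusion and the functoriality axioms, which the paper leaves tacit.
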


\begin{proof}
	Note that $\ConCinfty(\mathcal{M})_\Wobs$
	is a subalgebra of $\Cinfty(M,\mathbb{R})$ by the fact that
	$\Lie_X$ is $\mathbb{R}$-linear and satisfies a Leibniz rule.
	The $\NULL$-component is obviously contained in the $\WOBS$-component 
	and, since it is just the vanishing ideal of $C$, it is a two-sided 
	ideal in $\Cinfty(M,\mathbb{R})$.
	This shows that $\ConCinfty(\mathcal{M})$ is indeed an embedded 
	strong constraint algebra.
	Now given a smooth constraint map
	$\phi \colon \mathcal{M} \to \mathcal{N}$
	we have
	$(\phi^*f)(p) = f(\phi(p)) = 0$
	for $f \in \ConCinfty(\mathcal{N})_\Null$
	and all $p\in M_\Wobs$.
	Thus $\phi^*(\ConCinfty(\mathcal{N})_\Null) \subseteq 
	\ConCinfty(\mathcal{N})_\Null$.
	To show that $\phi^*$ also preserves the $\WOBS$-component 
	let $f \in \ConCinfty(\mathcal{N})_\Wobs$ be given.
	Then for $X_p \in D_\mathcal{M}\at{p}$, $p \in \mathcal{M}_\Wobs$
	we have
	$X_p(\phi^*f) = T_p\phi(X_p)f = 0$
	since $T_p\phi(X_p) \in D_\mathcal{N}\at{\phi(p)}$
	by assumption.
	This shows $\phi^*f \in \ConCinfty(\mathcal{M})_\Wobs$.
\end{proof}

\begin{example}\
	\label{ex:ConFunctionsOnConMfld}
	\begin{examplelist}
		\item \label{ex:ConFunctionsOnConMfld_1}
		Let $\mathcal{M} = (M,C,D)$ be a constraint manifold
		of dimension $d = (d_\Total, d_\Wobs, d_\Null)$,
		$p \in C$ and $(U,x)$ an adapted  chart around $p$ as in
		\autoref{lem:LocalStructureConManfifold}.
		Then
		\begin{equation}
			\begin{split}
				x^i &\in \Cinfty(\mathcal{M}\at{U})_\Null \text{ if } i \in \{d_\Wobs+1, \dotsc, d_\Total\} = (d^*)_\Null, \\
				x^i &\in \Cinfty(\mathcal{M}\at{U})_\Wobs \text{ if } i \in \{d_\Null+1, \dotsc, d_\Total\} = (d^*)_\Wobs, \\
				x^i &\in \Cinfty(\mathcal{M}\at{U})_\Total \text{ if } i \in 
				\{1, \dotsc, d_\Total\} = (d^*)_\Total.
			\end{split}
		\end{equation}
		\item \label{ex:ConFunctionsOnConMfld_2}
		Let $C \subseteq M$ be a coisotropic submanifold of a Poisson manifold $(M, \pi)$ and
		denote as before by $\mathcal{M} = (M,C,D)$ the corresponding constraint manifold.
		Then, as for any constraint manifold, 
		the $\NULL$-component
		$\ConCinfty(\mathcal{M})_\Null = \vanishing_C$ is the vanishing ideal of $C$, and
		additionally
		\begin{equation}
			\ConCinfty(\mathcal{M})_\Wobs 
			= \Pnormalizer_C
			= \left\{ f \in \Cinfty(M) \mid \{f,g\} \in \vanishing_C \text{ for all } g \in \vanishing_C \right\}
		\end{equation}
		is the Poisson normalizer of $\vanishing_C$.
	\end{examplelist}
\end{example}

Constructing the embedded strong constraint algebra of smooth functions on a constraint manifold
then commutes with reduction:

\begin{proposition}[Constraint functions vs. reduction]
	\label{prop:ConFunctionsVSReduction}
	There exists a natural isomorphism such that the following 
	diagram commutes:
	\begin{equation}
		\label{prop:ConFunctionsVSReduction_diag}
		\begin{tikzcd}
			\ConMfld
			\arrow[r,"\ConCinfty"]
			\arrow[d,"\red"{swap}]
			&\injstrConAlg^\opp
			\arrow[d,"\red"]\\
			\Manifolds
			\arrow[r,"\Cinfty"]
			&\Algebras^\opp
		\end{tikzcd}
	\end{equation}
\end{proposition}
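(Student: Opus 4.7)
The plan is to construct, for each constraint manifold $\mathcal{M} = (M,C,D)$, an explicit isomorphism
\[
	\Phi_{\mathcal{M}} \colon \ConCinfty(\mathcal{M})_\red = \ConCinfty(\mathcal{M})_\Wobs / \ConCinfty(\mathcal{M})_\Null \longrightarrow \Cinfty(\mathcal{M}_\red)
\]
and then check that it is natural in $\mathcal{M}$. The map is defined as follows: given $f \in \ConCinfty(\mathcal{M})_\Wobs$, the condition $\Lie_X f\at{C} = 0$ for all $X \in \Secinfty(D)$ says exactly that $f\at{C}$ is constant along the leaves of $D$, so it descends to a (necessarily smooth) function $\tilde{f} \in \Cinfty(C/D) = \Cinfty(\mathcal{M}_\red)$ via the quotient projection $\pi \colon C \to C/D$. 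Set $\Phi_{\mathcal{M}}([f]) \coloneqq \tilde{f}$. This is well-defined because any $g \in \ConCinfty(\mathcal{M})_\Null$ vanishes on $C$, and it is clearly a unital algebra morphism since restriction to $C$ and pullback by $\pi$ both are.

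Next I would verify bijectivity. Injectivity is immediate: if $\pi^*\tilde{f} = f\at{C} = 0$, then $f \in \ConCinfty(\mathcal{M})_\Null$, so $[f] = 0$. For surjectivity, given $\tilde{f} \in \Cinfty(\mathcal{M}_\red)$, the function $g \coloneqq \pi^*\tilde{f} \in \Cinfty(C)$ is leafwise constant, and since $C \subseteq M$ is closed and embedded, the standard tubular-neighbourhood/partition-of-unity extension theorem produces some $f \in \Cinfty(M)$ with $f\at{C} = g$. Any such extension lies in $\ConCinfty(\mathcal{M})_\Wobs$, since the defining condition depends only on $f\at{C}$, and plainly $\Phi_{\mathcal{M}}([f]) = \tilde{f}$.

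For naturality, given a smooth constraint map $\phi \colon \mathcal{M} \to \mathcal{N}$, one has to show that
\[
	\Phi_{\mathcal{M}} \circ (\phi^*)_\red = (\phi_\red)^* \circ \Phi_{\mathcal{N}}.
\]
Chasing an element $[f] \in \ConCinfty(\mathcal{N})_\red$ through both paths, this reduces to the identity $\phi_\red \circ \pi_{\mathcal{M}} = \pi_{\mathcal{N}} \circ (\phi\at{C_\mathcal{M}})$ of smooth maps $C_\mathcal{M} \to \mathcal{N}_\red$, which is precisely the definition $\phi_\red([p]) = [\phi(p)]$.

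The only non-formal step is surjectivity, where one needs the smooth extension of $g = \pi^*\tilde{f}$ from the closed embedded submanifold $C$ to all of $M$; everything else follows from the tautological definitions. Since the category $\ConMfld$ assumes closed embedded submanifolds, this extension is always available, so no additional hypothesis is needed.
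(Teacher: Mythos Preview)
Your proof is correct and follows essentially the same approach as the paper: define the map by descent of $f\at{C}$ through the quotient $\pi \colon C \to C/D$, verify injectivity via the kernel being the vanishing ideal, establish surjectivity by extending $\pi^*\tilde{f}$ from the closed submanifold $C$ via a tubular neighbourhood and bump function, and check naturality by unwinding $\phi_\red$. The paper spells out the extension step slightly more explicitly (pull back along the tubular projection, then multiply by a cutoff supported in the tube), but the argument is the same.
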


\begin{proof}
	Observe that every
	$f \in \ConCinfty(\mathcal{M})_\Wobs$
	drops to a function $f_\red \in \Cinfty(\mathcal{M}_\red)$, 
	and the kernel of this map is exactly given by the vanishing ideal
	$\ConCinfty(\mathcal{M})_\Null$.
	Hence we obtain an inclusion
	$\ConCinfty(\mathcal{M})_\red \subseteq \Cinfty(\mathcal{M}_\red)$.
	To show surjectivity of this map, choose a tubular neighbourhood $V$ of
	$C$ with projection $\pr_V \colon V \to C$ and a bump function
	$\chi \in \Cinfty(M,\mathbb{R})$ with $\chi\at{C} = 1$ and 
	$\chi\at{M\setminus V} = 0$.
	Note that the closedness of $C$ is needed for the existence of such a 
	$\chi$.
	Then every function $f \in \Cinfty(\mathcal{M}_\red)$
	can first be pulled back to a function $\pi_\red^*f$ on $C$ and
	afterwards pulled back to $V$ via $\pr_V^*(\pi_\red^*f)$,
	where $\pi_\red \colon C \to \mathcal{M}_\red$ denotes the projection 
	to the quotient.
	Finally, we can extend it to all of $M$ using $\chi$ obtaining
	$\hat{f} \coloneqq \chi\cdot (\pr_V^*(\pi_\red^*f))$.
	Since $\hat{f}\at{C} = \pi_\red^*f$ we clearly get
	$\hat{f} \in \ConCinfty(\mathcal{M})_\Wobs$
	and $(\hat{f})_\red = f$.
	Hence we get $\ConCinfty(\mathcal{M})_\red = 
	\Cinfty(\mathcal{M}_\red)$.
	For the naturality consider a smooth constraint map
	$\phi \colon \mathcal{M} \to \mathcal{N}$.
	Then for every $f \in \Cinfty(\mathcal{N})_\red$ we have
	\begin{equation*}
		(\phi^*)_\red (f_\red)
		= (\phi^*(f))_\red
		= (f \circ \phi)_\red
		= f_\red \circ \phi_\red
		= (\phi_\red)^*(f_\red).
	\end{equation*}
	This shows that \eqref{prop:ConFunctionsVSReduction_diag} commutes up to a natural isomorphism.
\end{proof}

\subsection{Constraint Vector Bundles}
\label{sec:ConVectorBundles}

To introduce the notion of constraint vector bundle
we need the concept of a partial connection 
(or partial covariant derivative), cf. \cite{bott:1972a}.
If $D \subseteq C$ is an involutive regular distribution on a smooth 
manifold $C$ and $E \to C$ is a vector bundle over $C$, then
a \emph{$D$-connection on $E$} is a $\Reals$-linear map
\begin{equation}
	\nabla \colon \Secinfty(D) \tensor[\Reals] \Secinfty(E) \to \Secinfty(E) 
\end{equation}
fulfilling the usual properties of a covariant derivative.
Note that $D$-connections always exist by restricting a 
connection on $E$ to $D$.
Moreover, every $D$-connection can be extended to a 
covariant derivative on $E$ by choosing a complement $D^\perp$ of $D$ inside
$TC$ and a $D^\perp$-connection, then taking the sum of those.
Given a curve $\gamma \colon I \to C$ inside a fixed leaf of $D$
connecting $p,q \in C$ we obtain corresponding parallel transport
$\Parallel_\gamma \colon E_p \to E_q$.
If this parallel transport is independent of the chosen (leafwise) 
path, we will call the $D$-connection $\nabla$ \emph{holonomy-free}.

\begin{definition}[Constraint vector bundle]
	\label{def:ConVectorBundle}
Let $\mathcal{M} = (M_\Total,M_\Wobs,D_\mathcal{M})$ and 
$\mathcal{N} = (N_\Total,N_\Wobs,D_\mathcal{N})$ be constraint manifolds. 
\begin{definitionlist}
	\item A \emph{constraint vector bundle}
	$E = (E_\Total,E_\Wobs,E_\Null,\nabla)$ over $\mathcal{M}$
	consists of a vector bundle $E_\Total \to M_\Total$,
	a subbundle $E_\Wobs \to M_\Wobs$ of $\iota^\#E_\Total$,
	a subbundle $E_\Null \to M_\Wobs$ of $E_\Wobs$
	and a holonomy-free $D_\mathcal{M}$-connection $\nabla$ 
	on $E_\Wobs / E_\Null$.	
	\item Let $E = (E_\Total,E_\Wobs,E_\Null, \nabla^E)$ and 
	$F = (F_\Total, F_\Wobs, F_\Null,\nabla^F)$ be constraint vector bundles 
	over $\mathcal{M}$ and $\mathcal{N}$, respectively.
	A morphism $\Phi \colon E \to F$ of constraint vector bundles over 
	$\phi \colon \mathcal{M} \to \mathcal{N}$ is given by a vector 
	bundle morphism $\Phi_\Total \colon E_\Total \to F_\Total$
	such that
	\begin{definitionlist}[label=\alph*.)]
		\item $\Phi_\Total$ restricts to a vector bundle morphism
		$\Phi_\Wobs = \Phi_\Total\at{E_\Wobs} \colon E_\Wobs \to F_\Wobs$,
		\item $\Phi_\Wobs(E_\Null) \subseteq F_\Null$ and 
		\item the induced vector bundle morphism 
		$\overline{\Phi} \colon E_\Wobs / E_\Null \to F_\Wobs / F_\Null$ 
		is a morphism between vector bundles with linear connections , i.e.
		\begin{equation}
			\label{eq:ConstraintVBMorphism}
			\nabla^{E^*}_{v_p} \overline\Phi^*\alpha
			= (\overline\Phi_{p})^*\big(\nabla^{F^*}_{T\phi(v_p)} \alpha\big)
		\end{equation}
		holds for all $v_p \in D_\mathcal{M}\at{p}$, $\alpha \in \Secinfty((F_\Wobs/F_\Null)^*)$ and 
		$p \in C$,
		see \autoref{def:MorphLinConnection}. 
	\end{definitionlist}
	\item The category of constraint vector bundles with morphisms over arbitrary smooth maps is denoted by
	$\ConVect$, while the category of constraint vector bundles over a fixed constraint manifold $\mathcal{M}$
	with morphisms over $\id_\mathcal{M}$
	is denoted by $\ConVect(\mathcal{M})$.
\end{definitionlist}
\end{definition}

\begin{remark}
	\label{rem:DistributionsOnVBAsDsVB}
	In classical differential geometry vector bundles can be viewed as manifolds 
	with a linear fibration. 
	Using this viewpoint for constraint geometry means the following:
	Let us start with a constraint vector bundle
	$E = (E_\Total \to M, E_\Wobs \to C, E_\Null \to C, \nabla)$
	over the constraint manifold 
	$\mathcal{M} = (M, C, D_{\mathcal{M}})$, then the corresponding 
	constraint manifold is given by 
	$E = (E_\Total, E_\Wobs, D_{E})$, 
	where the simple distribution $D_{E}$ is  
	equivalent to the information $(D_{\mathcal{M}}, E_\Null, \nabla)$. 
	This was worked out in \cite{jotzlean.ortiz:2014a}, see also 
	\cite{kern:2023a}, where they are called linear distributions. 

	Starting with the triple $(D_{\mathcal{M}}, E_\Null, \nabla)$ we 
	identify $E_\Wobs / E_\Null$ with a subbundle $E_\WobsNotNull$ in $E_\Wobs$, 
	such that $E_\Wobs \simeq E_\Null \oplus E_\WobsNotNull$. 
	Extending $\nabla$ to a linear connection on $E_\Wobs$
	gives an equivalent, but non-canonical, linear splitting $\Sigma$
	of the double vector bundle $TE_\Wobs$,
	and an isomorphism $TE_\Wobs \simeq E_\Wobs \times_{C} TC \times_{C} E_{\Wobs}$,
	called decomposition, see \cite{jotzlean:2018b}. 
	For more information on double vector bundles, see e.g. 
	\cite{graciasaz.mehta:2010a,mackenzie:1992a}. 
	Under this identification we can define
	$D_{E} \coloneqq E_\Wobs \times_{C} D_{\mathcal{M}} \times_{C} E_\Null$
	and thus obtain a double subbundle
	\begin{equation}
		\begin{tikzcd}
			{D_{E}}
			\arrow[r]
			\arrow[d]
			& {D_{\mathcal{M}}}
			\arrow[d] \\ 
			E_\Wobs 
			\arrow[r]
			& C
		\end{tikzcd}
		\qquad \subseteq \qquad 
		\begin{tikzcd}
			{TE_\Wobs} 
			\arrow[r]
			\arrow[d]
			& {TC} 
			\arrow[d] \\
			{E_\Wobs}
			\arrow[r] 
			& {C}
		\end{tikzcd},
	\end{equation}
	see \cite{jotzlean.ortiz:2014a}. 
	Moreover, the linear splitting leads to a direct sum decomposition 
	\begin{equation}
		D_{E} \simeq \Ver(E_\Null) \oplus \Hor^{\Sigma}(D_{\mathcal{M}}),
	\end{equation}
	where $\Ver(E_\Null)$ denotes the vertical bundle over $E_\Wobs$ pointing in 
	$E_\Null$-direction, i.e. 
	\begin{equation}
		\Ver(E_\Null) = \Big\{ \left.\frac{\D}{\D t}\right|_{t = 0}(s_p + t r_p)  \in TE_\Wobs \bigm| s_p \in E_\Wobs, r_p \in E_\Null, p \in C \Big\}, 
	\end{equation}
	and $\Hor^{\Sigma}(D_{\mathcal{M}})$ denotes the horizontally lifted vectors 
	over $D_{\mathcal{M}}$, i.e. 
	\begin{equation}
		\Hor^{\Sigma}(D_{\mathcal{M}}) 
		= \Big\{ \Sigma(s_p, v_p) \in TE_\Wobs \bigm| s_p \in E_\Wobs, v_p \in D_{\mathcal{M}}, p \in C \Big\}. 
	\end{equation}

	Conversely, suppose a double subbundle $D_{E}$ subordinate to $TE_\Wobs$ is given.	
	We define $E_\Null$ to be the core of $D_{E}$ and together with the 
	side bundles $E_\Wobs$ and $D_{\mathcal{M}}$ and a chosen linear splitting $\Omega$, there is an isomorphism 
	$E_\Wobs \times_{C} D_{\mathcal{M}} \times_{C} E_\Null \simeq D_{E}$ 
	of double vector bundles.
	Every such linear splitting $\Omega$ can be extended to a linear splitting 
	$\Sigma$ for $TE_\Wobs$,
	leading to an isomorphism 
	$E_\Wobs \times_{C} TC \times_{C} E_\Wobs \simeq TE_\Wobs$.
	The linear splitting $\Sigma$ corresponds to a linear 
	connection 
	$\widehat{\nabla} \colon \Secinfty(TC) \tensor[\Reals] \Secinfty(E_\Wobs) \to \Secinfty(E_\Wobs)$, 
	which in turn gives rise to a linear 
	$D_{\mathcal{M}}$-connection $\nabla$ on $E_\Wobs / E_\Null$. 
	This leads to the claimed triple $(D_{\mathcal{M}}, E_\Null, \nabla)$. 
\end{remark}

For every constraint vector bundle $E$ over a constraint manifold
$\mathcal{M} = (M,C,D)$
and $p \in M$ we can consider the fibre
$E_\Total\at{p}$.
If $p \in C$ is a point in the submanifold, we have subspaces defined by the subbundles
$E_\Wobs$ and $E_\Null$, leading to a constraint vector space, i.e. a constraint module over the constraint algebra
$(\Reals,\Reals,0)$, given by
\begin{equation}
	E\at{p} \coloneqq \big(E_\Total\at{p}, E_\Wobs\at{p}, E_\Null\at{p}\big).
\end{equation}
For $p \in M\setminus C$ we define $E\at{p} \coloneqq (E_\Total\at{p},0,0)$.
Since $M$ and $C$ are supposed to be connected the dimension of this constraint vector space is independent of the base point $p \in C$.
Thus we call the constraint index set
\begin{equation}
	\rank(E) \coloneqq \big(\rank(E_\Total), \rank(E_\Wobs), \rank(E_\Null)\big)
\end{equation}
the \emph{rank of $E$}.

\begin{example}
	Instances of constraint vector bundles have, under different names, appeared in the literature before.
	\begin{examplelist}
		\item In \cite[Def 2.2]{cabrera.ortiz:2022a} the notion of quotient data 
		$(q_M,K,\Delta)$ for a vector bundle
		$E \to M$ is introduced, see also \cite[§2.1]{mackenzie:2005a}.
		Here $q_M \colon M \to \tilde{M}$ denotes a surjective submersion with connected fibres,
		$K \subseteq E$ is a subbundle and
		$\Delta$ is a smooth assignment taking a pair of points $x,y \in M$ on the same
		$q_M$-fibre to a linear isomorphism
		$\nabla_{x,y} \colon E_y / K_y \to E_x / K_x$.
		This directly gives a constraint vector bundle
		$(E,E,K)$ over $(M,M,\ker(Ty_M))$ with $\bar\nabla$ the 
		partial 
		connection induced by $\Delta$.
		\item By Batchelor's Theorem \cite{batchelor:1980a, bonavolonta.poncin:2013a}
		graded manifolds
		of degree one correspond to vector 
		bundles 
		over manifolds.
		Under this identification a graded submanifold of a degree 
		one graded manifold corresponds
		to a constraint vector bundle $(E,\iota^\#E,F)$ over 
		$(M,C,0)$, see \cite{cueca:2019a}.
	\end{examplelist}
\end{example}

\begin{example}[Trivial constraint vector bundle]\
	\label{ex:TrivialConVectorBundle}
	Let $\mathcal{M} = (M,C,D)$ be a constraint manifold and 
	$k = (k_\Total, k_\Wobs, k_\Null)$ a finite constraint index set.
	Then
	\begin{equation}
		\mathcal{M} \times \Reals^k \coloneqq 
		\big(M \times \mathbb{R}^{k_\Total}, C \times \mathbb{R}^{k_\Wobs}, C \times \mathbb{R}^{k_\Null}, \Lie\big)
	\end{equation}
	defines a constraint vector bundle.
	Here $\Lie$ denotes the component-wise Lie derivative.
\end{example}

We will call a constraint vector bundle of that form \emph{trivial}.
Constraint vector bundles isomorphic to trivial vector bundles will be called
\emph{trivializable}.

\begin{definition}[Constraint sub-bundle]
	\label{def:ConSubBundle}
	Let $E = (E_\Total,E_\Wobs,E_\Null,\nabla^E)$ 
	be a constraint vector bundle over a constraint manifold
	$\mathcal{M} = (M,C,D)$.
	A \emph{constraint sub-bundle} $F$ of $E$ consists
	of a vector sub-bundle $F_\Total \subseteq E_\Total$
	together with a subbundle $F_\Wobs \subseteq E_\Wobs$
	such that the following properties hold:
	\begin{definitionlist}
		\item $F_\Wobs$ is a subbundle of $\iota^\# F_\Total$ over $C$.
		\item The intersection $F_\Wobs \cap E_\Null$ is a subbundle of $E_\Null$ and $F_\Wobs$.
		\item $\nabla^E$ restricts to a $D$-connection on
		$\frac{F_\Wobs + E_\Null}{E_\Null}$.
	\end{definitionlist}
\end{definition}

Note that $\frac{F_\Wobs + E_\Null}{E_\Null} \simeq \frac{F_\Wobs}{F_\Wobs \cap E_\Null}$,
and therefore every constraint sub-bundle $F$
can be seen as a constraint vector bundle
$(F_\Total,F_\Wobs,F_\Wobs \cap E_\Null, \nabla^F)$,
with $\nabla^F$ being the restrictions of $\nabla^E$ to
$\frac{F_\Wobs}{F_\Wobs \cap E_\Null}$.
The inclusion $i \colon F \hookrightarrow E$ then forms a morphism of constraint vector bundles.
In combination with \autoref{def:ConSubMfld} it should be clear how to define constraint sub-bundles on constraint submanifolds, but we will not need this generalization in the following.

\begin{proposition}
	\label{prop:ConKernSubbundle}
	Let $E$ and $F$ be constraint vector bundles over a constraint manifold $\mathcal{M}$.
	Suppose $\Phi \colon E \to F$ is a constraint vector bundle morphism over $\id_\mathcal{M}$ such that
	$\Phi_\Total$, $\Phi_\Wobs$ and
	$\Phi_\Wobs\at{E_\Null}$ have constant rank.
	Then $\ker \Phi$ given by
	\begin{equation}
	\begin{split}
		(\ker \Phi)_\Total &\coloneqq \ker(\Phi_\Total) \\
		(\ker \Phi)_\Wobs &\coloneqq \iota^\#\ker(\Phi_\Total) \cap E_\Wobs
	\end{split}
	\end{equation}
	is a constraint sub-bundle of $E$.
\end{proposition}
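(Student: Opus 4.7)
The plan is to check the three defining conditions of \autoref{def:ConSubBundle} in turn for the triple $(\ker\Phi)_\Total := \ker\Phi_\Total$ and $(\ker\Phi)_\Wobs := \iota^\#\ker\Phi_\Total \cap E_\Wobs$. First I would unpack these using the morphism axioms of \autoref{def:ConVectorBundle}.

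Since a constraint vector bundle morphism has $\Phi_\Wobs = \Phi_\Total\at{E_\Wobs}$, one directly obtains $(\ker\Phi)_\Wobs = \ker\Phi_\Wobs$ and $(\ker\Phi)_\Wobs \cap E_\Null = \ker(\Phi_\Wobs\at{E_\Null})$. The three constant-rank hypotheses then promote $\ker\Phi_\Total$, $\ker\Phi_\Wobs$, and $\ker(\Phi_\Wobs\at{E_\Null})$ to genuine vector subbundles via the classical rank theorem, so that items (i) and (ii) of \autoref{def:ConSubBundle} follow at once: (i) is the statement that $(\ker\Phi)_\Wobs$ is a subbundle of $\iota^\#(\ker\Phi)_\Total$ over $C$, which is the constant-rank intersection from above; (ii) is the constant-rank subbundle property of $(\ker\Phi)_\Wobs \cap E_\Null$ inside both $E_\Null$ and $(\ker\Phi)_\Wobs$.

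The substantive step is item (iii): $\bar\nabla^E$ must preserve the subbundle $S := ((\ker\Phi)_\Wobs + E_\Null)/E_\Null$ of $E_\Wobs/E_\Null$. Here I would invoke the intertwining identity \eqref{eq:ConstraintVBMorphism}, which says that the induced map $\bar\Phi \colon E_\Wobs/E_\Null \to F_\Wobs/F_\Null$ is a morphism of vector bundles with linear connection and so commutes with $\bar\nabla^E$ and $\bar\nabla^F$ along directions in $D$. Applying this to $[s] \in \Secinfty(E_\Wobs/E_\Null)$ for a local section $s$ of $(\ker\Phi)_\Wobs$ and any $X \in \Secinfty(D)$ gives $\bar\Phi(\bar\nabla^E_X[s]) = \bar\nabla^F_X \bar\Phi([s]) = 0$, so $\bar\nabla^E_X[s]$ necessarily lies in $\ker\bar\Phi$. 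The invariance of $S$ itself then follows from identifying $\ker\bar\Phi$ with $S$.

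The expected main obstacle is precisely this identification $\ker\bar\Phi = S$: it is not automatic from connection-intertwining alone and is where the constant-rank hypothesis on $\Phi_\Wobs\at{E_\Null}$ is really used. My approach would be to choose local trivializations of $E_\Total$ and $F_\Total$ adapted to the filtrations $E_\Null \subseteq E_\Wobs \subseteq \iota^\#E_\Total$ and $F_\Null \subseteq F_\Wobs \subseteq \iota^\#F_\Total$, putting $\Phi$ into a block-triangular normal form simultaneously with respect to both filtrations. The constant rank of $\Phi_\Wobs\at{E_\Null}$ ensures that $\Phi_\Wobs(E_\Null) \subseteq F_\Null$ is a subbundle, and combining this with the constant ranks of $\Phi_\Total$ and $\Phi_\Wobs$ forces the preimage $\Phi_\Wobs^{-1}(F_\Null)$ to coincide with $(\ker\Phi)_\Wobs + E_\Null$, giving $\ker\bar\Phi = S$ and closing item (iii).
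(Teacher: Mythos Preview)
Your treatment of items (i) and (ii) matches the paper's argument: the constant-rank hypotheses make $\ker\Phi_\Total$, $\ker\Phi_\Wobs$, and $\ker\Phi_\Null$ subbundles, and you correctly identify $(\ker\Phi)_\Wobs=\ker\Phi_\Wobs$ and $(\ker\Phi)_\Wobs\cap E_\Null=\ker\Phi_\Null$. The paper dispatches item (iii) in one sentence (``the fact that $\Phi$ is compatible with $\nabla$ shows that $\nabla$ restricts to $\ker(\Phi_\Wobs)/\ker(\Phi_\Null)$''); your proposal rightly recognises this as the substantive step and correctly extracts $\bar\nabla^E_X[s]\in\ker\bar\Phi$ for $s\in\Secinfty(\ker\Phi_\Wobs)$ from connection intertwining.

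The gap is in your claimed identification $\ker\bar\Phi=S$, which is false in general even with all three constant-rank hypotheses. Take $\mathcal{M}=(\Reals^2,\Reals^2,D)$ with $D$ spanned by $\partial_1$, let $E=F=\mathcal{M}\times\Reals^3$ with $E_\Null=F_\Null$ the first coordinate line and trivial partial connections, and let $\Phi$ act by $e_1\mapsto 0$, $e_2\mapsto e_1$, $e_3\mapsto x^1 e_1$. All three ranks are constant ($1$, $1$, $0$) and $\bar\Phi=0$, so $\Phi$ is a constraint morphism. But $\ker\Phi_\Wobs$ is spanned by $e_1$ and $e_3-x^1 e_2$, so $S$ is the line through $[e_3]-x^1[e_2]$ whereas $\ker\bar\Phi$ is all of $E_\Wobs/E_\Null$. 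In fact $\nabla^E_{\partial_1}\big([e_3]-x^1[e_2]\big)=-[e_2]\notin S$, so $S$ is not $\nabla^E$-invariant: the proposition itself fails in this example, not just your argument. Both your block-triangular normal-form plan and the paper's one-line assertion lack control over $\Phi_\Wobs^{-1}(F_\Null)$; the stated constant-rank hypotheses govern the kernels of the three maps but say nothing about how $\Phi_\Wobs(E_\Wobs)$ meets $F_\Null$, which is exactly what separates $\ker\bar\Phi$ from $S$.
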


\begin{proof}
	Since $\Phi_\Total$ and $\Phi_\Wobs$ have constant rank we know that 
	$(\ker\Phi)_\Total$ and $(\ker\Phi)_\Wobs = \ker(\Phi_\Wobs)$ are subbundles, 
	and clearly
	$(\ker\Phi)_\Wobs \subseteq (\ker\Phi_\Total)$
	is a subbundle.
	Moreover, $(\ker\Phi)_\Wobs \cap E_\Null \simeq \ker(\Phi_\Null)$
	is a subbundle since $\Phi_\Null$ has constant rank.
	Finally, the fact that $\Phi$ is compatible with $\nabla$ shows that $\nabla$ restricts to
	$\frac{\ker(\Phi_\Wobs)}{\ker(\Phi_\Null)}$.
\end{proof}

Recall from \cite{bott:1972a} that for a given manifold $C$ with a regular involutive distribution
$D \subseteq TC$ there exists a canonical $D$-connection on the normal bundle
$TC / D$, the so-called \emph{Bott connection}, given by
\begin{equation} \label{eq:BottConnection}
	\nabla^\Bott_X \cc{Y} = \cc{[X,Y]}
\end{equation}
for $X \in \Secinfty(D)$ and $\cc{Y} \in \Secinfty(TC / D)$.
Here $\cc{Y}$ denotes the equivalence class of
$Y \in \Secinfty(TC)$.
Moreover, it is well-known that since $C/D$ is smooth the Bott connection
is holonomy free.

\begin{definition}[Constraint tangent bundle]
	\label{def:ConTangentBundle}
	Let $\mathcal{M} = (M, C, D)$ be a constraint manifold.
	The constraint vector bundle
	$T\mathcal{M} \coloneqq (TM,TC,D,\nabla^\Bott)$
	over $\mathcal{M}$
	is called the 
	\emph{(constraint) tangent bundle} of $\mathcal{M}$.
\end{definition}

\begin{proposition}[Constraint tangent bundle functor]
	Mapping every constraint manifolds to
	its constraint tangent bundle and smooth maps
	$\phi \colon \mathcal{M} \to \mathcal{N}$ between constraint manifolds
	$\mathcal{M}$ and $\mathcal{N}$ to the tangent map
	$T\phi \colon T\mathcal{M} \to T\mathcal{N}$
	defines a functor
	\begin{equation}
		T \colon \ConMfld \to \ConVect.
	\end{equation}
\end{proposition}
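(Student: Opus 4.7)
My plan is to split the claim into three items: (i) $T\mathcal{M}$ is indeed a constraint vector bundle over $\mathcal{M}$; (ii) for each constraint map $\phi\colon\mathcal{M}\to\mathcal{N}$, the tangent map $T\phi$ is a constraint vector bundle morphism over $\phi$; and (iii) the assignment is functorial, i.e.\ $T(\id_\mathcal{M})=\id_{T\mathcal{M}}$ and $T(\psi\circ\phi)=T\psi\circ T\phi$. Item (i) is essentially already contained in \autoref{def:ConTangentBundle}: $TM\to M$ is a smooth vector bundle, $TC$ is a subbundle of $\iota^{\#}TM$, $D\subseteq TC$ is a subbundle, and the Bott connection \eqref{eq:BottConnection} on $TC/D$ is a well-defined $D$-connection which is holonomy-free because $D$ is simple, as recalled just before \autoref{def:ConTangentBundle}. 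Item (iii) is immediate from the classical tangent functor, since the $\WOBS$- and $\NULL$-components of $T\phi$ are just restrictions of the classical $T\phi$.

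The substance lies in (ii). Matching the three conditions of \autoref{def:ConVectorBundle}, I need to verify: (a) that $T\phi$ restricts to a bundle map $TC\to TN_\Wobs$, which follows from $\phi(C)\subseteq N_\Wobs$; (b) that $T\phi(D_\mathcal{M})\subseteq D_\mathcal{N}$, which is built into the definition of a constraint map; and (c) that the induced bundle morphism $\overline{T\phi}\colon TC/D_\mathcal{M}\to TN_\Wobs/D_\mathcal{N}$ over $\phi|_C$ satisfies the compatibility \eqref{eq:ConstraintVBMorphism} with the two Bott connections.

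The main obstacle is (c), and my approach relies on the simplicity of $D_\mathcal{M}$ and $D_\mathcal{N}$. Since $\pi_\mathcal{M}\colon C\to\mathcal{M}_\red$ is a surjective submersion, $T\pi_\mathcal{M}$ induces a bundle isomorphism $TC/D_\mathcal{M}\simeq\pi_\mathcal{M}^{\#}T\mathcal{M}_\red$ under which the Bott connection becomes the canonical flat partial connection on a pullback bundle; dually, the parallel sections of $(TC/D_\mathcal{M})^{*}$ are exactly the pullbacks $\pi_\mathcal{M}^{*}\tilde\alpha$ of $1$-forms $\tilde\alpha$ on $\mathcal{M}_\red$, and similarly for $\mathcal{N}$. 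A constraint map $\phi$ descends to $\phi_\red\colon\mathcal{M}_\red\to\mathcal{N}_\red$ with $\pi_\mathcal{N}\circ\phi=\phi_\red\circ\pi_\mathcal{M}$, so for a basic form $\alpha=\pi_\mathcal{N}^{*}\tilde\alpha$ one gets $\overline{T\phi}^{*}\alpha=\pi_\mathcal{M}^{*}\phi_\red^{*}\tilde\alpha$, again basic, whence both sides of \eqref{eq:ConstraintVBMorphism} vanish. Writing a general section locally as $\sum_{j} f_{j}\,\pi_\mathcal{N}^{*}\tilde\alpha_{j}$ and using the Leibniz rule together with the pointwise identity $v_p(f_j\circ\phi)=(T\phi(v_p))(f_j)$ then reduces the general case to the basic one. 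Alternatively, \eqref{eq:ConstraintVBMorphism} can be verified by a direct coordinate computation in foliated charts provided by \autoref{lem:LocalStructureConManfifold}, but the quotient viewpoint avoids having to construct $\phi$-related extensions of $v_p$ and $T\phi(v_p)$ globally.
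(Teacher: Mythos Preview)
Your proposal is correct, and the overall structure matches the paper's: both reduce the problem to checking condition (c) in \autoref{def:ConVectorBundle}, since (a), (b) and functoriality are straightforward from the definitions and the classical tangent functor.

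Where you diverge is in how you verify the Bott-connection compatibility \eqref{eq:ConstraintVBMorphism}. The paper proceeds by a direct Lie-derivative computation: it extends $v_p\in D_\mathcal{M}$ to a local section $X$ of $D_\mathcal{M}$, uses the local existence of a $\phi$-related vector field $Y$ on $N_\Wobs$, and then invokes the naturality $\Lie_X(T\phi)^{*}\alpha=(T\phi)^{*}\Lie_Y\alpha$ to identify both sides of \eqref{eq:ConstraintVBMorphism} with $\overline{\Lie_Y\alpha}|_{\phi(p)}\circ\overline{T\phi}$. Your argument instead exploits the simplicity of the distributions to identify the normal bundles with pullbacks from the reduced manifolds, so that the dual Bott connections become the canonical flat partial connections on pullback bundles; the identity then holds trivially on basic forms and extends by the Leibniz rule. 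Your route is more conceptual and sidesteps the construction of $\phi$-related extensions, at the cost of invoking the isomorphism $TC/D_\mathcal{M}\simeq\pi_\mathcal{M}^{\#}T\mathcal{M}_\red$ (a standard fact for simple foliations, essentially the content of \autoref{prop:ReductionOfTangentBundle}). The paper's route is more self-contained at this point in the exposition but requires the local existence of related vector fields. Both are valid and of comparable length.
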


\begin{proof}
	For the $\TOTAL$-components the statement is clear, and since $T\phi$ is completely
	determined by $T\phi \colon TM_\Total \to TN_\Total$ the only thing left to show is that $T\phi$ is actually a constraint morphism.
	Since $\phi$ maps $M_\Wobs$ to $N_\Wobs$
	we immediately get that $\iota^\#T\phi$ restricts to
	$T\phi \colon TM_\Wobs \to TN_\Wobs$.
	Moreover, by \autoref{def:ConstraintManifold} we have
	$T\phi(D_\mathcal{M}) \subseteq D_\mathcal{N}$.
	It remains to show that $T\phi$ is compatible with the Bott connections.
	For this let
	$v_p \in D_\mathcal{M}\at{p}$
	and
	$X \in \Secinfty(D_{\mathcal{M}})$
	such that $X(p) = v_p$.
	Moreover, we know there exist
	open subsets $V \subseteq M$ around $p$
	and
	$U \subseteq N_\Wobs$ around $\phi(p)$
	with $V \subseteq \phi^{-1}(U)$	and
	$Y \in \Secinfty(TU)$ 
	such that
	$X\at{V} \sim_{\phi} Y$.
	Then for arbitrary
	$\alpha \in \Secinfty(\Ann(D_\mathcal{N}))$
	and
	$w_p \in T_pM_\Wobs$
	we compute
	\begin{align*}
		\Big(\nabla^{T^*\mathcal{M}}_{v_p} (\overline{T\phi})^* \overline{\alpha}\Big) (\overline{w_p})
		&= \overline{\Lie_X (T\phi)^* \alpha} \at[\big]{p} (\overline{w_p}) \\
		&= \overline{ (T\phi)^*\Lie_Y \alpha}\at[\big]{p} (\overline{w_p}) \\
		&= \overline{\Lie_Y \alpha}\at[\big]{\phi(p)} \Big(\overline{T\phi}(\overline{w_p})\Big) \\
		&= \Big(\nabla^{T^*\mathcal{N}}_{Y(\phi(p))} \overline{\alpha}\Big) \Big(\overline{T\phi}(\overline{w_p})\Big) \\
		&= (\overline{T_p\phi})^*\Big(\nabla^{T^*\mathcal{N}}_{T\phi(v_p)}\overline{\alpha} \Big)(\overline{w_p}) ,
	\end{align*}
	which proves the claim.
\end{proof}

On every constraint vector bundle $E$ the subbundle $E_\Null$ 
together with the $D$-connection $\nabla^E$ defines an 
equivalence relation on $E_\Wobs$ by 
$v_p \sim_E w_p$ if and only if
$p \sim_\mathcal{M} q$
and there exists a path $\gamma \colon I \to C$ in the leaf of $p$ 
such that $\cc{w_q} = \Parallel_{\gamma,p\to q}(\cc{v_p})$ is the parallel 
transport of $\cc{v_p}$ along $\gamma$.
Here $\cc{\argument}$ denotes the equivalence class in $E_\Wobs / 
E_\Null$.
Since $\nabla^E$ is holonomy-free this is independent of the chosen 
leafwise path, and thus indeed gives a well-defined equivalence 
relation. 
Infinitesimally, the equivalence relation $\sim_E$ corresponds to the 
distribution $D_E$ which is induced by $(D_\mathcal{M}, E_\Null, \nabla)$, see 
\autoref{rem:DistributionsOnVBAsDsVB}. 

\begin{proposition}[Reduction of constraint vector bundles]
	\label{prop:ReductionOfConVect}
	Let $E = (E_\Total,E_\Wobs,E_\Null,\nabla^E)$ be a constraint 
	vector bundle over a constraint manifold
	$\mathcal{M} = (M,C,D)$.
	\begin{propositionlist}
		\item \label{prop:ReductionOfConVect_1}
		There exists a unique vector bundle structure on 
		\begin{equation}
			\pr_{E_\red} \colon E_\Wobs / \mathord{\sim}_E \to \mathcal{M}_\red,
			\qquad \pr_E([v_p]) = \pi_\mathcal{M}(p),
		\end{equation}
		with $\pi_\mathcal{M} \colon C \to \mathcal{M}_\red$,
		such that the quotient map
		\begin{equation}
			\pi_E \colon E_\Wobs \to E_\Wobs / \mathord{\sim}_E,
			\qquad \pi_E(v_p) = [v_p]
		\end{equation}
		is a submersion and a vector bundle morphism over 
		$\pi_\mathcal{M}$.
		\item \label{prop:ReductionOfConVect_2}
		There exists an isomorphism
		\begin{equation}
			\Theta \colon (E_\Wobs / E_\Null) \to 
			\pi_\mathcal{M}^\#(E_\Wobs / \sim_E),
			\qquad \Theta(\cc{v_p}) = (p,[v_p])
		\end{equation}
		of vector bundles fulfilling
		\begin{equation}
			\label{eq:ReductionOfConVect_Parallel}
			\Theta^{-1}(q,[v_p]) = \Parallel_{\gamma,p\to q}(\cc{v_p})
		\end{equation}
		for $v_p \in E_\Wobs\at{p}$ and $p \sim_\mathcal{M} q$.
	\end{propositionlist}	
\end{proposition}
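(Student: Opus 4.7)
The plan for Part~\ref{prop:ReductionOfConVect_1} is to first observe that $\sim_E$ is a well-defined equivalence relation on $E_\Wobs$: reflexivity, symmetry, and transitivity follow from the groupoid structure of leafwise parallel transport, with well-definedness guaranteed precisely by the holonomy-freeness of $\nabla^E$. To produce the smooth quotient, I would follow the viewpoint explained in \autoref{rem:DistributionsOnVBAsDsVB}: the triple $(D, E_\Null, \nabla^E)$ is equivalent to an involutive distribution $D_E$ on $E_\Wobs$ whose leaves are exactly the equivalence classes of $\sim_E$. Around any point of $C$, I combine a foliation chart for $D$ as in \autoref{lem:LocalStructureConManfifold} with local trivializations of $E_\Wobs$ and $E_\Null$ adapted to a splitting $E_\Wobs \simeq E_\Null \oplus (E_\Wobs/E_\Null)$ coming from a chosen extension of $\nabla$. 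In such adapted coordinates, parallel transport along leaves of $D$ becomes literally the identity on the $(E_\Wobs/E_\Null)$-summand, so locally $E_\Wobs/\mathord{\sim}_E$ is the product of a slice transverse to $D$ with the fibre of $E_\Wobs/E_\Null$. This gives both smoothness and the submersion property of $\pi_E$.

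Next I transfer this local product structure to a vector bundle structure on $\pr_{E_\red}$: the fibre over $[p]$ is declared to be $(E_\Wobs\at{p})/(E_\Null\at{p})$, and this is independent of the representative $p \in \pi_\mathcal{M}^{-1}([p])$ because leafwise parallel transport is a linear isomorphism. The local trivializations above then provide vector bundle charts. For uniqueness, demanding that $\pi_E$ be a surjective submersion forces the quotient smooth structure by the standard uniqueness of quotients by simple distributions, and demanding that $\pi_E$ be fibrewise linear over $\pi_\mathcal{M}$ forces the linear structure on each fibre.

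For Part~\ref{prop:ReductionOfConVect_2}, I would define $\Theta$ by $\Theta(\overline{v_p}) = (p, [v_p])$. Well-definedness on $E_\Wobs/E_\Null$ follows because two representatives of $\overline{v_p}$ differ by an element of $E_\Null\at{p}$, and such elements are $\sim_E$-related via the constant path. Fibrewise $\Theta$ is a linear isomorphism onto the fibre of the pullback, which by construction of the quotient bundle in Part~\ref{prop:ReductionOfConVect_1} agrees with $(E_\Wobs\at{p})/(E_\Null\at{p})$; smoothness and smoothness of the inverse follow from the local trivializations constructed above. The explicit formula \eqref{eq:ReductionOfConVect_Parallel} then just unwinds the definition of $\sim_E$: given $p \sim_\mathcal{M} q$ via a leafwise path $\gamma$, the unique $\overline{w_q}$ with $[w_q] = [v_p]$ is $\Parallel_{\gamma, p \to q}(\overline{v_p})$.

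The main obstacle is the smoothness of $E_\Wobs/\mathord{\sim}_E$, i.e.\ promoting the simplicity of $D$ on $C$ to simplicity of the induced distribution $D_E$ on $E_\Wobs$. I plan to settle this by the local model above, where $D_E$ becomes a product distribution in linear fibre coordinates, so that the Godement criterion is satisfied and no global monodromy obstructions can arise. Once this is in place, all remaining verifications—uniqueness of the smooth and linear structures, smoothness of $\Theta$ and of its inverse, and the parallel transport formula—reduce to direct checks in these local models.
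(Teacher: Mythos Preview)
Your proposal is correct, but the paper takes a different and shorter route. Instead of working directly with the distribution $D_E$ on $E_\Wobs$ and building local product charts by hand, the paper first splits the quotient into two steps: the fibrewise quotient $E_\Wobs \to E_\Wobs/E_\Null$, followed by the quotient of $E_\Wobs/E_\Null$ by the equivalence relation $\sim_{\nabla^E}$ coming from parallel transport alone. For the second step it observes that parallel transport defines a linear action of the pair groupoid $R(\pi_\mathcal{M}) = C \,{}_{\pi_\mathcal{M}}\!\times_{\pi_\mathcal{M}} C$ on $E_\Wobs/E_\Null$, and then invokes \cite[Lemma~4.1]{higgins.mackenzie:1990a}, which delivers both the unique vector bundle structure on the quotient and the isomorphism $\Theta$ of Part~\ref{prop:ReductionOfConVect_2} in one stroke. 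Your approach is more elementary and self-contained---no groupoid machinery or external reference is needed, and the local models you build are exactly what underlies the cited lemma---whereas the paper's approach trades that explicit work for a clean citation and gets uniqueness and Part~\ref{prop:ReductionOfConVect_2} essentially for free.
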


\begin{proof}
	We can split the quotient procedure into two steps.
	First we consider the quotient vector bundle
	$E_\Wobs / E_\Null \to C$ with quotient map
	$\pi_{E_\Null} \colon E_\Wobs \to E_\Wobs / E_\Null$
	being a submersion and vector bundle morphism.
	Now the $D$-connection $\nabla^E$ induces an equivalence 
	relation on $E_\Wobs / E_\Null$ by
	$\cc{v_p} \sim_{\nabla^E} \cc{w_p}$
	if and only if $p \sim_\mathcal{M} q$ and
	$\cc{w_p} = \Parallel_{\gamma,p\to q}(\cc{v_p})$.
	In the language of Lie groupoids it is easy to see that
	the parallel transport of $\nabla^E$ defines a linear action
	of the Lie groupoid
	$R(\pi_\mathcal{M}) = 
	C \deco{}{\pi_\mathcal{M}}{\times}{}{\pi_\mathcal{M}} C$
	on $(E_\Wobs / E_\Null)$.
	Then \cite[Lemma 4.1]{higgins.mackenzie:1990a}
	gives the existence of a unique vector bundle structure on 
	$\pr_\nabla \colon (E_\Wobs / E_\Null) / \sim_{\nabla^E} \to 
	\mathcal{M}_\red$ such that the quotient map
	$\pi_\nabla \colon (E_\Wobs / E_\Null) \to (E_\Wobs / E_\Null) / 
	\sim_{\nabla^E}$ is a submersion and a vector bundle morphism over
	$\pi_\mathcal{M}$.
	Combining these we obtain a unique vector bundle structure on
	$E_\Wobs /\mathbin{\sim_E} \simeq (E_\Wobs / E_\Null) / \sim_{\nabla^E}$
	such that $\pi_E = \pi_{\nabla^E} \circ \pi_{E_\Null}$
	is a submersion and vector bundle morphism over $\pi_\mathcal{M}$.
	The second part is again directly given by \cite[Lemma 
	4.1]{higgins.mackenzie:1990a}.
\end{proof}

\begin{definition}[Reduced vector bundle]
	\label{def:ReducedVectorBundle}
	Let $E = (E_\Total, E_\Wobs,E_\Null,\nabla^E)$ be a constraint vector bundle
	over a constraint manifold
	$\mathcal{M} = (M,C,D)$.
	Then the vector bundle
	$E_\red \coloneqq (E_\Wobs / E_\Null) / \sim_{\nabla^E}$ 
	over $\mathcal{M}_\red$ is called the \emph{reduced vector bundle} of $E$.
\end{definition}

\begin{example}
	Consider a trivial bundle $\mathcal{M} \times \Reals^k$ as in \autoref{ex:TrivialConVectorBundle}.
	Then
	\begin{equation}
		(C \times \Reals^{k_\Wobs}) / (C \times \Reals^{k_\Null}) \simeq C \times \Reals^{k_\Wobs - k_\Null}
	\end{equation}
	and since the $D$-connection is just given by the Lie derivative we get
	$(\mathcal{M} \times \Reals^k)_\red \simeq \mathcal{M}_\red \times \Reals^{k_\Wobs - k_\Null}$.
\end{example}

Morphisms of constraint vector bundles are designed to yield well-defined morphisms between the reduced vector bundles, allowing for a reduction functor, as expected.

\begin{proposition}[Reduction functor]
	\label{prop:RedFunctorVect}
	Mapping constraint vector bundles to their reduced bundles defines a functor
	$\red \colon \ConVect \to \Vect$.
\end{proposition}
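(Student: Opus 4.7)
The plan is to unpack the two-step description of reduction given in \autoref{prop:ReductionOfConVect} (first quotient by $E_\Null$, then by the equivalence induced by parallel transport of $\nabla^E$) and check that a constraint morphism $\Phi \colon E \to F$ descends through both quotients to a vector bundle morphism $\Phi_\red \colon E_\red \to F_\red$ covering $\phi_\red$. Once this is done, functoriality (preservation of identities and composition) is immediate because all operations are defined pointwise by representatives.

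First, I would fix a constraint morphism $\Phi \colon E \to F$ over $\phi \colon \mathcal{M} \to \mathcal{N}$. By definition, $\Phi_\Wobs$ is a vector bundle morphism over $\phi_\Wobs \colon C_\mathcal{M} \to C_\mathcal{N}$ that sends $E_\Null$ into $F_\Null$, so it descends to a vector bundle morphism $\overline{\Phi} \colon E_\Wobs/E_\Null \to F_\Wobs/F_\Null$ over $\phi_\Wobs$. This is the first quotient.

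The heart of the proof is the second quotient, and this is where compatibility with the partial connections is used. The key claim to establish is that $\overline\Phi$ intertwines the parallel transports of $\nabla^E$ and $\nabla^F$ along leafwise curves, namely
\begin{equation}
\overline{\Phi}\bigl(\Parallel^E_{\gamma,p\to q}(\cc{v_p})\bigr)
= \Parallel^F_{\phi\circ\gamma,\phi(p)\to\phi(q)}\bigl(\overline{\Phi}(\cc{v_p})\bigr)
\end{equation}
for every curve $\gamma$ in a leaf of $D_\mathcal{M}$. I would derive this from \eqref{eq:ConstraintVBMorphism}: dualizing the connection compatibility, differentiating the curve $t \mapsto \overline{\Phi}(\Parallel^E_{\gamma,p\to\gamma(t)}(\cc{v_p}))$, and noting that $\phi\circ\gamma$ lies in a leaf of $D_\mathcal{N}$ by $T\phi(D_\mathcal{M}) \subseteq D_\mathcal{N}$, so that both sides satisfy the same ODE defining parallel transport with the same initial value; uniqueness then forces them to agree. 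Since $\phi_\Wobs$ descends to $\phi_\red$, this equality says precisely that $\overline{\Phi}$ sends equivalence classes of $\sim_{\nabla^E}$ to equivalence classes of $\sim_{\nabla^F}$, and hence factors through a well-defined set-theoretic map $\Phi_\red \colon E_\red \to F_\red$ covering $\phi_\red$.

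It remains to check that $\Phi_\red$ is a smooth vector bundle morphism and that reduction is functorial. Smoothness follows from the universal property of the quotient: $\pi_F \circ \overline{\Phi} = \Phi_\red \circ \pi_E$ together with the fact that $\pi_E$ is a surjective submersion (by \autoref{prop:ReductionOfConVect}) forces $\Phi_\red$ to be smooth. Fiberwise linearity is inherited from $\overline{\Phi}$, because the vector bundle structure on $E_\red$ characterized in \autoref{prop:ReductionOfConVect}\ref{prop:ReductionOfConVect_1} makes $\pi_E$ a vector bundle morphism. Preservation of identities is obvious, and for composition $\Psi \circ \Phi$ we have $\overline{\Psi\circ\Phi} = \overline{\Psi}\circ\overline{\Phi}$, which after the second quotient gives $(\Psi\circ\Phi)_\red = \Psi_\red\circ\Phi_\red$. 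The only nontrivial step is the parallel-transport intertwining identity above, which I expect to be the main obstacle since all other points are formal consequences of \autoref{prop:ReductionOfConVect}.
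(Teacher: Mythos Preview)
Your proposal is correct and follows essentially the same strategy as the paper: quotient by $E_\Null$, then show $\overline{\Phi}$ intertwines leafwise parallel transport so that the second quotient is well-defined, and finally check smoothness and linearity. The only cosmetic difference is that the paper outsources the step you flagged as ``the main obstacle'' to the appendix (\autoref{prop:CharacterizationsMorConnection}\ref{prop:CharacterizationsMorConnection_Parallel}), which records the equivalence between the defining condition \eqref{eq:ConstraintVBMorphism} and parallel-transport compatibility, whereas you rederive it inline via the ODE uniqueness argument; your smoothness argument via the submersion universal property is equivalent to the paper's use of local sections of $\pi_E$.
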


\begin{proof}
	We need to show that morphisms of constraint vector bundles induce morphisms between the respective reduced bundles.
	For this let $\Phi \colon E \to F$ be a morphism of constraint vector bundles
	$E \to \mathcal{M}$ and $F \to \mathcal{N}$
	over a smooth map $\phi \colon \mathcal{M} \to \mathcal{N}$.
	Since $\Phi$ restricts to a vector bundle morphism
	$\Phi_\Wobs \colon E_\Wobs \to F_\Wobs$
	which maps the subbundle $E_\Null$ to $F_\Null$
	we obtain a well-defined vector bundle morphism
	$\Phi_\Wobs \colon E_\Wobs / E_\Null \to F_\Wobs / F_\Null$,
	which is compatible with the covariant derivates in the sense of
	\eqref{eq:ConstraintVBMorphism}.
	Now suppose that $v_p \sim_E w_q$.
	Then by \autoref{prop:CharacterizationsMorConnection}~\ref{prop:CharacterizationsMorConnection_Parallel}
	we know
	$\Phi(\cc{w_q})	 = \Parallel_{\hat\gamma,\phi(p)\to \phi(q)} \Phi(\cc{v_p})$,
	showing that $\Phi$ preserves the equivalence relation and thus drops to a map
	$\Phi_\red \colon E_\red \to F_\red$.
	It is smooth since locally there exist sections of the projection map 
	$\pi_\red \colon E_\Wobs \to E_\red$.
	Moreover, it is clearly fiberwise linear, hence defining a vector bundle morphism
	$\Phi_\red \colon E_\red \to F_\red$.
\end{proof}

\begin{proposition}
	\label{prop:ReductionOfTangentBundle}
	There exists a natural isomorphism making the following diagram commute:
	\begin{equation}
		\begin{tikzcd}
			\ConMfld
			\arrow[r,"T"]
			\arrow[d,"\red"{swap}]
			&\ConVect
			\arrow[d,"\red"]\\
			\Manifolds
			\arrow[r,"T"]
			& \Vect
		\end{tikzcd}
	\end{equation}
\end{proposition}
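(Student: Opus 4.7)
The plan is to build, for each constraint manifold $\mathcal{M} = (M,C,D)$, a canonical vector bundle isomorphism $\Psi_\mathcal{M} \colon (T\mathcal{M})_\red \to T\mathcal{M}_\red$ and then check naturality. By \autoref{prop:ReductionOfConVect}~\ref{prop:ReductionOfConVect_1} the reduced bundle $(T\mathcal{M})_\red$ is characterized by the universal property that $\pi_{T\mathcal{M}} \colon TC \to (T\mathcal{M})_\red$ is a surjective submersion and a vector bundle morphism over $\pi_\mathcal{M} \colon C \to \mathcal{M}_\red$. Since $T\pi_\mathcal{M} \colon TC \to T\mathcal{M}_\red$ is itself a surjective submersion and a vector bundle morphism over $\pi_\mathcal{M}$, to factor it through $\pi_{T\mathcal{M}}$ it suffices to show that $T\pi_\mathcal{M}(v_p) = T\pi_\mathcal{M}(w_q)$ whenever $v_p \sim_{T\mathcal{M}} w_q$, and conversely.

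The heart of the argument is identifying the equivalence relation $\sim_{T\mathcal{M}}$ on $TC$ with the kernel relation of $T\pi_\mathcal{M}$. For one direction, first note that $\ker T_p\pi_\mathcal{M} = D_p$, so two vectors at the same point are identified by $T\pi_\mathcal{M}$ iff they differ by an element of $D$, matching the quotient $TC/D$. The nontrivial step is that parallel transport of $\nabla^\Bott$ along a leaf curve $\gamma \colon I \to C$ from $p$ to $q$ corresponds to the canonical identification $T_{\pi(p)}\mathcal{M}_\red = T_{\pi(q)}\mathcal{M}_\red$ induced by $\pi_\mathcal{M}$. To prove this, given $v_p \in T_pC$ choose a locally $\pi_\mathcal{M}$-projectable extension $Y$, i.e.\ $Y \sim_{\pi_\mathcal{M}} \widetilde{Y}$ for some $\widetilde{Y} \in \Secinfty(T\mathcal{M}_\red)$. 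Projectability forces $[X,Y] \in \Secinfty(D)$ for every $X \in \Secinfty(D)$, hence $\nabla^\Bott_X \cc{Y} = \cc{[X,Y]} = 0$; so $\cc{Y}$ is leafwise parallel and $\Parallel_{\gamma,p\to q}(\cc{v_p}) = \cc{Y(q)}$. Applying $T\pi_\mathcal{M}$ yields $T\pi_\mathcal{M}(Y(q)) = \widetilde{Y}(\pi(q)) = \widetilde{Y}(\pi(p)) = T\pi_\mathcal{M}(v_p)$. Conversely, if $T\pi_\mathcal{M}(v_p) = T\pi_\mathcal{M}(w_q)$, pick a projectable extension $Y$ of $v_p$; then $Y(q)$ and $w_q$ project to the same vector in $T_{\pi(q)}\mathcal{M}_\red$, so they differ by an element of $D_q$, and $\cc{w_q} = \cc{Y(q)} = \Parallel_{\gamma,p\to q}(\cc{v_p})$.

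The resulting factorization $\Psi_\mathcal{M} \colon (T\mathcal{M})_\red \to T\mathcal{M}_\red$ is a vector bundle morphism over $\id_{\mathcal{M}_\red}$ which is fiberwise bijective by the equivalence above, smooth because $T\pi_\mathcal{M}$ is smooth and $\pi_{T\mathcal{M}}$ locally admits smooth sections (being a surjective submersion), and hence a vector bundle isomorphism. For naturality, given a constraint map $\phi \colon \mathcal{M} \to \mathcal{N}$ one must verify $\Psi_\mathcal{N} \circ (T\phi)_\red = T(\phi_\red) \circ \Psi_\mathcal{M}$; unwinding the definitions both sides send $[v_p] \in (T\mathcal{M})_\red$ to $T_{\pi(p)}\phi_\red \bigl(T_p\pi_\mathcal{M}(v_p)\bigr)$, which equals $T_p(\pi_\mathcal{N} \circ \phi)(v_p)$ by the chain rule applied to the defining identity $\phi_\red \circ \pi_\mathcal{M} = \pi_\mathcal{N} \circ \phi$.

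The main obstacle is the projectable-extension argument: it requires knowing that every tangent vector $v_p \in T_pC$ extends locally to a $\pi_\mathcal{M}$-projectable vector field. This uses the simplicity of $D$, via a foliation chart as in \autoref{lem:LocalStructureConManfifold} in which the quotient is a product projection, so that constant-coefficient vector fields in leaf-transverse coordinates give the needed projectable extension. Once this is in place, the identification of $\sim_{T\mathcal{M}}$ with the fibers of $T\pi_\mathcal{M}$ and hence the construction of $\Psi_\mathcal{M}$ proceed routinely.
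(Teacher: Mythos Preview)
Your argument is correct and takes a genuinely different route from the paper. The paper works on the pullback level: it invokes \autoref{prop:ReductionOfConVect}~\ref{prop:ReductionOfConVect_2} to identify $\pi_\mathcal{M}^\#(T\mathcal{M})_\red \simeq TC/D$, and then builds the isomorphism $TC/D \to \pi_\mathcal{M}^\#T(\mathcal{M}_\red)$ via the derivation picture, sending $[v_p]$ to $(p, v_p \circ \pi_\mathcal{M}^*)$, checking injectivity and surjectivity directly. You instead bypass the pullback and confront the equivalence relation $\sim_{T\mathcal{M}}$ head-on, identifying it with the fibres of $T\pi_\mathcal{M}$ by means of projectable extensions and the observation that such extensions are Bott-parallel. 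This makes the role of $\nabla^\Bott$ transparent, whereas the paper hides the parallel-transport step inside \autoref{prop:ReductionOfConVect}. Your approach also has the merit of verifying naturality explicitly via the chain rule, which the paper omits. One small point worth tightening: the projectable extension $Y$ must be defined along the entire leafwise path from $p$ to $q$, not just near $p$; this is easily arranged by first choosing $\widetilde{Y}$ on an open set in $\mathcal{M}_\red$ and lifting along a complement of $D$ (or by patching over a finite cover of the path using foliation charts), but it deserves a sentence.
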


\begin{proof}
	For every constraint manifold $\mathcal{M}$
	we construct an isomorphism
	$\Psi_\mathcal{M} \colon \pi_\mathcal{M}^\#(T\mathcal{M})_\red \to \pi_\mathcal{M}^\#T(\mathcal{M}_\red)$.
	From \autoref{prop:ReductionOfConVect} we know that $\pi_\mathcal{M}^\#(T\mathcal{M})_\red \simeq TC/D$.
	Moreover, recall that we can pull back every germ $f_{[p]} \in \Cinfty_{[p]}(\mathcal{M}_\red)$
	to $\pi_\mathcal{M}^*f_{[p]} \in \Cinfty_p(C)$.
	Thus we can define
	\begin{equation*}
		\Psi_\mathcal{M} \colon TC/D
		\ni [v_p]
		\mapsto (p, v_p \circ \pi_\mathcal{M}^*)
		\in \pi_\mathcal{M}^\#T(\mathcal{M}_\red),
	\end{equation*}
	giving a fiberwise injective vector bundle morphism since $D$ is obviously the kernel.
	To show surjectivity let $(p,w_{[p]}) \in \pi_\mathcal{M}^\#T(\mathcal{M}_\red)$ be given.
	Since $\pi_\mathcal{M}$ is a surjective submersion, there exists a local section
	$\sigma \colon V \to C$ on an open neighbourhood $V \subseteq \mathcal{M}_\red$ around
	$[p]$.
	With this we can set $v_p(f_p) \coloneqq w_{[p]}((\sigma^*f)_{[p]})$ for any
	$f \in \Cinfty(\pi_\mathcal{M}^{-1}(V))$,
	and thus $\Psi_\mathcal{M}([v_p]) = (p,w_{[p]})$.
	This shows that $\Psi_\mathcal{M}$ is a fiberwise isomorphism and hence an isomorphism of vector bundles.
	Then $\Psi_\mathcal{M}$ induces the isomorphism $(T\mathcal{M})_\red \simeq T(\mathcal{M}_\red)$
	as required.
\end{proof}

As an important tool to work with constraint vector bundles we need the existence of local frames adapted to the additional structures.
For this observe that every constraint vector bundle $E = (E_\Total,E_\Wobs,E_\Null,\nabla)$
over a constraint manifold $\mathcal{M} = (M,C,D)$
can be restricted to an open subset $U \subset M$ giving 
a constraint vector bundle
$E\at{U} = (E_\Total\at{U}, E_\Wobs\at{U \cap C}, E_\Null\at{U \cap C}, \nabla\at{U\cap C})$
over $\mathcal{M}\at{U}$.

\begin{lemma}
	\label{lem:AdaptedLocalFrames}
	Let $E = (E_\Total, E_\Wobs, E_\Null, \nabla)$ be a constraint vector bundle of rank
	$k = (k_\Total, k_\Wobs, k_\Null)$ 
	over a constraint manifold
	$\mathcal{M} = (M,C,D)$.
	Let furthermore $E_\WobsNotNull \to C$ and $E_\TotalNotWobs \to C$ be subbundles of
	$\iota^\#E_\Total$ such that
	$E_\Wobs = E_\Null \oplus E_\WobsNotNull$
	and
	$\iota^\# E_\Total = E_\Wobs \oplus E_\TotalNotWobs$.
	Then for every $p \in C$ there exists a local frame $e_1, \dotsc, e_{k_\Total} \in \Secinfty(E_\Total\at{U})$
	on an open neighbourhood $U \subseteq M$ around $p$ such that
	\begin{lemmalist}
		\item $\iota^\#e_i \in \Secinfty(E_\Null\at{U \cap C})$ for all $i=1,\dotsc,k_\Null$,
		\item $\iota^\#e_i \in \Secinfty(E_\WobsNotNull\at{U\cap C})$ and
		$\nabla_X \iota^\#e_i = 0$ for all  $X \in \Secinfty(D)$ and $i = k_\Null + 1, \dotsc, k_\Wobs$,
		\item $\iota^\#e_i \in \Secinfty(E_\TotalNotWobs\at{U \cap C})$ for all $i = k_\Wobs +1, \dotsc, k_\Total$.
	\end{lemmalist}
\end{lemma}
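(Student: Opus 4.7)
The plan is to first construct the required frame on a neighborhood of $p$ inside $C$, respecting all three pieces of the decomposition, and then extend it to a full neighborhood of $p$ in $M$ by a standard trivialization argument.

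First I would work on $C$. Pick an open neighborhood $V \subseteq C$ of $p$ small enough to support local frames of the three subbundles $E_\Null$, $E_\WobsNotNull$ and $E_\TotalNotWobs$ separately. Choose any local frame $f_1,\dots,f_{k_\Null}$ of $E_\Null$ on $V$, and any local frame $h_{k_\Wobs+1},\dots,h_{k_\Total}$ of $E_\TotalNotWobs$ on $V$; these cost nothing. The interesting piece is the middle block: I want a $\nabla$-parallel frame $\bar g_{k_\Null+1},\dots,\bar g_{k_\Wobs}$ of $E_\Wobs/E_\Null$, which I will then lift along the splitting $E_\Wobs = E_\Null \oplus E_\WobsNotNull$ to obtain sections $g_i$ of $E_\WobsNotNull$ on $V$.

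The construction of that parallel frame is where the hypothesis that $\nabla$ is a holonomy-free $D$-connection and that $D$ is simple really enters. Using \autoref{lem:LocalStructureConManfifold} I take a foliation chart for $D$ around $p$ inside $C$, and inside it I pick a local transversal $T \subseteq V$ through $p$ whose image in $\mathcal{M}_\red$ is an open neighborhood of $[p]$. On $T$ I choose any smooth frame of $(E_\Wobs/E_\Null)|_T$ and parallel-transport it along the leaves of $D$. Because $\nabla$ is holonomy-free, the resulting sections $\bar g_{k_\Null+1},\dots,\bar g_{k_\Wobs}$ are well-defined (independent of the leafwise path chosen inside $V$) and smooth, because parallel transport along the leaves of a simple foliation assembles into a smooth map. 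By construction $\nabla_X \bar g_i = 0$ for all $X \in \Secinfty(D)$, and the lifts $g_i \in \Secinfty(E_\WobsNotNull|_V)$ form a frame with $\iota^\# g_i \in \Secinfty(E_\WobsNotNull)$.

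Concatenating yields a local frame $f_1,\dots,f_{k_\Null},g_{k_\Null+1},\dots,g_{k_\Wobs},h_{k_\Wobs+1},\dots,h_{k_\Total}$ of $\iota^\# E_\Total$ over $V$ satisfying the three conditions pointwise on $C$. Finally, to extend it to an open neighborhood $U \subseteq M$ of $p$, I shrink $V$ so that there is a tubular neighborhood $U$ of $V$ in $M$ with retraction $r\colon U \to V$ over which $E_\Total$ trivializes as $U \times \Reals^{k_\Total}$. Expressing the frame on $V$ as a smooth map $V \to \mathrm{GL}(k_\Total,\Reals)$ and composing with $r$ produces a smooth map $U \to \mathrm{GL}(k_\Total,\Reals)$, hence a local frame $e_1,\dots,e_{k_\Total}$ of $E_\Total|_U$ whose restriction to $U \cap C$ is the constructed frame. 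This frame satisfies all three required properties.

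The main obstacle is the middle step: ensuring that the parallel-transported frame of $E_\Wobs/E_\Null$ on a transversal extends to a smooth frame on a whole neighborhood of $p$ in $C$. The key points are that $\nabla$ is holonomy-free (so the transported sections are single-valued on $V$) and that $D$ is simple (so the leaf space is smooth and the transported frame depends smoothly on the base point); the extension from $C$ to $M$ afterwards is purely a standard tubular-neighborhood trivialization.
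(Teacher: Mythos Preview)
Your proof is correct and follows essentially the same strategy as the paper: build the frame on $C$ in three blocks adapted to the splitting, with the middle block given by a $\nabla$-parallel frame of $E_\Wobs/E_\Null$, and then extend to $M$ via a tubular neighbourhood. The only difference is cosmetic: where you construct the parallel frame by hand via parallel transport off a transversal, the paper invokes the already-established isomorphism $E_\Wobs/E_\Null \simeq \pi_\mathcal{M}^\# E_\red$ from \autoref{prop:ReductionOfConVect}~\ref{prop:ReductionOfConVect_2} and simply pulls back a local frame of $E_\red$.
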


\begin{proof}
	Take a local frame $g_1, \dotsc, g_{k_\Wobs - k_\Null}$ of $E_\red$ on an open neighbourhood
	$\check{V} \subseteq \mathcal{M}_\red$ of $\pi_\mathcal{M}(p)$.
	Using \autoref{prop:ReductionOfConVect} \ref{prop:ReductionOfConVect_2}
	we obtain a local frame $g_1, \dotsc, g_{k_\Wobs - k_\Null}$ of $E_\WobsNotNull \simeq E_\Wobs / E_\Null$
	on the open neighbourhood $ \pi_\mathcal{M}^{-1}(\check{V})$
	with $\nabla_X g_i = 0$ for all $X \in \Secinfty(D)$
	and $i = 1, \dotsc, k_\Wobs - k_\Null$.
	Choose additionally local frames $f_1, \dotsc, f_{k_\Null}$ of $E_\Null$ and
	$h_1, \dotsc,  h_{k_\Total - k_\Wobs}$ of $E_\TotalNotWobs$ on a possibly smaller
	open neighbourhood $V$.
	Using a tubular neighbourhood $\pr_U \colon U \to C \cap V$ of $C \cap V$ inside $V$ we can pull back those local
	frames to a local frame 
	\begin{equation*}
		e_i \coloneqq \begin{cases}
			\pr_U^\#f_i &\text{ if } i = 1, \dotsc, k_\Null \\
			\pr_U^\#g_{i-n_\Null} &\text{ if } i = k_\Null + 1, \dotsc, k_\Wobs \\
			\pr_U^\#h_{i-n_\Wobs} &\text{ if } i = k_\Wobs  +1, \dotsc, k_\Total
		\end{cases}
	\end{equation*}
	of $E_\Total$ fulfilling the required properties.	
\end{proof}

In order to introduce the usual constructions of vector bundles also in the constraint setting we will need
the following lemma concerning the constraint homomorphism bundles:

\begin{lemma}
	\label{lem:WobsNullQuotientHom}
	Let $E,F$ be constraint vector bundles over a constraint manifold
	$\mathcal{M} = (M,C,D)$.
	Define vector bundles
	\begin{align}
		\ConHom(E,F)_\Wobs 
		& \coloneqq \left\{ \Phi_p \in \Hom(\iota^\#E_\Total, \iota^\#F_\Total)
		\bigm| \Phi_p(E_\Wobs\at{p}) \subseteq F_\Wobs\at{p}
		\text{ and } \Phi_p(E_\Null\at{p}) \subseteq F_\Null\at{p} \right\} \\
		\shortintertext{and}
		\ConHom(E,F)_\Null 
		& \coloneqq \left\{ \Phi_p \in \Hom(\iota^\#E_\Total, \iota^\#F_\Total)
		\bigm| \Phi_p(E_\Wobs\at{p}) \subseteq F_\Null\at{p} \right\}
	\end{align}
	over $C$.
	Then
	\begin{align}
		\Theta \colon \ConHom(E,F)_\Wobs / \ConHom(E,F)_\Null
		&\to \Hom(E_\Wobs/ E_\Null, F_\Wobs/F_\Null)
		\shortintertext{defined by}
		\Theta(\cc{\Phi_p})(\cc{v_p}) &\coloneqq \cc{\Phi_p(v_p)}
	\end{align}
	is an isomorphism of vector bundles.
	Here $\cc{\argument}$ denotes the projection to the quotient.
\end{lemma}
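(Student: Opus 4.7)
The plan is to verify that $\Theta$ is well-defined, fiberwise linear, fiberwise bijective, and smooth together with its inverse; from this the claim follows.

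Well-definedness and fiberwise injectivity. If $v_p \in E_\Null\at{p}$, then for any $\Phi_p \in \ConHom(E,F)_\Wobs\at{p}$ the defining condition gives $\Phi_p(v_p) \in F_\Null\at{p}$, so $\cc{\Phi_p(v_p)} = 0$. If $\Phi_p \in \ConHom(E,F)_\Null\at{p}$, then $\Phi_p(E_\Wobs\at{p}) \subseteq F_\Null\at{p}$ by definition, so $\Theta(\cc{\Phi_p}) = 0$. Hence $\Theta$ descends to the quotients and is manifestly fiberwise linear. The converse of the last implication is immediate from the definition of $\ConHom(E,F)_\Null\at{p}$ and yields fiberwise injectivity.

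Fiberwise surjectivity. At a point $p \in C$ I would choose complements $E_\WobsNotNull\at{p}$ of $E_\Null\at{p}$ in $E_\Wobs\at{p}$ and $E_\TotalNotWobs\at{p}$ of $E_\Wobs\at{p}$ in $\iota^\# E_\Total\at{p}$, and analogously for $F$. The projection induces an isomorphism $E_\WobsNotNull\at{p} \xrightarrow{\simeq} (E_\Wobs / E_\Null)\at{p}$, and the same for $F$. Given $\Psi_p \colon (E_\Wobs / E_\Null)\at{p} \to (F_\Wobs / F_\Null)\at{p}$, transport $\Psi_p$ across these identifications to a map $\tilde\Psi_p \colon E_\WobsNotNull\at{p} \to F_\WobsNotNull\at{p}$, and then extend by zero on the remaining summands $E_\Null\at{p} \oplus E_\TotalNotWobs\at{p}$. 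The resulting $\Phi_p$ satisfies $\Phi_p(E_\Null\at{p}) = 0 \subseteq F_\Null\at{p}$ and $\Phi_p(E_\Wobs\at{p}) \subseteq F_\WobsNotNull\at{p} \subseteq F_\Wobs\at{p}$, so $\Phi_p \in \ConHom(E,F)_\Wobs\at{p}$; by construction $\Theta(\cc{\Phi_p}) = \Psi_p$.

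Smoothness. Smoothness of $\Theta$ itself is clear from its defining formula applied to smooth representatives. For the inverse, I would perform the surjectivity construction simultaneously on a neighbourhood of $p$: by \autoref{lem:AdaptedLocalFrames} one obtains smooth local subbundles $E_\WobsNotNull, E_\TotalNotWobs$ of $\iota^\# E_\Total$ realizing the required splittings (and likewise for $F$), so the extension-by-zero procedure produces a smooth local inverse of $\Theta$. The main obstacle I anticipate is precisely this smoothness step, ensuring that the pointwise splittings can be chosen smoothly; but this is exactly the content of \autoref{lem:AdaptedLocalFrames}, and once the local inverses exist they automatically patch, since fiberwise bijectivity forces them to agree on overlaps. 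Hence $\Theta$ is an isomorphism of vector bundles over $C$.
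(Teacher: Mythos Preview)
Your proposal is correct and follows essentially the same approach as the paper: verify well-definedness and fiberwise injectivity directly from the definitions, and obtain fiberwise surjectivity by choosing complements and extending by zero. You are in fact slightly more careful than the paper, which glosses over the smoothness of the inverse and implicitly also needs a complement in $F$ to land in $\iota^\# F_\Total$; your appeal to \autoref{lem:AdaptedLocalFrames} to make the splittings smooth locally is appropriate and not circular.
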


\begin{proof}
	It is clear that $\Theta$ is a well-defined map.
	Moreover, it is a vector bundle morphism 
	since it is essentially given by evaluation.
	The fiberwise injectivity is again clear by definition, while for the 
	fiberwise surjectivity we need to choose complements
	$E_\Null^\perp$ and $E_\Wobs^\perp$
	of
	$E_\Null$ inside $E_\Wobs$ and
	$E_\Wobs$ inside $\iota^\#E_\Total$.
	Thus $\iota^\#E_\Total = E_\Null \oplus E_\Null^\perp \oplus E_\Wobs^\perp$
	and $E_\Wobs / E_\Null \simeq E_\Null^\perp$.
	Then for every $\Psi_p \in \Hom(E_\Wobs/E_\Null, F_\Wobs/F_\Null)$
	set $\Phi(v_p) = \Psi(v_p)$ for all $v_p \in E_\Null^\perp$
	and $\Phi(v_p) = 0$ for all $v_p \in E_\Null$ or $v_p \in E_\Wobs^\perp$.
	With this we have $\Theta(\Phi_p) = \Psi_p$.
	Thus we have an isomorphism of vector bundles as claimed.
\end{proof}

Let us now introduce some important constructions on constraint vector bundles.

\begin{proposition}
	\label{prop:ConstructionsCVect}
	Let $E = (E_\Total,E_\Wobs,E_\Null,\nabla^E)$ and
	$F = (F_\Total,F_\Wobs,F_\Null,\nabla^F)$ be 
	constraint vector bundles over the constraint manifold 
	$\mathcal{M} = (M_\Total, M_\Wobs,D_{\mathcal{M}})$.
	Let $\mathcal{N} = (N_\Total, N_\Wobs,D_{\mathcal{N}})$ be another 
	constraint manifold and $\phi \colon \mathcal{N} \to \mathcal{M}$ 
	be a constraint map.
	\begin{propositionlist}
		\item \label{prop:ConstructionsCVect_DirectSum}
		Defining $E \oplus F$ by
		\begin{equation}
			\begin{split}
				(E \oplus F)_\Total
				&\coloneqq E_\Total \oplus F_\Total,\\
				(E \oplus F)_\Wobs
				&\coloneqq E_\Wobs \oplus F_\Wobs,\\
				(E \oplus F)_\Null
				&\coloneqq E_\Null \oplus F_\Null,\\
				\nabla^{E \oplus F}
				&\coloneqq \nabla^{E} \oplus \nabla^F
			\end{split}
		\end{equation}
		yields a constraint vector bundle over $\mathcal{M}$, called the
		\emph{direct sum},
		with
		\begin{equation}
			\label{eq:ConstructionsCVect_SumRank}
			\rank (E \oplus F) = \rank(E) + \rank(F).
		\end{equation}
		\item \label{prop:ConstructionsCVect_Tensor}
		Defining $E \tensor F$ by
		\begin{equation}
			\begin{split}
				(E \tensor F)_\Total
				&\coloneqq E_\Total \tensor F_\Total,\\
				(E \tensor F)_\Wobs
				&\coloneqq E_\Wobs \tensor F_\Wobs,\\
				(E \tensor F)_\Null
				&\coloneqq E_\Null \tensor F_\Wobs + E_\Wobs \tensor F_\Null,\\
				\nabla^{E \tensor F}
				&\coloneqq \nabla^{E} \tensor \id + \id \tensor \nabla^F
			\end{split}
		\end{equation}
		yields a constraint vector bundle over $\mathcal{M}$, called the \emph{tensor product},
		with
		\begin{equation}
			\label{eq:ConstructionsCVect_TensorRank}
			\rank(E \tensor F) = \rank(E) \tensor \rank(F).
		\end{equation}
		\item \label{prop:ConstructionsCVect_StrTensor}
		Defining $E \strtensor F$ by
		\begin{equation}
			\begin{split}
				(E \strtensor F)_\Total
				&\coloneqq E_\Total \tensor F_\Total,\\
				(E \strtensor F)_\Wobs
				&\coloneqq E_\Wobs \tensor F_\Wobs
				+ E_\Null \tensor \iota^\#F_\Total 
				+ \iota^\#E_\Total \tensor F_\Null,\\
				(E \strtensor F)_\Null
				&\coloneqq E_\Null \tensor \iota^\#F_\Total 
				+ \iota^\#E_\Total \tensor F_\Null,\\
				\nabla^{E \strtensor F}
				&\coloneqq  \nabla^{E} \tensor \id + \id \tensor \nabla^F
			\end{split}
		\end{equation}
		yields a constraint vector bundle over $\mathcal{M}$, called 
		the \emph{strong tensor product},
		with	
		\begin{equation}
			\label{eq:ConstructionsCVect_StrTensorRank}
			\rank(E \strtensor F) = \rank(E) \strtensor \rank(F).
		\end{equation}
		\item \label{prop:ConstructionsCVect_Dual}
		Defining $E^*$ by
		\begin{equation} \label{eq:ConDualBundle}
			\begin{split}
				(E^*)_\Total
				&= (E_\Total)^*,\\
				(E^*)_\Wobs
				&= \Ann_{\iota^\#E_\Total}(E_\Null),\\
				(E^*)_\Null
				&= \Ann_{\iota^\#E_\Total}(E_\Wobs),
			\end{split}
		\end{equation}
		with $\Ann_{\iota^\#E_\Total}(E_\Null)$ and $\Ann_{\iota^\#E_\Total}(E_\Wobs)$ the annihilator subbundles
		of $E_\Null$ and $E_\Wobs$ with respect to $\iota^\#E_\Total$
		and $\nabla^{E^*}$ the dual covariant derivative,
		yields a constraint vector bundle over $\mathcal{M}$, called the
		\emph{dual vector bundle},
		with
		\begin{equation}
			\label{eq:ConstructionsCVect_DualRank}
			\rank (E^*)
			= \rank(E)^*.
		\end{equation}		
		\item \label{prop:ConstructionsCVect_Hom}
		Defining $\ConHom(E,F)$ by
		\begin{equation}
			\begin{split}
				\ConHom(E,F)_\Total 
				& \coloneqq \Hom(E_\Total, F_\Total),\\
				\ConHom(E,F)_\Wobs 
				& \coloneqq \left\{ \Phi_p \in \Hom(\iota^\#E_\Total, \iota^\#F_\Total)
				\bigm| \Phi_p(E_\Wobs\at{p}) \subseteq F_\Wobs\at{p}\right.\\
				&\qquad\qquad\qquad\quad\qquad\qquad\qquad\left.\text{ and } \Phi_p(E_\Null\at{p}) \subseteq F_\Null\at{p} \right\},\\
				\ConHom(E,F)_\Null 
				& \coloneqq \left\{ \Phi_p \in \Hom(\iota^\#E_\Total, \iota^\#F_\Total)
				\bigm| \Phi_p(E_\Wobs\at{p}) \subseteq F_\Null\at{p} \right\}, \\
				\nabla^{\Hom}_X A 
				&\coloneqq \nabla_X^F \circ A - A \circ \nabla_X^E,
			\end{split}
		\end{equation}
		where $A \in \Secinfty(\ConHom(E,F)_\Wobs / \ConHom(E,F)_\Null)$
		is identified with
		$A \colon \Secinfty(E_\Wobs/E_\Null) \to \Secinfty(F_\Wobs/F_\Null)$
		using \autoref{lem:WobsNullQuotientHom}
		and $X \in \Secinfty(D)$,
		yields a constraint vector bundle, called the \emph{homomorphism bundle},
		with
		\begin{equation}
			\label{eq:ConstructionsCVect_HomRank}
			\rank(\ConHom(E,F)) = \rank(E^*) \strtensor \rank(F).
		\end{equation}
		\item \label{prop:ConstructionsCVect_PullBack}
		Defining $\phi^\# F$ by
		\begin{equation}
			\begin{split}
				\left(\phi^\# F \right)_\Total 
				& \coloneqq \phi^\#_\Total F_\Total,\\
				\left(\phi^\# F\right)_\Wobs 
				& \coloneqq \phi^\#_\Wobs F_\Wobs,\\
				\left(\phi^\# F\right)_\Null 
				& \coloneqq \phi^\#_\Wobs F_\Null,\\
				\nabla^{\phi^\# F}
				& \coloneqq \phi^\#_\Wobs \nabla^F,
			\end{split}
		\end{equation}
		where $\phi^\#_\Wobs \nabla^F$ denotes the pull-back linear	connection 
		of $\nabla^F$, see  
		\autoref{prop:ExistenceOfPullBackLinearConnection}, yields a 
		constraint vector bundle over $\mathcal{N}$, called the
		\emph{constraint pull-back vector bundle of $F$}, with
		\begin{equation}
			\rank(\phi^\#F) = \rank(F).
		\end{equation}
	\end{propositionlist}
\end{proposition}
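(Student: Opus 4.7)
The plan is to verify each of the six constructions by reducing to local computations via the adapted frames of \autoref{lem:AdaptedLocalFrames}. For each item three things need checking: (a) the prescribed $\WOBS$- and $\NULL$-pieces form vector subbundles of the stated ambient bundle of constant rank matching the right-hand side; (b) after identifying the reduced quotient with the analogous construction on $E_\Wobs/E_\Null$ and $F_\Wobs/F_\Null$, the prescribed connection is the standard induced $D_\mathcal{M}$-connection there; and (c) this connection is holonomy-free. The rank identities \eqref{eq:ConstructionsCVect_SumRank}--\eqref{eq:ConstructionsCVect_HomRank} then reduce to the combinatorial identities for $\sqcup$, $\tensor$, $\strtensor$ and $(\argument)^*$ on constraint index sets recorded in \autoref{def:TensorProdDualConIndSet}.

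For (a), write adapted frames $(e_i)_{i \in k}$ for $E$ and $(f_j)_{j \in l}$ for $F$ via \autoref{lem:AdaptedLocalFrames}: in cases \ref{prop:ConstructionsCVect_DirectSum}--\ref{prop:ConstructionsCVect_Dual} every subbundle appearing in the construction is the span of an explicit subfamily of $(e_i)$, $(e^i)$, $(e_i \tensor f_j)$ or $(e^i \tensor f_j)$ indexed by the appropriate subset of $k$, $k \sqcup l$, $k \tensor l$ or $k \strtensor l$. This simultaneously certifies locally constant rank for the sums defining, for instance, $(E \tensor F)_\Null$ or $(E \strtensor F)_\Wobs$, and delivers the canonical isomorphisms $(E \tensor F)_\Wobs / (E \tensor F)_\Null \simeq (E_\Wobs/E_\Null) \tensor (F_\Wobs/F_\Null) \simeq (E \strtensor F)_\Wobs/(E \strtensor F)_\Null$ and $(E^*)_\Wobs/(E^*)_\Null \simeq (E_\Wobs/E_\Null)^*$; for the homomorphism bundle the required identification with $\Hom(E_\Wobs/E_\Null, F_\Wobs/F_\Null)$ is exactly \autoref{lem:WobsNullQuotientHom}. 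Under these identifications the connections specified in the proposition are visibly the standard tensor, dual and Hom connections on the reduced bundles of $E$ and $F$.

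For (c) the essential point is naturality of parallel transport: if $\gamma$ is a $D_\mathcal{M}$-leafwise path in $C$ and $\Parallel^E_\gamma$, $\Parallel^F_\gamma$ denote the (path-independent) parallel transports on $E_\Wobs/E_\Null$ and $F_\Wobs/F_\Null$, then the transports induced on the reduced bundles of the five constructions \ref{prop:ConstructionsCVect_DirectSum}--\ref{prop:ConstructionsCVect_Hom} are $\Parallel^E_\gamma \oplus \Parallel^F_\gamma$, $\Parallel^E_\gamma \tensor \Parallel^F_\gamma$ (valid for both tensor products), $(\Parallel^E_\gamma)^{-*}$ and $A \mapsto \Parallel^F_\gamma \circ A \circ (\Parallel^E_\gamma)^{-1}$; each is a composition of path-independent isomorphisms and hence itself path-independent. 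The main difficulty lies in the pullback item \ref{prop:ConstructionsCVect_PullBack}, where the base manifold also changes. Here one first applies \autoref{prop:ExistenceOfPullBackLinearConnection} to obtain a linear connection $\phi^\#_\Wobs \nabla^F$ on $\phi^\#_\Wobs(F_\Wobs/F_\Null) \simeq (\phi^\# F)_\Wobs/(\phi^\# F)_\Null$, and the hypothesis $T\phi(D_\mathcal{N}) \subseteq D_\mathcal{M}$ then lets us restrict it to a $D_\mathcal{N}$-connection. Holonomy-freeness follows because any $D_\mathcal{N}$-leafwise path $\gamma$ in $N_\Wobs$ is mapped by $\phi$ to a $D_\mathcal{M}$-leafwise path $\phi \circ \gamma$ in $M_\Wobs$, along which parallel transport of the pulled-back connection coincides fibrewise with parallel transport of $\nabla^F$ along $\phi \circ \gamma$.
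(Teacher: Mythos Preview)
Your proposal is correct and follows essentially the same approach as the paper: verify the subbundle conditions, identify each quotient with the corresponding standard construction on $E_\Wobs/E_\Null$ and $F_\Wobs/F_\Null$, and deduce holonomy-freeness from naturality of parallel transport. The only notable difference is one of economy: the paper invokes adapted frames only where genuinely needed (for $\ConHom(E,F)$ and the pullback), handling the direct sum and dual by elementary observations and the tensor products by the constant-rank argument that $(E_\Null \tensor F_\Wobs) \cap (E_\Wobs \tensor F_\Null) = E_\Null \tensor F_\Null$ has pointwise constant dimension; your uniform use of frames is slightly heavier but equally valid, and your treatment of holonomy-freeness for the pullback is in fact more explicit than the paper's bare reference to \autoref{prop:ExistenceOfPullBackLinearConnection}.
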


\begin{proof}
	\ref{prop:ConstructionsCVect_DirectSum}: Note that the direct sum of subbundles is a subbundle of the direct sum 
	and we have 
	$(E \oplus F)_\Wobs / (E \oplus F)_\Null \simeq (E_\Wobs / E_\Null) \oplus (F_\Wobs / F_\Null)$.
	Moreover, the parallel transport of $\nabla^{E \oplus F}$ is given by the direct sum of the parallel transports of $\nabla^E$ and $\nabla^F$, and thus it is holonomy-free.
	
	\ref{prop:ConstructionsCVect_Tensor}: We need to show that $(E \tensor F)_\Null$ actually forms a subbundle
	of $E_\Wobs \tensor F_\Wobs$.
	Let $p \in C$ be given, then the dimension of
	$(E_\Null \tensor F_\Wobs)\at{p} \cap (E_\Wobs \tensor F_\Null)\at{p}
	= (E_\Null \tensor F_\Null )\at{p}$ is independent of $p$, and thus
	$(E \tensor F)_\Null$ has constant rank and therefore defines a subbundle
	of $E_\Wobs \tensor F_\Wobs$.
	The parallel transport of $\nabla^{E \tensor F}$ on 
	$(E \tensor F)_\Wobs / (E \tensor F)_\Null
	\simeq (E_\Wobs / E_\Null) \tensor (F_\Wobs / F_\Null)$
	is given by the tensor product of the parallel transports, and hence is holonomy-free.
	
	\ref{prop:ConstructionsCVect_StrTensor}: 
	With an analogous argument we see that $(E \strtensor F)_\Wobs$ and $(E \strtensor F)_\Null$
	are well-defined subbundles with
	$(E \strtensor F)_\Wobs / (E \strtensor F)_\Null
	\simeq (E_\Wobs / E_\Null) \tensor (F_\Wobs / F_\Null)$
	and holonomy-free covariant derivative.
	
	\ref{prop:ConstructionsCVect_Dual}:
	For the dual bundle we have by definition subbundles
	\begin{equation*}
		\Ann_{\iota^\#E_\Total}(E_\Wobs) \subseteq \Ann_{\iota^\#E_\Total}(E_\Null)\subseteq \iota^\#(E_\Total)^*
	\end{equation*}
	holds.
	Moreover, $\Ann_{\iota^\#E_\Total}(E_\Null) /\Ann_{\iota^\#E_\Total}(E_\Wobs) \simeq (E_\Wobs / E_\Null)^*$
	holds and since $\nabla^{E}$ is holonomy-free so is the dual covariant derivative $\nabla^{E^*}$.
	
	\ref{prop:ConstructionsCVect_Hom}: For the homomorphism bundle note that $\iota^\#\Hom(E_\Total, F_\Total) \simeq \Hom(\iota^\#E_\Total, \iota^\#F_\Total)$.
	By using adapted local frames as in \autoref{lem:AdaptedLocalFrames} it is then easy to see that
	$\ConHom(E,F)_\Wobs$ and $\ConHom(E,F)_\Null$ form subbundles of
	$\iota^\#\ConHom(E,F)_\Total$.
	Moreover, since $\nabla^{\Hom}$ is the covariant derivative obtained from the isomorphism
	$\Hom(E_\Wobs / E_\Null, F_\Wobs / F_\Null) \simeq (E_\Wobs / E_\Null)^* \tensor (F_\Wobs / F_\Null)$
	and duals as well as tensor products of holonomy-free covariant derivatives are again holonomy-free,
	so is $\nabla^{\Hom}$.
	
	\ref{prop:ConstructionsCVect_PullBack}: By choosing an adapted local frame for 
	$F$ on some open neighbourhood $U \subseteq M_\Total$ as 
	in \autoref{lem:AdaptedLocalFrames} immediately clarifies why 
	$(\phi^\# F)_\Wobs$ and $(\phi^\# F)_\Null$ are subbundles of 
	$(\phi^\# F)_\Total$. 
	Since we have
	$(\phi^\# F)_\Wobs / (\phi^\# F)_\Null \simeq \phi^\# (F_\Wobs / F_\Null)$, it 
	follows by \autoref{prop:ExistenceOfPullBackLinearConnection} that 
	$\nabla^{\phi^\# F}$ is holonomy-free. 
\end{proof}

Note that the dual bundle $E^*$ 
is indeed given by the homomorphism bundle $\ConHom(E,\mathcal{M} \times \Reals)$, and we clearly have $(E^*)^* \simeq E$.
Moreover, all the above constructions can easily seen to be functorial.
In particular, dualizing is a contravariant endofunctor of $\ConVect(\mathcal{M})$.

\begin{example}\
Let $\mathcal{M} = (M,C,D)$ be a constraint manifold.
\begin{examplelist}
	\item The constraint cotangent bundle of $\mathcal{M}$ is given by
	\begin{equation}
		\begin{split}
			(T^*\mathcal{M})_\Total 
			&= T^*M\\
			(T^*\mathcal{M})_\Wobs
			&= \Ann_{\iota^\#T^*M}(D)\\
			(T^*\mathcal{M})_\Null 
			&= \Ann_{\iota^\#T^*M}(TC).
		\end{split}
	\end{equation}
	Note that we can canonically identify
	$\Ann_{\iota^\#T^*M}(D) / \Ann_{\iota^\#T^*M}(TC) \simeq \Ann_{TC}(D)$.
	Under this identification $\cc{\iota^\#\alpha}$ becomes just
	the pullback (or restriction) $\iota^*\alpha \in \Ann_{TC}(D)$ of the form $\alpha$ to $C$.
	Then the dual Bott connection is given by
	\begin{equation}
		\nabla^{\Bott}_X \iota^*\alpha = \Lie_X \iota^*\alpha.
	\end{equation}
	\item For any given constraint vector bundle $E$ over $\mathcal{M}$ we can construct its
	antisymmetric tensor powers with respect to both $\tensor$ and $\strtensor$.
	Denoting by $\wedge$ the antisymmetric tensor product of classical vector bundles we obtain
	\begin{equation}
		\begin{split}
		(\Anti_{\tensor}^k E)_\Total &= \Anti^k E_\Total,\\
		(\Anti_{\tensor}^k E)_\Wobs &= \Anti^k E_\Wobs,\\
		(\Anti_{\tensor}^k E)_\Null &= \Anti^{k-1} E_\Wobs \wedge E_\Null,
	\end{split}
	\qquad\text{and}\qquad
	\begin{split}
		(\Anti_{\strtensor}^k E)_\Total &= \Anti^k E_\Total,\\
		(\Anti_{\strtensor}^k E)_\Wobs &= \Anti^k E_\Wobs + \Anti^{k-1} E_\Total \wedge E_\Null,\\
		(\Anti_{\strtensor}^k E)_\Null &= \Anti^{k-1} E_\Total \wedge E_\Null.
	\end{split}
	\end{equation}
	Here we suppressed the pullback $\iota^\#$ for the $\TOTAL$-bundles, since
	from the context it is clear that we only can take tensor products of vector bundles
	over $C$.
\end{examplelist}
\end{example}

At this point we could prove the expected compatibilities between the various constructions of constraint vector bundles, but we will postpone this in order to be able to reuse \autoref{prop:DualTensorHomIsos}.

\subsection{Sections of Constraint Vector Bundles}
\label{sec:ConSections}

In order to motivate the definition of sections of constraint vector bundles consider the total space of a constraint vector bundle $E = (E_\Total,E_\Wobs,E_\Null,\nabla)$ over a constraint manifold
$\mathcal{M} = (M,C,D)$ in the following way:
The vector bundle $E_\Total$ is clearly a smooth manifold, and since $C \subseteq M$ is a closed submanifold so is
$E_\Wobs \subseteq E_\Total$.
Additionally, by identifying $E_\Wobs / E_\Null$ with $E_\WobsNotNull$ such that
$E_\Wobs \simeq E_\Null \oplus E_\WobsNotNull$, there is a distribution $D_E$ on $E_\Wobs$ which is given by
$D_E \coloneqq \Ver(E_\Null) \oplus \Hor^{\Sigma}(D_{\mathcal{M}})$,
see \autoref{rem:DistributionsOnVBAsDsVB}.
Thus we can understand the total space of a constraint vector bundle as a constraint manifold.
The vector bundle projection $\pr \colon E \to \mathcal{M}$ turns out to be a smooth map of constraint manifolds.
Thus a constraint section of $E$ should be a constraint map $s \colon \mathcal{M} \to E$ such that 
$\pr \circ s = \id_\mathcal{M}$.
This means in particular that $s$ restricted to $C$ yields a section $\iota^\# s$ of $E_\Wobs$.
Moreover, $\iota^\# s$ should map equivalent points in $C$ to equivalent vectors in $E_\Wobs$.
In other words, $\iota^\# s$ should either map to $E_\Null$ or be covariantly constant along the leaves of $D$.
These considerations motivate the following definition of the constraint module of sections.

\begin{proposition}[Functor of constraint sections]
	\label{prop:FunctorConSections}
	Let $\mathcal{M} = (M,C,D)$ be a constraint manifold.
	Mapping a constraint vector bundle 
	$E = (E_\Total,E_\Wobs,E_\Null,\nabla)$ to
	\begin{equation}
		\begin{split}
			\ConSecinfty(E)_\Total
			&\coloneqq \Secinfty(E_\Total) \\
			\ConSecinfty(E)_\Wobs
			&\coloneqq \left\{ s \in \Secinfty(E_\Total) \bigm|  \iota^\#s \in \Secinfty(E_\Wobs),
			\nabla_X\cc{\iota^\#s} = 0 \text{ for all } X \in \Secinfty(D) \right\}, \\
			\ConSecinfty(E)_\Null
			&\coloneqq \left\{ s \in \Secinfty(E_\Total) \bigm| \iota^\#s \in 
			\Secinfty(E_\Null) \right\},
		\end{split}
	\end{equation}
	and a constraint vector bundle morphism $\Phi \colon E \to F$ over the identity to
	\begin{equation}
		\Phi \colon \ConSecinfty(E) \to \ConSecinfty(F),
		\qquad
		\Phi(s)(p) \coloneqq \Phi\big(s(p)\big)
	\end{equation}
	defines a functor 
	$\ConSecinfty \colon \ConVect(\mathcal{M}) \to \injstrConMod(\ConCinfty(\mathcal{M}))$.
\end{proposition}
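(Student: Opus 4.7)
The plan is to verify three things in sequence: that $\ConSecinfty(E)$ really is a strong constraint $\ConCinfty(\mathcal{M})$-module, that a constraint vector bundle morphism $\Phi$ preserves the $\WOBS$- and $\NULL$-subspaces of sections, and finally that the assignment respects identities and composition (which will be immediate once the first two points are in place).

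For the module structure, I would start by noting that $\ConSecinfty(E)_\Total = \Secinfty(E_\Total)$ is a $\Cinfty(M)$-module in the classical sense. The nontrivial checks are stability under multiplication by elements of the appropriate constraint subalgebras of $\ConCinfty(\mathcal{M})$. For $f\in\ConCinfty(\mathcal{M})_\Wobs$ and $s\in\ConSecinfty(E)_\Wobs$, I would compute $\iota^\#(fs)=(\iota^*f)(\iota^\#s)\in\Secinfty(E_\Wobs)$ and then
\begin{equation*}
    \nabla_X\overline{\iota^\#(fs)} = \bigl(\Lie_X\iota^*f\bigr)\overline{\iota^\#s} + (\iota^*f)\nabla_X\overline{\iota^\#s},
\end{equation*}
where the first summand vanishes because $f\in\ConCinfty(\mathcal{M})_\Wobs$ forces $\Lie_Xf|_C=0$ for $X\in\Secinfty(D)$, and the second vanishes by the hypothesis on $s$. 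For the $\NULL$-component, if $f\in\ConCinfty(\mathcal{M})_\Wobs$ and $s\in\ConSecinfty(E)_\Null$ then $\iota^\#(fs)\in\Secinfty(E_\Null)$ because $E_\Null$ is a subbundle. Strongness requires the two extra compatibilities, both of which are straightforward: $\ConCinfty(\mathcal{M})_\Total\cdot\ConSecinfty(E)_\Null\subseteq\ConSecinfty(E)_\Null$ follows exactly as the previous computation (no condition on $f$ was used), and $\ConCinfty(\mathcal{M})_\Null\cdot\ConSecinfty(E)_\Total\subseteq\ConSecinfty(E)_\Null$ follows because $f|_C=0$ forces $\iota^\#(fs)=0\in\Secinfty(E_\Null)$.

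For the morphism part, let $\Phi\colon E\to F$ be a constraint vector bundle morphism over $\id_\mathcal{M}$ and $s\in\Secinfty(E_\Total)$. I would exploit that $\iota^\#(\Phi(s)) = \Phi_\Wobs(\iota^\#s)$ when $s$ is $\WOBS$ or $\NULL$. If $s\in\ConSecinfty(E)_\Null$, then $\iota^\#s\in\Secinfty(E_\Null)$ and the condition $\Phi_\Wobs(E_\Null)\subseteq F_\Null$ from \autoref{def:ConVectorBundle} gives $\Phi(s)\in\ConSecinfty(F)_\Null$. If $s\in\ConSecinfty(E)_\Wobs$, then $\iota^\#s\in\Secinfty(E_\Wobs)$, so first $\iota^\#\Phi(s)\in\Secinfty(F_\Wobs)$, and second I need $\nabla^F_X\overline{\Phi_\Wobs(\iota^\#s)}=\overline{\Phi}(\nabla^E_X\overline{\iota^\#s})=0$. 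This intertwining relation between $\overline{\Phi}$ and the two connections is the concrete content of \eqref{eq:ConstraintVBMorphism} (in its equivalent primal form, cf.\ \autoref{prop:CharacterizationsMorConnection} in the appendix), so invoking that equivalence closes the argument.

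The main obstacle is precisely the last step: interpreting condition \eqref{eq:ConstraintVBMorphism}, which is phrased in terms of the dual connection on $1$-forms, as the statement that $\overline{\Phi}$ intertwines the partial connections on sections. I would rely on the appendix material summarised in \autoref{sec:morphismsOfLinearConnections} to convert between the two formulations. Once that is in hand, functoriality is clear because the assignment on underlying sets is just the classical pushforward of sections along a vector bundle map over the identity, and identity and composition are trivially preserved.
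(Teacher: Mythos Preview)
Your proposal is correct and follows essentially the same route as the paper: verify the module structure via the Leibniz rule for $\nabla$, check strongness using $\iota^\#(fs)=(\iota^*f)(\iota^\#s)$, and deduce preservation of the $\WOBS$- and $\NULL$-components of sections from the defining conditions on a constraint vector bundle morphism. You are in fact slightly more explicit than the paper, which simply asserts that compatibility with the partial connections forces $\Phi$ to send flat sections to flat sections; your appeal to \autoref{prop:CharacterizationsMorConnection} to pass from the dual formulation \eqref{eq:ConstraintVBMorphism} to the primal intertwining $\overline{\Phi}\circ\nabla^E=\nabla^F\circ\overline{\Phi}$ is exactly the right justification for that step.
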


\begin{proof}
	First note that $\ConSecinfty(E)_\Total$ is clearly a $\ConCinfty(\mathcal{M})_\Total$-module.
	In general we have $\iota^\#(f\cdot s) = \iota^*f \cdot  \iota^\#s$ for
	$s \in \Secinfty(E_\Total)$ and $f \in \Cinfty(M)$.
	Thus for $f \in \ConCinfty(\mathcal{M})_\Wobs$ and
	$s \in \ConSecinfty(E)_\Wobs$ we have
	$\iota^\#(f\cdot s) \in \Secinfty(E_\Wobs)$
	and
	\begin{align*}
		\nabla^E_X(\cc{\iota^\#(f \cdot s)})
		= \nabla^E_X(\iota^*f \cdot \cc{\iota^\#s})
		= \Lie_X\iota^*f \cdot \cc{\iota^\#s} + \iota^*f \cdot \nabla^E_X \cc{\iota^\#s}
		= \iota^*f \cdot \nabla^E_X \cc{\iota^\#s}
		= 0
	\end{align*}
	for all $X \in \Secinfty(D)$, where we used $\Lie_X \iota^*f = 0$.
	Now let $s \in \Secinfty(E_\Total)$ and $f \in \ConCinfty(\mathcal{M})_\Null$ be given, then
	$\iota^\#(f \cdot s) = \iota^*f \cdot \iota^\#s = 0 \in \Secinfty(E_\Null)$.
	If $s \in \Secinfty(E)_\Null$ and $f \in \Cinfty(M)$, we get again
	$\iota^\# (f \cdot s) \in \Secinfty(E_\Null)$.
	Hence we see that $\ConSecinfty(E)$ is indeed a strong $\ConCinfty(\mathcal{M})$-module.
	Let now $\Phi \colon E \to F$ be a constraint morphism of constraint vector bundles over $\id_{\mathcal{M}}$.
	Then $\Phi$ can be restricted to a morphism between the $\WOBS$- or $\NULL$-components, meaning that
	$\Phi$ commutes with $\iota^\#$.
	Moreover, since $\Phi$ is by definition compatible with the partial connections, it maps flat sections
	to flat sections.
	Hence $\Phi$ induces a constraint module morphism between the modules of sections.
\end{proof}

It should be stressed that $\ConSecinfty(E)_\Wobs$ and $\ConSecinfty(E)_\Null$ consist
of globally defined sections, with additional properties on $C$.
In particular, $\ConSecinfty(E)_\Null$ consists of those sections of $E_\Total$ which on $C$
are sections of the subbundle $E_\Null$,
while $\ConSecinfty(E)_\Wobs$ consists of sections of $E_\Total$ such that on $C$
it is a section of the subbundle $E_\Wobs$ whose $E_\Null$ component can be arbitrary, 
but everything complementary to $E_\Null$ needs to be covariantly constant along the leaves.

\begin{example}[(Co-)Vector fields]
	Let $\mathcal{M} = (M,C,D)$ be a constraint manifold.
	\begin{examplelist}
		\item For the constraint tangent bundle $T\mathcal{M}$ we get
		\begin{equation}
		\label{eq:constraintVectorFields}
			\begin{split}
				\ConSecinfty(T\mathcal{M})_\Total 
				&= \Secinfty(TM),\\
				\ConSecinfty(T\mathcal{M})_\Wobs
				&= \left\{ X \in \Secinfty(TM) \bigm| X\at{C} \in \Secinfty(TC) \text{ and}\right.\\
				&\hspace{8.5em}\left.[X,Y] \in \Secinfty(D) \text{ for all } Y \in \Secinfty(D) \right\},\\
				\ConSecinfty(T\mathcal{M})_\Null
				&= \left\{ X \in \Secinfty(TM) \bigm| X\at{C} \in \Secinfty(D) \right\},
			\end{split}
		\end{equation}
		by the definition of the Bott connection, see \eqref{eq:BottConnection}.
		\item For the constraint cotangent bundle $T^*\mathcal{M}$ we get
		\begin{equation}
			\begin{split}
				\ConSecinfty(T^*\mathcal{M})_\Total 
				&= \Secinfty(T^*M),\\
				\ConSecinfty(T^*\mathcal{M})_\Wobs
				&= \left\{ \alpha \in \Secinfty(T^*M) \bigm| \ins_X \iota^*\alpha = 0,\;
				\Lie_X \iota^*\alpha = 0  \text{ for all } X \in \Secinfty(D) \right\},\\
				\ConSecinfty(T^*\mathcal{M})_\Null
				&= \left\{ \alpha \in \Secinfty(T^*M) \bigm| \iota^*\alpha = 0 \right\},
			\end{split}
		\end{equation}
		by the definition of the dual vector bundle in \eqref{eq:ConDualBundle}.
		In other words $\ConSecinfty(T^*\mathcal{M})_\Wobs$ are exactly those one-forms on $M$ which are basic
		when restricted to $C$, and $\ConSecinfty(T^*\mathcal{M})_\Null$ are those which vanish on $C$.
		Here we have to carefully distinguish between the pullback $\iota^\#\alpha$ as a section of the pullback bundle $\iota^\#T^*M$
		and the pullback (or restriction) $\iota^*\alpha \in \Secinfty(T^*C)$ of the form $\alpha$ along $\iota$.
	\end{examplelist}
\end{example}

\begin{example}[Sections of sub-bundles]
	\label{ex:SectionsSubbundles}
	Let $F \subseteq E$ be a constraint sub-bundle
	of a constraint vector bundle $E$ over a constraint manifold $\mathcal{M}$.
	Then the inclusion $i \colon F \hookrightarrow E$
	induces a morphism
	$i \colon \ConSecinfty(F) \to \ConSecinfty(E)$
	of constraint $\ConCinfty(\mathcal{M})$-modules.
	Note that this is in fact a regular monomorphism,
	see \autoref{prop:MonoEpisConModk}, and thus
	$\ConSecinfty(F)$ is a constraint submodule
	of $\ConSecinfty(E)$.
\end{example}

\begin{example}\
	\label{ex:ConSections}
	Let $\mathcal{M} = (M,C,D)$ be a constraint manifold
	of dimension $d = (d_\Total, d_\Wobs, d_\Null)$,
	$p \in C$ and $(U,x)$ an adapted  chart around $p$ as in
	\autoref{lem:LocalStructureConManfifold}.
	Then
	\begin{equation}
		\begin{split}
			\frac{\del}{\del x^i} &\in 
			\ConSecinfty(T\mathcal{M}\at{U})_\Total
			\quad\text{ if }\quad i \in \{1, \dotsc, d_\Total\},\\
			\frac{\del}{\del x^i} &\in 
			\ConSecinfty(T\mathcal{M}\at{U})_\Wobs
			\quad\text{ if }\quad i \in \{1, \dotsc, d_\Wobs\}, \\
			\frac{\del}{\del x^i} &\in 
			\ConSecinfty(T\mathcal{M}\at{U})_\Null 
			\quad\text{ if }\quad i \in \{1, \dotsc, d_\Null \}.
		\end{split}
	\end{equation}
\end{example}

This example motivates the definition of a constraint local frame.

\begin{definition}[Constraint local frame]
	\label{def:ConLocalFrame}
	Let $E = (E_\Total, E_\Wobs, E_\Null,\nabla)$ be a constraint vector bundle of
	rank $k$ over
	a constraint manifold $\mathcal{M} = (M,C,D)$.
	A \emph{local frame} of $E$ on an open subset $U \subseteq M$, is
	a local frame $e_1, \dotsc, e_{k_\Total}$ of $E_\Total$ on $U$,
	such that
	\begin{definitionlist}
		\item $e_1, \dotsc, e_{k_\Wobs} \in \ConSecinfty(E\at{U})_\Wobs$
		and $\iota^\#e_1,\dotsc, \iota^\#e_{k_\Wobs}$ is a local 
		frame for $E_\Wobs$ on $U \cap C$, and
		\item $e_1, \dotsc, e_{k_\Null} \in \ConSecinfty(E\at{U})_\Null$
		and $\iota^\#e_1,\dotsc, \iota^\#e_{k_\Null}$ is a local 
		frame for $E_\Null$ on $U \cap C$.
	\end{definitionlist}
\end{definition}

The existence of local frames for constraint vector bundles is 
guaranteed by
\autoref{lem:AdaptedLocalFrames}.
To show that every $v_p \in E_\Total\at{p}$ is the value of some section $s \in \Secinfty(E_\Total)$
one can simply extend a local frame for $E_\Total$ to all of $M$ by means of a cut-off function.
Now for $v_p \in E_\Wobs\at{p}$ this is not so easy any more, since a cut-off function would need to
be an element of $\ConCinfty(\mathcal{M})_\Wobs$ itself to end up with a section in $\ConSecinfty(E)_\Wobs$.

\begin{corollary}
	\label{lem:ExtendingToConSections}
	Let $E = (E_\Total,E_\Wobs,E_\Null,\nabla)$ be a constraint
	vector bundle over a constraint manifold $\mathcal{M} = (M,C,D)$.
	\begin{lemmalist}
		\item \label{lem:ExtendingToConSections_1}
		For each $p \in C$ and $v_p \in E_\Null\at{p}$ there exists an
		$s \in \ConSecinfty(E)_\Null$ such that
		$s(p) = v_p$.
		\item  \label{lem:ExtendingToConSections_2}
		For each $p \in C$ and $v_p \in E_\Wobs\at{p}$ there exists an
		$s \in \ConSecinfty(E)_\Wobs$ such that
		$s(p) = v_p$.
	\end{lemmalist}
\end{corollary}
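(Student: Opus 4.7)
The plan is to prove part~\ref{lem:ExtendingToConSections_1} by a direct cut-off argument using the adapted local frames of \autoref{lem:AdaptedLocalFrames}, and then reduce part~\ref{lem:ExtendingToConSections_2} to constructing a single globally flat section of $E_\Wobs/E_\Null$ via \autoref{prop:ReductionOfConVect}.

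For \ref{lem:ExtendingToConSections_1}, I would pick an adapted local frame $e_1, \dotsc, e_{k_\Total}$ of $E_\Total$ on a neighborhood $U$ of $p$, with $\iota^\# e_i \in \Secinfty(E_\Null\at{U\cap C})$ for $i \leq k_\Null$, together with a cut-off $\chi \in \Cinfty(M)$ supported in $U$ and equal to $1$ at $p$. Writing $v_p = \sum_{i=1}^{k_\Null} c^i e_i(p)$, the section $s := \sum_{i=1}^{k_\Null} c^i \chi e_i \in \Secinfty(E_\Total)$ (extended by zero outside $U$) restricts on $C$ to $\sum c^i\, \chi\at{C}\, \iota^\# e_i \in \Secinfty(E_\Null)$, giving $s \in \ConSecinfty(E)_\Null$ with $s(p) = v_p$.

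For \ref{lem:ExtendingToConSections_2}, I fix once and for all a splitting $E_\Wobs = E_\Null \oplus E_\WobsNotNull$ (from an auxiliary Euclidean metric on $\iota^\# E_\Total$) and decompose $v_p = v_p^\Null + v_p^\WobsNotNull$ accordingly. Part~\ref{lem:ExtendingToConSections_1} already yields some $s^\Null \in \ConSecinfty(E)_\Null \subseteq \ConSecinfty(E)_\Wobs$ with $s^\Null(p) = v_p^\Null$, so it remains to produce $s_M \in \ConSecinfty(E)_\Wobs$ with $s_M(p) = v_p^\WobsNotNull$. The key step is to construct a \emph{globally} flat section $\bar s_C \in \Secinfty(E_\Wobs/E_\Null)$ with $\bar s_C(p) = \cc{v_p^\WobsNotNull}$. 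By \autoref{prop:ReductionOfConVect}~\ref{prop:ReductionOfConVect_2} the class $\cc{v_p^\WobsNotNull}$ corresponds under $\Theta$ to $\sigma_{[p]} := \pi_E(v_p^\WobsNotNull) \in E_\red\at{[p]}$, which I extend to a global section $\sigma \in \Secinfty(E_\red)$ by a standard partition-of-unity argument on $\mathcal{M}_\red$; setting $\bar s_C := \Theta^{-1} \circ \pi_\mathcal{M}^\# \sigma$, formula~\eqref{eq:ReductionOfConVect_Parallel} forces $\bar s_C$ to be leafwise invariant under parallel transport, so $\nabla_X \bar s_C = 0$ for all $X \in \Secinfty(D)$.

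Finally, the splitting identifies $\bar s_C$ with a section $\tilde s_C \in \Secinfty(E_\WobsNotNull) \subseteq \Secinfty(\iota^\# E_\Total)$, which admits a global extension to $s_M \in \Secinfty(E_\Total)$ by the standard extension of sections over a closed embedded submanifold (covering $M$ by opens with adapted frames and using a tubular projection with a partition of unity). Then $\iota^\# s_M = \tilde s_C \in \Secinfty(E_\Wobs)$ and $\cc{\iota^\# s_M} = \bar s_C$ is flat along $D$, so $s := s^\Null + s_M \in \ConSecinfty(E)_\Wobs$ satisfies $s(p) = v_p$. The main obstacle is precisely this global flatness: a naive cut-off of a local flat frame section from \autoref{lem:AdaptedLocalFrames} would introduce nonzero $\nabla_X$-terms along $D$, and it is the holonomy-freeness of $\nabla$ (encoded in \autoref{prop:ReductionOfConVect}) that lets us replace such a local flat section by a globally flat one while keeping the required value at $p$.
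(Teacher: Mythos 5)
Your proposal is correct and follows essentially the same route as the paper: part (i) by a cut-off of (adapted) frame sections restricting into $E_\Null$, and part (ii) by splitting $E_\Wobs = E_\Null \oplus E_\WobsNotNull$, handling the $E_\Null$-part via (i), and producing the complementary flat part by choosing a section of $E_\red$ through $[p]$, pulling it back to a covariantly constant section of $E_\Wobs/E_\Null$ via \autoref{prop:ReductionOfConVect}~\ref{prop:ReductionOfConVect_2} and \eqref{eq:ReductionOfConVect_Parallel}, and extending off $C$ with a tubular neighbourhood. The only cosmetic difference is that the paper builds the $\NULL$-section on $C$ first and then extends via the tubular projection, while you cut off the adapted frame of \autoref{lem:AdaptedLocalFrames} directly; the substance is identical.
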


\begin{proof}
	For the first part choose a local frame $e_1, \dotsc, e_{n_0}$
	of $E_\Null$ around $p$ with $n_0 = \rank(E_\Null)$.
	Then using $v_p = \sum_{k=1}^{n_0} v_p^k e_k(p)$
	we can define a local section $\sum_{k=1}^{n_0} v_p^k e_k$
	which we extend to a section $\tilde{s} \in \Secinfty(E_\Null) 
	\subseteq \Secinfty(\iota^\#E_\Total)$
	by means of a bump function.
	In order to extend $\tilde{s}$ to a section of $E_\Total$ choose 
	a tubular neighbourhood $V \subseteq M$ of $C$ with bundle 
	projection $\pi_V \colon V \to C$.
	Then pulling back $\tilde{s}$ to $V$ via $\pi_V$ and afterwards 
	extending to all of $M$ using a suitable bump function gives
	a globally defined section $s \in \Secinfty(E_\Total)$
	with $\iota^\#s = \tilde{s} \in \Secinfty(E_\Null)$
	and $s(p) = \tilde{s}(p) = v_p$.
	Note that the existence of such a bump function requires the closedness of $C$.
	For \ref{lem:ExtendingToConSections_2} choose a complementary 
	vector bundle $E_\WobsNotNull \to C$ to $E_\Null$ inside of 
	$E_\Wobs$, i.e. $E_\Wobs = E_\Null \oplus E_\WobsNotNull$ and hence
	$E_\WobsNotNull \simeq E_\Wobs / E_\Null$.
	Then $v_p = v_p^0 + v_p^\perp$ with $v_p^0 \in E_\Null\at{p}$
	and $v_p^\perp \in E_\WobsNotNull\at{p}$.
	By \ref{lem:ExtendingToConSections_1} we find a section
	$s_0 \in \Secinfty(E)_\Null$ such that $s_0(p) = v_p^0$.
	Now choose $\check{s} \in \Secinfty(E_\red)$ such that
	$\check{s}(\pi_\mathcal{M}(p)) = [v_p^\perp]$.
	Then by \autoref{prop:ReductionOfConVect} \ref{prop:ReductionOfConVect_2}
	we can identify $\pi_\mathcal{M}^\#\check{s}$ with a section
	$s^\perp \in \Secinfty(E_\Wobs / E_\Null)$ such that
	$\nabla_X s^\perp = 0$ for all $X \in \Secinfty(D)$.
	Then using a tubular neighbourhood as before to extend $s_0 + s^\perp$ to all of
	$M$ we obtain the desired section.
\end{proof}

In classical differential geometry the famous Serre-Swan Theorem states that the category
$\Vect(M)$ of vector bundles over a fixed manifold $M$ is equivalent to the category
$\Proj(\Cinfty(M))$ of finitely generated projective $\Cinfty(M)$-modules.
By \autoref{prop:FunctorConSections} we know that sections of constraint vector bundles form
strong constraint modules over the commutative strong constraint algebra $\ConCinfty(\mathcal{M})$
of functions on the constraint manifold $\mathcal{M}$.
In fact, every module of sections is finitely generated projective in the sense of
\autoref{def:ConProjectiveModules}, as the following lemma shows:

\begin{lemma}
	\label{lem:ConstructionConDualBasis}
	Let $E = (E_\Total,E_\Wobs,E_\Null,\nabla)$
	be a constraint vector bundle over a given constraint manifold
	$\mathcal{M} = (M,C,D)$.
	Suppose the following data is given:
	\begin{lemmalist}
		\item A dual basis $\{f_j,f^j\}_{j\in J_\NullNotVan}$ of $\Secinfty(E_\Null)$
		indexed by some set $J_\NullNotVan$.
		\item A complementary vector bundle $E_\WobsNotNull \subseteq E_\Wobs$ of $E_\Null$
		together with a dual basis $\{g_j,g^j\}_{j \in J_{\WobsNotNull}}$ of 
		$\Secinfty(E_\WobsNotNull)$, indexed by some set $J_{\WobsNotNull}$,
		such that
		$\nabla^E \cc{g_j} = 0$ and $\nabla^{E^*} \cc{g^j} = 0$.
		\item A complementary vector bundle $E_\TotalNotWobs \subseteq \iota^\#E_\Total$ of $E_\Wobs$
		together with a dual basis $\{h_j,h^j\}_{j \in J_{\TotalNotWobs}}$ of 
		$\Secinfty(E_\TotalNotWobs)$, indexed by some set $J_{\TotalNotWobs}$.
	\end{lemmalist}
	Then there exists a finite set $J_\Van$ and a constraint dual basis $(\{e_j\}_{j \in J},\{e^j\}_{j \in J^*})$, 
	indexed by the constraint index set
	$J = (J_\Total,J_\Wobs,J_\Null)$
	defined by
	\begin{equation}
	\begin{split}
		J_\Total &\coloneqq J_\Van \sqcup J_\NullNotVan \sqcup J_\WobsNotNull \sqcup J_\TotalNotWobs,\\
		J_\Wobs &\coloneqq J_\Van \sqcup J_\NullNotVan \sqcup J_\WobsNotNull,\\
		J_\Null &\coloneqq J_\Van \sqcup J_\NullNotVan,\\
	\end{split}
	\end{equation}
	such that
	\begin{equation}
		\begin{split}
			e_j\at{C} &= 0\\
			e_j\at{C} &= f_j\\
			e_j\at{C} &= g_j\\
			e_j\at{C} &= h_j
		\end{split}
		\qquad
		\begin{split}
			e^j\at{C} &= 0\\
			e^j\at{C} &= f^j\\
			e^j\at{C} &= g^j\\
			e^j\at{C} &= h^j
		\end{split}
		\qquad
		\begin{split}
			&\text{for all } j \in J_\Van, \\
			&\text{for all } j \in J_\NullNotVan, \\
			&\text{for all } j \in J_\WobsNotNull, \\
			&\text{for all } j \in J_\TotalNotWobs.
		\end{split}
	\end{equation}
\end{lemma}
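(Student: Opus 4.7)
The plan is to build the dual basis in two stages: first extending the given generators $f_j$, $g_j$, $h_j$ and their duals to global sections of $E_\Total$ and $(E_\Total)^*$ via a tubular neighbourhood of $C$ and a cutoff, which yields a ``partial'' dual basis that reconstructs any section on $C$; and then adding a finite correction extracted from the classical Serre-Swan theorem applied to $E_\Total \to M$ to absorb the discrepancy away from $C$.

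To carry out the first stage I would choose a tubular neighbourhood $\pr_V \colon V \to C$ of $C$ in $M$ together with a cutoff $\chi \in \Cinfty(M,\Reals)$ satisfying $\chi\at{C} = 1$ and $\supp\chi \subseteq V$ (where closedness of $C$ is needed). The given splitting $\iota^\# E_\Total = E_\Null \oplus E_\WobsNotNull \oplus E_\TotalNotWobs$ provides projections $\pi_\bullet$ onto each summand and, dually, inclusions $\pi_\bullet^* \colon E_\bullet^* \hookrightarrow \iota^\#(E_\Total)^*$. Then for each $j \in J_\NullNotVan \sqcup J_\WobsNotNull \sqcup J_\TotalNotWobs$, with corresponding generator $\tilde{s}_j$ and dual $s^j$, I set
\begin{equation*}
  e_j \coloneqq \chi \cdot \pr_V^\# \tilde{s}_j,
  \qquad
  e^j(x) \coloneqq \chi \cdot \pr_V^\# \bigl((\pi_\bullet^* s^j)(\iota^\# x)\bigr),
\end{equation*}
extended by zero outside $V$; the cutoff factor ensures $\Cinfty(M)$-linearity of $e^j$ globally. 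The required constraint memberships then reduce to pointwise checks on $C$: by construction $\iota^\# e_j$ equals $f_j$, $g_j$, or $h_j$ accordingly, placing $e_j$ in $\ConSecinfty(E)_\Null$ for $j \in J_\NullNotVan$ and in $\ConSecinfty(E)_\Wobs$ for $j \in J_\WobsNotNull$ (using $\nabla^E \cc{g_j} = 0$); dually, $\iota^\# e^j$ annihilates the complementary summands, so $e^j \in (\ConSecinfty(E)^*)_\Wobs$ for $j \in J_\WobsNotNull$ (using $\nabla^{E^*} \cc{g^j} = 0$) and $e^j \in (\ConSecinfty(E)^*)_\Null$ for $j \in J_\TotalNotWobs$.

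Because $\{f_j, g_j, h_j\}$ together with their projection-extended duals form a pointwise dual basis of $\iota^\# E_\Total$, the partial sum $S(x) \coloneqq \sum_{j \in J \setminus J_\Van} e_j \cdot e^j(x)$ will satisfy $S(x)\at{C} = x\at{C}$ for every $x$. Consequently $\Psi \coloneqq \id - S$ is a $\Cinfty(M)$-linear endomorphism of $\Secinfty(E_\Total)$ all of whose values vanish on $C$. The second stage then applies the classical Serre-Swan theorem to obtain a finite dual basis $\{u_k, u^k\}_{k \in J_\Van}$ of $\Secinfty(E_\Total)$, and I define $e_{\Van, k} \coloneqq \Psi(u_k) = u_k - S(u_k)$ with $e^{\Van, k} \coloneqq u^k$. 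Each $e_{\Van, k}$ vanishes on $C$ and so lies in $\ConSecinfty(E)_\Null$, and $\Cinfty(M)$-linearity of $\Psi$ gives $\sum_{k \in J_\Van} e_{\Van, k} \cdot u^k(x) = \Psi(x)$, whence $\sum_{j \in J} e_j \cdot e^j(x) = S(x) + \Psi(x) = x$. Local finiteness of the overall expansion transfers from that of the input dual bases together with finiteness of $J_\Van$.

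The main technical obstacle will be the careful bookkeeping verifying the constraint conditions on the $e^j$: one must check that $\iota^\# e^j$ lands in the prescribed annihilator subbundle and that its class modulo the deeper annihilator is $\nabla^{E^*}$-flat — a verification that hinges on the combined behaviour of the projections $\pi_\bullet$, the tubular extension, and the flatness hypotheses on $g^j$. Once these are spelled out, the finite correction step using Serre-Swan is essentially formal.
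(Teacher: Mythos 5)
Your overall strategy --- extend the adapted frame and coframe from $C$ over a tubular neighbourhood with a cutoff, then repair the completeness identity away from $C$ by a finite classical dual basis --- is close in spirit to the paper's proof, which patches the two dual bases with a \emph{quadratic} partition of unity $\chi_1^2+\chi_2^2=1$. But as written your argument has a genuine gap. The formula $e^j(x) \coloneqq \chi \cdot \pr_V^\#\bigl((\pi_\bullet^* s^j)(\iota^\# x)\bigr)$ evaluates $x$ on $C$ first and then pulls the resulting function back to $V$; this functional is \emph{not} $\Cinfty(M)$-linear (for $f \in \Cinfty(M)$ it returns $(\pr_V^*\iota^*f)\cdot e^j(x)$ rather than $f \cdot e^j(x)$), contrary to your claim that the cutoff ensures linearity. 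Consequently the $e^j$ are not elements of $\Secinfty(E_\Total^*) \simeq \Hom_{\Cinfty(M)}(\Secinfty(E_\Total),\Cinfty(M))$, as \autoref{def:ConProjectiveModules} requires of a dual basis, and --- more damagingly --- $\Psi = \id - S$ is then not $\Cinfty(M)$-linear, which is exactly what your second stage needs for $\sum_{k} \Psi(u_k)\, u^k(x) = \Psi(x)$. The repair is what the paper does: use the splitting to view $f^j, g^j, h^j$ as sections of $\iota^\# E_\Total^*$, transport them via the tubular-neighbourhood identification to honest local sections of $E_\Total^*$ over $V$, and multiply \emph{those} by $\chi$. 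With that reading $S$ becomes multiplication by $\chi^2$ near $C$ and your error-operator argument does go through.

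Even after this fix, however, your $J_\Van$-part has $e^j = u^j$, which does not vanish on $C$, so the asserted conclusion $e^j\at{C} = 0$ for $j \in J_\Van$ fails. This is harmless for the dual-basis conditions themselves (no condition is imposed on $e^n$ for $n$ in the $\NULL$-part of the index set), hence for the constraint Serre--Swan theorem downstream, but it does not prove the lemma as stated. The paper kills both problems at once: with $\chi_1^2 + \chi_2^2 = 1$, $\supp\chi_1 \subseteq V$, $\supp\chi_2 \subseteq M\setminus C$, it sets $e_j = \chi_1 c_j$, $e^j = \chi_1 c^j$ on the $C$-part and $e_j = \chi_2 d_j$, $e^j = \chi_2 d^j$ on the $J_\Van$-part, where $\{d_j,d^j\}$ is a dual basis over $M\setminus C$; the quadratic weights give $\sum_j e_j e^j(x) = \chi_1^2 x + \chi_2^2 x = x$, while every $J_\Van$-element carries a factor $\chi_2$ and therefore vanishes on $C$. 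In short: your asymmetric distribution of the cutoff (all of it on the section side of the correction term) is what loses the $e^j\at{C}=0$ clause; distributing it symmetrically, as in the paper, recovers the full statement.
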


\begin{proof}
	From the given data we obtain a dual basis
	$\{c_j, c^j\}_{i \in J_C}$ of $\iota^\#E_\Total$ with
	$J_C = J_\Null \sqcup J_\WobsNotNull \sqcup J_\TotalNotWobs$
	\begin{equation*}
		c_j = \begin{cases}
			f_j & \text{ if } j \in J_\Null \\
			g_j & \text{ if } j \in J_\WobsNotNull \\
			h_j & \text{ if } j \in J_\TotalNotWobs
		\end{cases}\quad \text{ and }\quad
		c^j = \begin{cases}
			f^j & \text{ if } j \in J_\Null \\
			g^j & \text{ if } j \in J_\WobsNotNull \\
			h^j & \text{ if } j \in J_\TotalNotWobs
		\end{cases}.
	\end{equation*}
	To extend the dual basis to all of $M$ we choose a tubular neighbourhood 
	$\pr_V \colon V \to C$, with $\iota_V \colon V \hookrightarrow M$
	an open neighbourhood of $C$.
	Then we can pull back the $c_j$ and $c^j$ to obtain a dual basis of 
	$\iota_V^\#E_\Total$, which we again denote by $\{c_j,c^j\}_{j\in J_C}$.
	On the open subset $\iota_{M\setminus C} \colon M\setminus C \hookrightarrow M$
	choose another dual basis $\{d_j,d^j\}_{j \in J_\Van}$ of $\iota_{M\setminus C}^\#E_\Total$.
	We now need to patch these dual bases together.
	For this choose a quadratic partition of unity $\chi_1, \chi_2 \in \Cinfty(M)$
	with $\chi_1^2 + \chi_2^2 = 1$ and $\supp \chi_1 \subseteq V$ and
	$\supp \chi_2 \subseteq M \setminus C$.
	Then $\{e_j, e^j\}_{j \in J_\Total}$ with $J_\Total = J_C \sqcup J_\Van$ defined by
	\begin{equation*}
		e_j = \begin{cases}
			\chi_1\cdot c_j & \text{ if } j \in J_C \\
			\chi_2 \cdot d_j & \text{ if } j \in J_\Van
		\end{cases}
		\quad \text{ and }\quad
		e^i = \begin{cases}
			\chi_1 \cdot c^j & \text{ if } j \in J_C \\
			\chi_2 \cdot d^j & \text{ if } j \in J_\Van
		\end{cases}
	\end{equation*}
	forms a dual basis for $E_\Total$.
	It remains to show that this dual basis fulfils the properties of
	\autoref{def:ConProjectiveModules}.
	For this consider the constraint set $J$ with $J_\Total$ as above,
	$J_\Wobs \coloneqq J_\Van \sqcup J_\NullNotVan \sqcup J_\WobsNotNull$ and
	$J_\Null = J_\Van \sqcup J_\NullNotVan$.
	By construction we have $e_i \in \ConSecinfty(E)_\Wobs$ for $i \in I_\Wobs$
	and $e_i \in \ConSecinfty(E)_\Null$ for $i \in I_\Null$.
	From the fact that $g^j \in \Secinfty(\Ann E_\Null)$ and $h^j \in \Secinfty(\Ann E_\Wobs)$
	it follows that $e^i \in \ConSecinfty(E^*)_\Wobs$ for $i \in I_\Total \setminus I_\Null$
	and $e^i \in \ConSecinfty(E^*)_\Null$ for $i \in I_\Total \setminus I_\Wobs$.	
\end{proof}

Conversely, it can be shown that every finitely generated projective strong
constraint module over $\ConCinfty(\mathcal{M})$ is given by
the sections of a constraint vector bundle.
Together this is the content of the constraint Serre-Swan theorem:

\begin{theorem}[Constraint Serre-Swan]
	\label{thm:strConSerreSwan}
	Let $\mathcal{M} = (M,C,D)$ be a constraint manifold.
	The functor $\ConSecinfty \colon \ConVect(\mathcal{M}) \to \strConProj({\ConCinfty(\mathcal{M})})$
	is an equivalence of categories.
\end{theorem}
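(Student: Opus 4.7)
The plan is to verify the three standard conditions for an equivalence of categories: well-definedness of $\ConSecinfty$ into $\strConProj(\ConCinfty(\mathcal{M}))$, essential surjectivity, and full faithfulness. Well-definedness follows directly by combining \autoref{lem:AdaptedLocalFrames} with \autoref{lem:ConstructionConDualBasis}: I would cover $M$ by finitely many opens carrying adapted local frames of $E$, patch these together via a partition of unity whose supports are split between a tubular neighbourhood of $C$ and its complement, and feed the resulting local dual bases of $E_\Null$, $E_\WobsNotNull\simeq E_\Wobs/E_\Null$ and $E_\TotalNotWobs$ into \autoref{lem:ConstructionConDualBasis}. This produces a constraint dual basis for $\ConSecinfty(E)$ indexed by a finite constraint index set, so $\ConSecinfty(E)$ is a finitely generated projective strong constraint $\ConCinfty(\mathcal{M})$-module.

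For essential surjectivity, given $\module{P}\in\strConProj(\ConCinfty(\mathcal{M}))$ with dual basis indexed by a finite constraint index set $J=(J_\Total,J_\Wobs,J_\Null)$, I would form the matrix $P=(e^i(e_j))_{i,j\in J_\Total}$, which is an idempotent endomorphism of the free $\ConCinfty(\mathcal{M})_\Total$-module $\ConCinfty(\mathcal{M})_\Total^{J_\Total}$. The classical Serre--Swan theorem applied to $P$ produces a vector bundle $E_\Total\subseteq\mathcal{M}\times\Reals^{J_\Total}$ with $\Secinfty(E_\Total)\simeq\module{P}_\Total$. The constraint axioms of \autoref{def:ConProjectiveModules} imply that the submatrices of $P$ indexed by $J_\Wobs$ and $J_\Null$ restrict over $C$ to idempotents whose images yield the candidate subbundles $E_\Wobs$ and $E_\Null\subseteq\iota^\#E_\Total$. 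Their constant ranks are obtained via reduction: \autoref{prop:ProjectiveStrReduction} and \autoref{prop:ConFunctionsVSReduction} identify $\module{P}_\red$ with a finitely generated projective $\Cinfty(\mathcal{M}_\red)$-module, hence with the sections of a bundle $E_\red\to\mathcal{M}_\red$, and pulling back along $\pi_\mathcal{M}\colon C\to\mathcal{M}_\red$ via \autoref{prop:ReductionOfConVect} supplies the required identification of $E_\Wobs/E_\Null$. The partial connection on $E_\Wobs/E_\Null$ is then defined by $\nabla_X[s]\coloneqq[\Lie_X\tilde s]$ for any extension $\tilde s\in\module{P}_\Total$, which is well-defined because $\module{P}$ is strong and is holonomy-free by construction of $E_\red$. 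A direct comparison of dual bases then yields $\ConSecinfty(E)\simeq\module{P}$.

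Faithfulness is immediate from the classical Serre--Swan theorem applied to $\Phi_\Total$. For fullness, a strong constraint module morphism $\Psi\colon\ConSecinfty(E)\to\ConSecinfty(F)$ yields a classical bundle morphism $\Phi_\Total\colon E_\Total\to F_\Total$; using \autoref{lem:ExtendingToConSections} to realize every $v_p\in E_\Wobs\at{p}$ and every $v_p\in E_\Null\at{p}$ as values of sections in $\ConSecinfty(E)_\Wobs$ and $\ConSecinfty(E)_\Null$ respectively, the preservation conditions on $\Psi$ force $\Phi_\Total$ to restrict to $\Phi_\Wobs\colon E_\Wobs\to F_\Wobs$ with $\Phi_\Wobs(E_\Null)\subseteq F_\Null$. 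Condition \eqref{eq:ConstraintVBMorphism} follows because the flat sections of $E_\Wobs/E_\Null$ are exactly the images of $\ConSecinfty(E)_\Wobs$ modulo $\ConSecinfty(E)_\Null$, both of which $\Psi$ preserves by hypothesis. I expect the main obstacle to lie in the essential-surjectivity step, specifically verifying that $E_\Wobs$ and $E_\Null$ have constant rank and that the induced partial connection is genuinely holonomy-free; the cleanest route is to factor this through the classical Serre--Swan theorem on $\mathcal{M}_\red$ via \autoref{prop:ProjectiveStrReduction} and \autoref{prop:ConFunctionsVSReduction}, then lift back to $C$ using \autoref{prop:ReductionOfConVect} together with closedness of $C$ and tubular-neighbourhood extensions.
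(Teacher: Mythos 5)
Your overall strategy matches the paper's where the paper actually argues: well-definedness of $\ConSecinfty$ into $\strConProj(\ConCinfty(\mathcal{M}))$ via \autoref{lem:ConstructionConDualBasis}, and full faithfulness via the classical Serre--Swan theorem together with \autoref{lem:ExtendingToConSections} (your fullness argument is essentially the content of \autoref{prop:ConSectionsHom}, which the paper proves beforehand and then quotes). Two remarks on the overlap. First, for the input data of \autoref{lem:ConstructionConDualBasis} the delicate item is the dual basis of $E_\WobsNotNull \simeq E_\Wobs/E_\Null$ consisting of $\nabla$-flat sections: patching local adapted frames by a partition of unity does not in general preserve covariant constancy unless the cutoffs are leafwise constant on $C$; the paper's cleaner route, which you should adopt, is to take a (finite, classical) dual basis of $E_\red$ and pull it back along $\pi_\mathcal{M}$, which is automatically flat. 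Second, note that the paper does \emph{not} prove essential surjectivity at all; it defers this to \cite{dippell:2023a} and \cite{dippell.menke.waldmann:2022a}, so your second paragraph goes beyond the paper's proof.

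Concerning that essential surjectivity sketch, the idea (idempotent over $\Cinfty(M)$, then subbundles over $C$ cut out by the constraint dual basis) is the right one, but as written it has soft spots. Invoking \autoref{prop:ReductionOfConVect} to identify $E_\Wobs/E_\Null$ with $\pi_\mathcal{M}^\#E_\red$ is circular at that stage: that proposition presupposes a constraint vector bundle, i.e.\ exactly the holonomy-free partial connection you are still constructing. The order should be: define $E_\Wobs\at{p} = \{s(p) \mid s \in \module{P}_\Wobs\}$ and $E_\Null\at{p} = \{s(p) \mid s \in \module{P}_\Null\}$; constant rank (of $E_\Null$ as well, not only of $E_\Wobs/E_\Null$) follows most cleanly from the fact that for $k \in J_\Total\setminus J_\Wobs$ one has $e^k(\module{P}_\Wobs) \subseteq \ConCinfty(\mathcal{M})_\Null$ (and similarly for $J_\Null$), so the $J_\Wobs\times J_\Wobs$ and $J_\Null\times J_\Null$ submatrices of $P$ are idempotent along $C$ and their ranks equal their traces, hence are constant on the connected $C$. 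Only then define the connection; for your formula $\nabla_X[s] \coloneqq [\Lie_X\tilde{s}]$ you still owe a check that the componentwise derivative lands in $E_\Wobs$ modulo $E_\Null$ and is independent of the extension $\tilde{s}$ (this uses strongness of $\module{P}$), and holonomy-freeness is not ``by construction of $E_\red$'' but follows because the classes of elements of $\module{P}_\Wobs$ are flat and span every fibre of $E_\Wobs/E_\Null$, so leafwise parallel transport is determined by their values and is therefore path-independent. With these repairs your sketch would indeed fill the gap the paper outsources to the references.
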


\begin{proof}
	We can always find complements and dual bases as required in
	\autoref{lem:ConstructionConDualBasis}.
	In particular, by choosing a dual basis for $E_\red$ and pulling it back to $C$
	wo obtain a dual basis of $E_\WobsNotNull$ fulfilling the required properties.
	This shows that $\ConSecinfty$ is indeed a functor into the category of finitely generated projective strong constraint modules.
	This functor is fully-faithful by the classical Serre-Swan Theorem.
	The fact that every finitely generated projective constraint module over $\ConCinfty(M)$
	is given by sections of a constraint vector bundle is somewhat easier to prove and will not be important to us.
	A proof for that can be found in \cite{dippell:2023a} and in a slightly different setting in
	\cite{dippell.menke.waldmann:2022a}.
	Together this shows that $\ConSecinfty$ is an equivalence of categories
\end{proof}

\begin{remark}
	In \cite{dippell.menke.waldmann:2022a} a similar result for
	non-strong projective constraint modules over
	$\ConCinfty(\mathcal{M})$ as a non-strong constraint algebra was found.
	The geometric objects used there are similar but not identical to the notion of constraint vector bundles, in particular the vector bundle $E_\Wobs$ is a subbundle of $E_\Total$ defined on all of $M$, and $\nabla$ is a partial connection on $\iota^\#E_\Wobs$ instead of $E_\Wobs / E_\Null$.
\end{remark}

As a first important property of the sections functor we show that it is compatible with direct sums.

\begin{proposition}
	\label{prop:ConSectionsDirectSum}
	Let $E = (E_\Total, E_\Wobs, E_\Null, \nabla^E)$ and
	$F = (F_\Total, F_\Wobs, F_\Null, \nabla^F)$ be constraint vector bundles over a constraint manifold
	$\mathcal{M} = (M,C,D)$.
	Then
	\begin{equation}
		\ConSecinfty(E \oplus F) \simeq \ConSecinfty(E) \oplus \ConSecinfty(F)
	\end{equation}
	as strong constraint $\ConCinfty(\mathcal{M})$-modules.
\end{proposition}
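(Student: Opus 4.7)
The plan is to exhibit the isomorphism directly from the classical decomposition for the $\TOTAL$-components and then verify compatibility with all three constraint layers. Concretely, at the level of classical smooth sections we have the well-known isomorphism
\begin{equation*}
  \Phi \colon \Secinfty(E_\Total \oplus F_\Total) \to \Secinfty(E_\Total) \oplus \Secinfty(F_\Total),
  \qquad s \mapsto (\pr_{E_\Total} \circ s, \pr_{F_\Total} \circ s),
\end{equation*}
which is a $\Cinfty(M)$-module isomorphism. So I would take $\Phi$ as the $\TOTAL$-component of the candidate constraint isomorphism and show that it is a regular isomorphism of strong constraint $\ConCinfty(\mathcal{M})$-modules in the sense of \autoref{prop:MonoEpisConModk}.

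The check that $\Phi$ is a constraint morphism reduces to three small verifications that follow directly from the definitions in \autoref{prop:ConstructionsCVect}~\ref{prop:ConstructionsCVect_DirectSum} and \autoref{prop:FunctorConSections}. First, if $s \in \ConSecinfty(E \oplus F)_\Null$, then $\iota^\#s \in \Secinfty(E_\Null \oplus F_\Null)$, which immediately gives $\iota^\#(\pr_{E_\Total}\circ s) \in \Secinfty(E_\Null)$ and $\iota^\#(\pr_{F_\Total}\circ s) \in \Secinfty(F_\Null)$, so $\Phi(s)$ lies in $\ConSecinfty(E)_\Null \oplus \ConSecinfty(F)_\Null$. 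Second, if $s \in \ConSecinfty(E \oplus F)_\Wobs$, then $\iota^\#s \in \Secinfty(E_\Wobs \oplus F_\Wobs)$ is covariantly constant along $D$ with respect to $\nabla^{E\oplus F} = \nabla^E \oplus \nabla^F$, and since the projections $\pr_{E_\Wobs/E_\Null}$ and $\pr_{F_\Wobs/F_\Null}$ commute with the direct-sum connection, both components land in the respective $\WOBS$-modules.

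For the inverse direction, the classical inverse $\Phi^{-1}(s_E,s_F) = s_E \oplus s_F$ is built componentwise, so if $(s_E,s_F) \in \ConSecinfty(E)_\Wobs \oplus \ConSecinfty(F)_\Wobs$ respectively $\ConSecinfty(E)_\Null \oplus \ConSecinfty(F)_\Null$, then $\iota^\#(s_E \oplus s_F)$ lies in $E_\Wobs \oplus F_\Wobs$ respectively $E_\Null \oplus F_\Null$, and flatness with respect to $\nabla^E \oplus \nabla^F$ follows fibrewise from flatness of each summand. Thus $\Phi^{-1}$ is also a constraint morphism, so $\Phi$ is an isomorphism of strong constraint $\ConCinfty(\mathcal{M})$-modules.

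There is essentially no obstacle here: the proof is purely bookkeeping once one notes that all three constraint pieces of the direct sum are defined as literal direct sums and that $\nabla^{E\oplus F}$ is by construction the direct sum of $\nabla^E$ and $\nabla^F$, so every condition decouples across the two summands. The only point requiring any care is to remember that $\Phi$ must be a \emph{regular} isomorphism, but since both $\Phi$ and $\Phi^{-1}$ preserve $\WOBS$ and $\NULL$, this is automatic from \autoref{prop:MonoEpisConModk}.
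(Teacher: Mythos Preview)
Your proof is correct and follows essentially the same approach as the paper: take the classical $\Cinfty(M)$-module isomorphism on the $\TOTAL$-level and verify directly that it respects the $\WOBS$- and $\NULL$-components, using that $(E\oplus F)_\Wobs$, $(E\oplus F)_\Null$, and $\nabla^{E\oplus F}$ are all defined as honest direct sums. The paper runs the map in the opposite direction and, instead of checking that the inverse is constraint, verifies the regular-epimorphism condition $\Phi^{-1}(\ConSecinfty(E\oplus F)_\Null) = \ConSecinfty(E)_\Null \oplus \ConSecinfty(F)_\Null$ from \autoref{prop:MonoEpisConModk}; but this is logically the same check you do when you show $\Phi^{-1}$ preserves $\NULL$, so the two arguments are interchangeable.
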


\begin{proof}
	From classical differential geometry we know that
	$\Phi \colon \Secinfty(E_\Total) \oplus \Secinfty(F_\Total) \to \Secinfty(E_\Total \oplus F_\Total)$
	given by $\Phi(s,s')(p) \coloneqq s(p) \oplus s'(p)$
	is an isomorphism of $\Cinfty(M)$-modules.
	Now let $s \in \ConSecinfty(E)_\Wobs$ and $s' \in \ConSecinfty(F)_\Wobs$ be given.
	Then clearly $\Phi(s,s')(p) = s(p) \oplus s'(p) \in E_\Wobs\at{p} \oplus F_\Wobs\at{p}$ for all $p \in C$.
	Moreover, it holds
	\begin{align}
		\nabla^\oplus_X \cc{\Phi(s,s')}
		= \nabla^E_X \cc{s} \oplus \nabla^F_X \cc{s'}
		= 0
	\end{align}
	by the definition of $\nabla^\oplus$ in \autoref{prop:ConstructionsCVect}.
	Thus $\Phi$ preserves the $\WOBS$-component.
	Next, let $s \in \ConSecinfty(E)_\Null$ and $s' \in \ConSecinfty(F)_\Null$ be given.
	Then $\Phi(s,s')(p) = s(p) \oplus s'(p) \in E_\Null\at{p} \oplus F_\Null\at{p}$
	for all $p \in C$
	shows that $\Phi$ also preserves the $\NULL$-components.
	For $\Phi$ to be a constraint isomorphism it remains to show that
	$\Phi^{-1}(\ConSecinfty(E \oplus F)_\Null) = \ConSecinfty(E)_\Null \oplus \ConSecinfty(F)_\Null$, cf. \autoref{prop:MonoEpisConModk}.
	For this let $t \in \ConSecinfty(E \oplus F)_\Null$ be given.
	Then we know that $t = s \oplus s'$ for some
	$s \in \Secinfty(E_\Total)$ and $t \in \Secinfty(F_\Total)$.
	For all $p \in C$ we have
	\begin{equation*}
		s(p) \oplus s'(p) = (s \oplus s')(p) = t(p)
		\in (E \oplus F)_\Null\at{p} = E_\Null\at{p} \oplus F_\Null\at{p},
	\end{equation*} 
	and thus $s \in \ConSecinfty(E)_\Null$ and $s' \in \ConSecinfty(F)_\Null$.
	Therefore, $\Phi$ is a constraint isomorphism.
\end{proof}

Similarly, sections of constraint vector bundles are compatible with internal homs:

\begin{proposition}
	\label{prop:ConSectionsHom}
	Let $E = (E_\Total, E_\Wobs, E_\Null, \nabla^E)$ and
	$F = (F_\Total, F_\Wobs, F_\Null, \nabla^F)$ be constraint vector bundles over a constraint manifold
	$\mathcal{M} = (M,C,D)$.
	Then
	\begin{equation}
		\ConSecinfty(\ConHom(E,F)) \simeq \ConHom_{\ConCinfty(\mathcal{M})}(\ConSecinfty(E), \ConSecinfty(F))
	\end{equation}
	as strong constraint $\ConCinfty(\mathcal{M})$-modules.
\end{proposition}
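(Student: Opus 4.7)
The plan is to upgrade the classical Serre-Swan isomorphism
\[
	\Psi \colon \Secinfty(\Hom(E_\Total,F_\Total)) \to \Hom_{\Cinfty(M)}\bigl(\Secinfty(E_\Total),\Secinfty(F_\Total)\bigr), \qquad \Psi(A)(s)(p) \coloneqq A(p)\bigl(s(p)\bigr),
\]
to an isomorphism of strong constraint $\ConCinfty(\mathcal{M})$-modules. Since the $\Total$-components already agree, it suffices to show that $\Psi$ restricts to bijections between the $\Wobs$-components and between the $\Null$-components on the two sides. The key geometric input will be the extension lemma \autoref{lem:ExtendingToConSections} together with \autoref{lem:WobsNullQuotientHom}, which identifies $\ConHom(E,F)_\Wobs/\ConHom(E,F)_\Null$ with $\Hom(E_\Wobs/E_\Null,F_\Wobs/F_\Null)$ so that $\nabla^{\Hom}$ satisfies the usual Leibniz rule.

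For the forward inclusions, let $A \in \ConSecinfty(\ConHom(E,F))_\Wobs$ and $s \in \ConSecinfty(E)_\Wobs$. The fibrewise containment $A(p)(E_\Wobs\at{p}) \subseteq F_\Wobs\at{p}$ shows that $\iota^\# \Psi(A)(s)$ is a section of $F_\Wobs$, and the Leibniz identity
\[
	\nabla^F_X \bigl( \cc{\iota^\# A} \cdot \cc{\iota^\# s} \bigr) = \bigl(\nabla^{\Hom}_X \cc{\iota^\# A}\bigr)\bigl(\cc{\iota^\# s}\bigr) + \cc{\iota^\# A}\bigl(\nabla^E_X \cc{\iota^\# s}\bigr) = 0
\]
ensures $\Psi(A)(s) \in \ConSecinfty(F)_\Wobs$. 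The containments $\Psi(A)(\ConSecinfty(E)_\Null) \subseteq \ConSecinfty(F)_\Null$ for $A$ of $\Wobs$-type, and $\Psi(A)(\ConSecinfty(E)_\Wobs) \subseteq \ConSecinfty(F)_\Null$ for $A \in \ConSecinfty(\ConHom(E,F))_\Null$, follow directly from the fibrewise constraints on $A(p)$.

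For the inverse inclusions, suppose $T \in \ConHom_{\ConCinfty(\mathcal{M})}(\ConSecinfty(E),\ConSecinfty(F))_\Wobs$ and set $A \coloneqq \Psi^{-1}(T)$. For any $v_p \in E_\Wobs\at{p}$, extend it via \autoref{lem:ExtendingToConSections} to $s \in \ConSecinfty(E)_\Wobs$ with $s(p)=v_p$; the hypothesis $T(s) \in \ConSecinfty(F)_\Wobs$ then gives $A(p)(v_p) = T(s)(p) \in F_\Wobs\at{p}$, and an analogous extension for $v_p \in E_\Null\at{p}$ yields $A(p)(v_p) \in F_\Null\at{p}$. For the flatness condition, the same extension produces $s \in \ConSecinfty(E)_\Wobs$ with $\nabla^E_X \cc{\iota^\# s} = 0$ identically on $C$, while $T(s) \in \ConSecinfty(F)_\Wobs$ gives $\nabla^F_X \cc{\iota^\# T(s)} = 0$; the Leibniz identity above then forces $(\nabla^{\Hom}_X \cc{\iota^\# A})(\cc{v_p}) = 0$ at $p$, proving $A \in \ConSecinfty(\ConHom(E,F))_\Wobs$. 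The $\Null$-case is handled analogously.

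The main obstacle is precisely this conversion from pointwise containment and flatness conditions on $A$ to module-theoretic conditions on $\Psi(A)$ and back; the extension lemma \autoref{lem:ExtendingToConSections}, whose proof relies on the closedness of $C$, is exactly what makes individual fibres accessible through globally defined constraint sections. Once these translations are in place the verification is routine, and the naturality together with the module-morphism properties are inherited from the classical Serre-Swan theorem.
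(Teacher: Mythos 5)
Your proposal is correct and follows essentially the same route as the paper: both upgrade the classical pointwise isomorphism on the $\Total$-components, verify the forward containments using the Leibniz rule for $\nabla^{\Hom}$ from \autoref{lem:WobsNullQuotientHom}, and establish surjectivity on the $\Wobs$- and $\Null$-components by extending fibre vectors to global constraint sections via \autoref{lem:ExtendingToConSections}, which combined with injectivity from the classical Serre--Swan argument yields a regular epimorphism and hence an isomorphism.
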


\begin{proof}
	On the $\TOTAL$-component we have the isomorphism
	\begin{equation*}
		\eta \colon \Secinfty(\Hom(E_\Total,F_\Total)) \to \Hom_{\Cinfty(M)}(\Secinfty(E_\Total),\Secinfty(F_\Total))
	\end{equation*}
	given by
	\begin{equation*}
		\eta(A)(s)\at{p} \coloneqq A\at{p}(s\at{p})
	\end{equation*}
	for all $p \in M$ and $s \in \Secinfty(E_\Total)$.
	We first show that $\eta$ is indeed a constraint morphism:
	If $A \in \ConSecinfty(\ConHom(E,F))_\Null$, then for every $p \in C$ and $s \in \ConSecinfty(E)_\Null$
	we have $\eta(A)(s)\at{p} = A\at{p}(s\at{p}) \in F_\Null\at{p}$
	since $s\at{p} \in E_\Null\at{p}$.
	Thus $\eta$ preserves the $\NULL$-component.
	Consider now $A \in \ConSecinfty(\ConHom(E,F))_\Wobs$.
	For all $p \in C$ and $s \in \ConSecinfty(E)_\Null$ we have
	$\eta(A)(s)\at{p} = A\at{p}(s\at{p}) \in F_\Null\at{p}$
	since $s\at{p} \in E_\Null\at{p}$.
	Moreover, if $s \in \ConSecinfty(E)_\Wobs$, then
	$\eta(A)(s)\at{p} = A\at{p}(s\at{p}) \in F_\Wobs\at{p}$
	and
	\begin{equation*}
		\nabla^F_X \cc{\eta(A)(s)\at{C}} = \cc{\eta}\big( \underbrace{\nabla^\ConHom_X \cc{A\at{C}}}_{=0}\big)(\cc{s\at{C}})
		+ \cc{\eta(A)}(\underbrace{\nabla^E_X \cc{s\at{C}}}_{=0})
		= 0.
	\end{equation*}
	Thus $\eta(A)(s) \in \ConSecinfty(F)_\Wobs$.
	Summarizing, this shows that $\eta$ is a constraint morphism.
	
	It remains to show that $\eta$ is regular surjective.
	For this recall from classical differential geometry that for every
	$A \in \Hom_{\Cinfty(M)}(\Secinfty(E_\Total),\Secinfty(F_\Total))$
	the corresponding preimage is given by
	$A(p)(s_p) \coloneqq A(s)(p)$
	for all $s_p \in E_\Total\at{p}$ and $s \in \Secinfty(E_\Total)$
	such that $s(p) = s_p$.
	Note that this does not depend on the choice of the section $s$.
	Here we use the usual abuse of notation.
	
	Now let $A \in \ConHom_{\ConCinfty(\mathcal{M})}(\ConSecinfty(E),\ConSecinfty(F))_\Wobs$ be given.
	Then for every $p \in C$ and $s_p \in E_\Wobs\at{p}$ there exists a section
	$s \in \ConSecinfty(E)_\Wobs$ with $s(p) = s_p$ by \autoref{lem:ExtendingToConSections}.
	Then we have $A(p)(s_p) = A(s)(p) \in F_\Wobs\at{p}$
	since $A(s) \in \ConSecinfty(F)_\Wobs$.
	Similarly, if $s_p \in E_\Null\at{p}$, then
	there exists $s \in \ConSecinfty(E)_\Null$ with $s(p) = s_p$
	and thus
	$A(p)(s_p) = A(s)(p) \in F_\Null\at{p}$.
	We also need to show that $\nabla^{\ConHom}_X\cc{A\at{C}} = 0$
	for all $X \in \Secinfty(D)$.
	For this let $p \in C$ and $\cc{s_p} \in (E_\Wobs / E_\Null)\at{p}$
	be given.
	Again by \autoref{lem:ExtendingToConSections} we find a section $s \in \ConSecinfty(E)_\Wobs$
	such that $\cc{s}(p) = \cc{s_p}$.
	Then
	\begin{align*}
		\big(\nabla^{\ConHom}_X\cc{A\at{C}}\big)(p)(\cc{s_p})
		&= \big(\nabla^\ConHom_X \cc{A\at{C}}\big)(\cc{s\at{C}})(p) \\
		&= \nabla^F_X\big(\cc{A\at{C}}(\cc{s\at{C}})\big)(p) - \cc{A\at{C}}\big(\nabla^E_X \cc{s\at{C}}\big)\\
		&= \nabla^F_X\big(\cc{A(s)\at{C}}\big)(p)\\
		&= 0,
	\end{align*}
	since $\nabla^E_X\cc{s\at{C}} = 0$
	and $A(s)\at{C} \in \ConSecinfty(F)_\Wobs$.
	This shows $A \in \ConSecinfty(\ConHom(E,F))_\Wobs$, and hence
	$\eta$ is surjective on the $\WOBS$-component.
	
	Recall that we additionally have to check that $\eta$ is also surjective on the $\NULL$-component.
	Thus let $A \in \ConHom_{\ConCinfty(\mathcal{M})}(\ConSecinfty(E),\ConSecinfty(F))_\Null$ be given.
	Then for $p \in C$ and $s_p \in E_\Wobs\at{p}$ we find again by \autoref{lem:ExtendingToConSections}
	a section $s \in \ConSecinfty(E)_\Wobs$ with $s(p) = s_p$.
	Then
	$A(p)(s_p) = A(s)(p) \in F_\Null\at{p}$ holds
	since we have $A(s) \in \ConSecinfty(F)_\Null$.
	This finally shows that $\eta$ is a regular epimorphism and hence a constraint isomorphism.
\end{proof}

\begin{corollary}
	\label{cor:SectionsOfDualConstraintVectorBundle}
	Let $E = (E_\Total,E_\Wobs,E_\Null,\nabla^E)$ be a constraint vector bundle over a constraint
	manifold $\mathcal{M} = (M,C,D)$.
	Then 
	\begin{equation}
		\ConSecinfty(E^*) \simeq \ConSecinfty(E)^*
	\end{equation}
	as strong constraint $\ConCinfty(\mathcal{M})$-modules.
\end{corollary}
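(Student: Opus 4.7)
My plan is to deduce this as a direct corollary of the preceding proposition on constraint sections of homomorphism bundles, by taking $F$ to be the trivial rank $(1,1,0)$ constraint line bundle.

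The first step is to identify $E^{*} = \ConHom(E, \mathcal{M} \times \Reals)$, where $\mathcal{M} \times \Reals$ denotes the trivial constraint vector bundle of rank $(1,1,0)$ as in the trivial bundle example. This identification was already noted immediately after the constructions proposition: the dual bundle is a special case of the constraint homomorphism bundle. Unwinding the definitions indeed gives $\ConHom(E, \mathcal{M} \times \Reals)_\Total = \Hom(E_\Total, M \times \Reals) \simeq (E_\Total)^{*}$, and the $\WOBS$- and $\NULL$-components reduce to the annihilator subbundles $\Ann_{\iota^{\#}E_\Total}(E_\Null)$ and $\Ann_{\iota^{\#}E_\Total}(E_\Wobs)$, with $\nabla^{\ConHom}$ agreeing with the dual Bott-type connection $\nabla^{E^{*}}$ because the partial connection on the trivial line bundle is just the Lie derivative.

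The second step is to check that $\ConSecinfty(\mathcal{M} \times \Reals) = \ConCinfty(\mathcal{M})$ as strong constraint $\ConCinfty(\mathcal{M})$-modules. This is immediate from the definitions: sections of the total bundle are smooth functions $M \to \Reals$; the condition $\iota^{\#}s \in \Secinfty(E_\Null) = 0$ collapses to $s|_{C} = 0$, matching $\ConCinfty(\mathcal{M})_\Null$; and since the partial connection is the Lie derivative, the flatness condition $\nabla_X \cc{\iota^{\#}s} = 0$ for all $X \in \Secinfty(D)$ becomes $\Lie_X (s|_{C}) = 0$, matching $\ConCinfty(\mathcal{M})_\Wobs$.

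Combining these two observations with the proposition on $\ConSecinfty(\ConHom(E,F))$ applied to $F = \mathcal{M} \times \Reals$ yields the chain of natural isomorphisms of strong constraint $\ConCinfty(\mathcal{M})$-modules
\begin{equation*}
\ConSecinfty(E^{*})
\simeq \ConSecinfty\big(\ConHom(E, \mathcal{M} \times \Reals)\big)
\simeq \ConHom_{\ConCinfty(\mathcal{M})}\big(\ConSecinfty(E), \ConCinfty(\mathcal{M})\big)
= \ConSecinfty(E)^{*},
\end{equation*}
where the last equality is the definition of the dual constraint module. Since every step is an already-established natural isomorphism, there is no real obstacle; the only thing to keep in mind is that one must use the rank $(1,1,0)$ trivial bundle (so that $E_\Null = 0$), not the rank $(1,1,1)$ one, in order to recover $\ConCinfty(\mathcal{M})$ itself on the target side.
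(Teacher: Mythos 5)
Your proposal is correct and follows exactly the paper's own argument: the paper proves the corollary by choosing $F = \mathcal{M} \times \Reals$ in the proposition identifying $\ConSecinfty(\ConHom(E,F))$ with $\ConHom_{\ConCinfty(\mathcal{M})}(\ConSecinfty(E),\ConSecinfty(F))$, together with the already-noted identifications $E^* \simeq \ConHom(E,\mathcal{M}\times\Reals)$ and $\ConSecinfty(\mathcal{M}\times\Reals) \simeq \ConCinfty(\mathcal{M})$. Your additional care about using the rank $(1,1,0)$ trivial bundle is a sensible explicit check that the paper leaves implicit.
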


\begin{proof}
	Choose $F = \mathcal{M} \times \Reals$ in \autoref{prop:ConSectionsHom}.
\end{proof}

\begin{proposition}
	Let $E = (E_\Total, E_\Wobs, E_\Null, \nabla^E)$ and
	$F = (F_\Total, F_\Wobs, F_\Null, \nabla^F)$ be constraint vector bundles over a constraint manifold
	$\mathcal{M} = (M,C,D)$.
	\begin{propositionlist}
		\item The sections functor
		$\ConSecinfty \colon (\ConVect(\mathcal{M}), \tensor)
		\to \big(\strConProj\big(\ConCinfty(\mathcal{M})\big), \tensor \big)$
		is monoidal, i.e. there exist natural isomorphisms
		\begin{equation}
			\ConSecinfty(E) \tensor[\ConCinfty(\mathcal{M})] \ConSecinfty(F)
		\simeq \ConSecinfty(E \tensor F)
		\end{equation}
		of strong constraint $\ConCinfty(\mathcal{M})$-modules.
		\item The sections functor
		$\ConSecinfty \colon (\ConVect(\mathcal{M}), \strtensor)
		\to \big(\strConProj\big(\ConCinfty(\mathcal{M})\big), \strtensor \big)$
		is monoidal,
		i.e. there exist natural isomorphisms
		\begin{equation}
			\ConSecinfty(E) \strtensor[\ConCinfty(\mathcal{M})] \ConSecinfty(F)
		\simeq \ConSecinfty(E \strtensor F)
		\end{equation}
		of strong constraint $\ConCinfty(\mathcal{M})$-modules.
	\end{propositionlist}
\end{proposition}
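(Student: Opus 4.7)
The plan is to construct the natural comparison map
\[
\Phi \colon \ConSecinfty(E) \tensor[\ConCinfty(\mathcal{M})] \ConSecinfty(F) \longrightarrow \ConSecinfty(E \tensor F),\qquad \Phi(s \tensor t)(p) = s(p) \tensor t(p),
\]
and the analogous map for $\strtensor$, and to verify in both cases that it is a constraint isomorphism. On the $\TOTAL$ component $\Phi$ coincides with the classical comparison morphism between $\Secinfty(E_\Total) \tensor[\Cinfty(M)] \Secinfty(F_\Total)$ and $\Secinfty(E_\Total \tensor F_\Total)$, which is an isomorphism of $\Cinfty(M)$-modules by the classical Serre--Swan theorem; naturality is automatic from the pointwise definition.

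Next, I would check that $\Phi$ respects the constraint structure. For the ordinary tensor product, if $s \in \ConSecinfty(E)_\Wobs$ and $t \in \ConSecinfty(F)_\Wobs$, then $\iota^\#(s \tensor t) = \iota^\#s \tensor \iota^\#t$ is a section of $E_\Wobs \tensor F_\Wobs$ and the Leibniz rule for $\nabla^{E \tensor F} = \nabla^E \tensor \id + \id \tensor \nabla^F$ ensures $\nabla^{E\tensor F}_X \cc{\iota^\#(s \tensor t)} = 0$ for $X \in \Secinfty(D)$; the $\NULL$ case matches the formula $(E \tensor F)_\Null = E_\Null \tensor F_\Wobs + E_\Wobs \tensor F_\Null$ of Proposition \ref{prop:ConstructionsCVect}. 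The strong tensor case is analogous, using the three-summand description of $(E \strtensor F)_\Wobs$ and $(E \strtensor F)_\Null$.

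The main work, and the step I expect to be the obstacle, is showing bijectivity on the $\WOBS$ and $\NULL$ components (together with regularity, i.e.\ $\Phi_\Wobs^{-1}(\ConSecinfty(E \tensor F)_\Null)$ is the $\NULL$-component of the source). The cleanest route is to invoke the constraint Serre--Swan theorem (\autoref{thm:strConSerreSwan}) to identify both sides as finitely generated projective strong constraint modules over $\ConCinfty(\mathcal{M})$, and then work at the level of constraint dual bases. By Lemma \ref{lem:ConstructionConDualBasis}, choose constraint dual bases $(\{e_i\}_{i\in M},\{e^i\}_{i\in M^*})$ and $(\{f_j\}_{j\in N},\{f^j\}_{j\in N^*})$ of $\ConSecinfty(E)$ and $\ConSecinfty(F)$. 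By the proposition on projective modules in \autoref{sec:ProjectiveModules}, the families $(\{e_i \tensor f_j\},\{e^i \tensor f^j\})$ indexed by $M \tensor N$ and $M \strtensor N$ respectively are constraint dual bases of $\ConSecinfty(E) \tensor \ConSecinfty(F)$ and $\ConSecinfty(E) \strtensor \ConSecinfty(F)$. On the other hand, combining Lemma \ref{lem:AdaptedLocalFrames} with the tubular-neighbourhood/cut-off argument of Lemma \ref{lem:ConstructionConDualBasis} applied to $E \tensor F$ and $E \strtensor F$ yields constraint dual bases of $\ConSecinfty(E \tensor F)$ and $\ConSecinfty(E \strtensor F)$ whose pointwise tensor-product structure matches exactly the index-set constructions of Definition \ref{def:TensorProdDualConIndSet}: the formulas $(E \tensor F)_\Null = E_\Null \tensor F_\Wobs + E_\Wobs \tensor F_\Null$ and $(E \strtensor F)_\Wobs = E_\Wobs \tensor F_\Wobs + E_\Null \tensor \iota^\#F_\Total + \iota^\#E_\Total \tensor F_\Null$ correspond term by term to the definitions of $M \tensor N$ and $M \strtensor N$.

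It follows that $\Phi$ sends the dual basis of the source bijectively to a dual basis of the target, preserving the constraint index-set decomposition, so $\Phi$ is surjective on each of the $\TOTAL$, $\WOBS$ and $\NULL$ components and its $\WOBS$-preimage of $\NULL$ is exactly $\NULL$. By \autoref{prop:MonoEpisConModk} this forces $\Phi$ to be a constraint isomorphism, yielding the two monoidality isomorphisms. The delicate point throughout—the one place one must be careful—is that the identifications between constraint sections of $E_\Wobs \tensor F_\Wobs$ (plus correction summands in the strong case) and the tensor products of section spaces have to be read off from a choice of complements $E_\Wobs \simeq E_\Null \oplus E_\WobsNotNull$ and $\iota^\#E_\Total \simeq E_\Wobs \oplus E_\TotalNotWobs$ (and likewise for $F$), exactly as in the proof of the constraint Serre--Swan theorem; once such complements are fixed, the dual-basis matching is direct.
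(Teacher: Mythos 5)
Your overall strategy coincides with the paper's: the pointwise comparison maps $I_{E,F}$ and $J_{E,F}$, a direct verification that they are constraint morphisms, and then invertibility established through constraint dual bases of $\ConSecinfty(E)$ and $\ConSecinfty(F)$, with the bookkeeping carried by the index sets $M \tensor N$ and $M \strtensor N$. The source-side statement you quote (that $\{e_i \tensor f_j\}$, $\{e^i \tensor f^j\}$ indexed by $M \tensor N$, resp.\ $M \strtensor N$, is a dual basis of the algebraic tensor product of the section modules) is indeed the algebraic proposition of \autoref{sec:ProjectiveModules}, exactly as in the paper.

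The step whose justification does not work as written is the target-side claim that these same product families form a constraint dual basis of the \emph{geometric} module $\ConSecinfty(E \tensor F)$ (and of $\ConSecinfty(E \strtensor F)$). You propose to obtain this by running \autoref{lem:AdaptedLocalFrames} and the tubular-neighbourhood construction of \autoref{lem:ConstructionConDualBasis} on $E \tensor F$ and then ``matching'' the result with the product family; but that construction yields a dual basis adapted to an arbitrary choice of complements of $(E\tensor F)_\Null$ and $(E\tensor F)_\Wobs$, which has no reason to be of product form, and agreement of index sets alone does not transfer the membership conditions of \autoref{def:ConProjectiveModules} to the specific sections $e_i \tensor f_j$ and $e^i \tensor f^j$. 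The paper gets this without any new geometric construction: for the vectors it is precisely the constraint-ness of the comparison map you already checked, $e_i \tensor f_j = I_{E,F}(e_i \tensor f_j)$, while for the covectors one applies the strong comparison map of the \emph{dual} bundles, $e^i \tensor f^j = J_{E^*,F^*}(e^i \tensor f^j) \in \ConSecinfty(E^* \strtensor F^*)_\Wobs \simeq \ConSecinfty(E \tensor F)^*_\Wobs$, using the bundle identity $(E \tensor F)^* \simeq E^* \strtensor F^*$ together with \autoref{cor:SectionsOfDualConstraintVectorBundle}; this dual-covector case is the piece your write-up leaves out. Finally, once the product family is known to be a constraint dual basis of the target, the cleanest conclusion is the paper's: write the classical inverse $K(X) = \sum_{i,j} (e^i \tensor f^j)(X)\cdot e_i \tensor f_j$ and check from the index conditions that it preserves the $\WOBS$- and $\NULL$-components, so that $I$ and $J$ are constraint isomorphisms; the slogan ``sends a dual basis to a dual basis bijectively'' is not by itself the statement that delivers surjectivity and regularity via \autoref{prop:MonoEpisConModk}, though it is repaired by exactly this expansion argument.
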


\begin{proof}
	Consider the maps 
	\begin{align*}
		\label{eq:ConSecinftyLaxMonoidal}
		I_{E,F} \colon \ConSecinfty(E) \tensor[\ConCinfty(\mathcal{M})] \ConSecinfty(F)
		\to \ConSecinfty(E \tensor F),\qquad
		I_{E,F}(s \tensor t)(p) \coloneqq s(p) \tensor t(p)
		\shortintertext{and}
		J_{E,F} \colon \ConSecinfty(E) \strtensor[\ConCinfty(\mathcal{M})] \ConSecinfty(F)
		\to \ConSecinfty(E \strtensor F),\qquad
		J_{E,F}(s \tensor t)(p) \coloneqq s(p) \tensor t(p).
	\end{align*}
	By carefully checking the definitions of the $\WOBS$- and $\NULL$-components it can be shown that these maps are in fact constraint, see \cite{dippell:2023a} for details.
	Moreover, by classical theory it is clear that these are components of a natural transformation.
	Let us show that the natural transformations 
	$I$ and $J$
	are in fact natural isomorphisms.
	For this we construct inverses.
	Let $E, F \in \ConVect(\mathcal{M})$ and let
	$(\{e_i\}_{i \in M}, \{e^i\}_{i \in M^*})$ as well as
	$(\{f_j\}_{j \in N}, \{f^j\}_{j \in N^*})$ be finite dual bases of $E$ and $F$, respectively.
	From classical differential geometry we know that
	$(\{e_i \tensor f_j\}_{(i,j) \in M_\Total \times N_\Total}, \{e^i \tensor f^j\}_{(i,j) \in M_\Total \times N_\Total})$
	is a dual basis of $\Secinfty(E_\Total \tensor F_\Total)$
	and that
	\begin{equation*}
		K(X) = \sum_{i \in M_\Total} \sum_{j \in N_\Total}
		(e^i \tensor f^j)(X) \cdot e_i \tensor[\Cinfty(M)] f_j,
	\end{equation*}
	for $X \in \Secinfty(E_\Total \tensor F_\Total)$,
	defines an inverse 
	$K \colon \Secinfty(E \tensor F) \to \Secinfty(E) \tensor[\Cinfty(M)] \Secinfty(F)$
	to $I$.
	To show that $K$ is a constraint morphism we prove that 
	the families
	$(\{e_i \tensor f_j\}_{(i,j) \in M \tensor N}, \{e^i \tensor f^j\}_{(i,j) \in (M \tensor N)^*})$
	form a dual basis for $\ConSecinfty(E \tensor F)$:
	\begin{cptitem}
		\item $(i,j) \in (M \tensor N)_\Wobs$:
		Then we know that
		\begin{equation*}
			e_i \tensor f_j = I_{E,F}(e_i \tensor[\Cinfty(M)] f_j) \in \ConSecinfty(E \tensor F)_\Wobs.
		\end{equation*}
		\item $(i,j) \in (M \tensor N)_\Null$:
		Then we know that
		$e_i \tensor f_j = I_{E,F}(e_i \tensor[\Cinfty(M)] f_j) \in \ConSecinfty(E \tensor F)_\Null$.
		\item $(i,j) \in (M \tensor N)^*_\Wobs = (M^* \strtensor N^*)_\Wobs$:
		Then we know that
		\begin{equation*}
			e^i \tensor f^j = J_{E^*,F^*}(e^i \tensor[\Cinfty(M)] f^j) \in \ConSecinfty(E^* \strtensor F^*)_\Wobs \simeq \ConSecinfty(E \tensor F)^*_\Wobs.
		\end{equation*}
		\item $(i,j) \in (M \tensor N)^*_\Null$:
		Then we know that
		\begin{equation*}
			e^i \tensor f^j = J_{E^*,F^*}(e^i \tensor[\Cinfty(M)] f^j) \in \ConSecinfty(E^* \strtensor F^*)_\Null
			\simeq \ConSecinfty(E \tensor F)^*_\Null.
		\end{equation*}
	\end{cptitem}
	This shows that $K$ is a constraint morphism, and therefore $I$ is an isomorphism.
	With completely analogous arguments, on can show that $J$ is an isomorphism as well.
	The unit object in $\ConVect(\mathcal{M})$ is for both products given by 
	$\mathcal{M} \times \Reals$.
	Since $\ConSecinfty(\mathcal{M} \times \Reals) \simeq \ConCinfty(\mathcal{M})$
	the section functor preserves the monoidal units, and hence gives a monoidal functor in both cases.
\end{proof}

From now on we will write $\tensor$ and $\strtensor$ instead of $\tensor[\ConCinfty(\mathcal{M})]$
and $\strtensor[\ConCinfty(\mathcal{M})]$.
We only indicate the algebra if it is not the algebra of functions on the underlying manifold.

Since $\ConSecinfty$ is compatible with direct sums and tensor products, we also get
\begin{align}
	\Anti^\bullet_{\tensor} \ConSecinfty(E)
	\simeq \ConSecinfty(\Anti^\bullet_{\tensor} E),\\
	\shortintertext{and}
	\Anti^\bullet_{\strtensor} \ConSecinfty(E)
	\simeq \ConSecinfty(\Anti^\bullet_{\strtensor} E)
\end{align}
for any constraint vector bundle $E$.

Moreover, we can use now \autoref{prop:DualTensorHomIsos}
to infer the following compatibilities of the constraint homomorphism bundle, dual and tensor products:

\begin{proposition}
	\label{prop:PropsOfStrongDuals}
	Let $\mathcal{M} = (M,C,D)$ be a constraint manifold and let 
	$E$, $F$, $G$ be constraint vector bundles over $\mathcal{M}$.
	\begin{propositionlist}
		\item \label{prop:PropsOfStrongDuals_1}
		We have $(E \oplus F)^* \simeq E^* \oplus F^*$.
		\item \label{prop:PropsOfStrongDuals_4}
		We have $\ConHom(E,F) \simeq E^* \strtensor F$.
		\item \label{prop:PropsOfStrongDuals_2}
		We have $(E \tensor F)^* \simeq E^* \strtensor F^*$.
		\item \label{prop:PropsOfStrongDuals_3}
		We have $(E \strtensor F)^* \simeq E^* \tensor F^*$.
		\item \label{prop:PropsOfStrongDuals_5}
		We have $\ConHom(E \tensor F, G) \simeq \ConHom(E, F^* \strtensor G)$.
	\end{propositionlist}
\end{proposition}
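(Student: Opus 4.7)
My strategy is to reduce the five bundle-level statements to their algebraic counterparts at the level of sections, invoking the constraint Serre--Swan theorem (\autoref{thm:strConSerreSwan}) together with the compatibility of $\ConSecinfty$ with direct sums (\autoref{prop:ConSectionsDirectSum}), with $\ConHom$ (\autoref{prop:ConSectionsHom}), with duals (\autoref{cor:SectionsOfDualConstraintVectorBundle}), and with both tensor products (just established). Since $\ConSecinfty \colon \ConVect(\mathcal{M}) \to \strConProj(\ConCinfty(\mathcal{M}))$ is fully faithful, any canonical module isomorphism between the sections of two constructions lifts uniquely to a canonical constraint vector bundle isomorphism.

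Concretely, for part \ref{prop:PropsOfStrongDuals_4} the chain
\[
\ConSecinfty(\ConHom(E,F)) \simeq \ConHom_{\ConCinfty(\mathcal{M})}(\ConSecinfty(E), \ConSecinfty(F)) \simeq \ConSecinfty(F) \strtensor \ConSecinfty(E)^* \simeq \ConSecinfty(F \strtensor E^*)
\]
combines \autoref{prop:ConSectionsHom}, \autoref{prop:DualTensorHomIsos}~\ref{prop:DualTensorHomIsos_1}, \autoref{cor:SectionsOfDualConstraintVectorBundle} and the monoidal compatibility of $\ConSecinfty$ with $\strtensor$; by Serre--Swan this yields the bundle isomorphism $\ConHom(E,F) \simeq E^* \strtensor F$. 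Parts \ref{prop:PropsOfStrongDuals_2}, \ref{prop:PropsOfStrongDuals_3} and \ref{prop:PropsOfStrongDuals_5} follow in the same pattern, using respectively parts \ref{prop:DualTensorHomIsos_2}, \ref{prop:DualTensorHomIsos_3} and \ref{prop:DualTensorHomIsos_4} of \autoref{prop:DualTensorHomIsos}. Part \ref{prop:PropsOfStrongDuals_1} is the simplest: either one observes directly from the defining formulas in \autoref{prop:ConstructionsCVect} that $\Ann_{\iota^\#(E\oplus F)_\Total}(E_\Null \oplus F_\Null)$ splits canonically as $\Ann E_\Null \oplus \Ann F_\Null$ (and analogously for the $\NULL$-component), or one applies the same Serre--Swan reduction to the immediate algebraic identity $(\module{E} \oplus \module{F})^* \simeq \module{E}^* \oplus \module{F}^*$, which reads off from the componentwise description in \autoref{def:ConstructionsConMod}.

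\textbf{Main obstacle.} The real difficulty lies not in this proposition but in the preparatory results on which it rests: the explicit case-by-case checks in \autoref{prop:DualTensorHomIsos} that the candidate maps respect the $\WOBS$- and $\NULL$-filtrations, the verification in \autoref{prop:ConSectionsHom} that evaluation of a morphism of sections on an extended local section lands in the correct filtration level (which is precisely why \autoref{lem:ExtendingToConSections} is needed), and the proof that $\ConSecinfty$ is monoidal for both $\tensor$ and $\strtensor$. Granting these, the present proposition is essentially formal bookkeeping; the only point that deserves care is the naturality of the exhibited isomorphisms, which ensures that the resulting bundle isomorphisms are themselves canonical and independent of the auxiliary choices (complements, local frames, tubular neighbourhoods) entering Serre--Swan.
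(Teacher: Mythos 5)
Your proposal is correct and follows essentially the same route as the paper: the paper states this proposition without a separate proof precisely because, as its preamble indicates, it is obtained by transferring the module-level isomorphisms of \autoref{prop:DualTensorHomIsos} through the fully faithful sections functor, using its compatibility with $\oplus$, $\ConHom$, duals and both tensor products — exactly the reduction you carry out.
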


So far we only consider the sections functor $\ConSecinfty$ on the category
$\ConVect(\mathcal{M})$ of constraint vector bundles over a fixed constraint manifold.
Already in classical differential geometry the sections functor will not be functorial when extended 
to the category of vector bundles over arbitrary base manifolds.
Nevertheless, as in classical geometry, we can remedy this defect by instead taking sections of the dual bundle.

\begin{proposition}
	\label{prop:SectionsOverGeneralBase}
	For every morphism of constraint vector bundles
	$\Phi \colon E \to F$ over
	$\phi \colon \mathcal{M} \to \mathcal{N}$
	defining
	\begin{equation} \label{eq:PullbackMap}
		\Phi^*(\alpha)\at{p}(s_p) \coloneqq \alpha\at{\phi(p)}(\Phi(s_p))
	\end{equation}
	yields a morphism
	$\Phi^* \colon \ConSecinfty(F^*) \to \ConSecinfty(E^*)$
	of strong constraint modules along the constraint algebra morphism
	$\phi^* \colon \ConCinfty(\mathcal{N}) \to \ConCinfty(\mathcal{N})$
	given as in  
	\autoref{prop:FunctionsOnConManifolds}
\end{proposition}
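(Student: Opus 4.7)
The plan is to verify, beyond the classical fact that $\Phi^*$ is a module morphism over $\phi^*$ at the $\TOTAL$-level, that $\Phi^*$ is compatible with the two filtrations: it should map $\ConSecinfty(F^*)_\Null$ into $\ConSecinfty(E^*)_\Null$ and $\ConSecinfty(F^*)_\Wobs$ into $\ConSecinfty(E^*)_\Wobs$. Total-level module linearity $\Phi^*(f\cdot\alpha)=\phi^*f\cdot\Phi^*\alpha$ is the classical statement about pullbacks of cotensors, so nothing new is required there.

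For the $\NULL$-component, I would first unwind that $\alpha \in \ConSecinfty(F^*)_\Null$ means $\iota^\#\alpha$ takes values in $\Ann(F_\Wobs)=(F^*)_\Null$. Since $\Phi$ is a constraint morphism, $\Phi(E_\Wobs)\subseteq F_\Wobs$, so the pointwise evaluation
\[
  \Phi^*\alpha\at{p}(v_p) = \alpha\at{\phi(p)}\bigl(\Phi(v_p)\bigr)
\]
vanishes for every $p \in M_\Wobs$ and $v_p \in E_\Wobs\at{p}$; hence $\iota^\#\Phi^*\alpha$ annihilates $E_\Wobs$, so $\Phi^*\alpha \in \ConSecinfty(E^*)_\Null$. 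The entirely analogous argument using $\Phi(E_\Null)\subseteq F_\Null$ gives, for $\alpha \in \ConSecinfty(F^*)_\Wobs$, the subbundle containment $\iota^\#\Phi^*\alpha \in \Ann(E_\Null) = (E^*)_\Wobs$.

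The substantive content is in the remaining covariant-constancy half of the $\WOBS$-condition. Here I would first identify the section $\overline{\iota^\#\Phi^*\alpha} \in \Secinfty((E_\Wobs/E_\Null)^*)$, obtained by restricting $\Phi^*\alpha$ to $C$ and projecting along $\Ann(E_\Null)/\Ann(E_\Wobs) \simeq (E_\Wobs/E_\Null)^*$, with the section $\overline{\Phi}^{\,*}\overline{\iota^\#\alpha}$, where $\overline{\Phi} \colon E_\Wobs/E_\Null \to F_\Wobs/F_\Null$ is the induced bundle map and $\overline{\iota^\#\alpha}$ is the projection of $\iota^\#\alpha$ under $\Ann(F_\Null)/\Ann(F_\Wobs) \simeq (F_\Wobs/F_\Null)^*$. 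A direct fibrewise unwinding confirms the identification. With it in hand, the connection-compatibility clause of \autoref{def:ConVectorBundle} gives
\[
  \nabla^{E^*}_v \overline{\iota^\#\Phi^*\alpha} = (\overline{\Phi}_p)^{*}\bigl(\nabla^{F^*}_{T\phi(v)}\overline{\iota^\#\alpha}\bigr)
\]
for every $v \in D_\mathcal{M}\at{p}$. Since $\phi$ is a constraint map, $T\phi(v) \in D_\mathcal{N}\at{\phi(p)}$; and since $\alpha \in \ConSecinfty(F^*)_\Wobs$, we have $\nabla^{F^*}_Y\overline{\iota^\#\alpha}=0$ for every $Y \in \Secinfty(D_\mathcal{N})$. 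The right-hand side therefore vanishes, finishing the argument. The only genuinely nontrivial step is the identification of these two flavours of pullback under $\Ann(F_\Null)/\Ann(F_\Wobs)\simeq (F_\Wobs/F_\Null)^*$; everything else amounts to tracing the subbundle containments in the definition of a constraint vector bundle morphism together with $T\phi(D_\mathcal{M})\subseteq D_\mathcal{N}$.
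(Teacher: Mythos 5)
Your argument is correct and follows essentially the same route as the paper: verify the subbundle containments pointwise from $\Phi(E_\Wobs)\subseteq F_\Wobs$, $\Phi(E_\Null)\subseteq F_\Null$, and obtain the covariant-constancy half of the $\WOBS$-condition from the compatibility clause \eqref{eq:ConstraintVBMorphism} together with $T\phi(D_\mathcal{M})\subseteq D_\mathcal{N}$. The paper compresses this to a "straightforward check", while you make explicit the identification $\overline{\iota^\#\Phi^*\alpha}=\overline{\Phi}^{\,*}\overline{\iota^\#\alpha}$ that the paper leaves implicit; the content is the same.
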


\begin{proof}
	From classical differential geometry we know that $\Phi^*$ defines a module morphism along $\phi^*$ on the total components.
	Thus it only remains to show that $\Phi^*$ is indeed a constraint morphism.
	This is a straightforward check using \eqref{eq:PullbackMap}:
	If $\alpha \in \ConSecinfty(F^*)_\Wobs$, then for every $s_p \in E_\Null\at{p}$
	we get $\Phi(s_p) \in F_\Null\at{\phi(p)}$
	and thus $\alpha\at{\phi(p)}(\Phi(s_p)) = 0$.
	Moreover, for 
	$\nabla_X^{E^*} \overline{\alpha}$ vanishes
	for all $X \in \Secinfty(D)$
	since $\Phi$ is a constraint morphism, see \eqref{eq:ConstraintVBMorphism}.
	Thus we know that $\Phi^*$ preserves the $\WOBS$-component.
	Similarly, one obtains that $\Phi^*$ also preserves the $\NULL$-component.
\end{proof}

We finish this section with the compatibility of reduction with taking sections.

\begin{proposition}[Constraint sections vs. reduction]
	\label{prop:ConSecVSReduction}
	Let $\mathcal{M} = (M,C,D)$ be a constraint manifold.
	There exists a natural isomorphism making the following diagram commute:
	\begin{equation}
		\begin{tikzcd}
			\ConVect(\mathcal{M})
			\arrow[r,"\ConSecinfty"]
			\arrow[d,"\red"{swap}]
			& \strConProj(\ConCinfty(\mathcal{M}))
			\arrow[d,"\red"] \\
			\Vect(\mathcal{M}_\red)
			\arrow[r,"\Secinfty"]
			& \strConProj(\Cinfty(\mathcal{M}_\red))
		\end{tikzcd}
	\end{equation}
\end{proposition}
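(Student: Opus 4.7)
The plan is to construct, for each constraint vector bundle $E$ over $\mathcal{M}$, a natural isomorphism $\Theta_E \colon \ConSecinfty(E)_\red \to \Secinfty(E_\red)$ and then check naturality. The definition of $\Theta_E$ is forced: given $s \in \ConSecinfty(E)_\Wobs$, the restriction $\iota^\# s$ lies in $\Secinfty(E_\Wobs)$ and its class $\overline{\iota^\# s} \in \Secinfty(E_\Wobs/E_\Null)$ satisfies $\nabla^E_X \overline{\iota^\# s} = 0$ for all $X \in \Secinfty(D)$. Using the isomorphism $\Theta \colon E_\Wobs/E_\Null \to \pi_\mathcal{M}^\# E_\red$ from \autoref{prop:ReductionOfConVect}~\ref{prop:ReductionOfConVect_2} (whose inverse is given by parallel transport), a flat section of $E_\Wobs/E_\Null$ corresponds exactly to a section of $E_\red$ pulled back along $\pi_\mathcal{M}$. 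This produces a unique $s_\red \in \Secinfty(E_\red)$, and I set $\Theta_E([s]) \coloneqq s_\red$.

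Well-definedness on the quotient and injectivity go together: if $s \in \ConSecinfty(E)_\Null$, then $\iota^\# s \in \Secinfty(E_\Null)$ so $\overline{\iota^\# s} = 0$ and $s_\red = 0$; conversely, if $s_\red = 0$ for $s \in \ConSecinfty(E)_\Wobs$, then $\overline{\iota^\# s} = 0$ (since $\Theta$ is an isomorphism and parallel transport is injective on fibres), hence $\iota^\# s \in \Secinfty(E_\Null)$, so $s \in \ConSecinfty(E)_\Null$.

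For surjectivity, start with $\check{s} \in \Secinfty(E_\red)$. Pulling back via $\pi_\mathcal{M}^\#$ and applying $\Theta^{-1}$ yields a flat section $\tilde{s} \in \Secinfty(E_\Wobs/E_\Null)$. Choose a complement $E_\WobsNotNull \subseteq E_\Wobs$ to $E_\Null$, so that $E_\Wobs \simeq E_\Null \oplus E_\WobsNotNull$ and $E_\WobsNotNull \simeq E_\Wobs/E_\Null$. This gives a lift of $\tilde{s}$ to a section $s_C \in \Secinfty(E_\Wobs)$ whose class mod $E_\Null$ is $\tilde{s}$, hence is flat. Now extend $s_C$ from $C$ to a global section of $E_\Total$ in the same manner as in the proof of \autoref{prop:ConFunctionsVSReduction} and \autoref{lem:ExtendingToConSections}: pick a tubular neighbourhood $\pr_V \colon V \to C$, pull back $s_C$ to $\pr_V^\# s_C \in \Secinfty(E_\Total\at{V})$, and multiply by a bump function $\chi \in \Cinfty(M)$ with $\chi\at{C} = 1$ and $\supp\chi \subseteq V$ (using closedness of $C$) to obtain $s \in \Secinfty(E_\Total)$ with $\iota^\# s = s_C$. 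By construction $s \in \ConSecinfty(E)_\Wobs$ and $\Theta_E([s]) = \check{s}$.

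Naturality is a routine check: for a morphism $\Phi \colon E \to F$ in $\ConVect(\mathcal{M})$, the square
\begin{equation}
\begin{tikzcd}
\ConSecinfty(E)_\red \arrow[r,"\Theta_E"]\arrow[d,"\Phi_\red"{swap}] & \Secinfty(E_\red)\arrow[d,"(\Phi_\red)_*"]\\
\ConSecinfty(F)_\red \arrow[r,"\Theta_F"] & \Secinfty(F_\red)
\end{tikzcd}
\end{equation}
commutes because $\Phi$ sends $E_\Null$ into $F_\Null$ and intertwines $\nabla^E$ with $\nabla^F$ on the quotients, so passing first to $\iota^\#$ and then to the quotient by $\sim_{\nabla}$ commutes with applying $\Phi$, which is precisely the definition of $\Phi_\red$ from \autoref{prop:RedFunctorVect}. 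The main obstacle is the surjectivity step, specifically the global extension of the flat lift $s_C$ from $C$ to $M$; however, this is essentially identical to the bump-function argument already used in \autoref{prop:ConFunctionsVSReduction} and \autoref{lem:ExtendingToConSections}, and the closedness of $C \subseteq M$ built into \autoref{def:ConstraintManifold} is exactly what makes it work.
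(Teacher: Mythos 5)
Your proposal is correct and follows essentially the same route as the paper's proof: the same identification of classes $[s]$ with sections of $E_\red$ via the flat-section/parallel-transport isomorphism of \autoref{prop:ReductionOfConVect}~\ref{prop:ReductionOfConVect_2}, injectivity by showing the kernel is exactly $\ConSecinfty(E)_\Null$, and surjectivity by lifting through a splitting $E_\Wobs \simeq E_\Null \oplus E_\WobsNotNull$ and extending over a tubular neighbourhood using closedness of $C$. The only cosmetic difference is that the paper phrases the forward map via a (possibly non-smooth) section $\sigma$ of $\pi_\mathcal{M}$ rather than directly through the flat-section correspondence, which amounts to the same map.
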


\begin{proof}
	Our goal is to construct an isomorphism
	$\eta_E \colon \ConSecinfty(E)_\red \to \Secinfty(E_\red)$
	for every constraint vector bundle $E$ over $\mathcal{M}$.
	Thus let $s \in \ConSecinfty(E)_\Wobs$ be given.
	For any (possibly non-smooth) section
	$\sigma \colon \mathcal{M}_\red \to C$
	of the quotient map $\pi_\mathcal{M}$
	we can define a map
	$\eta_E(s) \colon \mathcal{M}_\red \to E_\red$
	by
	$\eta_E(s)(p) \coloneqq [s(\sigma(p))]$,
	which is a section of
	the vector bundle projection $\pr_{E_\red}$.
	Note that this map is independent of the choice
	of the section $\sigma$, since $s \in \ConSecinfty(E)_\Wobs$.
	Thus $\eta_E(s)$ is also smooth, since locally we can choose
	$\sigma$ to be smooth.
	So we end up with $\eta_E(s) \in \Secinfty(E_\red)$.
	Note also that $\eta_E$ is clearly $\ConCinfty(\mathcal{M})_\Wobs$-linear
	along the projection
	$\pi_{\ConCinfty(\mathcal{M})} \colon \ConCinfty(\mathcal{M})_\Wobs
	\to \Cinfty(M_\red)$.
	Now suppose $\eta_E(s) = 0$.
	Then $[s(\sigma(p))] = 0$ for all $p \in \mathcal{M}_\red$ and every section $\sigma$.
	Thus $\iota^\#s \in \Secinfty(E_\Null)$.
	This means that $\ker \eta_E = \ConSecinfty(E)_\Null$
	and therefore it induces an injective morphism
	$\eta_E \colon \ConSecinfty(E)_\red \to \Secinfty(E_\red)$
	of $\ConCinfty(\mathcal{M})_\red \simeq \Cinfty(\mathcal{M}_\red)$-modules.
	It remains to show that $\eta_E$ is also surjective.
	For this let $t \in \Secinfty(E_\red)$ be given.
	Now choose a splitting $E_\Wobs \simeq E_\Null \oplus \iota^\#E_\red$
	using \autoref{prop:ReductionOfConVect} \ref{prop:ReductionOfConVect_2}
	and define
	$s(q) \coloneqq \Theta^{-1}(t(\pi_\mathcal{M}(q)))$ for all $q \in C$
	and extend it to a section of $E_\Total$ by use of a tubular neighbourhood.
	By \eqref{eq:ReductionOfConVect_Parallel} $s$ is covariantly constant
	as a section of $E_\Wobs / E_\Null$, and therefore we have
	$s \in \ConSecinfty(E)_\Wobs$.
	Finally, we have $\eta_E(s) = t$, showing that $\eta_E$ is surjective,
	and thus an isomorphism.	
\end{proof}

By \autoref{prop:ReductionAndDirectSumsHoms} we know that the reduction of constraint modules
is compatible with direct sums.
Using the constraint Serre-Swan Theorem we obtain an analogous argument for constraint vector bundles.
The fact that reduction is also compatible with inner homs and tensor products needs additional geometric features.

\begin{proposition}
	\label{prop:RedConstructionsConVect}
	Let $\mathcal{M} = (M_\Total,M_\Wobs,D_{\mathcal{M}})$ and 
	$\mathcal{N} = (N_\Total, N_\Wobs, D_{\mathcal{N}})$ be constraint manifolds
	and $\phi \colon \mathcal{M} \to \mathcal{N}$ a constraint map.
	Moreover, let $E, F \in \Vect(\mathcal{M})$ be constraint vector bundles over 
	$\mathcal{M}$ and let $G \in \Vect(\mathcal{N})$ be a
	constraint vector bundle over $\mathcal{N}$.
	\begin{propositionlist}
		\item \label{prop:RedConstructionsConVect_1}
		There exists a canonical isomorphism
		$(E \oplus F)_\red \simeq E_\red \oplus F_\red$.
		\item \label{prop:RedConstructionsConVect_2}
		There exists a canonical isomorphism
		$(E \tensor F)_\red \simeq E_\red \tensor F_\red$.
		\item \label{prop:RedConstructionsConVect_3}
		There exists a canonical isomorphism
		$(E \strtensor F)_\red \simeq E_\red \tensor F_\red$.
		\item \label{prop:RedConstructionsConVect_4}
		There exists a canonical isomorphism
		$\ConHom(E,F)_\red \simeq \Hom(E_\red, F_\red)$.
		\item \label{prop:RedConstructionsConVect_5}
		There exists a canonical isomorphism
		$(E^*)_\red \simeq (E_\red)^*$.
		\item \label{prop:RedConstructionsConVect_6}
		There exists a canonical isomorphism 
		$(\phi^\# G)_\red \simeq \phi^\#_\red G_\red$. 
	\end{propositionlist}
\end{proposition}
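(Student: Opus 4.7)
My plan is to treat each construction by combining a fibrewise algebraic identification of the relevant $\WOBS$/$\NULL$ quotient with the observation that the prescribed partial connection on that quotient is transparently compatible with the leafwise equivalence relation $\sim$. Concretely, for each construction $\Box \in \{\oplus, \tensor, \strtensor, \ConHom, (\argument)^*, \phi^\#\}$ I will exhibit a morphism from one side to the other of the claimed isomorphism, check that it is fibrewise bijective and smooth, and verify it intertwines parallel transport. The resulting reduction isomorphisms will be canonical since, in each case, they are induced by a universal property on fibres.

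For \ref{prop:RedConstructionsConVect_1}, the fibre computation $(E \oplus F)_\Wobs/(E \oplus F)_\Null \simeq (E_\Wobs/E_\Null) \oplus (F_\Wobs/F_\Null)$ is immediate, and $\nabla^{E \oplus F}$ is by construction the direct sum connection, whose parallel transport is the direct sum of those of $\nabla^E$ and $\nabla^F$. For \ref{prop:RedConstructionsConVect_2} and \ref{prop:RedConstructionsConVect_3}, the analogous fibre identifications
\[
	(E \tensor F)_\Wobs|_p / (E \tensor F)_\Null|_p \simeq E_\red|_{[p]} \tensor F_\red|_{[p]}
\]
and the same identity for $\strtensor$ reduce to standard right-exactness of tensor products over fields once one knows the intersection identity
\[
	(E_\Wobs \tensor F_\Wobs) \cap (E_\Null \tensor \iota^\#F_\Total + \iota^\#E_\Total \tensor F_\Null) = E_\Null \tensor F_\Wobs + E_\Wobs \tensor F_\Null,
\]
which one checks by picking adapted local frames as in \autoref{lem:AdaptedLocalFrames}. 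Both $\nabla^{E \tensor F}$ and $\nabla^{E \strtensor F}$ are defined as tensor-product connections, so their parallel transport is the tensor product of the individual transports, and passing to the quotient by $\sim$ yields $E_\red \tensor F_\red$.

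For \ref{prop:RedConstructionsConVect_4}, \autoref{lem:WobsNullQuotientHom} gives $\ConHom(E,F)_\Wobs/\ConHom(E,F)_\Null \simeq \Hom(E_\Wobs/E_\Null, F_\Wobs/F_\Null)$, and the partial connection $\nabla^{\Hom}_X A = \nabla^F_X \circ A - A \circ \nabla^E_X$ has parallel transport given by conjugation with the parallel transports of $E_\Wobs/E_\Null$ and $F_\Wobs/F_\Null$; thus $A \sim B$ precisely when they descend to the same element of $\Hom(E_\red, F_\red)$, yielding the isomorphism. Statement \ref{prop:RedConstructionsConVect_5} is then the special case $F = \mathcal{M} \times \Reals$ of \ref{prop:RedConstructionsConVect_4}. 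For \ref{prop:RedConstructionsConVect_6}, observe that $(\phi^\# G)_\Wobs/(\phi^\# G)_\Null = \phi_\Wobs^\#(G_\Wobs/G_\Null)$ and that, by \autoref{prop:ExistenceOfPullBackLinearConnection}, the parallel transport of $\phi_\Wobs^\#\nabla^G$ along a curve in a leaf of $D_\mathcal{M}$ is the $\phi$-pullback of the parallel transport of $\nabla^G$ along the image curve in a leaf of $D_\mathcal{N}$; since $\phi_\red \circ \pi_\mathcal{M} = \pi_\mathcal{N} \circ \phi_\Wobs$, the map $(p,v) \mapsto ([p],[v])$ descends to the desired fibrewise isomorphism $(\phi^\#G)_\red \to \phi_\red^\# G_\red$.

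The main technical obstacle is the strong tensor case in \ref{prop:RedConstructionsConVect_3}: the description of $(E \strtensor F)_\Wobs$ as a sum of three subbundles forces one to compute the intersection displayed above explicitly, and to recognize that only the $E_\Wobs \tensor F_\Wobs$ contribution survives modulo the $\NULL$-component. Everything else is a routine bookkeeping of how the tensor-product, Hom, and pullback connections behave under quotient; smoothness of all constructed maps follows from local trivialisations using \autoref{lem:AdaptedLocalFrames} together with the local existence of sections of $\pi_\mathcal{M}$, as already exploited in the proof of \autoref{prop:ConSecVSReduction}.
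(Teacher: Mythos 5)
Your proposal is correct, but it takes a noticeably different route from the paper's proof. The paper disposes of \ref{prop:RedConstructionsConVect_1} by citing the algebraic statement \autoref{prop:ReductionAndDirectSumsHoms} (via Serre--Swan), gets \ref{prop:RedConstructionsConVect_5} from \ref{prop:RedConstructionsConVect_4} and \ref{prop:RedConstructionsConVect_3} as ``analogous to \ref{prop:RedConstructionsConVect_2}'', and then proves \ref{prop:RedConstructionsConVect_2}, \ref{prop:RedConstructionsConVect_4}, \ref{prop:RedConstructionsConVect_6} by pulling everything back to $M_\Wobs$ along the surjective submersion $\pi_\mathcal{M}$, using the identification $\pi_\mathcal{M}^\# E_\red \simeq E_\Wobs/E_\Null$ from \autoref{prop:ReductionOfConVect} \ref{prop:ReductionOfConVect_2} together with the quotient descriptions already established in \autoref{prop:ConstructionsCVect} and \autoref{lem:WobsNullQuotientHom}, and finally descending along $\pi_\mathcal{M}$. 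You instead construct the comparison maps directly on the quotients $E_\Wobs/E_\Null$ and check fibrewise bijectivity plus compatibility with parallel transport for each construction ($\oplus$, $\tensor$, $\strtensor$, $\ConHom$, dual, $\phi^\#$). The trade-off: the paper's argument is shorter and reuses already-proved natural isomorphisms, but its final step ``isomorphy after pullback along a surjective submersion implies isomorphy downstairs'' tacitly needs the isomorphisms over $C$ to be compatible with the leafwise identifications, i.e. with the induced partial connections; that equivariance is exactly what your explicit parallel-transport verification supplies, so your version is more self-contained on this point. Conversely, your ``main technical obstacle'', the intersection identity behind $(E \strtensor F)_\Wobs/(E \strtensor F)_\Null \simeq (E_\Wobs/E_\Null)\tensor(F_\Wobs/F_\Null)$, is not really new: it is already contained in \autoref{prop:ConstructionsCVect} \ref{prop:ConstructionsCVect_StrTensor}, and your adapted-frame argument is the standard way to see it; likewise \ref{prop:RedConstructionsConVect_1} could simply be quoted from \autoref{prop:ReductionAndDirectSumsHoms} as the paper does, rather than reproved geometrically.
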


\begin{proof}
	Statement \ref{prop:RedConstructionsConVect_1}
	follows directly from \autoref{prop:ReductionAndDirectSumsHoms}.
	Moreover, statement \ref{prop:RedConstructionsConVect_5}
	follows from \ref{prop:RedConstructionsConVect_4}.
	And \ref{prop:RedConstructionsConVect_3} can be proved completely analogous to \ref{prop:RedConstructionsConVect_2}.
	Thus we only show \ref{prop:RedConstructionsConVect_2},
	\ref{prop:RedConstructionsConVect_4}
	and
	\ref{prop:RedConstructionsConVect_6}.
	
	For this we pull all vector bundles back to $M_\Wobs$ along 
	$\pi_\mathcal{M} \colon M_\Wobs \to \mathcal{M}_\red$ and then use 
	\autoref{prop:ReductionOfConVect} \ref{prop:ReductionOfConVect_2} to compare 
	them.
	We have
	\begin{align*}
		\pi_\mathcal{M}^\# (E \tensor F)_\red
		&\simeq \frac{(E \tensor F)_\Wobs}{(E \tensor F)_\Null}
		\simeq \frac{E_\Wobs}{E_\Null} \tensor \frac{F_\Wobs}{F_\Null}
		\simeq E_\red \tensor F_\red
		\simeq \pi_\mathcal{M}^\#(E_\red \tensor F_\red),
	\end{align*}
	as well as
	\begin{align*}
		\pi_\mathcal{M}^\#\ConHom(E,F)_\red
		&\simeq \frac{\ConHom(E,F)_\Wobs}{\ConHom(E,F)_\Null}
		\simeq \Hom(\frac{E_\Wobs}{E_\Null},\frac{F_\Wobs}{F_\Null})\\
		&\simeq \Hom(\pi_\mathcal{M}^\#E_\red,\pi_\mathcal{M}^\#F_\red)\\
		&\simeq \pi_\mathcal{M}^\#\Hom(E_\red,F_\red),
	\end{align*}
	and similarly
	\begin{equation*}
		\pi_\mathcal{M}^\#(\phi^\#G)_\red
		\simeq \frac{\phi_\Wobs^\#G_\Wobs}{\phi_\Wobs^\#G_\Null}
		\simeq \phi_\Wobs^\#\frac{G_\Wobs}{G_\Null}
		\simeq \phi_\Wobs^\#\pi_\mathcal{N}^\#G_\red
		\simeq \pi_\mathcal{M}^\#\phi_\red^\# G_\red.
	\end{equation*}
	Since $\pi_\mathcal{M}$ is a surjective submersion this is be enough to infer
	isomorphy on $\mathcal{M}_\red$.
\end{proof}

The above isomorphisms can be shown to be part of natural isomorphisms,
turning the functor $\red \colon \ConVect(\mathcal{M}) \to \Vect(\mathcal{M}_\red)$
into an additive, closed and monoidal functor with respect to both tensor products.

\begin{example}[Almost complex structures]
	\label{ex:AlmostComplex}
	Let $\mathcal{M} = (M,C,D)$ be a constraint manifold
	and let $J \in \Secinfty(\End(TM))$ be an almost complex structure on $M$, 
	i.e. $J^2 = - \id_{TM}$.
	If $J \in \ConSecinfty(\ConEnd(T\mathcal{M}))_\Wobs$,
	then it reduces to an almost complex structure $J_\red$
	on $\mathcal{M}_\red$.
\end{example}

\subsection{Constraint Cartan Calculus}
\label{sec:CartanCalculus}

In this last section on the geometry of constraint manifolds
we extend the classical Cartan calculus to the constraint setting.
In particular we want to study the relation between constraint differential forms and constraint multivector fields.
Let us start with a local characterization of constraint vector fields.
For this recall from \autoref{sec:ConIndexSets}
that the dual $n^*$ of a constraint index set
$n = (n_\Total, n_\Wobs, n_\Null)$
is given by
$n^* = (n_\Total,\, n_\Total \setminus n_\Null,\, n_\Total \setminus n_\Wobs)$.

\begin{lemma}
	\label{lem:LocalConVect}
	Let $\mathcal{M} = (M,C,D)$ be a constraint manifold of dimension 
	$n = (n_\Total,n_\Wobs,n_\Null)$ and consider
	$X \in \Secinfty(T M)$.
	\begin{lemmalist}
		\item \label{lem:LocalConVect_1}
		We have
		$X \in \ConSecinfty(T\mathcal{M})_\Wobs$
		if and only if for every adapted chart $(U,x)$ around $p \in C$ it holds
		\begin{align}
			X^i &\in \ConCinfty(\mathcal{M}\at{U})_\Wobs \text{ if } i \in (n^*)_\Wobs,\\
			X^i &\in \ConCinfty(\mathcal{M}\at{U})_\Null \text{ if } i \in (n^*)_\Null,
		\end{align}
		where $X\at{U} = \sum_{i=1}^{n_\Total} X^i \frac{\del}{\del x^i}$.
		\item \label{lem:LocalConVect_2}
		We have $X \in \ConSecinfty(T\mathcal{M})_\Null$
		if and only if for every adapted chart around $p \in C$ it holds
		\begin{align}
			X^i &\in \ConCinfty(\mathcal{M}\at{U})_\Null \text{ if } 
			i \in (n^*)_\Wobs,
		\end{align}
		where $X\at{U} = \sum_{i=1}^{n_\Total} X^i \frac{\del}{\del x^i}$.
	\end{lemmalist}
\end{lemma}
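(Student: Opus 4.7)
The plan is to work in an adapted chart $(U,x)$ around $p \in C$ as provided by \autoref{lem:LocalStructureConManfifold}. In such a chart, $C \cap U$ is cut out by $x^{n_\Wobs+1} = \dotsb = x^{n_\Total} = 0$, so $TC\at{U \cap C}$ is spanned by $\frac{\del}{\del x^i}\at{C}$ for $i \in \{1,\dotsc,n_\Wobs\}$, and the leaves of $D$ on $C \cap U$ are cut out by $x^{n_\Null+1} = \dotsb = x^{n_\Wobs} = 0$, so $D\at{U \cap C}$ is spanned by $\frac{\del}{\del x^i}\at{C}$ for $i \in \{1, \dotsc, n_\Null\}$. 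Writing $X\at{U} = \sum_i X^i \frac{\del}{\del x^i}$, the strategy is to translate the intrinsic definitions of $\ConSecinfty(T\mathcal{M})_\Wobs$ and $\ConSecinfty(T\mathcal{M})_\Null$ into pointwise conditions on the $X^i$ and then match them against \autoref{ex:ConFunctionsOnConMfld} \ref{ex:ConFunctionsOnConMfld_1}.

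For part \ref{lem:LocalConVect_2}, the defining condition $X\at{C} \in \Secinfty(D)$ is equivalent, in the adapted chart, to $X^i\at{C} = 0$ for all $i > n_\Null$. Since a function $f \in \Cinfty(M)$ lies in $\ConCinfty(\mathcal{M})_\Null$ exactly when $f\at{C} = 0$, this is the same as $X^i \in \ConCinfty(\mathcal{M}\at{U})_\Null$ for $i \in (n^*)_\Wobs = \{n_\Null+1, \dotsc, n_\Total\}$. Both implications are immediate from the local spanning of $D$.

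For part \ref{lem:LocalConVect_1}, I split the two conditions defining the $\Wobs$ component. First, $X\at{C} \in \Secinfty(TC)$ is equivalent to $X^i\at{C} = 0$ for $i > n_\Wobs$, i.e.\ $X^i \in \ConCinfty(\mathcal{M}\at{U})_\Null$ for $i \in (n^*)_\Null = \{n_\Wobs+1,\dotsc,n_\Total\}$. Second, to test the Bott-type condition $[X,Y] \in \Secinfty(D)$ for all $Y \in \Secinfty(D)$, it suffices to plug in the local generators $Y_k = \frac{\del}{\del x^k}$ for $k \in \{1,\dotsc,n_\Null\}$ (extended by the same coordinate formula), because under the assumption $X\at{C} \in \Secinfty(TC)$ the bracket $[X,Y]\at{C}$ depends only on $Y\at{C}$ and is $\Cinfty(C)$-linear in $Y\at{C}$ modulo $\Secinfty(D)$, so linear combinations of the $Y_k$ yield all sections of $D$. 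A direct coordinate computation gives
\begin{equation*}
	[X,Y_k]\at{C} = -\sum_{i=1}^{n_\Total} \frac{\del X^i}{\del x^k}\bigg|_C \frac{\del}{\del x^i}\bigg|_C,
\end{equation*}
and membership of the right-hand side in $\Secinfty(D)$ forces $\frac{\del X^i}{\del x^k}\at{C} = 0$ for all $i > n_\Null$ and $k \leq n_\Null$. Since $\Lie_{\del/\del x^k}$ for $k \leq n_\Null$ generates the set of derivations $\Lie_Z$ with $Z \in \Secinfty(D)$, this is precisely $X^i \in \ConCinfty(\mathcal{M}\at{U})_\Wobs$ for $i \in (n^*)_\Wobs$, which together with the tangency condition above gives the claimed local characterization.

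The main technical subtlety to address is that $Y \in \Secinfty(D)$ is only defined on $C$, so $[X,Y]$ is a priori not defined as a vector field on $M$; however, once $X\at{C} \in \Secinfty(TC)$ is established, the bracket restricted to $C$ can be computed via any smooth extension of $Y$ to $U$ and the result is independent of the choice, which legitimates testing on the coordinate generators $Y_k$. The converse direction in each case is immediate by reversing the coordinate computations.
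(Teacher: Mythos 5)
Your proof is correct and follows essentially the same route as the paper: work in an adapted chart where $D$ is spanned by $\frac{\del}{\del x^1},\dotsc,\frac{\del}{\del x^{n_\Null}}$ and $TC$ by the first $n_\Wobs$ coordinate fields, translate the tangency condition into $X^i\at{C}=0$ for $i \in (n^*)_\Null$ and the Bott-connection condition, tested on these coordinate generators, into $\Lie_Z X^i\at{C}=0$, matching \autoref{ex:ConFunctionsOnConMfld}. The only difference is that you make explicit the extension-independence and $\Cinfty(C)$-linearity (mod $\Secinfty(D)$) arguments that justify testing only on the generators, which the paper's proof leaves implicit.
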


\begin{proof}
	By \autoref{ex:ConSections}
	locally we always find adapted coordinates such that
	\begin{align*}
		\iota^\#(\frac{\del}{\del x^i}) &\in \Secinfty(D\at{U}), \text{ if } i \in \{1, \dotsc, n_\Null\}, \\
		\shortintertext{and}
		\iota^\#(\frac{\del}{\del x^i}) &\in \Secinfty(TC\at{U}), \text{ if } i \in \{n_\Null +1, \dotsc, n_\Wobs\}.
	\end{align*}
	We have $X \in \ConSecinfty(T \mathcal{M})_\Wobs$
	if and only if $\iota^\# X \in \Secinfty(TC)$
	and $[Y, \iota^\#X] \in \Secinfty(D)$ hold for all $Y \in \Secinfty(D)$.
	The first condition exactly means that locally we have
	$X^i \in \ConCinfty(\mathcal{M}\at{U})_\Null$ for all $i \in \{ n_\Wobs +1, \dotsc, n_\Total \} = (n^*)_\Null$.
	Moreover, since $D$ is locally spanned by $\frac{\del}{\del x^1}, \dotsc, \frac{\del}{\del x^{n_\Null}}$
	the second condition shows $X^i \in \ConCinfty(\mathcal{M})_\Wobs$ for
	$i \in \{ n_\Null +1, \dotsc, n_\Total \} = (n^*)_\Wobs$.
	This shows the first part.
	The second part follows since $X \in \ConSecinfty(T\mathcal{M})_\Null$
	if and only if $\iota^\#X \in \Secinfty(D)$.
\end{proof}

With the help of this local characterization we can now identify 
constraint vector fields with constraint derivations, see 
\autoref{ex:ConLieAlg} \ref{ex:ConLieAlg_Derivations},
using the Lie derivative:

\begin{proposition}
	\label{prop:ConVectorFieldsAreConDer}
	Let $\mathcal{M} = (M,C,D)$ be a constraint manifold.
	Then
	\begin{equation}
		\Lie \colon \ConSecinfty(T\mathcal{M})
		\to \ConDer(\ConCinfty(\mathcal{M}))
	\end{equation}
	given by the Lie derivative is an isomorphism of constraint
	$\ConCinfty(\mathcal{M})$-modules.
\end{proposition}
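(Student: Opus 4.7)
The classical Lie derivative provides a bijection $\Lie \colon \Secinfty(TM) \to \Der(\Cinfty(M))$ on the $\TOTAL$-components; by \autoref{prop:MonoEpisConModk}, to upgrade this to a constraint isomorphism it is enough to show that it is a constraint morphism, that its $\WOBS$-restriction is surjective onto $\ConDer(\ConCinfty(\mathcal{M}))_\Wobs$, and that it carries $\ConSecinfty(T\mathcal{M})_\Null$ onto $\ConDer(\ConCinfty(\mathcal{M}))_\Null$. I would treat these two halves in turn.

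For the morphism property, take $X \in \ConSecinfty(T\mathcal{M})_\Wobs$. If $f \in \ConCinfty(\mathcal{M})_\Null$ then $f\at{C} = 0$, and the tangency $X\at{C} \in \Secinfty(TC)$ immediately gives $\Lie_X f\at{C} = 0$. For $f \in \ConCinfty(\mathcal{M})_\Wobs$ and $Y \in \Secinfty(D)$ I would expand $\Lie_Y \Lie_X f = \Lie_X \Lie_Y f + \Lie_{[Y,X]} f$; the first term vanishes on $C$ because $\Lie_Y f\at{C} = 0$ and $X$ is tangent to $C$, and the second vanishes on $C$ because $[Y,X]\at{C} \in \Secinfty(D)$ by the very definition of $\ConSecinfty(T\mathcal{M})_\Wobs$. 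For $X \in \ConSecinfty(T\mathcal{M})_\Null$ and $f \in \ConCinfty(\mathcal{M})_\Wobs$, pointwise evaluation $\Lie_X f(p) = X(p)(f)$ vanishes because $X(p) \in D_p$ can be realized as the value at $p$ of some $\tilde Y \in \Secinfty(D)$, and $\Lie_{\tilde Y} f\at{C} = 0$.

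For surjectivity and regularity of the epimorphism, the key move is to use adapted coordinates. Around any $p \in C$ pick a chart $(U, x^1, \dotsc, x^{n_\Total})$ from \autoref{lem:LocalStructureConManfifold}; by \autoref{ex:ConFunctionsOnConMfld} the coordinate function $x^i$ lies in $\ConCinfty(\mathcal{M}\at{U})_\Wobs$ for $i \in (n^*)_\Wobs$ and in $\ConCinfty(\mathcal{M}\at{U})_\Null$ for $i \in (n^*)_\Null$. Given a derivation $\Lie_X \in \ConDer(\ConCinfty(\mathcal{M}))_\Wobs$, the coefficients $X^i = \Lie_X x^i$ inherit precisely the filtration prescribed by \autoref{lem:LocalConVect}, forcing $X \in \ConSecinfty(T\mathcal{M})_\Wobs$; the same argument applied to a $\NULL$-derivation yields $X \in \ConSecinfty(T\mathcal{M})_\Null$, which simultaneously gives $\WOBS$-surjectivity and the regularity condition on the $\NULL$-component.

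The main obstacle is the translation, in both directions, between the algebraic condition ``$\Lie_X$ preserves $\ConCinfty(\mathcal{M})_\Wobs$'' and the geometric condition $[X\at{C}, Y] \in \Secinfty(D)$ for $Y \in \Secinfty(D)$: the forward direction rests on the Cartan-type commutator identity used above, and the backward direction requires a supply of test functions in $\ConCinfty(\mathcal{M})_\Wobs$ whose differentials locally span a complement to $\Ann(D)$ inside $T^*C$. The adapted coordinate functions $x^{n_\Null+1}, \dotsc, x^{n_\Wobs}$ provide exactly such a supply, after which the proof reduces to a local bookkeeping of coefficients.
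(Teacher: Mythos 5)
Your proposal is correct and follows essentially the same route as the paper: the same commutator identity $\Lie_Y\Lie_X f = \Lie_{[Y,\iota^\#X]}f + \Lie_{\iota^\#X}\Lie_Y f$ establishes the constraint morphism property, and surjectivity on the $\WOBS$- and $\NULL$-components is obtained exactly as in the paper by applying the derivation to adapted coordinate functions (\autoref{ex:ConFunctionsOnConMfld}) and invoking \autoref{lem:LocalConVect}. Combined with injectivity from the classical isomorphism on the $\TOTAL$-component and \autoref{prop:MonoEpisConModk}, this matches the paper's proof.
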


\begin{proof}
	From classical differential geometry we know that $\Lie$ is an isomorphism on the
	$\TOTAL$-com\-po\-nents.
	To show that $\Lie$ is a constraint morphism consider
	$X \in \ConSecinfty(T \mathcal{M})_\Null$
	and $f \in \ConCinfty(\mathcal{M})_\Wobs$.
	Then
	\begin{equation*}
		(\Lie_X f)\at{C}
		= \Lie_{\iota^\#X} f\at{C}
		= 0,
	\end{equation*}
	since $X\at{C} \in \Secinfty(D)$.
	Thus $\Lie$ maps the $\NULL$-component to the $\NULL$-component.
	Now let $X \in \ConSecinfty(T \mathcal{M})_\Wobs$ be given.
	Then for $f \in \ConCinfty(\mathcal{M})_\Wobs$
	we get
	\begin{equation*}
		\Lie_Y (\Lie_X f)\at{C}
		= \Lie_{[Y,\iota^\#X]}f\at{C} 
		+ \Lie_{\iota^\#X} \underbrace{\Lie_Y f\at{C}}_{=0}
		= \Lie_{[Y,\iota^\#X]} f\at{C} = 0,
	\end{equation*}
	for all $Y \in \Secinfty(D)$, since $[Y, \iota^\# X] \in \Secinfty(D)$ by 
	\eqref{eq:constraintVectorFields}. 
	Finally, for $f \in \ConCinfty(\mathcal{M})_\Null$
	we have $f\at{C} = 0$ and therefore
	\begin{equation*}
		(\Lie_X f)\at{C} = \Lie_{\iota^\#X}f\at{C} = 0,
	\end{equation*}
	which shows that $\Lie$ is a constraint morphism.
	Since the $\TOTAL$-component of $\Lie$ is just the classical Lie 
	derivative, which is an isomorphism, $\Lie$ is a constraint 
	monomorphism.
	To show that $\Lie$ is also a regular epimorphism let
	$D \in \ConDer(\ConCinfty(\mathcal{M}))_\Wobs$ be given.
	Since $D$ is in particular a derivation of $\Cinfty(M)$
	we know that there exists $X \in \Secinfty(TM)$ such that
	$\Lie_X = D$.
	Choose an adapted chart $(U,x)$ around $p \in C$, then
	$X\at{U} = \sum_{i=1}^{n_\Total} X^i \frac{\del}{\del x^i}$.
	Since $\Lie_X$ is a constraint derivation we get
	$X^i = \Lie_X(x^i) \in \ConCinfty(\mathcal{M}\at{U})_\Null$
	for all
	$i \in \{n_\Wobs +1, \dotsc, n_\Total\} = (n^*)_\Null$
	and 
	$X^i = \Lie_X(x^i) \in \ConCinfty(\mathcal{M}\at{U})_\Wobs$
	for all $i \in \{n_\Null +1, \dotsc, n_\Total\} = (n^*)_\Wobs$,
	by \autoref{ex:ConFunctionsOnConMfld} 
	\ref{ex:ConFunctionsOnConMfld_1}.
	And thus $X \in \ConSecinfty(T\mathcal{M})_\Wobs$
	using \autoref{lem:LocalConVect} \ref{lem:LocalConVect_1}.
	With the same line of reasoning we obtain $X \in 
	\ConSecinfty(T\mathcal{M})_\Null$ if
	$D \in \ConDer(\ConCinfty(\mathcal{M}))_\Null$,
	showing that $\Lie$ is a regular epimorphism, and
	therefore an isomorphism.
\end{proof}

With this we can transport the constraint Lie algebra 
structure on
$\ConDer(\ConCinfty(\mathcal{M}))$,
see \autoref{ex:ConLieAlg}~\ref{ex:ConLieAlg_Derivations},
to $\ConSecinfty(T \mathcal{M})$,
giving a constraint map
\begin{equation}
	[\argument,\argument] \colon \ConSecinfty(T\mathcal{M}) \tensor \ConSecinfty(T\mathcal{M}) \to \ConSecinfty(T\mathcal{M}).
\end{equation}
This is just the usual Lie bracket of vector fields, but now we see 
that it is actually compatible with the constraint structure.
Alternatively, one could directly check that the classical Lie bracket of vector fields
yields a constraint Lie algebra structure.

\begin{example}[Nijenhuis and complex structures]
	\label{ex:ReducibleNijenhuis}
	Let $\mathcal{M} = (M, C, D)$ be a constraint manifold and let 
	$A \in \Secinfty(\End(TM))$ be a Nijenhuis tensor, i.e. the associated 
	Nijenhuis torsion $N_A$, defined by
	\begin{equation}
		\label{eq:NijenhuisTorsion}
		N_A(X,Y) \coloneq [A(X), A(Y)] - A([A(X),Y]) - A([X,A(Y)]) + A^2([X,Y]) 
	\end{equation} 
	for $X, Y \in \Secinfty(TM)$, vanishes.
	If $A \in \ConSecinfty(\ConEnd(T\mathcal{M}))_\Wobs$ is a constraint 
	section, then its Nijenhuis torsion is clearly a constraint morphism 
	$N_A \colon \ConSecinfty(T\mathcal{M}) \wedge \ConSecinfty(T\mathcal{M}) \to \ConSecinfty(T\mathcal{M})$,
	which can be reformulated as
	$N_A \in \ConSecinfty(\Anti_{\strtensor}^2T^*\mathcal{M} \strtensor T\mathcal{M})_\Wobs$
	by \autoref{prop:PropsOfStrongDuals}.
	Since the Nijenhuis torsion $N_A$ reduces to the Nijenhuis torsion $N_{A_\red}$ of the reduced Nijenhuis manifold, we immediately
	see that if $N_A$ vanishes, then so does $N_{A_\red}$. 
	Hence if $A$ is a Nijenhuis structure on $\mathcal{M}$, then $A_\red$ is a 
	Nijenhuis structure on $\mathcal{M}_\red$, see also \cite{vaisman:1996a}. 
	In case the Nijenhuis structure is given by an almost complex structure 
	$J \in \ConSecinfty(\ConEnd(T\mathcal{M}))_\Wobs$, see 
	\autoref{ex:AlmostComplex}, it is called complex structure and it descends to 
	a complex structure $J_\red$ on $\mathcal{M}_\red$. 
\end{example}

With this at hand let us introduce constraint differential forms.
Since there are two tensor products available we can define 
constraint differential
forms in two ways.

\begin{definition}[Constraint Differential Forms]
	Let $\mathcal{M} = (M,C,D)$ be a constraint manifold.
	We denote by
	\begin{align}
		\ConForms_{\tensor}^\bullet(\mathcal{M}) 
		\coloneqq \Anti^\bullet_{\tensor} \ConSecinfty(T^* \mathcal{M})
		= \bigoplus_{k=0}^{\infty} \Anti^k_{\tensor} 
		\ConSecinfty(T^*\mathcal{M})
		\shortintertext{and}
		\ConForms_{\strtensor}^\bullet(\mathcal{M}) 
		\coloneqq \Anti^\bullet_{\strtensor} \ConSecinfty(T^* \mathcal{M})
		= \bigoplus_{k=0}^{\infty} \Anti^k_{\strtensor} 
		\ConSecinfty(T^*\mathcal{M})
	\end{align}
	the graded strong constraint modules of
	\emph{constraint differential forms} on $\mathcal{M}$.
\end{definition}

Note that $\ConForms_{\tensor}^\bullet(\mathcal{M}) \simeq (\Anti_{\strtensor}^\bullet \ConSecinfty(T\mathcal{M}))^*$
and $\ConForms_{\strtensor}^\bullet(\mathcal{M}) \simeq (\Anti_{\tensor}^\bullet \ConSecinfty(T\mathcal{M}))^*$.
Thus $\alpha \in \ConForms_{\tensor}^k(\mathcal{M})$ can be evaluated at
$X_1 \tensor \dots \tensor X_k \in \Anti_{\strtensor}^\bullet \ConSecinfty(T\mathcal{M})$,
while $\alpha \in \ConForms_{\strtensor}^k(\mathcal{M})$ can be evaluated at
$X_1 \tensor \dots \tensor X_k \in \Anti_{\tensor}^\bullet \ConSecinfty(T\mathcal{M})$.
For $\ConForms_{\strtensor}^\bullet(\mathcal{M})$
there is a good constraint Cartan calculus as we see in the following.

\begin{proposition}[Cartan calculus]
	\label{prop:ConCartanCalculus}
	Let $\mathcal{M} = (M,C,D)$ be a constraint manifold.
	\begin{propositionlist}
		\item $\ConForms_{\strtensor}^\bullet(\mathcal{M})$ is a 
		graded commutative strong constraint algebra with respect to the 
		wedge product $\wedge$.
		\item The insertion of vector fields into forms defines a 
		constraint $\ConCinfty(\mathcal{M})$-module morphism
		\begin{equation}
			\ins \colon \ConSecinfty(T\mathcal{M}) \to 
			\ConDer^{-1}(\ConForms_{\strtensor}^\bullet(\mathcal{M})),
		\end{equation}
		with $\ConDer^{-1}(\ConForms_{\strtensor}^\bullet(\mathcal{M}))$
		denoting the graded constraint derivations of degree $-1$.
		\item The Lie derivative defines a $\Reals$-linear constraint morphism
		\begin{equation}
			\Lie \colon \ConSecinfty(T\mathcal{M}) \to 
			\ConDer^{0}(\ConForms_{\strtensor}^\bullet(\mathcal{M}))
		\end{equation}
		into the graded constraint derivations of degree $0$ of 
		$\ConForms_{\strtensor}^\bullet(\mathcal{M})$.
		\item The de Rham differential defines a graded constraint 
		derivation
		\begin{equation}
			\D \colon \ConForms_{\strtensor}^\bullet(\mathcal{M}) \to 
			\ConForms_{\strtensor}^{\bullet+1}(\mathcal{M})
		\end{equation}
		of degree $+1$,
		turning $\ConForms_{\strtensor}^\bullet(\mathcal{M})$
		into a differential graded strong constraint algebra.
	\end{propositionlist}
\end{proposition}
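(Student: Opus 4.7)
The plan is to establish each item by reducing to its classical analogue on $M = M_\Total$ and then verifying compatibility with the filtration by $\WOBS$- and $\NULL$-components. A convenient framework throughout is the identification $\ConForms_{\strtensor}^k(\mathcal{M}) \simeq (\Anti^k_{\tensor} \ConSecinfty(T\mathcal{M}))^*$ noted immediately after the definition of constraint differential forms, which translates constraint conditions on a form $\alpha$ into conditions on how $\alpha$ pairs with multivector fields. When the abstract approach becomes opaque, I would fall back on the adapted local frames of \autoref{lem:LocalConVect} and \autoref{ex:ConSections}, in which the $\WOBS$- and $\NULL$-subspaces are read off the degrees of the multi-indices.

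For (i), the strong constraint $\ConCinfty(\mathcal{M})$-module structure on $\ConSecinfty(T^*\mathcal{M})$ together with \autoref{ex:gradedConAlgs} directly produces a graded strong constraint algebra structure on $\Anti_{\strtensor}^\bullet \ConSecinfty(T^*\mathcal{M})$, and graded commutativity is inherited from $\Omega^\bullet(M)$. For (ii), classically $\ins_X$ is a degree $-1$ derivation of $\Omega^\bullet(M)$ for any $X \in \Secinfty(TM)$, so it only remains to track the constraint conditions. Using the pairing identification, if $X \in \ConSecinfty(T\mathcal{M})_\Wobs$ then wedging with $X$ sends $(\Anti^{k-1}_{\tensor} \ConSecinfty(T\mathcal{M}))_\Wobs$ into $(\Anti^k_{\tensor}\ConSecinfty(T\mathcal{M}))_\Wobs$ and preserves the $\NULL$-component, straight from the definition of $\tensor$; precomposition with $\alpha$ then places $\ins_X\alpha$ in the correct $\WOBS$- or $\NULL$-component. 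If instead $X \in \ConSecinfty(T\mathcal{M})_\Null$, then $X \wedge (\Anti^{k-1}_{\tensor} \ConSecinfty(T\mathcal{M}))_\Wobs \subseteq (\Anti^k_{\tensor} \ConSecinfty(T\mathcal{M}))_\Null$, so $\ins_X$ is a $\NULL$-component derivation.

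For (iv), I would adopt the intrinsic Koszul formula
\begin{equation*}
	(\D\alpha)(X_0, \ldots, X_k) = \sum_{i=0}^{k} (-1)^i \Lie_{X_i}\alpha(X_0, \ldots, \widehat{X_i}, \ldots, X_k) + \sum_{i<j} (-1)^{i+j}\alpha\bigl([X_i,X_j], X_0, \ldots, \widehat{X_i}, \ldots, \widehat{X_j}, \ldots, X_k\bigr)
\end{equation*}
and evaluate on $X_0 \wedge \cdots \wedge X_k \in \Anti^{k+1}_{\tensor} \ConSecinfty(T\mathcal{M})$. For $\alpha \in (\ConForms^k_{\strtensor}(\mathcal{M}))_\Wobs$ and all $X_i \in \ConSecinfty(T\mathcal{M})_\Wobs$, the Lie derivative terms $\Lie_{X_i}\alpha(X_0, \ldots, \widehat{X_i},\ldots, X_k)$ land in $\ConCinfty(\mathcal{M})_\Wobs$ via \autoref{prop:ConVectorFieldsAreConDer}, while the bracket terms use $[\WOBS,\WOBS] \subseteq \WOBS$ in $\ConSecinfty(T\mathcal{M})$ to keep the arguments inside the $\WOBS$-component of $\Anti^k_{\tensor}$. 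Analogous checks handle the $\NULL$-case (invoking the Lie-ideal property $[\WOBS,\NULL] \subseteq \NULL$) and the case $\alpha \in \NULL$. The identities $\D^2 = 0$ and the graded Leibniz rule transfer verbatim from the classical situation, so $\D \in \ConDer^{+1}(\ConForms^\bullet_{\strtensor}(\mathcal{M}))_\Wobs$.

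Item (iii) then follows from Cartan's magic formula $\Lie_X = \ins_X \circ \D + \D \circ \ins_X$: composing and adding constraint morphisms obtained from (ii) and (iv) yields a graded constraint derivation of degree $0$, and $\Reals$-linearity in $X$ is immediate. The main obstacle is the bookkeeping in (iv), where one must keep careful track of how the Lie bracket of vector fields interacts with the $\WOBS$/$\NULL$ filtration on $\Anti^\bullet_{\tensor}\ConSecinfty(T\mathcal{M})$; the essential input is precisely that \autoref{prop:ConVectorFieldsAreConDer} makes $\ConSecinfty(T\mathcal{M})$ a constraint Lie algebra. Should the abstract argument become unwieldy, a parallel route is to check each operation on the coordinate basis $dx^{i_1} \wedge \cdots \wedge dx^{i_k}$ in an adapted chart, where the $\WOBS$- and $\NULL$-conditions reduce to explicit conditions on the coefficient functions via \autoref{ex:ConFunctionsOnConMfld}~\ref{ex:ConFunctionsOnConMfld_1}.
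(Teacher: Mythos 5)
Your proposal is correct, and for items (i) and (iv) it is essentially the paper's argument: (i) is immediate from \autoref{ex:gradedConAlgs}, and for the de Rham differential you use exactly the same Koszul-formula evaluation on wedges of constraint vector fields, with $\Lie_{X_i}$ acting on functions via \autoref{prop:ConVectorFieldsAreConDer} and the Lie-ideal properties of $[\argument,\argument]$ handling the bracket terms — the paper likewise only spells out one of the $\WOBS$/$\NULL$ cases in full. The two places where you deviate are worth noting. For (ii) you phrase the check through the identification $\ConForms_{\strtensor}^k(\mathcal{M}) \simeq (\Anti^k_{\tensor}\ConSecinfty(T\mathcal{M}))^*$ and the fact that wedging with $X$ is constraint for $\tensor$, whereas the paper argues that $\ins_X$ is a degree $-1$ derivation of $\wedge$ and checks it only on degree-one generators; both are sound, and your pairing argument has the small advantage of not needing the generation-in-low-degree step. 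For (iii) you invert the paper's logic: the paper proves directly that $\Lie$ is constraint (using the derivation property and $\Lie_X Y = [X,Y]$ together with \autoref{prop:ConVectorFieldsAreConDer}) and only afterwards observes that Cartan's magic formula holds because everything is determined by the $\TOTAL$-components, while you take the classical identity $\Lie_X = \ins_X\circ\D + \D\circ\ins_X$ on the $\TOTAL$-component as input and deduce that $\Lie$ is a constraint morphism from (ii) and (iv). This is legitimate and non-circular (your proof of (iv) only uses the Lie derivative on functions, not item (iii)), and it is slightly more economical; the paper's direct argument keeps the three operators on an equal, independent footing and yields the magic formula as a corollary rather than a hypothesis.
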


\begin{proof}
	In all cases we only need to show that the involved maps are 
	actually constraint maps.
	For the first part this is clear by the definition of
	$\ConForms^\bullet(\mathcal{M})$.
	For the insertion consider
	$X \in \ConSecinfty(T\mathcal{M})_\Null$.
	Then $\ins_X \alpha \in \ConSecinfty(T\mathcal{M})_\Null$
	for all
	$\alpha \in \ConSecinfty(T^*\mathcal{M})_\Wobs$.
	Since $\ins_X$ is a derivation of the wedge product
	it maps $\ConForms^\bullet(\mathcal{M})_\Wobs$
	to $\ConForms^\bullet(\mathcal{M})_\Null$.
	Now consider $X \in \ConSecinfty(T\mathcal{M})_\Wobs$.
	Then again by the derivation property it is easy to see that
	$\ins_X(\ConForms^\bullet(\mathcal{M})_\Wobs) \subseteq 
	\ConForms^\bullet(\mathcal{M})_\Wobs$
	and
	$\ins_X(\ConForms^\bullet(\mathcal{M})_\Null) \subseteq 
	\ConForms^\bullet(\mathcal{M})_\Null$.
	Thus $\ins$ is a constraint morphism.
	Since the Lie derivative is again a derivation and we know, by
	\autoref{prop:ConVectorFieldsAreConDer}
	and from the fact that
	$\Lie_X Y = [X,Y]$,
	that $\Lie_X$ is a constraint endomorphism of 
	$\ConSecinfty(T\mathcal{M})$,
	it follows that $\Lie$ is a constraint morphism.
	For the de Rham differential we can argue with the 
	formula
	\begin{align*}
		(\D\alpha)(X_0, \dots, X_k)
		&= \sum_{i=0}^{k} (-1)^k \Lie_{X_i}(\alpha(X_0,\dotsc, 
		\overset{i}{\wedge}, \dotsc, X_k))\\
		&\qquad + \sum_{i < j} (-1)^{i+j}
		\alpha([X_i,X_j],X_0,\dotsc, 
		\overset{i}{\wedge}, \dotsc, \overset{j}{\wedge}, \dotsc, 
		X_k),
	\end{align*}
	for $X_0 \tensor \dotsc \tensor X_k \in (\Anti_{\tensor}^\bullet 
	\ConSecinfty(T\mathcal{M}))_\Total$
	to see that $\D$ is a constraint morphism.
	For example, if $\alpha \in \ConForms_{\strtensor}^k(\mathcal{M})_\Null$ is given, we have
	\begin{equation*}
		(\D \alpha)(X_0, \dots, X_k) \in \ConCinfty(\mathcal{M})_\Null
	\end{equation*}
	for all
	$X_0, \dotsc, X_k \in \ConSecinfty(T\mathcal{M})_\Wobs$,
	since from
	\begin{equation*}
		\alpha(X_0, \dotsc, \overset{i}{\wedge},\dotsc, X_k) \in \ConCinfty(\mathcal{M})_\Null
	\end{equation*}
	it follows that
	\begin{equation*}
		\Lie_{X_i} \alpha(X_0, \dotsc, \overset{i}{\wedge},\dotsc, X_k) \in \ConCinfty(\mathcal{M})_\Null
	\end{equation*}
	and from
	$[X_i,X_j] \in \ConSecinfty(T\mathcal{M})_\Null$
	it follows
	\begin{equation*}
		\alpha([X_i,X_j],X_0,\dotsc,\overset{i}{\wedge},\dotsc,\overset{j}{\wedge},\dotsc,X_k) \in \ConCinfty(\mathcal{M})_\Null.
	\end{equation*}
	Thus we have $\D\alpha \in \ConForms_{\strtensor}^{k+1}(\mathcal{M})_\Null$.
	In a similar way we can argue for $\alpha \in \ConForms_{\strtensor}^k(\mathcal{M})_\Wobs$.
\end{proof}

Since $\ins$,  $\Lie$ and $\D$ are completely determined by their $\TOTAL$-components, we immediately get all the usual formulas from the classical Cartan calculus, e.g. Cartan's magic formula
\begin{equation}
	\Lie_X = [\ins_X,\D].
\end{equation}
We cannot expect a similarly well behaved Cartan calculus on
$\ConForms_{\tensor}^\bullet(\mathcal{M})$, since in this case
the de Rham differential is not well-defined, as the next example shows.

\begin{example} \label{ex:ConDRDifferential}
	Consider $\mathcal{M} = (\Reals^{n_\Total}, 
	\Reals^{n_\Wobs},\Reals^{n_\Null})$
	with $n_\Total > n_\Null \geq 1$
	and let $\alpha = x^{n_\Null} \D x^{n_\Total} \in 
	\ConSecinfty(T^*\mathcal{M})_\Null$.
	Then we have
	\begin{equation}
		\D \alpha = \D x^{n_\Null} \wedge \D x^{n_\Total}
		\in \ConSecinfty(T^*\mathcal{M})_\Total \wedge 
		\ConSecinfty(T^*\mathcal{M})_\Null
		\nsubseteq \ConForms_{\tensor}^2(\mathcal{M})_\Null.
	\end{equation}
\end{example}

Let us now turn our attention to constraint multivector fields.
As for constraint differential forms we can define multivector fields using both tensor products available.

\begin{definition}[Constraint multivector fields]
	Let $\mathcal{M} = (M,C,D)$ be a constraint manifold,
	then we denote by
	\begin{align}
		\ConVecFields_{\tensor}^\bullet(\mathcal{M}) 
		\coloneqq \Anti^\bullet_{\tensor} \ConSecinfty(T\mathcal{M})
		\simeq \ConSecinfty(\Anti^\bullet_{\tensor} T\mathcal{M})
		\shortintertext{and}
		\ConVecFields_{\strtensor}^\bullet(\mathcal{M}) 
		\coloneqq \Anti^\bullet_{\strtensor} \ConSecinfty(T\mathcal{M})
		\simeq \ConSecinfty(\Anti^\bullet_{\strtensor} T\mathcal{M})
	\end{align}
	the graded strong constraint modules of
	\emph{constraint multivector fields}
	on $\mathcal{M}$.
\end{definition}

The following example shows that every constraint manifold constructed from a coisotropic submanifold of a Poisson manifold
carries a constraint bivector field in $\ConVecFields_{\strtensor}^2(\mathcal{M})$,
while a Poisson submanifold yields a constraint bivector field in
$\ConVecFields_{\tensor}^2(\mathcal{M})$.

\begin{example}
	\label{ex:ConBivectorsFromPoissonManifolds}
	Let $(M,\pi)$ be a Poisson manifold.
	\begin{examplelist}
		\item \label{ex:ConPoissonMfld}
		If $C \subseteq M$ is a closed coisotropic submanifold allowing for a smooth reduction
		we denote by $\mathcal{M} = (M,C,D)$ the constraint manifold with $D$ the characteristic distribution
		of the coisotropic submanifold $C$.
		Let $n = (n_\Total,n_\Wobs,n_\Null)$ be its constraint dimension.
		Then $\pi \in \Anti^2 \Secinfty(TM)$ is a bivector field, fulfilling 
		$\iota^\#\pi \in \Secinfty(TC \wedge TC + \iota^\#TM \wedge D)
		= \Secinfty((\Anti^2_{\strtensor} T \mathcal{M})_\Wobs)$.
		In an adapted coordinate chart $(U,x)$ around $p \in C$, cf. \autoref{lem:AdaptedLocalFrames},
		we have
		\begin{equation}
			\cc{\iota^\#\pi\at{U \cup C}} = \sum_{i,j = n_\Null +1}^{n_\Wobs} \pi^{ij} \frac{\del}{\del x^i} \wedge \frac{\del}{\del x^j},
		\end{equation}
		and thus for all $\ell = 1, \dotsc n_\Null$
		\begin{align*}
			\nabla_{\frac{\del}{\del x^\ell}} \cc{\iota^\#\pi\at{U \cup C}}
			= \sum_{i,j = n_\Null +1}^{n_\Wobs} \pi^{ij} \left[\frac{\del}{\del x^\ell}, \frac{\del}{\del x^i}\right] \wedge \frac{\del}{\del x^j}
			+ \sum_{i,j = n_\Null +1}^{n_\Wobs} \pi^{ij} \frac{\del}{\del x^i} \wedge \left[\frac{\del}{\del x^\ell},\frac{\del}{\del x^j}\right]
			= 0
		\end{align*}
		holds.
		Here we crucially use that $\pi^{ij} \in \ConCinfty(U)_\Wobs$ for all $i,j = n_\Null + 1, \dotsc, n_\Wobs$.
		Since $D$ is locally spanned by $\frac{\del}{\del x^1},\dotsc, \frac{\del}{\del x^{n_\Null}}$
		we have $\pi \in \ConVecFields_{\strtensor}^2(\mathcal{M})_\Wobs$.
		\item Since every Poisson submanifold is in particular coisotropic, every closed Poisson submanifold
		gives a constraint manifold $\mathcal{M} = (M,C,0)$ the constraint manifold with trivial distribution.
		Let $n = (n_\Total,n_\Wobs,0)$ be its constraint dimension.
		Then $\pi \in \Anti^2\Secinfty(TM)$ restricts to a bivector field
		$\pi\at{C} \in \Anti^2\Secinfty(TC) = \Secinfty((\Anti^2 T\mathcal{M})_\Wobs)$.
		Since $D$ is trivial we thus get $\pi \in 
		\ConVecFields_{\tensor}^2(\mathcal{M})_\Wobs$.
		\item Every closed Poisson submanifold $C$ of a Poisson manifold $M$ can also be equipped with
		another distribution $D$ given by the symplectic leaves of $C$.
		In general, the leaf space will not be smooth, but e.g. for certain types of Poisson manifolds of compact type at 
		least an orbifold structure on the leaf space can be achieved, see 
		\cite{crainic.fernandes.martinez:2019b,crainic.fernandes.martinez:2019a}.
		Note that in the case of a smooth leaf space we obtain a constraint manifold
		$\mathcal{M} = (M,C,D)$ with a constraint Poisson structure $\pi \in \Anti^2\ConVecFields_{\tensor}^2(\mathcal{M})_\Wobs$.
		The reduced space then describes the transversal structure.
	\end{examplelist}
\end{example}

This suggests that a constraint manifold equipped with a constraint bivector field
$\pi \in \ConVecFields_{\strtensor}^2(\mathcal{M})_\Wobs$
fulfilling the Jacobi identity
induces a coisotropic structure on its submanifold.
On the other hand $\pi \in \ConVecFields_{\tensor}^2(\mathcal{M})_\Wobs$
fulfilling the Jacobi identity seems to induce a Poisson structure on 
$C$, which drops to $\mathcal{M}_\red$.
To make this precise we first introduce the Schouten bracket for constraint multivector fields.

\begin{proposition}[Constraint Schouten bracket]
	Let $\mathcal{M} = (M,C,D)$ be a constraint manifold.
	The classical Schouten bracket defines constraint graded Lie algebra structures of degree $-1$
	\begin{align}
		\Schouten{\argument, \argument} 
		\colon \ConVecFields_{\tensor}^{k}(\mathcal{M}) \tensor[\field{k}] \ConVecFields_{\tensor}^{\ell}(\mathcal{M})
		\to \ConVecFields_{\tensor}^{k+\ell - 1}(\mathcal{M})
		\shortintertext{on $\ConVecFields_{\tensor}^\bullet(\mathcal{M})$, and}
		\Schouten{\argument, \argument} 
		\colon \ConVecFields_{\strtensor}^{k}(\mathcal{M}) \tensor[\field{k}] \ConVecFields_{\strtensor}^{\ell}(\mathcal{M})
		\to \ConVecFields_{\strtensor}^{k+\ell - 1}(\mathcal{M})
	\end{align}
	on $\ConVecFields_{\strtensor}^\bullet(\mathcal{M})$.
\end{proposition}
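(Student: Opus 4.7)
The plan is to exploit the fact that the classical Schouten bracket on $\VecFields^\bullet(M)$ is already a graded Lie bracket of degree $-1$: graded skew-symmetry and the graded Jacobi identity are inherited automatically. What requires verification is that the bracket respects the constraint structures, i.e.\ that it descends to a constraint morphism of degree $-1$ on both $\ConVecFields_\tensor^\bullet(\mathcal{M})$ and $\ConVecFields_\strtensor^\bullet(\mathcal{M})$. My strategy is to reduce to the base cases of vector fields and functions, and then propagate via the graded Leibniz rule characterising the Schouten bracket, extending by $\field{k}$-linearity from pure wedges.

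First, the base cases are already available: \autoref{prop:ConVectorFieldsAreConDer} together with \autoref{ex:ConLieAlg}~\ref{ex:ConLieAlg_Derivations} shows that the Lie bracket restricts to a constraint map $\ConSecinfty(T\mathcal{M}) \tensor[\field{k}] \ConSecinfty(T\mathcal{M}) \to \ConSecinfty(T\mathcal{M})$, so $[\ConSecinfty(T\mathcal{M})_\Wobs, \ConSecinfty(T\mathcal{M})_\Wobs] \subseteq \ConSecinfty(T\mathcal{M})_\Wobs$ and $[\ConSecinfty(T\mathcal{M})_\Wobs, \ConSecinfty(T\mathcal{M})_\Null] \subseteq \ConSecinfty(T\mathcal{M})_\Null$; using $\ConSecinfty(T\mathcal{M})_\Null \subseteq \ConSecinfty(T\mathcal{M})_\Wobs$ this also forces $[\ConSecinfty(T\mathcal{M})_\Null, \ConSecinfty(T\mathcal{M})_\Null] \subseteq \ConSecinfty(T\mathcal{M})_\Null$. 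Moreover $\Schouten{X, f} = \Lie_X f$ acts on $\ConCinfty(\mathcal{M})$ by constraint derivations. For pure wedges $\xi = X_1 \wedge \dots \wedge X_k$ and $\eta = Y_1 \wedge \dots \wedge Y_\ell$ the graded Leibniz rule gives
\begin{equation*}
	\Schouten{\xi, \eta}
	= \sum_{i,j} (-1)^{i+j}\, [X_i, Y_j] \wedge X_1 \wedge \dots \wedge \widehat{X_i} \wedge \dots \wedge X_k \wedge Y_1 \wedge \dots \wedge \widehat{Y_j} \wedge \dots \wedge Y_\ell,
\end{equation*}
together with the analogous formulas when one factor is a function. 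Since $\Anti^k_\tensor \ConSecinfty(T\mathcal{M})_\Null \simeq \Anti^{k-1}\ConSecinfty(T\mathcal{M})_\Wobs \wedge \ConSecinfty(T\mathcal{M})_\Null$, while $\Anti^k_\strtensor \ConSecinfty(T\mathcal{M})_\Null$ is spanned by pure wedges with at least one factor in $\ConSecinfty(T\mathcal{M})_\Null$, the verification reduces to a term-by-term case analysis of this expansion.

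For the $\tensor$ case, take $\xi \in \Anti^k_\tensor \ConSecinfty(T\mathcal{M})_\Null$ in the form $X_1 \wedge \dots \wedge X_k$ with $X_1 \in \ConSecinfty(T\mathcal{M})_\Null$ and $X_2, \dots, X_k \in \ConSecinfty(T\mathcal{M})_\Wobs$, and $\eta = Y_1 \wedge \dots \wedge Y_\ell$ with all $Y_j \in \ConSecinfty(T\mathcal{M})_\Wobs$. In the terms with $i = 1$, the bracket $[X_1, Y_j]$ is $\NULL$ and becomes the unique null factor while all other wedge entries remain $\WOBS$; in the terms with $i > 1$, the bracket $[X_i, Y_j]$ is $\WOBS$ whereas $X_1$ is preserved and provides the null factor. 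Either way the result has exactly one null factor with the rest in $\ConSecinfty(T\mathcal{M})_\Wobs$, hence lies in $\Anti^{k+\ell-1}_\tensor \ConSecinfty(T\mathcal{M})_\Null$. The $\strtensor$ case proceeds in the same spirit, the only difference being that the surviving null factor may originate either from an untouched entry of $\xi$ or from a bracket $[X_i, Y_j]$ involving at least one null input, and the remaining wedge factors are allowed to sit anywhere in $\ConSecinfty(T\mathcal{M})_\Total$. The main obstacle I anticipate is the bookkeeping for the $\tensor$ case, where one must simultaneously track the single null factor across every term of the Leibniz expansion; once this is settled, $\field{k}$-linearity extends the conclusion to all elements, and graded skew-symmetry together with the graded Jacobi identity descend verbatim from the classical Schouten bracket.
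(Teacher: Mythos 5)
Your proposal is correct and follows essentially the same route as the paper: the paper's proof simply invokes the explicit formula for the Schouten bracket on decomposable multivector fields together with the fact that $[\argument,\argument]$ is a constraint Lie bracket on $\ConVecFields^1(\mathcal{M})$, which is exactly your strategy. Your term-by-term tracking of the null factors (including the observation that when a null factor is bracketed against a merely $\TOTAL$ factor another untouched null factor survives in the $\strtensor$ case) just makes explicit the case analysis the paper leaves to the reader.
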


\begin{proof}
	This follows directly from the formula
	\begin{equation*}
		\Schouten{X_0 \wedge \cdots \wedge X_k, Y_0 \wedge \cdots \wedge Y_\ell}
		= \sum_{i=0}^k\sum_{j=0}^\ell (-1)^{i+j} [X_i,Y_j] \wedge X_1 \cdots \overset{i}{\wedge} \cdots X_k \wedge Y_0 \wedge \cdots \overset{j}{\wedge} \cdots \wedge Y_\ell
	\end{equation*}
	and the fact that $[\argument, \argument]$ is a constraint Lie bracket on
	$\ConVecFields^1(\mathcal{M})$.
\end{proof}

It is important to note that even for 
$\ConVecFields_{\strtensor}^\bullet(\mathcal{M})$
we do \emph{not} obtain a strong constraint Lie algebra structure.
One way to see this is to note that $\ConDer(\ConCinfty(\mathcal{M}))$
is only a constraint Lie algebra, even though $\ConCinfty(\mathcal{M})$
is a strong constraint algebra.
Ultimately, this comes from the fact that $\ConHom$ is adjoint
to $\tensor$ and not $\strtensor$.

\begin{corollary}
	\label{cor:ConDGLAOfConMultVectFields}
	Let $\mathcal{M}$ be a constraint manifold.
	Then
	\begin{corollarylist}
		\item $\big(\ConVecFields_{\tensor}^{\bullet+1}(\mathcal{M}), \wedge, \Schouten{\argument, \argument}\big)$
		is a constraint Gerstenhaber algebra.
		\item $\big(\ConVecFields_{\strtensor}^{\bullet+1}(\mathcal{M}), \wedge, \Schouten{\argument, \argument}\big)$
		is a strong constraint Gerstenhaber algebra.
	\end{corollarylist}
\end{corollary}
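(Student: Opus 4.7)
The plan is to assemble the corollary from the pieces already in hand. We have established in Proposition~\ref{prop:ConstructionsCVect} and its sections-level consequence that $\ConSecinfty(\Anti^\bullet_{\tensor} T\mathcal{M})$ and $\ConSecinfty(\Anti^\bullet_{\strtensor} T\mathcal{M})$ are a graded constraint algebra and a strong graded constraint algebra, respectively, with multiplication given by the wedge product (this is the $\Anti^\bullet$ construction from Example~\ref{ex:gradedConAlgs} applied to $\module{E} = \ConSecinfty(T\mathcal{M})$). The preceding proposition gives the constraint graded Lie algebra structure of degree $-1$ on both $\ConVecFields_{\tensor}^{\bullet+1}(\mathcal{M})$ and $\ConVecFields_{\strtensor}^{\bullet+1}(\mathcal{M})$ via the Schouten bracket. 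Hence the only thing missing from the definition of a (strong) constraint Gerstenhaber algebra is the graded Leibniz rule
\[
\Schouten{\xi, \eta \wedge \chi}
= \Schouten{\xi, \eta} \wedge \chi
+ (-1)^{(\deg(\xi) - 1)\deg(\eta)}\, \eta \wedge \Schouten{\xi, \chi}.
\]

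First I would observe that this identity holds in the classical Schouten--Gerstenhaber algebra $\bigl(\VecFields^{\bullet+1}(M), \wedge, \Schouten{\argument,\argument}\bigr)$, i.e.\ on the $\TOTAL$-components of both $\ConVecFields_{\tensor}^{\bullet+1}(\mathcal{M})$ and $\ConVecFields_{\strtensor}^{\bullet+1}(\mathcal{M})$, which are both literally $\VecFields^{\bullet+1}(M)$ by construction. Second, since both sides of the identity are given by the same classical formula on $\VecFields^{\bullet+1}(M)$, equality on the $\TOTAL$-component implies equality as constraint multivector fields, because a constraint morphism (or a constraint element) is determined by its $\TOTAL$-component and the identities take place inside $\ConVecFields^{\bullet+1}(\mathcal{M})_\Total$.

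The only point that requires a touch of care is to confirm that all three operations $\wedge$, $\Schouten{\argument,\argument}$ and the bilinear combinations appearing on both sides are genuinely constraint maps with the expected (strong, or non-strong) source and target, so that writing the Leibniz rule in the constraint category even makes sense. For $\wedge$ this is part of the (strong) graded constraint algebra structure recalled above. For $\Schouten{\argument,\argument}$ this is the preceding proposition. The compositions $\Schouten{\argument,\argument} \circ (\id \tensor \wedge)$ and $\wedge \circ (\Schouten{\argument,\argument} \tensor \id)$, etc., are then constraint maps of graded constraint $\field{k}$-modules by functoriality of $\tensor$ and the fact that constraint morphisms compose.

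The main obstacle — though a mild one — is keeping the two tensor products straight: the Schouten bracket is defined via $\tensor$ in both cases (as $\ConHom$ is adjoint to $\tensor$, not $\strtensor$), yet the multiplication on $\ConVecFields_{\strtensor}^{\bullet+1}(\mathcal{M})$ uses $\strtensor$. One should therefore just verify that the trilinear map $(\xi,\eta,\chi) \mapsto \Schouten{\xi, \eta \wedge \chi}$ is a constraint morphism $\ConVecFields^{\bullet+1} \tensor \ConVecFields^{\bullet+1} \tensor \ConVecFields^{\bullet+1} \to \ConVecFields^{\bullet+1}$ in both the $\tensor$- and $\strtensor$-versions; this follows immediately by composing the constraint morphisms $\wedge$ and $\Schouten{\argument,\argument}$ (using, in the strong case, the natural morphism $\ConVecFields_{\strtensor} \tensor \ConVecFields_{\strtensor} \to \ConVecFields_{\strtensor} \strtensor \ConVecFields_{\strtensor}$). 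Once this is recorded, the classical Leibniz identity transports to the (strong) constraint setting and both items follow.
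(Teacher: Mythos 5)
Your argument is correct and matches the paper's intent: the corollary is stated without a separate proof precisely because it assembles the (strong) graded constraint algebra structure of $\Anti^\bullet_{\tensor}$ resp.\ $\Anti^\bullet_{\strtensor}$ on $\ConSecinfty(T\mathcal{M})$, the constraint graded Lie structure from the preceding Schouten-bracket proposition, and the classical Leibniz identity, which by the definition of a constraint Gerstenhaber algebra only needs to hold for elements of the $\Total$-component. Your extra care about phrasing the Leibniz rule as an identity of constraint trilinear maps is harmless but not needed for exactly this reason.
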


In contrast to constraint differential forms there is no preferred choice of the tensor products, at least from the point of view of available algebraic structure.
Nevertheless, \autoref{ex:ConBivectorsFromPoissonManifolds} \ref{ex:ConPoissonMfld} shows that if we are interested
in coisotropic submanifolds we are forced to consider
$\ConVecFields_{\strtensor}^2(\mathcal{M})$
instead of $\ConVecFields_{\tensor}^2(\mathcal{M})$.

Both types of constraint forms and constraint multivector fields are well behaved under reduction:

\pagebreak

\begin{proposition}[Constraint forms and multivector fields vs. reduction]
	\label{prop:ConFormsFieldsVSReduction}
	Let $\mathcal{M}$ be a con\-str\-aint
	manifold.
	\begin{propositionlist}
		\item There exists a canonical isomorphism
		$\ConForms_{\tensor}^\bullet(\mathcal{M})_\red
		\simeq \Forms^\bullet(\mathcal{M}_\red)$
		of graded $\ConCinfty(\mathcal{M})_\red$-modules.
		\item There exists a canonical isomorphism
		$\ConForms_{\strtensor}^\bullet(\mathcal{M})_\red
		\simeq \Forms^\bullet(\mathcal{M}_\red)$
		of differential graded algebras.
		\item There exists a canonical isomorphism
		$\ConVecFields_{\tensor}^\bullet(\mathcal{M})_\red \simeq \VecFields^\bullet(\mathcal{M}_\red)$
		of Gerstenhaber algebras.
		\item There exists a canonical isomorphism
		$\ConVecFields_{\strtensor}^\bullet(\mathcal{M})_\red \simeq \VecFields^\bullet(\mathcal{M}_\red)$
		of Gerstenhaber algebras.
	\end{propositionlist}
\end{proposition}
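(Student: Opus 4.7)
The plan is to derive all four isomorphisms by combining three already established ingredients: the Serre--Swan identification $\Anti_{\tensor}^\bullet \ConSecinfty(E) \simeq \ConSecinfty(\Anti_{\tensor}^\bullet E)$ together with its $\strtensor$-variant, the reduction--sections isomorphism $\ConSecinfty(E)_\red \simeq \Secinfty(E_\red)$ from \autoref{prop:ConSecVSReduction}, and the bundle reductions from \autoref{prop:RedConstructionsConVect}. The key fact from the latter is that both $(E \tensor F)_\red$ and $(E \strtensor F)_\red$ reduce to the ordinary tensor product $E_\red \tensor F_\red$, and that $(T\mathcal{M})_\red \simeq T(\mathcal{M}_\red)$ while $(E^*)_\red \simeq (E_\red)^*$; consequently both $\Anti_{\tensor}^\bullet$ and $\Anti_{\strtensor}^\bullet$ of a constraint vector bundle reduce to the classical $\Anti^\bullet$ of the reduced bundle.

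For the underlying graded module isomorphisms I would chain these identifications. For part (i), $\ConForms_{\tensor}^\bullet(\mathcal{M})_\red \simeq \ConSecinfty(\Anti_{\tensor}^\bullet T^*\mathcal{M})_\red \simeq \Secinfty\bigl(\Anti^\bullet (T^*\mathcal{M})_\red\bigr) \simeq \Secinfty\bigl(\Anti^\bullet T^*(\mathcal{M}_\red)\bigr) = \Forms^\bullet(\mathcal{M}_\red)$, and parts (ii)--(iv) are analogous using $\strtensor$, or with $T\mathcal{M}$ in place of $T^*\mathcal{M}$. In every case the crucial middle step is provided by the compatibility of reduction with $\Anti_{\tensor}$ and $\Anti_{\strtensor}$ via iterated application of \autoref{prop:RedConstructionsConVect}.

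It then remains to match the algebraic structures through these isomorphisms. The wedge products on $\ConForms_{\strtensor}^\bullet$ and $\ConVecFields_{\tensor/\strtensor}^\bullet$ are by definition induced from the respective tensor products, so by naturality of the Serre--Swan equivalence and the functoriality of reduction they descend to the classical wedge product. The de Rham differential and Schouten bracket are constraint morphisms by \autoref{prop:ConCartanCalculus} and \autoref{cor:ConDGLAOfConMultVectFields}, hence automatically descend to operators on the reduction; identifying the descended operators with $\D_\red$ and the classical $\Schouten{\argument,\argument}$ follows by checking agreement on generators (functions in the form case; vector fields and functions in the multivector case) and invoking the derivation and graded Leibniz properties, since $\Forms^\bullet(\mathcal{M}_\red)$ is locally generated by $\Cinfty(\mathcal{M}_\red)$ and $\D_\red \Cinfty(\mathcal{M}_\red)$, and $\VecFields^\bullet(\mathcal{M}_\red)$ is generated as a wedge algebra by $\VecFields(\mathcal{M}_\red)$ over $\Cinfty(\mathcal{M}_\red)$.

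The main obstacle I anticipate is the Schouten-bracket step, which ultimately reduces to showing that the reduced Lie bracket on $\ConSecinfty(T\mathcal{M})_\red$ corresponds to the classical Lie bracket on $\VecFields(\mathcal{M}_\red)$. By \autoref{prop:ConVectorFieldsAreConDer} this is equivalent to the claim that the natural map $\ConDer(\ConCinfty(\mathcal{M}))_\red \to \Der(\Cinfty(\mathcal{M}_\red))$ of \eqref{eq:InsertionReductionDerivations} is an isomorphism in the geometric setting -- a bijectivity already promised earlier. I would prove it by composing with the established isomorphisms $\ConSecinfty(T\mathcal{M})_\red \simeq \Secinfty((T\mathcal{M})_\red) \simeq \VecFields(\mathcal{M}_\red)$ coming from \autoref{prop:ConSecVSReduction} and \autoref{prop:ReductionOfTangentBundle}, with surjectivity of the lift handled as in the proof of \autoref{prop:ConFunctionsVSReduction} via \autoref{lem:ExtendingToConSections} and a tubular neighbourhood of $C$.
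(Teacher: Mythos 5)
Your proposal is correct and follows essentially the same route as the paper: both chain the established compatibilities of reduction with sections, exterior/tensor powers and the (co)tangent bundle to get the graded module isomorphisms, and then observe that wedge product, de Rham differential and Schouten bracket descend and are pinned down by the same (local/generator) formulas. Your extra care in identifying the reduced Lie bracket with the classical one via \autoref{prop:ConVectorFieldsAreConDer} and the surjectivity of \eqref{eq:InsertionReductionDerivations} makes explicit a step the paper only asserts implicitly, but it is not a different method.
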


\begin{proof}
	We combine established results to the following chain of canonical isomorphisms:
	\begin{align*}
		\ConForms_{\tensor}^\bullet(\mathcal{M})_\red
		&= \big( \bigoplus_{k=0}^\infty \Anti_{\tensor}^k \ConSecinfty(T^*\mathcal{M}) \big)_\red
		\simeq \bigoplus_{k=0}^\infty (\Anti_{\tensor}^k \ConSecinfty(T^*\mathcal{M}))_\red \\
		&\simeq  \bigoplus_{k=0}^\infty \Anti^k \ConSecinfty(T^*\mathcal{M})_\red
		\simeq \bigoplus_{k=0}^\infty \Anti^k \Secinfty(T^*\mathcal{M}_\red)
		= \Forms^k(\mathcal{M}_\red).
	\end{align*}
	Since we know that the reduction of $\strtensor$ and $\tensor$ agree and the reduced 
	de Rham differential $\D_\red$ fulfils the same local characterization as the de Rham differential on 
	$\mathcal{M}_\red$ the second part follows.
	Similar we have for the constraint multi-vector fields:
	\begin{align*}
		\ConVecFields_{\tensor}^\bullet(\mathcal{M})_\red
		&= \big( \bigoplus_{k=0}^\infty \Anti_{\tensor}^k \ConSecinfty(T\mathcal{M}) \big)_\red
		\simeq \bigoplus_{k=0}^\infty (\Anti_{\tensor}^k \ConSecinfty(T\mathcal{M}))_\red \\
		&\simeq  \bigoplus_{k=0}^\infty \Anti^k \ConSecinfty(T\mathcal{M})_\red
		\simeq \bigoplus_{k=0}^\infty \Anti^k \Secinfty(T\mathcal{M}_\red)
		= \VecFields^k(\mathcal{M}_\red).
	\end{align*}
	Since the defining equation of the Schouten bracket holds for the reduced 
	Schouten bracket we get an isomorphism of Gerstenhaber algebras.
	The last part follows again since $\strtensor$ and $\tensor$ agree after 
	reduction, and the Schouten bracket is given by the same formula.
\end{proof}

\begin{example}\
	\label{ex:ConBivectorsFromPoissonManifoldsReduction}
	\begin{examplelist}
		\item \label{ex:ConBivectorsFromPoissonManifoldsReduction_1} 
		For this let $(M,\pi)$ be a Poisson manifold.
		From \autoref{ex:ConBivectorsFromPoissonManifolds} we know that
		for every closed Poisson submanifold $C \subseteq M$ with smooth leaf space
		we have $\pi \in \ConVecFields_{\strtensor}^2(\mathcal{M})_\Wobs$,
		and clearly $\Schouten{\pi,\pi} = 0$.
		On the other hand, given any constraint manifold
		$\mathcal{M} = (M,C,D)$
		and a bivector field $\pi \in \ConVecFields_{\strtensor}^2(\mathcal{M})_\Wobs$
		with $\Schouten{\pi,\pi} = 0$
		it is easy to see that $C \subseteq M$ is a coisotropic submanifold 
		with characteristic distribution $D_C \subseteq D$
		and $\Cinfty_D(M)$ is closed under the Poisson bracket.
		Then $\pi_\red \in \VecFields^2(\mathcal{M}_\red)$
		satisfies $\Schouten{\pi_\red,\pi_\red} = 0$
		and hence $(\mathcal{M},\pi)$ reduces to a Poisson manifold 
		$(\mathcal{M}_\red, \pi_\red)$. 
		
		\item Let $\mathcal{M} = (M, C, D)$ be a constraint manifold and 
		$\omega \in \ConForms^2_{\strtensor}(\mathcal{M})_\Wobs$ be 
		presymplectic constraint 2-form. 
		This means $\iota^*\omega \in \Secinfty(\iota^*T^*\mathcal{M} \wedge TC^\ann + D^\ann \wedge D^\ann) 
		= \Secinfty((\Anti^2_{\strtensor}T^*\mathcal{M})_\Wobs)$ and an 
		evaluation of 
		$\iota^*\omega$ shows that $(TC)^\perp \subseteq D \subseteq TC$, which 
		makes $C$ a coisotropic submanifold. 
		Then $\omega_\red \in \Forms^2(\mathcal{M}_\red)$ satisfies 
		$\D_\red \omega_\red = 0$, which yields a reduced presymplectic 
		manifold $(M_\red, \omega_\red)$. 
		
		\item Let $\mathcal{M} = (M, C, D)$ be a constraint manifold together 
		with a constraint Poisson tensor 
		$\pi \in \ConVecFields^2_{\strtensor}(\mathcal{M})_\Wobs$ and a 
		constraint Nijenhuis tensor 
		$A \in \ConSecinfty(\ConEnd(T\mathcal{M}))_\Wobs$, see 
		\autoref{ex:ConBivectorsFromPoissonManifoldsReduction} 
		\ref{ex:ConBivectorsFromPoissonManifoldsReduction_1} and 
		\autoref{ex:ReducibleNijenhuis}, respectively. 
		The triple $(M, \pi, A)$ is called Poisson-Nijenhuis manifold, if 
		\begin{equation}
			\pi^\sharp \circ A^* = A \circ \pi^\sharp
		\end{equation}
		and the Schouten invariant $C_{\pi, A}$ defined by 
		\begin{equation}
			C_{\pi, A}(\alpha, X, \beta) \coloneq 
			\beta \left( (\Lie_{\pi^\sharp(\alpha)} A) X\right) 
			- \alpha \left( (\Lie_{\pi^\sharp(\beta)} A) X\right) 
			+ A(X)\pi(\alpha, \beta) 
			- X \left( \pi(A^*\alpha, \beta) \right),
		\end{equation}
		for $\alpha, \beta \in \Secinfty(T^*M)$ and $X \in \Secinfty(TM)$, 
		vanishes. 
		For the given data the first condition descends clearly, since on both 
		sides we have a composition of constraint morphisms, whereas the second 
		condition descends by the constraint calculus established before. 
		This leads to a reduced Poisson tensor 
		$\pi_\red \in \Secinfty(\Anti^2 T\mathcal{M}_\red)$ and a reduced 
		Nijenhuis tensor $A_\red \in \Secinfty(\End(T\mathcal{M}_\red))$ 
		satisfying the conditions of a Poisson-Nijenhuis manifold. 
		This example recovers the results of \cite{vaisman:1996a}.
	\end{examplelist}
\end{example}

\section{Applications of Constraint Reduction}
\label{sec:Applications}

In this section we show how the theory developed so far can be applied to the reduction of some geometric examples. 
In the first subsection, \autoref{sec:ConLieRinehartAlgebras}, we give the 
notion of constraint Lie-Rinehart algebra and study its reduction 
The results obtained there will then be used for the reduction of Lie (bi-)algebroids in \autoref{sec:ConLieAlgebroids}.
This will unify and generalize many well-known constructions of Lie algebroids.
Finally, in \autoref{sec:RedDirac} the reduction of 
Dirac structures is discussed.

\subsection{Reduction of Lie-Rinehart Algebras}
\label{sec:ConLieRinehartAlgebras}

We first state the definition of a constraint Lie-Rinehart algebra,
cf. \cite{rinehart:1963a,huebschmann:1990a} for the classical notion. 

\begin{definition}[Constraint Lie-Rinehart algebra]\
	\label{def:ConLieRinehartAlg}
	\begin{definitionlist}
		\item A \emph{constraint Lie-Rinehart algebra}
		$(\algebra{A},\liealg{g})$
		consists of the following data:
		\begin{definitionlist}[label=\alph*.)]
			\item A commutative constraint $\field{k}$-algebra $\algebra{A}$.
			\item A constraint $\algebra{A}$-module
			$\liealg{g}$ together with a constraint Lie algebra structure
			$[\argument, \argument]$ on $\liealg{g}$.
			\item An action of $\liealg{g}$ on $\algebra{A}$ by derivations, i.e. 
			a constraint morphism
			$\rho \colon \liealg{g} \to \ConDer(\algebra{A})$
			of constraint Lie algebras and constraint
			$\algebra{A}$-modules, called \emph{anchor}.
		\end{definitionlist}
		Additionally, these structures are supposed to be compatible via the \emph{Leibniz rule} 
		\begin{equation}
			[\xi, a \cdot \eta] = \left(\rho(\xi)a\right) \cdot \eta + a[\xi,\eta],
		\end{equation}
		for all $\xi, \eta \in \liealg{g}_{\Total}$ and $a \in 
		\algebra{A}_{\Total}$.
		\item A constraint Lie-Rinehart algebra $(\algebra{A},\liealg{g})$
		is called \emph{strong} if $\algebra{A}$ is a strong constraint algebra and 
		$\liealg{g}$ is a strong constraint $\algebra{A}$-module.
	\end{definitionlist}	
\end{definition}

Note, in the following the conditions in $iii.)$ are summarized by calling 
$\algebra{A}$ a \emph{constraint Lie module} over $\liealg{g}$ with action 
$\rho$. 
In \cite{kern:2023a} there was given a similar but equivalent definition. 

\begin{example}[Constraint Lie-Rinehart algebras]\
	\label{ex:ConLieRinehartAlgebras}
\begin{examplelist}
	\item A constraint Lie-Rinehart algebra of the form $((\algebra{A},\algebra{A},0),(\liealg{g},\liealg{g},0))$
	is the same as a classical Lie-Rinehart algebra $(\algebra{A},\liealg{g})$.
	\item In \cite{alburquerque.etal:2021a} subalgebras of Lie-Rinehart algebras are defined as follows:
	Let $(A,L)$ be a Lie-Rinehart algebra.
	Then a Lie-Rinehart subalgebra of $(A,L)$ is given by a Lie subalgebra $S \subseteq L$ such that $A\cdot S \subseteq S$.
	This is equivalent to a constraint Lie-Rinehart algebra $(\algebra{A},\liealg{g})$ of the form
	\begin{equation}
		\liealg{g} = (L,S,0)
		\qquad\text{and}\qquad
		\algebra{A} = (A,A,0).
	\end{equation}
	\item Again in \cite{alburquerque.etal:2021a} also a notion of ideal of a Lie-Rinehart algebra is defined.
	Such an ideal of a Lie-Rinehart algebra $(A,L)$ is given by a Lie ideal
	$I \subseteq L$
	satisfying $\rho(I)(A)L\subseteq I$.
	This is equivalently given by a constraint Lie-Rinehart algebra
	$(\algebra{A},\liealg{g})$
	of the form 
	\begin{equation}
		\liealg{g} = (L,L,I)
		\qquad\text{and}\qquad
		\algebra{A} = \big(A,A,\SP{\rho(I)(A)}\big),
	\end{equation}
	with $\SP{\rho(I)(A)}$ denoting the ideal generated by the subset
	$\rho(I)(A)\subseteq A$.
	\item A more general version of an ideal in a Lie-Rinehart algebra, a so called weak ideal,
	was introduced in \cite{jotz:2018a}.
	Such a weak ideal in a Lie-Rinehart algebra $(A,L)$ is given by
	a unital subalgebra $B \subseteq A$,
	a Lie subalgebra $\cc{L} \subseteq L$
	and a Lie ideal $J \subseteq \cc{L}$, s.t.
	\begin{cptitem}
		\item $\cc{L}$ is a $B$-module,
		\item $(\cc{L},B)$ is a Lie-Rinehart algebra,
		\item $\rho(J)(B) = 0$ and
		\item $J$ is an $A$-module.
	\end{cptitem}
	This data is equivalently given by a constraint Lie-Rinehart algebra 
	$(\algebra{A},\liealg{g})$ of the form 
	\begin{equation}
		\liealg{g} = (L,\cc{L},J)
		\qquad\text{and}\qquad
		\algebra{A} = (A,B,0).
	\end{equation}
\end{examplelist}
\end{example}

As for classical Lie-Rinehart algebras there exist different notions of structure preserving maps of constraint Lie-Rinehart algebras, see e.g.
\cite{higgins.mackenzie:1993a}.
In the following we will only consider morphisms.

\begin{definition}[Morphism of constraint Lie-Rinehart algebras]\
	\label{def:ConstraintLieRinehartAlgebraMorphism}
	Let $(\algebra{A},\liealg{g})$ and $(\algebra{B},\liealg{h})$ be 
	constraint Lie-Rinehart algebras.
	\begin{definitionlist}
		\item A \emph{morphism 
		$(\phi, \Phi) \colon (\algebra{A},\liealg{g}) \to (\algebra{B},\liealg{h})$
		of constraint Lie-Rinehart algebras}
		consists of a constraint Lie algebra morphism
		$\Phi \colon \liealg{g} \to \liealg{h}$
		and a constraint algebra morphism
		$\phi \colon \algebra{A} \to \algebra{B}$, 
		such that
		\begin{definitionlist}[label=\alph*.)]
			\item $\Phi$ is a morphism of constraint modules along the constraint 
			algebra morphism $\phi \colon \algebra{A} \to \algebra{B}$,
			\item $\phi$ is a morphism of constraint Lie modules along the constraint 
			Lie algebra morphism $\Phi$, i.e.
			\begin{equation}
				\rho_{\liealg{h}}(\Phi(\xi))\phi(a) 
				= \phi \left( \rho_{\liealg{g}}(\xi)a \right)
			\end{equation}
			holds for $\xi \in \liealg{g}$ and $a \in \algebra{A}$.
		\end{definitionlist}
		
		\item The category of constraint Lie-Rinehart algebras and constraint Lie-Rinehart 
		algebra morphisms is denoted by $\injConLieRineAlg$.
		The full subcategory of strong constraint Lie-Rinehart algebras is denoted by
		$\injstrConLieRineAlg$.		
	\end{definitionlist}	
\end{definition}

The reduction of all ingredients used in the definition of constraint Lie-Rinehart algebra have been studied in \autoref{sec:AlgebraicPreliminaries}.
Thus we obtain the following reduction procedure.
Note that similar results were given in \cite{kern:2023a}.

\begin{theorem}[Reduction of Lie-Rinehart algebras]\
	\label{prop:RedConLieRinehart}
	Let $(\algebra{A},\liealg{g})$ and $(\algebra{B}, \liealg{h})$ be constraint 
	Lie-Ri\-ne\-hart algebras. 
	\begin{propositionlist}
		\item \label{prop:RedConLieRinehart_1}
		The pair $(\algebra{A}_\red,\liealg{g}_\red)$ together with the anchor
		$\rho_\red \colon \liealg{g}_\red \to \Der(\algebra{A})_\red \subseteq \Der(\algebra{A}_\red)$
		is a Lie-Rinehart algebra.
		
		\item \label{prop:RedConLieRinehart_2}
		Let
		$(\phi,\Phi) \colon (\algebra{A},\liealg{g}) \to (\algebra{B},\liealg{h})$
		be a morphism of constraint Lie-Rinehart algebras.
		Then the pair of reduced morphisms
		$(\phi_\red, \Phi_\red) \colon (\algebra{A}_\red,\liealg{g}_\red) 
		\to (\algebra{B}_\red,\liealg{h}_\red)$ 
		is a morphism of Lie-Rinehart algebras. 
		
		\item \label{prop:RedConLieRinehart_3}
		Mapping every constraint Lie Rinehart algebra $(\algebra{A},\liealg{g})$
		to $(\algebra{A}_\red,\liealg{g}_\red)$
		and every map $(\phi,\Phi)$ of constraint Lie-Rinehart algebras to $(\phi_\red,\Phi_\red)$
		defines a functor
		\begin{equation}
			\red \colon \injConLieRineAlg \to \LieRineAlg.
		\end{equation}
	\end{propositionlist}
\end{theorem}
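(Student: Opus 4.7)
My plan is to prove each of the three statements by a systematic application of the reduction functors already established for the constituent structures: constraint algebras, constraint modules, constraint Lie algebras, and constraint derivations. Because a constraint Lie-Rinehart algebra is nothing but the compatible assembly of these pieces, reduction will be obtained by reducing each piece and then checking that the compatibility axioms pass to the quotient.

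For part \ref{prop:RedConLieRinehart_1}, I first note that $\algebra{A}_\red$ is a commutative $\field{k}$-algebra (reduction of commutative constraint algebras), that $\liealg{g}_\red$ is an $\algebra{A}_\red$-module (reduction of constraint modules), and that $\liealg{g}_\red$ carries a Lie bracket via the reduction functor $\red \colon \injConLieAlg \to \LieAlgs$ discussed after \autoref{ex:ConLieAlg}. The anchor is defined as the composition
\begin{equation*}
	\rho_\red \colon \liealg{g}_\red \longrightarrow \ConDer(\algebra{A})_\red \hookrightarrow \Der(\algebra{A}_\red),
\end{equation*}
where the first arrow is the reduction of the constraint Lie algebra/module morphism $\rho$ and the second is the natural injection of \eqref{eq:InsertionReductionDerivations}. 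By functoriality of reduction, $\rho_\red$ is automatically a Lie algebra morphism and an $\algebra{A}_\red$-module morphism. The remaining point is the Leibniz rule, which I would verify by choosing representatives $\xi,\eta\in\liealg{g}_\Wobs$ and $a\in\algebra{A}_\Wobs$ of classes $\cc\xi,\cc\eta\in\liealg{g}_\red$ and $\cc a\in\algebra{A}_\red$ (legal since the $\WOBS$-components are closed under the module action and the bracket), applying the Leibniz rule of $(\algebra{A},\liealg{g})$ to the representatives, and projecting the resulting identity to the quotient; the projection is a bracket/module morphism, so the identity survives.

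For part \ref{prop:RedConLieRinehart_2}, given a morphism $(\phi,\Phi)$ the maps $\phi_\red$ and $\Phi_\red$ exist as the reductions of the underlying constraint algebra and constraint Lie algebra morphisms. The compatibility conditions of \autoref{def:ConstraintLieRinehartAlgebraMorphism} — that $\Phi_\red$ is $\algebra{A}_\red$-linear along $\phi_\red$ and that $\rho_{\liealg{h}_\red}(\Phi_\red(\cc\xi))\phi_\red(\cc a) = \phi_\red(\rho_{\liealg{g}_\red}(\cc\xi)\cc a)$ — follow by the same representative-chasing, using that both identities already hold in the $\WOBS$-components by hypothesis. Part \ref{prop:RedConLieRinehart_3} is then immediate: identities and compositions are preserved under reduction because each of the constituent reduction functors is itself a functor, so $\red$ is functorial.

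I do not expect a genuine obstacle here; the theorem is essentially a bookkeeping consequence of the apparatus built in \autoref{sec:AlgebraicPreliminaries}. The one subtle point worth emphasizing is that the natural map $\ConDer(\algebra{A})_\red \to \Der(\algebra{A}_\red)$ of \eqref{eq:InsertionReductionDerivations} is in general only an injection, not an isomorphism; thus the reduced anchor lands in, but need not exhaust, $\Der(\algebra{A}_\red)$. This is entirely acceptable for the definition of a Lie-Rinehart algebra, where no surjectivity of the anchor is required, so it causes no issue.
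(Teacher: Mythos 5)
Your proposal is correct and follows essentially the same route as the paper's proof: reduce the constituent constraint structures via the functors of \autoref{sec:AlgebraicPreliminaries}, obtain the anchor through $\ConDer(\algebra{A})_\red \hookrightarrow \Der(\algebra{A}_\red)$ as in \eqref{eq:InsertionReductionDerivations}, and check that the Leibniz rule and the morphism compatibilities descend by passing to representatives in the $\WOBS$-components. Your closing remark that the injection into $\Der(\algebra{A}_\red)$ need not be surjective, and that this is harmless for the Lie-Rinehart axioms, is exactly the point the paper also relies on.
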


\begin{proof}
	The $\field{k}$-algebra structure of 
	$\algebra{A}_\red = \algebra{A}_\Wobs / \algebra{A}_\Null$
	as well as the $\algebra{A}_\red$-module structure of 
	$\liealg{g}_\red = \liealg{g}_\Wobs / \liealg{g}_\Null$
	follow from \autoref{sec:ConAlgebrasAndModules},
	while the Lie algebra structure on
	$\liealg{g}_\red$
	was discussed in \autoref{sec:ConLieAlgebras}.
	There it was also shown that the constraint $\algebra{A}$-module and constraint Lie algebra morphism $\rho$ 
	descends to 
	$\rho_\red \colon \liealg{g}_\red \to \Der(\algebra{A})_\red$.
	Since $\Der(\algebra{A})_\red \subseteq \Der(\algebra{A}_\red)$,
	see \eqref{eq:InsertionReductionDerivations},
	this is equivalent to a reduced Lie module structure on
	$\algebra{A}_\red$. 
	Together the reduced structures give rise to a Lie-Rinehart algebra 
	$(\algebra{A}_\red, \liealg{g}_\red)$, since the Leibniz rule can be easily shown to descends a well. 
	
	The morphisms $\phi$ and $\Phi$ of constraint $\field{k}$-algebras as well as 
	of constraint Lie algebras and constraint modules over $\field{k}$-algebras, 
	respectively, give rise to reduced morphisms $\phi_\red$ and $\Phi_\red$. 
	By using the quotient projections relating the reduced with the original 
	structures it follows that the pair $(\phi_\red, \Phi_\red)$ forms a reduced 
	morphism of Lie modules, which makes it a Lie-Rinehart algebra morphism. 
	 
	 The third part follows by the two previous ones and the verifications for 
	 $\red$ are straight-forward.
\end{proof}

\subsection{Reduction of Lie (Bi-)Algebroids}
\label{sec:ConLieAlgebroids}

The combination of constraint vector bundles, as introduced in \autoref{sec:ConVectorBundles},
with constraint Lie-Rinehart algebras enables us now to define constraint Lie algebroids in the 
following way. 

\begin{definition}[Constraint Lie algebroid]
	\label{def:ConLieAlgebroid}
	A \emph{constraint Lie algebroid}
	consists of
	\begin{definitionlist}
		\item a constraint vector bundle
		$A = (A_\Total, A_\Wobs, A_\Null,\nabla)$
		over a constraint manifold
		$\mathcal{M} = (M,C,D)$ and 
		
		\item a constraint Lie-Rinehart algebra structure on 
		$(\ConCinfty(\mathcal{M}), \ConSecinfty(A))$. 
	\end{definitionlist}
\end{definition}

In \cite{kern:2023a} a more hands-on definition was given. 
For a definition of classical Lie algebroids, see \cite{mackenzie:2005a}. 
The link between classical and constraint Lie algebroids becomes more 
transparent using the following reformulation.

\begin{lemma}
	Let $A = (A_\Total, A_\Wobs, A_\Null, \nabla)$ be a constraint vector bundle  
	over $\mathcal{M} = (M, C, D)$. 
	Then a constraint Lie algebroid structure on $A$ is equivalent to
	\begin{lemmalist}
		\item a constraint vector bundle morphism $\rho \colon A \to T\mathcal{M}$ 
		over $\id_{\mathcal{M}}$ and 
		
		\item a constraint Lie algebra on $\ConSecinfty(A)$ with Lie bracket 
		$[\argument, \argument]$,
	\end{lemmalist}
	such that 
	\begin{equation}
		[a, fb] = (\rho(a)f)b + f[a, b]
	\end{equation}
	holds for $a, b \in \ConSecinfty(A)_\Total$ and 
	$f \in \ConCinfty(\mathcal{M})_\Total$.
\end{lemma}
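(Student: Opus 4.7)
The plan is to observe that this lemma is essentially a translation between two equivalent packagings of the same data, built on two identifications already established: the isomorphism $\ConSecinfty(T\mathcal{M}) \simeq \ConDer(\ConCinfty(\mathcal{M}))$ from \autoref{prop:ConVectorFieldsAreConDer}, and the internal-hom identity $\ConSecinfty(\ConHom(A,T\mathcal{M})) \simeq \ConHom_{\ConCinfty(\mathcal{M})}(\ConSecinfty(A),\ConSecinfty(T\mathcal{M}))$ from \autoref{prop:ConSectionsHom}. The only nontrivial content is that the Lie-algebra-morphism property of the anchor---which is part of the constraint Lie-Rinehart definition but is \emph{not} imposed in the alternative formulation---is automatic from the Leibniz rule together with Jacobi.

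First I would unpack the definition: a constraint Lie algebroid on $A$ is, per \autoref{def:ConLieAlgebroid} and \autoref{def:ConLieRinehartAlg}, a triple $(\,[\argument,\argument]_{\ConSecinfty(A)},\ \rho_{\mathrm{LR}},\ \text{Leibniz rule})$, where $\rho_{\mathrm{LR}}\colon \ConSecinfty(A) \to \ConDer(\ConCinfty(\mathcal{M}))$ is simultaneously a constraint $\ConCinfty(\mathcal{M})$-module morphism and a constraint Lie algebra morphism. Applying the isomorphism $\ConDer(\ConCinfty(\mathcal{M}))\simeq\ConSecinfty(T\mathcal{M})$ turns $\rho_{\mathrm{LR}}$ into a constraint module morphism $\ConSecinfty(A)\to\ConSecinfty(T\mathcal{M})$. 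Such a morphism is, by the definition of a constraint module morphism, precisely an element of $\ConHom_{\ConCinfty(\mathcal{M})}(\ConSecinfty(A),\ConSecinfty(T\mathcal{M}))_\Wobs$, which via \autoref{prop:ConSectionsHom} corresponds to a section of $\ConHom(A,T\mathcal{M})$ in the $\Wobs$-component; unwinding the definition of $\ConHom(A,T\mathcal{M})_\Wobs$ (including the connection condition $\nabla^{\ConHom}_X\overline{\iota^\#\Phi}=0$) shows this is exactly a constraint vector bundle morphism $\rho\colon A\to T\mathcal{M}$ over $\id_\mathcal{M}$ in the sense of \autoref{def:ConVectorBundle}. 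Under this dictionary, condition (ii) is exactly the constraint Lie algebra structure, condition (i) is the constraint module part of the anchor, and the Leibniz rule is preserved verbatim (Lie derivative of $f$ along $\rho(a)$ matches $\rho(a)f$ by construction).

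The main and only substantive step is the backward implication: showing that any $(\rho,[\argument,\argument])$ satisfying (i), (ii) and the Leibniz rule automatically makes $\rho_{\mathrm{LR}}$ a constraint Lie algebra morphism, i.e.\ $\rho([a,b])=[\rho(a),\rho(b)]$ as constraint derivations of $\ConCinfty(\mathcal{M})$. This follows from the standard classical computation: for $a,b,c\in\ConSecinfty(A)_\Total$ and $f\in\ConCinfty(\mathcal{M})_\Total$, expanding both sides of the Jacobi identity
\[
    [a,[b,fc]] - [b,[a,fc]] = [[a,b],fc]
\]
using the Leibniz rule and cancelling the Jacobi-invariant piece $f[[a,b],c]$ yields
\[
    \bigl([\rho(a),\rho(b)] - \rho([a,b])\bigr)(f)\cdot c = 0.
\]
Since this holds for arbitrary $c\in\ConSecinfty(A)_\Total$ and $f\in\ConCinfty(\mathcal{M})_\Total$, working pointwise (at points where a local frame is nonvanishing) gives $\rho([a,b])=[\rho(a),\rho(b)]$ on $\TOTAL$-components. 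The constraint compatibility is then automatic: both sides are already constraint derivations (because $\rho_{\mathrm{LR}}$ is a constraint map and $\ConDer(\ConCinfty(\mathcal{M}))$ is a constraint Lie algebra), and equality of $\TOTAL$-components forces equality as constraint derivations.

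The hard part I would expect is not conceptual but notational: making sure the connection-compatibility clause \eqref{eq:ConstraintVBMorphism} in the definition of constraint vector bundle morphism corresponds precisely to the $\nabla^{\ConHom}\overline{\Phi}=0$ condition built into $\ConSecinfty(\ConHom(A,T\mathcal{M}))_\Wobs$ via the Bott connection on $T\mathcal{M}$. Once this identification is carefully verified (which amounts to unwinding the definition of $\nabla^{\ConHom}$ in \autoref{prop:ConstructionsCVect}\ref{prop:ConstructionsCVect_Hom} applied to $F = T\mathcal{M}$), the rest of the proof is a transparent package of the two identifications above plus the algebraic Jacobi-Leibniz argument.
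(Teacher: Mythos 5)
The paper states this lemma without proof, treating it as a direct unpacking of \autoref{def:ConLieAlgebroid} together with \autoref{def:ConLieRinehartAlg}, so there is no written argument to compare against; your sketch supplies exactly the details the authors leave implicit, and it is correct. The dictionary you use — $\ConSecinfty(T\mathcal{M})\simeq\ConDer(\ConCinfty(\mathcal{M}))$ from \autoref{prop:ConVectorFieldsAreConDer}, then \autoref{prop:ConSectionsHom} to pass from constraint module morphisms to sections of $\ConHom(A,T\mathcal{M})$, and finally the identification of $\ConSecinfty(\ConHom(A,T\mathcal{M}))_\Wobs$ with constraint vector bundle morphisms over $\id_\mathcal{M}$ — is the intended route; for the last step note that the dual-connection condition \eqref{eq:ConstraintVBMorphism} matches $\nabla^{\ConHom}\overline{\Phi}=0$ precisely because, over the identity, \autoref{prop:CharacterizationsMorConnection} reduces it to $\overline{\Phi}\circ\nabla = \nabla^{\Bott}\circ\overline{\Phi}$, which is what you flag. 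The only genuinely nontrivial point, that the bracket-morphism property of the anchor is automatic from Jacobi and Leibniz, is handled by the standard computation you give; the pointwise conclusion needs only that every value in a fibre of $A_\Total$ is attained by a section, and the constraint compatibility of $\rho_{\mathrm{LR}}([a,b])=[\rho_{\mathrm{LR}}(a),\rho_{\mathrm{LR}}(b)]$ is indeed free since all constraint objects here are embedded, so equality of $\Total$-components suffices.
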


With this it follows that for any constraint Lie algebroid $A$ 
the $\TOTAL$-component $A_\Total$ is a classical Lie 
algebroid.
This is not true for the $\WOBS$-component:
Even though $\rho_\Wobs \colon A_\Wobs \to TC$ 
yields a candidate for the anchor
there is no canonical way to define a Lie bracket
on $\Secinfty(A_\Wobs)$.
To see this, recall that $\ConSecinfty(A)_\Wobs$ consists of certain
sections defined an all of $M$.
Nevertheless, constraint Lie algebroids reduce to Lie algebroids on the 
reduced manifold as we will see later. 

\begin{lemma}
	\label{lem:UnderlyingConstraintLRAlgOfConstraintLAlgd}
	Let $A = (A_\Total, A_\Wobs, A_\Null, \nabla)$ be a constraint Lie algebroid 
	over $\mathcal{M} = (M, C, D)$, then the constraint Lie-Rinehart algebra 
	$(\ConCinfty(\mathcal{M}), \ConSecinfty(A))$ is strong. 
\end{lemma}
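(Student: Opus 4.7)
The plan is simply to unwind the definition of \emph{strong} constraint Lie-Rinehart algebra in \autoref{def:ConLieRinehartAlg} and verify the two conditions it imposes, both of which have essentially been established earlier in the paper for the geometric setting. Concretely, I need to check that (i) the underlying commutative constraint algebra $\ConCinfty(\mathcal{M})$ is strong, and (ii) the constraint module $\ConSecinfty(A)$ of sections is strong over it.

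For (i), this is immediate from \autoref{prop:FunctionsOnConManifolds}, where it is shown that the functor $\ConCinfty$ takes values in $\injstrConAlg^{\opp}$. The point is that $\ConCinfty(\mathcal{M})_\Null$ consists of the smooth functions on $M$ vanishing on $C$, i.e.\ the vanishing ideal of $C$, which is manifestly a two-sided ideal of $\Cinfty(M) = \ConCinfty(\mathcal{M})_\Total$.

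For (ii), this is precisely the content of \autoref{prop:FunctorConSections}: the sections functor $\ConSecinfty$ takes values in $\injstrConMod(\ConCinfty(\mathcal{M}))$. The relevant computations there show that for any $f \in \ConCinfty(\mathcal{M})_\Null$ and $s \in \Secinfty(A_\Total)$, one has $\iota^\#(f \cdot s) = \iota^* f \cdot \iota^\# s = 0 \in \Secinfty(A_\Null)$, giving $\ConSecinfty(A)_\Total \cdot \ConCinfty(\mathcal{M})_\Null \subseteq \ConSecinfty(A)_\Null$; and for $s \in \ConSecinfty(A)_\Null$ and any $f \in \Cinfty(M)$, the same identity shows $\iota^\#(f\cdot s) \in \Secinfty(A_\Null)$, so $\ConSecinfty(A)_\Null$ is a $\ConCinfty(\mathcal{M})_\Total$-submodule of $\ConSecinfty(A)_\Total$.

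There is no real obstacle: the lemma is a direct consequence of the earlier propositions and does not use the Lie algebroid structure (neither the bracket nor the anchor) at all—only the constraint vector bundle structure together with the general functorial properties of $\ConCinfty$ and $\ConSecinfty$. In effect, one is merely observing that in the geometric setting strongness is automatic, which is a notable contrast with the purely algebraic setting where it has to be assumed.
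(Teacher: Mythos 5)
Your argument is correct and is essentially the paper's own proof: the paper verifies directly that $\ConCinfty(\mathcal{M})_\Null$ is the vanishing ideal of $C$ (hence a two-sided ideal in $\ConCinfty(\mathcal{M})_\Total$) and that $f\cdot s\at{C}$ lands in $\Secinfty(A_\Null)$ in the two relevant cases, which are exactly the computations you point to in \autoref{prop:FunctionsOnConManifolds} and \autoref{prop:FunctorConSections}. Your observation that the Lie algebroid structure plays no role and that strongness is automatic in the geometric setting is also accurate.
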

\begin{proof} 
	Since $\ConCinfty(\mathcal{M})_\Null$ is by definition the vanishing ideal of $C$, it is a two-sided ideal in $\ConCinfty(\mathcal{M})_\Total$, 
	which makes $\ConCinfty(\mathcal{M})$ a strong constraint $\field{k}$-algebra. 
	Moreover, for $f \in \ConCinfty(\mathcal{M})_\Null$ and 
	$s \in \ConSecinfty(A)_\Total$ it follows that $fs \at{C} = 0$, 
	i.e. $fs \in \ConSecinfty(A)_\Null$. 
	For $f \in \ConCinfty(\mathcal{M})_\Total$ and $s \in \ConSecinfty(A)_\Null$ 
	we obtain $fs \at{C} \in \Secinfty(A_\Null)$, i.e. 
	$fs \in \ConSecinfty(A)_\Null$. 
	 Consequently, $\ConSecinfty(A)$ is a strong constraint 
	 $\ConCinfty(\mathcal{M})$-module and thus 
	 $(\ConCinfty(\mathcal{M}), \ConSecinfty(A))$ is strong as constraint 
	 Lie-Rinehart algebra. 
\end{proof}

We can now define morphisms of constraint Lie algebroids.
The classical notion of Lie algebroid morphism can be found in 
\cite{higgins.mackenzie:1990a,higgins.mackenzie:1993a}. 

\begin{definition}[Constraint Lie algebroid morphism]\
	Let $A \to \mathcal{M}$ and $B \to \mathcal{N}$ be constraint Lie 
	algebroids.
	\begin{definitionlist}
		\item A constraint vector bundle morphism $\Phi \colon A \to B$ 
		over $\phi \colon \mathcal{M} \to \mathcal{N}$
		is called 
		\emph{constraint Lie algebroid morphism},
		if the corresponding morphisms
		$\Phi \colon \ConSecinfty(A) \to \ConSecinfty(\phi^\#B)$
		and 
		$\phi^* \colon \ConCinfty(\mathcal{N}) \to \ConCinfty(\mathcal{M})$
		satisfy the following two conditions:
		\begin{definitionlist}[label=\alph*.)]
			\item The equation of constraint morphisms
			\begin{equation} \label{eq:AnchorLieAlgbdComorphism}
				T\phi \circ \rho_A = \rho_B \circ \Phi
			\end{equation} 
			holds.		
			\item For $\widehat{\Phi}(a) = \sum_{i}f^i \phi^\# a_i$
			and
			$\widehat{\Phi}(b) = \sum_{j} g^j \phi^\# b_j \in 
			\Con\Secinfty(\phi^\# B)_{\Total}$, one has 
			\begin{equation}
			\label{eq:LieBracketConditionOfLieAlgebroidMorphism}
				\widehat{\Phi} \left(\left[a, b\right]_A\right) 
				= \sum_{i,j} f^i g^j \phi^\#\left[a_i, b_j\right]_B 
				+ \sum_{j} \rho_A(a)g^j \phi^\#b_j 
				- \sum_{i} \rho_A(b)f^i \phi^\#a_i 
			\end{equation}
			for $a, b \in \Con\Secinfty(A)_{\Total}$,
			$a_i, b_j \in \Con\Secinfty(B)_{\Total}$
			and $f^i, g^j \in \Con\Cinfty(\mathcal{M})_{\Total}$,
			where 
			$\widehat{\Phi} \colon A \to \phi^\# B$
			is the induced morphism between constraint vector bundles over 
			$\id_{\mathcal{M}}$, see 
			\eqref{eq:universalVBMorphismToPullBackVB}. 
		\end{definitionlist}
	
	\item The category of constraint Lie algebroids with morphisms of constraint 
	Lie algebroids is denoted by $\injConLieAlgd$. 
	\end{definitionlist}
\end{definition}

Additionally, we can use the notion of morphism of constraint Lie-Rinehart algebras as introduced in \autoref{sec:ConLieRinehartAlgebras}
to define comorphisms of constraint Lie algebroids.
The definition of classical Lie algebroid comorphisms can be found in 
\cite{cattaneo.dherin.weinstein:2013a}.

\begin{definition}[Constraint Lie algebroid comorphism]\
	Let $A \to \mathcal{M}$ and $B \to \mathcal{N}$ be constraint Lie 
	algebroids. 
	\begin{definitionlist}
		\item A constraint vector bundle morphism $\Phi \colon B^* \to A^*$ 
		along $\phi \colon \mathcal{N} \to \mathcal{M}$
		is called a \emph{constraint Lie algebroid comorphism}, if the corresponding 
		morphism between constraint modules of sections 
		$\Phi^* \colon \Con\Secinfty(A) \to \Con\Secinfty(B)$ over 
		$\phi^* \colon \Con\Cinfty(\mathcal{M}) \to \Con\Cinfty(\mathcal{N})$ 
		forms a Lie-Rinehart algebra morphism. 
	
		\item The category of constraint Lie algebroids with comorphism of 
		constraint Lie algebroids is denoted by $\injConLieAlgd_{\mathrm{co}}$. 
	\end{definitionlist}
\end{definition}

The reduction of constraint Lie algebroids becomes functorial for both types 
of morphisms. 
The results of the following theorem coincide with those in \cite{kern:2023a}.

\begin{theorem}[Reduction of constraint Lie algebroids]\
	\label{thm:ReductionOfLieAlgebroids}
	\begin{theoremlist}
		\item \label{thm:ReductionOfLieAlgebroids_1}
		Let $A$ be a constraint Lie algebroid over $\mathcal{M}$
		with anchor $\rho$
		and Lie bracket $[\argument,\argument]$.
		Then $A_\red$ is a Lie algebroid over $\mathcal{M}_\red$ with anchor $\rho_\red$
		and Lie bracket $[\argument,\argument]_\red$.
		
		\item \label{thm:ReductionOfLieAlgebroids_2}
		Let $\Phi \colon A \to B$ be a Lie algebroid morphism over 
		$\phi \colon \mathcal{M} \to \mathcal{N}$. 
		Then $\Phi_\red \colon A_\red \to B_\red$ is a morphism of Lie algebroids 
		over $\phi_\red \colon \mathcal{M}_\red \to \mathcal{N}_\red$. 
		
		\item \label{thm:ReductionOfLieAlgebroids_3}
		Mapping every constraint Lie algebroid $A$ over $\mathcal{M}$ to 
		$A_\red$ over $\mathcal{M}_\red$ and every morphism $\Phi$ of constraint 
		Lie algebroids to $\Phi_\red$ defines a functor 
		\begin{equation}
			\red \colon \injConLieAlgd \to \LieAlgd. 
		\end{equation}
		
		\item \label{thm:ReductionOfLieAlgebroids_4}
		Let $\Phi \colon B^* \to A^*$ be a Lie algebroid comorphism over 
		$\phi \colon \mathcal{N} \to \mathcal{M}$.
		Then $\Phi_\red \colon B^*_\red \to A^*_\red$ is a Lie algebroid 
		comorphism over $\phi \colon \mathcal{N}_\red \to \mathcal{M}_\red$. 
		 
		\item \label{thm:ReductionOfLieAlgebroids_5}
		Mapping every constraint Lie algebroid $A$ over $\mathcal{M}$ to 
		$A_\red$ over $\mathcal{M}_\red$ and every comorphism $\Phi$ of constraint 
		Lie algebroids to $\Phi_\red$ defines a functor 
		\begin{equation}
			\red \colon \injConLieAlgd_{\mathrm{co}} \to \LieAlgd_{\mathrm{co}}. 
		\end{equation}
	\end{theoremlist}
\end{theorem}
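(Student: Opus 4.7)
The plan is to deduce everything from the Lie-Rinehart reduction of \autoref{prop:RedConLieRinehart}, translated back and forth between the Lie-algebroid and Lie-Rinehart pictures via the sections functor and the constraint Serre-Swan Theorem.

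For \ref{thm:ReductionOfLieAlgebroids_1}, I would start from \autoref{lem:UnderlyingConstraintLRAlgOfConstraintLAlgd}, which gives that $(\ConCinfty(\mathcal{M}), \ConSecinfty(A))$ is a strong constraint Lie-Rinehart algebra. Applying \autoref{prop:RedConLieRinehart}~\ref{prop:RedConLieRinehart_1}, combined with the natural isomorphisms $\ConCinfty(\mathcal{M})_\red \simeq \Cinfty(\mathcal{M}_\red)$ from \autoref{prop:ConFunctionsVSReduction} and $\ConSecinfty(A)_\red \simeq \Secinfty(A_\red)$ from \autoref{prop:ConSecVSReduction}, produces a Lie-Rinehart structure on the pair $(\Cinfty(\mathcal{M}_\red), \Secinfty(A_\red))$. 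Combining \autoref{prop:ConVectorFieldsAreConDer} with \autoref{prop:ReductionOfTangentBundle} identifies the reduced anchor with a vector bundle morphism $\rho_\red \colon A_\red \to T\mathcal{M}_\red$, and the classical Serre-Swan theorem packages this data as a Lie algebroid structure on $A_\red$ in the desired sense.

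For \ref{thm:ReductionOfLieAlgebroids_2} and \ref{thm:ReductionOfLieAlgebroids_3}, given $\Phi \colon A \to B$ over $\phi$, I would factor it through the induced constraint morphism $\widehat{\Phi} \colon A \to \phi^\# B$ over $\id_\mathcal{M}$, which by \autoref{prop:RedFunctorVect} reduces to a vector bundle morphism between the reductions. Using $(\phi^\# B)_\red \simeq \phi_\red^\# B_\red$ from \autoref{prop:RedConstructionsConVect}~\ref{prop:RedConstructionsConVect_6}, this yields the desired morphism $\Phi_\red \colon A_\red \to B_\red$ over $\phi_\red$. The anchor identity \eqref{eq:AnchorLieAlgbdComorphism} is an equality between constraint morphisms and so descends directly. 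The bracket identity \eqref{eq:LieBracketConditionOfLieAlgebroidMorphism} is built from the $\ConCinfty(\mathcal{M})$-module structure, brackets and anchors, each of which is compatible with reduction by the results of \autoref{sec:ConGeometry}; hence the reduced identity is precisely the classical bracket condition for $\Phi_\red$. Functoriality is inherited from the functoriality of reduction on $\ConMfld$ and $\ConVect$ established earlier.

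For \ref{thm:ReductionOfLieAlgebroids_4} and \ref{thm:ReductionOfLieAlgebroids_5}, a constraint Lie algebroid comorphism $\Phi \colon B^* \to A^*$ is by definition the datum of a constraint Lie-Rinehart algebra morphism $(\phi^*, \Phi^*) \colon (\ConCinfty(\mathcal{M}), \ConSecinfty(A)) \to (\ConCinfty(\mathcal{N}), \ConSecinfty(B))$ along $\phi^*$. Applying \autoref{prop:RedConLieRinehart}~\ref{prop:RedConLieRinehart_2} yields a reduced Lie-Rinehart morphism, which through \autoref{prop:ConSecVSReduction} together with \autoref{prop:RedConstructionsConVect}~\ref{prop:RedConstructionsConVect_5} corresponds precisely to a Lie algebroid comorphism $\Phi_\red \colon B^*_\red \to A^*_\red$ over $\phi_\red$, and functoriality is again immediate. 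The main obstacle is the bracket condition \eqref{eq:LieBracketConditionOfLieAlgebroidMorphism}: one must check that, for a given presentation $\widehat{\Phi}(a) = \sum_i f^i\, \phi^\# a_i$ with constraint coefficients $f^i \in \ConCinfty(\mathcal{M})_\Wobs$, the presentation survives reduction modulo the equivalence on $(\phi^\# B)_\red$, and that the anchor-term $\rho_A(a) g^j \, \phi^\# b_j$ reduces compatibly. Here the strongness furnished by \autoref{lem:UnderlyingConstraintLRAlgOfConstraintLAlgd} is crucial, since any $\NULL$-component factor kills the entire summand, allowing the identity to pass cleanly to $\mathcal{M}_\red$.
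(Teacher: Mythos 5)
Your proposal is correct and follows essentially the same route as the paper: reduce each ingredient via the established compatibility results (\autoref{prop:ConFunctionsVSReduction}, \autoref{prop:ConSecVSReduction}, \autoref{prop:ReductionOfTangentBundle}, \autoref{prop:RedFunctorVect}, \autoref{prop:RedConLieRinehart}), handle morphisms by factoring through $\widehat{\Phi}$ and using \autoref{prop:RedConstructionsConVect}~\ref{prop:RedConstructionsConVect_6}, and handle comorphisms via the reduction of Lie-Rinehart morphisms together with \autoref{prop:SectionsOverGeneralBase}. One minor slip: the bracket condition \eqref{eq:LieBracketConditionOfLieAlgebroidMorphism} and the check that a presentation $\widehat{\Phi}(a) = \sum_i f^i \phi^\# a_i$ with $\WOBS$-data descends belong to parts \ref{thm:ReductionOfLieAlgebroids_2}/\ref{thm:ReductionOfLieAlgebroids_3}, not to the comorphism parts \ref{thm:ReductionOfLieAlgebroids_4}/\ref{thm:ReductionOfLieAlgebroids_5}, where your first argument (Lie-Rinehart morphism reduction plus functoriality of dual sections) already suffices.
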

\begin{proof}
	\ref{thm:ReductionOfLieAlgebroids_1}: 
	We only need to collect the necessary results:
	Every constraint vector bundle $A \to \mathcal{M}$
	reduces to a vector bundle $A_\red \to \mathcal{M}_\red$
	by \autoref{prop:ReductionOfConVect}.
	Moreover, the constraint tangent bundle $T\mathcal{M}$ reduces to the tangent bundle
	$T\mathcal{M}_\red$ by \autoref{prop:ReductionOfTangentBundle}
	and by \autoref{prop:RedFunctorVect} the vector bundle morphism $\rho$
	reduces to a vector bundle morphism $\rho_\red \colon A_\red \to T\mathcal{M}_\red$.
	Finally, the constraint module $\ConSecinfty(A)$ reduces to $\Secinfty(A_\red)$
	by \autoref{prop:ConSecVSReduction} and the constraint functions $\ConCinfty(\mathcal{M})$
	reduce to $\Cinfty(\mathcal{M}_\red)$ according to \autoref{prop:ConFunctionsVSReduction}.
	Together with \autoref{prop:RedConLieRinehart} the Leibniz rule descends as 
	well, showing that we obtain a reduced Lie algebroid. 
	
	\ref{thm:ReductionOfLieAlgebroids_2}:
	In order to prove this assertion note that by \autoref{prop:RedFunctorVect} 
	the reduction functor maps $\Phi$ to a reduced vector bundle morphism 
	$\Phi_\red$ over $\phi_\red$.
	By the same proposition it follows that condition \eqref{eq:AnchorLieAlgbdComorphism} descends to the reduction.
	It remains to show that the reduced vector bundle morphism also satisfies 
	condition \eqref{eq:LieBracketConditionOfLieAlgebroidMorphism}. 
	The  constraint vector bundle morphism $\Phi$ leads to a constraint vector 
	bundle morphism $\widehat{\Phi} \colon E \to \phi^\# F$ over 
	$\id_{\mathcal{M}}$, which by \autoref{thm:strConSerreSwan} becomes a 
	morphism between the constraint modules of sections. 
	Since taking the pull-back vector bundle commutes with reduction, see 
	\autoref{prop:RedConstructionsConVect} \ref{prop:RedConstructionsConVect_6}, 
	and taking sections commutes with reduction as well, see 
	\autoref{prop:ConSecVSReduction}, it follows that reduction commutes with 
	taking sections of a pull-back vector bundle. 
	Consequently, for $a \in \ConSecinfty(A)_\Wobs$, 
	$f^i \in \ConCinfty(\mathcal{M})_\Wobs$ and 
	$a_i \in \ConSecinfty(B)_\Wobs$, which are related by 
	$\widehat{\Phi}(a) = \sum_i f^i \phi^\# a_i$, we obtain 
	\begin{align*}
		\widehat{ \left( \Phi_\red \right)}(a_\red) 
		= \widehat{\Phi}(a)_\red 
		= \Big( \sum_i f^i \phi^\# a_i \Big)_\red 
		= \sum_i f^i_\red \phi^\#_{\red} (a_{i})_\red, 
	\end{align*}
	where we used the coincidence of $\widehat{(\Phi_\red)}$ with 
	$(\widehat{\Phi})_\red$ up 	to the natural isomorphism in 
	\autoref{prop:RedConstructionsConVect} \ref{prop:RedConstructionsConVect_6}. 
	Let $b \in \ConSecinfty(A)_\Wobs$, $g_j \in \ConCinfty(\mathcal{M})_\Wobs$ 
	and $b_j \in \ConSecinfty(B)_\Wobs$, which are related by 
	$\widehat{\Phi}(b) = \sum_j g_j \phi^\# b_j$, we obtain 
	\begin{align*}
		\widehat{\left( \Phi_\red \right) }(\left[ a_\red, b_\red \right]_{A, \red}) 
		&= \widehat{\Phi}\left( \left[ a, b \right]_A\right)_\red \\
		&= \sum_{i, j} f^i_\red g^j_\red \phi^\#_\red \left[ a_{i, \red}, b_{j, \red} \right]_{B, \red} 
		- \sum_i \rho_{A, \red}(b_\red) f^i_\red \phi^\#_\red a_{i, \red} \\
		& \qquad + \sum_j \rho_{A, \red}(a_\red) g^j_\red \phi^\#_\red b_{j, \red} 
	\end{align*}
	by using \eqref{eq:LieBracketConditionOfLieAlgebroidMorphism} and 
	exploiting the equation shown before. 
	This proves the condition on the Lie brackets and makes the vector bundle 
	morphism $\Phi_\red \colon A_\red \to B_\red$ over 
	$\phi_\red \colon \mathcal{M}_\red \to \mathcal{N}_\red$ a Lie algebroid 
	morphism. 
	
	\ref{thm:ReductionOfLieAlgebroids_3}:
	This follows by the first and the second assertion and 
	straight-forward computations show that $\red$ constitutes a covariant 
	functor. 
	
	\ref{thm:ReductionOfLieAlgebroids_4}:
	This follows directly from the reduction of morphism of Lie-Rinehart algebras, see \autoref{prop:RedConLieRinehart}, together with the fact that taking sections of the dual is functorial and commutes with reduction, see \autoref{prop:SectionsOverGeneralBase}.
	
	\ref{thm:ReductionOfLieAlgebroids_5}: 
	This is a combination of the first and the fourth one as well as 
	a straight-forward computation to verify that $\red$ yields a covariant 
	functor. 
\end{proof}

\begin{example}[Constraint Lie algebroids]\
	\begin{examplelist}
		\item A constraint Lie algebroid with trivial $\NULL$-components, i.e.
		a constraint Lie algebroid of the form
		$A = (A_\Total,A_\Wobs,0,0)$ over $\mathcal{M} = (M,C,0)$,
		corresponds to a Lie subalgebroid on the submanifold $C$.
		The reduction is given by this Lie algebroid
		$A_\Wobs$ over $C$.
		\item The classical extreme cases for Lie algebroids also hold in the constraint setting:
		Over a point
		$\mathcal{M} = (\{\pt\},\{\pt\},0)$
		a constraint Lie algebroid is exactly a constraint Lie algebra,
		see \autoref{sec:ConLieAlgebras}.
		Moreover, for any constraint manifold $\mathcal{M}$
		the constraint tangent bundle $T\mathcal{M}$
		is a constraint Lie algebroid with $\rho = \id$.	
		
		\item Let $C \subseteq M$ be a closed coisotropic submanifold with simple 
		characteristic distribution $D \subseteq TC$ and let $\mathcal{M} = (M,C,D)$ 
		be the corresponding constraint manifold.
		It is well-known that $T^*M$, equipped with the anchor
		$\pi^\# \colon T^*M \to TM$, given by 
		$\pi^\# (\alpha) = \pi(\alpha, \argument)$ for $\alpha \in \Secinfty(T^*M)$, 
		and the Lie bracket defined by 
		\begin{equation}
			\label{eq:LieBracketPoissonLieAlgebroid}
			[\alpha,\beta] \coloneqq \Lie_{\alpha^\#}\beta - \Lie_{\beta^\#}\alpha - \D(\pi(\alpha,\beta))
		\end{equation}
		for all $\alpha,\beta \in \Secinfty(T^*M)$,
		is a Lie algebroid.
		One could check by hand that all these structures actually define a constraint Lie algebroid on $T^*\mathcal{M}$.
		More conceptually we can also argue as follows:
		By \autoref{ex:ConBivectorsFromPoissonManifolds} \ref{ex:ConPoissonMfld} we know that 
		\begin{equation}
			\pi \in \ConSecinfty(\Anti_{\strtensor}^2T\mathcal{M})_\Wobs
			\subseteq \ConSecinfty(T\mathcal{M} \strtensor T\mathcal{M})_\Wobs
			\simeq \ConSecinfty(\ConHom(T^*\mathcal{M},T\mathcal{M}))_\Wobs,
		\end{equation}
		see also \autoref{prop:DualTensorHomIsos}.
		This shows that $\pi^\# \colon T^*\mathcal{M} \to T\mathcal{M}$
		is a a constraint morphism.
		Moreover, by \autoref{prop:ConCartanCalculus} we know that the Lie derivative and the de Rham differential are constraint morphisms, and hence \eqref{eq:LieBracketPoissonLieAlgebroid}
		is a constraint morphism.
		The fact that we have a classical Lie algebroid on the $\TOTAL$-component then ensures that all required compatibilities hold, and we thus get a constraint Lie algebroid on $T^*\mathcal{M}$.
		This constraint Lie algebroid then reduces to the classical Lie algebroid structure
		on $T^*\mathcal{M}_\red$ induced by the reduced Poisson structure on $\mathcal{M}_\red$.
		
		\item Infinitesimal ideal systems were introduced in \cite{jotzlean.ortiz:2014a}
		as the infinitesimal counterpart to foliated Lie groupoids, see also 
		\cite{carinena.nunesdacosta.santos:2005a,hawkins:2008a}.
		Given a classical Lie algebroid $A \to M$ with anchor $\rho$ and Lie bracket
		$[\argument,\argument]$ an infinitesimal ideal system is given by 
		an involutive distribution $F_M \subseteq TM$,
		a subalgebroid $K \subseteq A$, such that $\rho(K) \subseteq F_M$,
		and a flat $F_M$-connection $\nabla$ on $A/K$ such that the following properties are fulfilled:
		\begin{examplelist}[label=\textit{\alph*.)}]
			\item $[a,b] \in \Secinfty(K)$, for all $a \in \Secinfty(A)$, $b \in \Secinfty(K)$ with $\nabla a = 0$,
			\item $\nabla[a,b] = 0$, for all $a,b \in \Secinfty(A)$ with $\nabla a = 0 = \nabla b$,
			\item $\nabla^\Bott \rho(a) = 0$, for all $a \in \Secinfty(A)$
			with $\nabla a = 0$, where $\nabla^\Bott$ is the Bott connection associated to $F_M$.
		\end{examplelist}
		If $F_M$ is simple this corresponds exactly to a constraint Lie algebroid structure on the
		constraint vector bundle $(A,A,K,\nabla)$ over the constraint manifold $(M,M,F_M)$.
		Thus from the constraint point of view it becomes clear that infinitesimal 
		ideal systems capture the correct structure to reduce a Lie algebroid along 
		a distribution. 
	\end{examplelist}
\end{example}

Recall that in the the classical situation,
given a vector bundle $A$ over a manifold $M$,
there is a bijective correspondence 
\begin{equation}
	\left\{\text{Lie algebroid } \rho \colon A \to TM \right\} 
	\leftrightarrows
	\left\{\text{Gerstenhaber algebra } \Secinfty(\Anti^\bullet A) \right\}, 
\end{equation}
relating Lie algebroid structures on $A$ with Gerstenhaber algebra structures on 
$\Secinfty(\Anti^\bullet A)$.
See \cite{kosmannschwarzbach:1995a,xu:1999a} and the references therein for 
more details.
There is a second correspondence
\begin{equation}
	\left\{ \text{Lie algebroid } \rho \colon A \to TM \right\} 
	\leftrightarrows \left\{ \text{differential graded algebra } 
	\Secinfty(\Anti^\bullet A^*) \right\}, 
\end{equation}
characterizing Lie algebroid structures as differential graded algebra structures
on $\Secinfty(\Anti^\bullet A^*)$,
see \cite{kosmannschwarzbach:1995a,xu:1999a}.
In the constraint setting we have to carefully choose the tensor products used in both characterizations.

\begin{theorem}[Characterization of Lie algebroids]
	\label{thm:ConstraintCartanCalculus}
	Let $A = (A_\Total, A_\Wobs, A_\Null, \nabla)$
	be a constraint vector bundle over a constraint manifold
	$\mathcal{M} = (M,C,D)$.
	Then the following structures are equivalent:
	\begin{theoremlist}
		\item A constraint Lie algebroid structure
		$[\argument, \argument]$ on $A$ with anchor
		$\rho \colon A \to T\mathcal{M}$.
		
		\item A strong constraint Gerstenhaber structure
		$\GerstBracket{\argument, \argument}$
		on
		$\ConSecinfty(\Anti_{\strtensor}^\bullet A)$
		with respect to the associative product $\wedge$.

		\item A constraint Gerstenhaber structure
		$\GerstBracket{\argument, \argument}$
		on
		$\ConSecinfty(\Anti_{\tensor}^\bullet A)$
		with respect to the associative product $\wedge$.
		
		\item A differential graded strong constraint algebra structure
		on $\ConSecinfty(\Anti_{\strtensor}^\bullet A^*)$
		with differential $\D_A$ of degree $+1$ with respect to $\wedge$.
	\end{theoremlist}
	The relation between these structures are given as follows:
	\begin{cptitem}
		\item The Gerstenhaber bracket
		$\GerstBracket{\argument,\argument}$
		on
		$\ConSecinfty(\Anti_{\tensor}^\bullet A)$
		and
		$\ConSecinfty(\Anti_{\strtensor}^\bullet A)$
		is uniquely determined by
		\begin{equation} \label{eq:GerstBracketGenerators}
			\GerstBracket{f,g} = 0, \qquad
			\GerstBracket{a,f} = \rho(a)f = -\GerstBracket{f,a}, 
			\qquad\text{ and }\qquad			
			\GerstBracket{a,b} = [a,b]
		\end{equation}
		for $a,b \in \Secinfty(A_\Total)$ and
		$f,g \in \Cinfty(M)$.
		\item The differential $\D$ on 
		$\ConSecinfty(\Anti_{\strtensor}^\bullet A^*)$ is given by 
		\begin{equation} \label{eq:DifferentialByKoszulFormula}
		\begin{split}
			(\D\alpha)(a_0,\dotsc, a_{k})
			&= \sum_{i=0}^{k} (-1)^{i}\rho(a_i) \alpha(a_0,\dotsc, \overset{i}{\wedge}, \dotsc, \alpha_{k}) \\
			&\qquad + \sum_{i<j} (-1)^{i+j} \alpha([a_i,a_j],a_0,\dotsc,\overset{i}{\wedge},\dotsc,\overset{j}{\wedge},\dotsc, a_{k})
		\end{split}
		\end{equation}
		for $\alpha \in \Secinfty(\Anti^{k}A^*_\Total)$
		and
		$a_0, \dotsc, a_{k} \in \Secinfty(A_\Total)$.
	\end{cptitem}
\end{theorem}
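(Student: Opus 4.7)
The plan is to establish all three non-trivial equivalences by first invoking the classical (non-constraint) correspondences on the $\TOTAL$-components, and then verifying that the constraint structure (i.e.\ the $\WOBS$- and $\NULL$-components and, ultimately, compatibility with the partial connection $\nabla$) is transported correctly in each direction. The starting point is that for the $\TOTAL$-data, the classical theorems of Kosmann-Schwarzbach--Mackenzie--Xu already give bijections between a Lie algebroid structure $([\argument,\argument],\rho)$ on $A_\Total$, a Gerstenhaber bracket on $\Secinfty(\Anti^\bullet A_\Total)$, and a differential on $\Secinfty(\Anti^\bullet A_\Total^*)$, with the explicit formulas \eqref{eq:GerstBracketGenerators} and \eqref{eq:DifferentialByKoszulFormula}. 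So what remains is purely a compatibility check at the level of constraint modules.

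For the equivalences $(i)\Leftrightarrow(ii)$ and $(i)\Leftrightarrow(iii)$, I would first identify, via \autoref{thm:strConSerreSwan} and the monoidality of the section functor, the constraint Gerstenhaber algebras in question with $\Anti_{\tensor}^\bullet \ConSecinfty(A)$ and $\Anti_{\strtensor}^\bullet \ConSecinfty(A)$, respectively. Starting from a constraint Lie algebroid, one extends the bracket defined on generators in \eqref{eq:GerstBracketGenerators} by the graded Leibniz rule to $\Secinfty(\Anti^\bullet A_\Total)$. Checking that this extension restricts to a constraint morphism $\ConSecinfty(\Anti_{\tensor}^\bullet A) \tensor[\field{k}] \ConSecinfty(\Anti_{\tensor}^\bullet A) \to \ConSecinfty(\Anti_{\tensor}^\bullet A)$ reduces, by the Leibniz rule, to the generator-level statements that $\rho \colon A \to T\mathcal{M}$ is a constraint morphism, $[\argument,\argument] \colon \ConSecinfty(A) \tensor[\field{k}] \ConSecinfty(A) \to \ConSecinfty(A)$ is a constraint morphism, and $\ConCinfty(\mathcal{M})$ is a strong constraint algebra; and these are exactly the data coming with a constraint Lie algebroid. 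For (ii), the corresponding check on the $\strtensor$-version uses the stronger grading on $\Anti_{\strtensor}^\bullet A$ together with the fact that the anchor $\rho$ sends $\ConSecinfty(A)_\Wobs$ into $\ConDer(\ConCinfty(\mathcal{M}))_\Wobs$, i.e.\ into derivations preserving $\ConCinfty(\mathcal{M})_\Null$ as an ideal, which lets one absorb an extra $\NULL$-factor. Conversely, recovering the anchor and bracket from a (strong) constraint Gerstenhaber structure is done by restricting $\GerstBracket{\argument,\argument}$ to degrees $(1,0)$ and $(1,1)$.

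For the equivalence $(i)\Leftrightarrow(iv)$, I would use \autoref{prop:PropsOfStrongDuals} to pass to $\ConSecinfty(\Anti_{\strtensor}^\bullet A^*) \simeq (\Anti_{\tensor}^\bullet \ConSecinfty(A))^*$, so that a constraint differential is nothing but a map dual to a constraint morphism on $\Anti_{\tensor}^\bullet \ConSecinfty(A)$. Given a constraint Lie algebroid, the Koszul-style formula \eqref{eq:DifferentialByKoszulFormula} defines the classical de Rham-type differential on $\Secinfty(\Anti^\bullet A_\Total^*)$; exactly as for the constraint Cartan calculus in \autoref{prop:ConCartanCalculus}, the two summands in \eqref{eq:DifferentialByKoszulFormula} preserve the $\WOBS$- and $\NULL$-components because $\rho$, $[\argument,\argument]$, and the pairing $A \strtensor A^* \to \mathcal{M}\times\Reals$ are constraint, and because the flatness condition for $\nabla$ on $A_\Wobs/A_\Null$ is precisely what is needed to see that sections whose values are basic along $D$ get mapped to sections whose values are basic along $D$. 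The converse direction recovers $[\argument,\argument]$ and $\rho$ by polarizing $\D_A^2 = 0$ and testing against degree-$0$ and degree-$1$ elements, as in the classical case, and strongness of the resulting differential graded constraint algebra encodes the fact that the constraint structure on $A$ is compatible with the partial connection.

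The main obstacle I expect is the bookkeeping in step (ii): the $\NULL$-component $(\Anti_{\strtensor}^k A)_\Null$ is defined via $\Anti^{k-1} A_\Total \wedge A_\Null$, which is strictly larger than what one would naively multiply out of $\Anti^{k-1}_{\strtensor}A$ and $A_\Null$, and similarly for $(\Anti_{\strtensor}^k A)_\Wobs$. Verifying by induction on degree that the Gerstenhaber bracket, extended via the graded Leibniz rule from \eqref{eq:GerstBracketGenerators}, still lands in these larger submodules requires using the full strength of the anchor being a strong constraint morphism together with the identification of flat sections along $D$ with sections pulled back from $\mathcal{M}_\red$ provided by \autoref{prop:ReductionOfConVect}. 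Once that combinatorial check is done, everything else, including functoriality and the compatibility with the reduction results of \autoref{thm:ReductionOfLieAlgebroids}, follows by routine verification from the classical correspondences.
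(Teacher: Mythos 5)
Your proposal follows essentially the same route as the paper: invoke the classical Kosmann-Schwarzbach/Xu correspondences on the $\TOTAL$-components, check that the bracket determined by \eqref{eq:GerstBracketGenerators} and the differential \eqref{eq:DifferentialByKoszulFormula} are constraint morphisms, and recover $\rho$ and $[\argument,\argument]$ from the low-degree data in the converse directions. The only remark worth making is that the $\strtensor$-bookkeeping you single out as the main obstacle is resolved in the paper by a direct check on the explicit Schouten-type expansion of $\GerstBracket{a,b}$ on elementary wedges --- no reduction identification or extra strength of the anchor is needed, since whenever the factor lying in $\ConSecinfty(A)_\Null$ does not enter the bracket it survives in the wedge, and the $\NULL$-component of $\Anti_{\strtensor}^\bullet A$ absorbs arbitrary $\TOTAL$-factors, while if it does enter the bracket one only needs that $[\argument,\argument]$ and $\rho$ are constraint with respect to $\tensor_{\field{k}}$.
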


\begin{proof}
	Suppose $A$ is a constraint Lie algebroid with anchor $\rho$ and bracket $[\argument, \argument]$.
	The exterior algebra
	$\ConSecinfty(\Anti^\bullet_{\strtensor}A)
	\simeq \Anti^\bullet_{\strtensor}\ConSecinfty(A)$ forms a 
	graded strong constraint algebra with respect to the associative 
	product $\wedge$ induced by $\strtensor$, see \autoref{ex:gradedConAlgs}. 
	Since $A_\Total$ is a classical Lie algebroid it
	corresponds to a classical Gerstenhaber structure on
	$\Anti^\bullet A_\Total$, which is given by \eqref{eq:GerstBracketGenerators}.
	Thus it only remains to show that the Gerstenhaber bracket
	$\GerstBracket{\argument,\argument}$
	is a constraint Lie bracket.
	For this note that \eqref{eq:GerstBracketGenerators}
	implies that the Gerstenhaber bracket is given by
	\begin{equation*} \label{eq:GerstBracketOnGeneratorsOfExteriorAlgebra}
		\GerstBracket{a, b}
		= \sum_{i = 0}^{k}\sum_{j = 0}^{\ell} (-1)^{i + j} 
		[a_i, b_j] \wedge a_0 \wedge \dotsc \overset{i}{\wedge} \dotsc 
		\wedge a_k \wedge b_0 \wedge \dotsc \overset{j}{\wedge} 
		\dotsc \wedge b_\ell,
		\tag{$*$}
	\end{equation*}
	for elementary tensors
	$a = a_0 \wedge \dotsc \wedge a_k \in \Secinfty(\Anti^{k+1} A_\Total)$
	and 
	$b = b_0 \wedge \dotsc \wedge b_\ell \in \Secinfty(\Anti^{\ell+1} A_\Total)$.
	see \cite{huebschmann:1998a}. 
	Now let e.g.
	$a = a_0 \wedge \dotsc \wedge a_k \in \ConSecinfty(\Anti^{k+1}_{\strtensor}A)_\Null$ 
	and
	$b = b_0 \wedge \dotsc \wedge b_\ell \in \ConSecinfty(\Anti^{\ell+1}_{\strtensor}A)_\Wobs$ 
	be elementary tensors, then $a$ contains at least one factor in 
	$\ConSecinfty(A)_\Null$,
	and thus all summands of \eqref{eq:GerstBracketOnGeneratorsOfExteriorAlgebra}
	end up in $\ConSecinfty(\Anti_{\strtensor}^\bullet A)_\Null$.
	The other cases can be checked equivalently.
	With this we obtain a strong constraint Gerstenhaber structure on
	$\Anti_{\strtensor}^\bullet\ConSecinfty(A)$.
	Completely analogous considerations show that \eqref{eq:GerstBracketOnGeneratorsOfExteriorAlgebra}
	also defines a constraint Gerstenhaber structure on
	$\Anti_{\tensor}^\bullet\ConSecinfty(A)$.
	
	Conversely, assume that 
	$\Anti_{\tensor}^\bullet\ConSecinfty(A)$
	carries a constraint Gerstenhaber structure.
	Using \eqref{eq:GerstBracketGenerators}
	we see that the Lie bracket $[\argument, \argument]$
	and the anchor $\rho$ are constraint maps.
	Moreover, since we already now that
	$A_\Total$
	is a classical Lie algebroid
	this is enough to obtain a constraint Lie algebroid structure on $A$.

	Moreover, every strong constraint Gerstenhaber algebra structure on
	$\Anti_{\strtensor}^\bullet\ConSecinfty(A)$
	restricts to a constraint Gerstenhaber algebra on 
	$\Anti_{\tensor}^\bullet\ConSecinfty(A)$,
	and thus induces a constraint Lie algebroid structure on $A$ as before.

	Let us now show the equivalence with differential graded algebras.
	For this suppose again that $A$ is a constraint Lie algebroid.

	It is well-known that the classical Lie algebroid structure on $A_\Total$
	corresponds to a classical differential graded algebra structure on 
	$\Anti^\bullet \Secinfty(A_\Total)^*$ with differential
	given by \eqref{eq:DifferentialByKoszulFormula}.
	Thus it only remains to show that 
	$\D \colon \Anti^\bullet_{\strtensor}\ConSecinfty(A^*) \to \Anti^{\bullet}_{\strtensor}\ConSecinfty(A^*)$ 
	is a constraint morphism.
	This can be done by evaluating \eqref{eq:DifferentialByKoszulFormula} on
	$a_0 \wedge \dotsc \wedge a_{k} \in \ConSecinfty(\Anti_{\tensor}^{k+1}A)_\Wobs$
	and
	$a_0 \wedge \dotsc \wedge a_{k} \in \ConSecinfty(\Anti_{\tensor}^{k+1}A)_\Null$.

	Conversely, starting with a differential graded strong constraint algebra structure on
	$\Anti^\bullet_{\strtensor}\ConSecinfty(A^*)$ we know that
	$(\algebra{A}_\Total, \liealg{g}_\Total)$
	is a classical Lie algebroid. 
	It remains to show that $\rho$ and $[\argument, \argument]$ are constraint morphisms. 
	Evaluating $\D$ on degree $0$ and $1$ elements leads to 
	\begin{equation*} \label{eq:DifferentialOnDegree0Generator} \tag{$**$}
		\rho(a)f = \D f (a)
	\end{equation*}
	for $f \in \Cinfty(M)$ and 
	\begin{equation*} \label{eq:DifferentialOnDegree1Generator} \tag{${**}*$}
		\eta([a, b])
		= \D (\eta (b)) (a) 
		- \D (\eta(a))(b) - (\D\eta)(a, b) 
	\end{equation*}
	for $\eta \in \Secinfty(A^*_\Total)$ and
	$a, b \in \Secinfty(A_\Total)$, see \cite{xu:1999a}. 
	Since the anchor is given by \eqref{eq:DifferentialOnDegree0Generator}
	a straightforward check shows that $\rho$ is indeed constraint.
	To show that the Lie bracket given by \eqref{eq:DifferentialOnDegree1Generator}
	is constraint we use the fact that
	$(A^*)^* \simeq A$.
	Then we need to show that for all
	$a \tensor b \in (\ConSecinfty(A) \tensor \ConSecinfty(A))_\Wobs$
	we have $\eta([a,b]) \in \ConSecinfty(A)_\Wobs$
	for $\eta \in \ConSecinfty(A^*)_\Wobs$
	and $\eta([a,b]) \in \ConSecinfty(A)_\Null$
	for $\eta \in \ConSecinfty(A^*)_\Null$.
	Moreover, for all $a \tensor b \in (\ConSecinfty(A) \tensor \ConSecinfty(A))_\Null$
	we need to have $\eta([a,b]) \in \ConSecinfty(A)_\Null$
	for $\eta \in \ConSecinfty(A^*)_\Wobs$.
	This follows directly by checking the right-hand side of
	\eqref{eq:DifferentialOnDegree1Generator}
	and using that $\D$ is constraint.
	Thus $A$ becomes a constraint Lie algebroid.
\end{proof}

Surprisingly, the non-strong Gerstenhaber structure on
$\Anti^\bullet_{\tensor}\ConSecinfty(A)$
is enough to obtain a Lie algebroid structure on $A$.
This comes from the fact that both the Gerstenhaber structure on
$\Anti^\bullet_{\tensor}\ConSecinfty(A)$
and
$\Anti^\bullet_{\strtensor}\ConSecinfty(A)$
are generated by degrees $0$ and $1$.
On the other hand, the constraint differential graded algebra 
structure needs to be strong.
On $\Anti^\bullet_{\tensor}\ConSecinfty(A^*)$
the differential $\D$ would not be a constraint morphism, cf. \autoref{ex:ConDRDifferential}.

\begin{remark} \label{rem:CoincidenceOfReductionOfLRAlgsWithItsCharacterizations}
	Since for modules of sections the tensor products $\tensor$ and $\strtensor$
	are both compatible with reduction, see \autoref{prop:RedConstructionsConVect},
	it is clear that the Gerstenhaber structures on
	$\ConSecinfty(\Anti^\bullet_{\strtensor} A)$
	and $\ConSecinfty(\Anti^\bullet_{\tensor} A)$
	both reduce to a Gerstenhaber structure on $\Secinfty(\Anti^\bullet A_\red)$
	corresponding to the reduced Lie algebroid $A_\red$.
	Similarly, the differential graded algebra structure on
	$\ConSecinfty(\Anti^\bullet_{\strtensor} A^*)$
	reduces to a differential graded algebra structure on
	$\Secinfty(\Anti^\bullet A_\red^*)$
	corresponding to the reduced Lie algebroid $A_\red$.
\end{remark}

\begin{example}\
\begin{examplelist}
	\item For $A = T\mathcal{M}$
	we obtain the constraint Schouten bracket and the constraint de Rham differential
	from \autoref{sec:CartanCalculus}.
	\item Given a constraint Lie algebra $\liealg{g}$
	considered as a constraint Lie algebroid over $(\{\pt\},\{\pt\},0)$
	the associated DGLA $\Anti_{\strtensor}^\bullet\liealg{g}^*$
	with differential $\D_\liealg{g}$ given by
	\begin{equation}
		(\D_\liealg{g}\alpha)(\xi_1, \dotsc, x_{k+1})
		= \sum_{i<j} (-1)^{i+j} \alpha([\xi_i,\xi_j],\xi_1,\dotsc,\overset{i}{\wedge},\dotsc,\overset{j}{\wedge},\dotsc, \xi_{k+1}),
	\end{equation}
	for $\xi_1, \dotsc, \xi_{k+1} \in \liealg{g}$,
	can be understood as the constraint Chevalley-Eilenberg complex
	with scalar coefficients.
\end{examplelist}
\end{example}

We finish this treatment of constraint Lie algebroids with a short discussion of the reduction of Lie bialgebroids.
Using the above characterization of constraint Lie algebroids we can introduce constraint
Lie bialgebroids as follows:

\begin{definition}[Constraint Lie bialgebroids]
	Let $A \to \mathcal{M}$ and $B \to \mathcal{N}$ be a constraint vector 
	bundles. 
	\begin{definitionlist}
		\item A \emph{constraint Lie bialgebroid over $\mathcal{M}$}
		is given by a constraint vector bundle $A$ together with constraint Lie algebroid
		structures on $A$ and its dual $A^*$, which satisfy 
		\begin{equation}
			\D_{A^*} \GerstBracket{a, b}_A 
			= \GerstBracket{\D_{A^*}(a), b}_A 
			+ (-1)^{\deg(a)} \GerstBracket{a, \D_{A^*}(b)}_A  
		\end{equation}
		for every 
		$a, b \in \ConSecinfty(\Anti_{\strtensor}^\bullet A)_{\Total}$, where 
		$\D_{A^*}$ denotes the differential and $\GerstBracket{\cdot, \cdot}_A$ 
		the Gerstenhaber bracket on 
		$\ConSecinfty(\Anti_{\strtensor}^\bullet A)$.  
		
		\item A constraint morphism of vector bundles $\Phi^* \colon B^* \to A^*$ 
		over $\phi \colon \mathcal{N} \to \mathcal{M}$ is called morphism 
		between constraint Lie bialgebroids, if $\Phi^*$ is a constraint Lie 
		algebroid morphism and a constraint Lie algebroid comorphism. 
		The morphism of constraint Lie bialgebroids with underlying constraint 
		vector bundle morphism $\Phi^* \colon B^* \to A^*$ is denoted by 
		$\Phi \colon (A, A^*) \to (B, B^*)$. 
		
		\item The category of constraint Lie bialgebroids and their morphisms 
		is denoted by $\injConLieBiAlgd$ and the full subcategory of constraint 
		Lie bialgebroids over the constraint manifold $\mathcal{M}$ is denoted 
		by $\injConLieBiAlgd(\mathcal{M})$. 
	\end{definitionlist}
\end{definition} 

\begin{theorem}[Reduction of Lie bialgebroids]\
	\label{thm:redOfLieBiAlgds}
	\begin{theoremlist}
		\item \label{thm:redOfLieBiAlgds_1}
		Let $(A, A^*)$ be a constraint Lie bialgebroid over $\mathcal{M}$. 
		Then $(A_\red, A^*_\red)$ is a Lie bialgebroid over $\mathcal{M}_\red$. 
		
		\item \label{thm:redOfLieBiAlgds_2}
		Let $\Phi^* \colon (A, A^*) \to (B, B^*)$ be a morphism of 
		constraint Lie bialgebroids over $\phi \colon \mathcal{N} \to \mathcal{M}$. 
		Then $\Phi^*_\red \colon (A_\red, A^*_\red) \to (B_\red, B^*_\red)$ is 
		a morphism of Lie bialgebroids over 
		$\phi_\red \colon \mathcal{N}_\red \to \mathcal{M}_\red$.
		
		\item \label{thm:redOfLieBiAlgds_3}
		Mapping every constraint Lie bialgebroid $(A, A^*)$ over 
		$\mathcal{M}$ to $(A_\red, A^*_\red)$ over $M_\red$ and every 
		morphism $\Phi^*$ of constraint Lie bialgebroids to $\Phi^*_\red$ 
		defines a functor
		\begin{equation}
			\red \colon \injConLieBiAlgd \to \LieBiAlgd.
		\end{equation} 
	\end{theoremlist}
\end{theorem}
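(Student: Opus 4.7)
The plan is to bootstrap from the two previous theorems: \autoref{thm:ReductionOfLieAlgebroids} handles the Lie algebroid structures on $A$ and $A^*$ individually, and the characterisations in \autoref{thm:ConstraintCartanCalculus}, together with \autoref{rem:CoincidenceOfReductionOfLRAlgsWithItsCharacterizations}, allow me to transport the compatibility condition through reduction.

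For part \ref{thm:redOfLieBiAlgds_1}, I would first apply \autoref{thm:ReductionOfLieAlgebroids}~\ref{thm:ReductionOfLieAlgebroids_1} twice to get that $A_\red$ and $(A^*)_\red \simeq (A_\red)^*$ are classical Lie algebroids over $\mathcal{M}_\red$; the identification of the duals uses \autoref{prop:RedConstructionsConVect}~\ref{prop:RedConstructionsConVect_5}. It remains to verify the bialgebroid compatibility
\begin{equation*}
\D_{(A_\red)^*}\Schouten{a_\red, b_\red}_{A_\red}
= \Schouten{\D_{(A_\red)^*} a_\red, b_\red}_{A_\red}
+ (-1)^{\deg(a)} \Schouten{a_\red, \D_{(A_\red)^*} b_\red}_{A_\red}.
\end{equation*}
By \autoref{rem:CoincidenceOfReductionOfLRAlgsWithItsCharacterizations}, the strong constraint Gerstenhaber structure on $\ConSecinfty(\Anti^\bullet_{\strtensor} A)$ reduces to the Gerstenhaber structure on $\Secinfty(\Anti^\bullet A_\red)$ induced by $A_\red$, and the differential graded strong constraint algebra structure on $\ConSecinfty(\Anti^\bullet_{\strtensor} A^*)$ reduces to the one induced by $(A_\red)^*$. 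Since the reduction functor $\red \colon \injstrConMod(\ConCinfty(\mathcal{M})) \to \Modules(\Cinfty(\mathcal{M}_\red))$ is an algebra homomorphism on the $\WOBS$-components, applying it to the constraint compatibility identity (which holds on all of $\ConSecinfty(\Anti^\bullet_{\strtensor} A)_\Wobs$) yields precisely the reduced compatibility.

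For part \ref{thm:redOfLieBiAlgds_2}, the hypothesis says that $\Phi^* \colon B^* \to A^*$ is simultaneously a constraint Lie algebroid morphism and a constraint Lie algebroid comorphism. By \autoref{thm:ReductionOfLieAlgebroids}~\ref{thm:ReductionOfLieAlgebroids_2} the reduction $\Phi^*_\red \colon (B^*)_\red \to (A^*)_\red$ is a Lie algebroid morphism, and by \autoref{thm:ReductionOfLieAlgebroids}~\ref{thm:ReductionOfLieAlgebroids_4} it is also a Lie algebroid comorphism. Since being a morphism of Lie bialgebroids is precisely the conjunction of these two conditions on the dual map, $\Phi^*_\red$ is a morphism of the reduced Lie bialgebroids.

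For part \ref{thm:redOfLieBiAlgds_3}, functoriality follows directly: composition of constraint Lie algebroid morphisms (resp.\ comorphisms) reduces to composition of the reductions by \autoref{thm:ReductionOfLieAlgebroids}~\ref{thm:ReductionOfLieAlgebroids_3} and \ref{thm:ReductionOfLieAlgebroids_5}, and identities are preserved similarly. The only step requiring genuine checking, and the one I expect to be the main obstacle in a written-out proof, is ensuring that the constraint differential $\D_{A^*}$ indeed descends to the differential $\D_{(A_\red)^*}$ corresponding to the reduced Lie algebroid structure on $(A_\red)^*$, rather than to some a priori different differential; but this is precisely the content of \autoref{rem:CoincidenceOfReductionOfLRAlgsWithItsCharacterizations}, which says that the reduced strong constraint dg-algebra structure coincides with the one obtained from $(A^*)_\red \simeq (A_\red)^*$ via \autoref{thm:ConstraintCartanCalculus}.
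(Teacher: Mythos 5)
Your proposal is correct and follows essentially the same route as the paper: part one by applying \autoref{thm:ReductionOfLieAlgebroids} to both $A$ and $A^*$, identifying $(A^*)_\red \simeq (A_\red)^*$, and pushing the compatibility identity through the quotient projections via \autoref{rem:CoincidenceOfReductionOfLRAlgsWithItsCharacterizations}; part two as the conjunction of \autoref{thm:ReductionOfLieAlgebroids}~\ref{thm:ReductionOfLieAlgebroids_2} and \ref{thm:ReductionOfLieAlgebroids_4}; part three by straightforward functoriality. No gaps to report.
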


\begin{proof}
	The first assertion is due to \autoref{thm:ReductionOfLieAlgebroids} applied 
	to $A$ as well as to $A^*$.
	Since the constraint vector bundles are dual of one another, the reduced 
	vector bundles $A_\red$ and $A^*_\red$ are dual to each other. 
	Similarly, the corresponding differential graded constraint algebra and the 
	constraint Gerstenhaber algebra descend and the reduced objects are equivalent  
	to each other, see 
	\autoref{rem:CoincidenceOfReductionOfLRAlgsWithItsCharacterizations}. 
	Consequently, by using the quotient projections the reduced differential and 
	the reduced Gerstenhaber bracket can be shown to be related, such that the 
	pair $(A_\red, A^*_\red)$ yields a Lie bialgebroid. 
	The second assertion follows as a combination of 
	\autoref{thm:ReductionOfLieAlgebroids} 
	\ref{thm:ReductionOfLieAlgebroids_2} and 
	\ref{thm:ReductionOfLieAlgebroids_4}. 
	The third assertion is a consequence of the first and the second assertion as well 
	as of a straight-forward computation proving the functorial properties. 
\end{proof}

\subsection{Reduction of Dirac Manifolds}
\label{sec:RedDirac}

As a last application of constraint geometry we study the reduction of Dirac manifolds.
For this we need to to consider
constraint bilinear forms on vector bundles and complements of subbundles.
Let $E \to \mathcal{M}$ be a constraint vector bundle equipped with a bilinear form
$\SP{\argument, \argument} \colon E \tensor E \to \mathcal{M} \times \Reals$,
which we suppose to be non-degenerate, meaning that the induced constraint map
\begin{equation}
	\flat \colon \ConSecinfty(E) \to \ConSecinfty(E^*),
	\qquad
	s^\flat \coloneqq \SP{s, \argument }
\end{equation}
is a regular monomorphism, see \autoref{prop:MonoEpisConModk}.
Given a constraint sub-bundle $i \colon L \hookrightarrow E$ we define the complement
$L^\perp$ with respect to $\SP{\argument, \argument}$ as
$L^\perp \coloneqq \ker(i^* \circ \flat)$,
where we consider $\flat$ as a map $\flat \colon E \to E^*$.
To see that $L^\perp$ is a well-defined constraint sub-bundle note that $\flat$ and $i^*$ have constant rank, thus \autoref{prop:ConKernSubbundle} applies
and yields
\begin{equation}
	\label{eq:complement}
\begin{split}
	(L^\perp)_\Total &= (L_\Total)^\perp, \\ 
	(L^\perp)_\Wobs &= (L_\Total)^\perp\at{C} \cap E_\Wobs, \\
	(L^\perp)_\Null &= (L_\Total)^\perp\at{C} \cap E_\Null.
\end{split}
\end{equation}
Note that $L^\perp$ does only depend on $(L_\Total)^\perp$ but not on the
$\WOBS$- or $\NULL$-components of $L$.
It follows that $(L^\perp)_\red \subseteq (L_\red)^\perp$, but in general we do not obtain an equality here.

Let now $\mathcal{M} = (M,C,D)$ be a constraint manifold, then we define
the \emph{generalized tangent bundle} of $\mathcal{M}$ by
$\genT \mathcal{M} \coloneqq T\mathcal{M} \oplus T^*\mathcal{M}$.
Recall that 
\begin{equation}
\begin{split}
	(\genT\mathcal{M})_\Total &= TM \oplus T^*M \\
	(\genT\mathcal{M})_\Wobs &= TC \oplus \Ann(D) \\
	(\genT\mathcal{M})_\Null &= D \oplus \Ann(TC)	
\end{split}
\end{equation}
On $\genT\mathcal{M}$ we have two important canonical structures:
First, there is a bilinear form 
\begin{equation}
\begin{split}
	\SP{\argument, \argument} \colon \ConSecinfty(\genT\mathcal{M}) &\tensor \ConSecinfty(\genT\mathcal{M}) \to \ConCinfty(\mathcal{M}) \\
	\SP{(X + \alpha) , (Y + \beta)} &\coloneqq \alpha(Y) + \beta(X), 
\end{split}
\end{equation}
which is clearly symmetric and non-degenerate.
Note that with respect to this bilinear form we have
$(\genT\mathcal{M})_\Wobs^\perp = (\genT\mathcal{M})_\Null$,
and thus $(\genT\mathcal{M})_\Wobs$ is coisotropic, i.e. 
$(\genT\mathcal{M})_\Wobs^\perp \subseteq (\genT\mathcal{M})_\Wobs$.
Moreover, the so-called \emph{Courant bracket} is given by
\begin{equation}
\begin{split}
	\Courant{\argument,\argument} &\colon \ConSecinfty(\genT\mathcal{M}) \tensor \ConSecinfty(\genT\mathcal{M}) \to \ConSecinfty(\genT\mathcal{M}) \\
	\Courant{(X + \alpha,Y + \beta)} 
	&\coloneqq [X,Y] + \Big(\Lie_X\beta - \Lie_Y\alpha + \frac 12 \D\big(\alpha(Y) - \beta(X)\big)\Big).
\end{split}
\end{equation}
As a composition of constraint morphisms the Courant bracket is clearly a well-defined constraint morphism.
Note that, as in the classical situation,
$\Courant{\argument,\argument}$
does not satisfy the Jacobi identity and hence it is not a (constraint) Lie bracket.

\begin{definition}[Constraint Dirac manifold]
A \emph{constraint Dirac manifold}
is given by a constraint manifold $\mathcal{M} = (M,C,D)$
together with a constraint subbundle $L \subseteq \genT\mathcal{M}$
satisfying the following conditions:
\begin{definitionlist}
	\item $L$ is a Lagrangian subbundle, i.e. $L = L^\perp$.
	\item $L$ is involutive with respect to the Courant bracket, i.e.
	$\Courant{\ConSecinfty(L),\ConSecinfty(L)} \subseteq \ConSecinfty(L)$.
\end{definitionlist}
\end{definition}

Recall from \autoref{ex:SectionsSubbundles} that constraint sections of a constraint subbundle form indeed a constraint submodule, and hence the Schouten bracket $\Schouten{\argument, \argument}$ can be evaluated on $\ConSecinfty(L)$.

Using \eqref{eq:complement} we see that $L \subseteq \genT \mathcal{M}$
is a Lagrangian subbundle if and only if $L_\Total \subseteq \genT M_\Total$
is Langrangian.
Thus a constraint Dirac manifold is the same as a Dirac manifold structure on $M_\Total$
such that $L_\Total \cap (\genT \mathcal{M})_\Wobs$
and $L_\Total \cap (\genT \mathcal{M})_\Null$ have constant rank.
The following result shows that such constraint Dirac manifolds can always be reduced.

\begin{proposition}[Reduction of Dirac manifolds]
	Let $\mathcal{M} = (M,C,D)$ be a constraint Dirac manifold with Lagrangian subbundle $L \subseteq \genT \mathcal{M}$.
	Then $\mathcal{M}_\red$ is a Dirac manifold with Lagrangian subbundle 
	$L_\red \subseteq \genT \mathcal{M}_\red$.
\end{proposition}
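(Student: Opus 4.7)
The plan is to verify the two defining conditions of a Dirac structure for $L_\red \subseteq \genT\mathcal{M}_\red$: pointwise coisotropic linear-algebraic reduction will give the Lagrangian condition, and compatibility of the constraint Courant bracket with reduction will give involutivity.

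First I would identify $\genT\mathcal{M}_\red$ with $T\mathcal{M}_\red \oplus T^*\mathcal{M}_\red$. By \autoref{prop:ReductionOfTangentBundle} we have $(T\mathcal{M})_\red \simeq T\mathcal{M}_\red$, and by \autoref{prop:RedConstructionsConVect}~\ref{prop:RedConstructionsConVect_1} and \ref{prop:RedConstructionsConVect_5} reduction is compatible with direct sums and duals, hence $(\genT\mathcal{M})_\red \simeq T\mathcal{M}_\red \oplus T^*\mathcal{M}_\red = \genT\mathcal{M}_\red$. Applying \autoref{prop:ReductionOfConVect} to the constraint sub-bundle $L \subseteq \genT\mathcal{M}$ then produces a sub-bundle $L_\red \subseteq \genT\mathcal{M}_\red$ under this identification.

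For the Lagrangian condition I would argue fiberwise. Fix $p \in C$, set $V = \genT_p M$, $W = (\genT\mathcal{M})_\Wobs\at{p}$, and observe from the explicit description of the $\WOBS$- and $\NULL$-components that $W^\perp = (\genT\mathcal{M})_\Null\at{p}$, so $W$ is coisotropic and the induced pairing on $W/W^\perp$ is the one on $\genT\mathcal{M}_\red\at{[p]}$. The assumption $L = L^\perp$ together with \eqref{eq:complement} (applied to the Lagrangian $L_\Total$) gives $L_\Wobs\at{p} = L_\Total\at{p} \cap W$ and $L_\Null\at{p} = L_\Total\at{p} \cap W^\perp$, hence
\begin{equation*}
	L_\red\at{[p]} \;=\; \frac{L_\Wobs\at{p}}{L_\Null\at{p}} \;\simeq\; \frac{L_\Total\at{p} \cap W + W^\perp}{W^\perp},
\end{equation*}
which is the standard coisotropic reduction of the Lagrangian $L_\Total\at{p}$ and is therefore Lagrangian in $W/W^\perp$. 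The constant-rank conditions built into the notion of constraint sub-bundle ensure this fiberwise statement assembles into the equality $L_\red = (L_\red)^\perp$ of sub-bundles of $\genT\mathcal{M}_\red$.

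For the involutivity, I would exploit that the Courant bracket $\Courant{\cdot,\cdot}$ on $\ConSecinfty(\genT\mathcal{M})$ is assembled from the constraint Lie bracket of vector fields, the constraint Lie derivative on forms, the constraint de~Rham differential, and the canonical evaluation pairing, each of which is a constraint morphism by \autoref{prop:ConVectorFieldsAreConDer} and \autoref{prop:ConCartanCalculus}. Consequently $\Courant{\cdot,\cdot}$ descends under the natural isomorphism $\ConSecinfty(\genT\mathcal{M})_\red \simeq \Secinfty(\genT\mathcal{M}_\red)$ of \autoref{prop:ConSecVSReduction} to the classical Courant bracket on $\genT\mathcal{M}_\red$. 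Using \autoref{ex:SectionsSubbundles} to identify $\ConSecinfty(L)_\red \simeq \Secinfty(L_\red)$ as a submodule of $\Secinfty(\genT\mathcal{M}_\red)$, the hypothesis $\Courant{\ConSecinfty(L),\ConSecinfty(L)} \subseteq \ConSecinfty(L)$ descends to $\Courant{\Secinfty(L_\red),\Secinfty(L_\red)} \subseteq \Secinfty(L_\red)$. The main obstacle is the Lagrangian equality: only the inclusion $(L^\perp)_\red \subseteq (L_\red)^\perp$ is automatic from \eqref{eq:complement}, and the reverse inclusion is exactly what the pointwise coisotropic reduction supplies, provided the ranks of $L_\Wobs$ and $L_\Null$ are constant.
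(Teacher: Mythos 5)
Your proposal is correct and follows essentially the same route as the paper: involutivity comes from the Courant bracket being built out of constraint morphisms and hence descending under reduction, and the Lagrangian condition comes from pointwise linear coisotropic reduction of the Lagrangian fibre $L_\Total\at{p}$ by the coisotropic subspace $(\genT\mathcal{M})_\Wobs\at{p}$. The only cosmetic difference is that you obtain $L_\red = (L_\red)^\perp$ in one fibrewise step, using \eqref{eq:complement} to identify $L_\Wobs\at{p}$ and $L_\Null\at{p}$ with the intersections of $L_\Total\at{p}$ with $(\genT\mathcal{M})_\Wobs\at{p}$ and its orthogonal, whereas the paper splits the argument into isotropy, via $(L^\perp)_\red \subseteq (L_\red)^\perp$, plus coisotropy, via the same standard symplectic linear algebra.
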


\begin{proof}
Since taking tangent bundles and taking duals commutes with reduction we have
$L_\red \subseteq \genT \mathcal{M}_\red$.
According to \autoref{prop:ConFormsFieldsVSReduction}
the constraint Lie derivative and de Rham differential reduce to the classical ones, and therefore
the Courant bracket on $\genT \mathcal{M}$
reduces to the Courant bracket on $\genT \mathcal{M}_\red$.
Since taking sections commutes with reduction we see that $L_\red$ is involutive with respect to the Courant bracket.
Moreover, as argued above, we have $L_\red = (L^\perp)_\red \subseteq (L_\red)^\perp$, i.e. $L_\red$ is isotropic.
Finally, for every $p \in C$ we know that
$(\genT\mathcal{M})_\Wobs\at{p}$ is coisotropic and
$L_\Wobs\at{p}$ is a Lagrangian subspace.
It is then well-known from the theory of symplectic vector spaces that
$L_\Wobs\at{p} / (\genT\mathcal{M})_\Wobs^\perp\at{p}$
is indeed coisotropic, see also \cite[Lecture 3]{weinstein:1977a}.
\end{proof}

It should be clear how the above reduction of Dirac manifolds can be extended
to the reduction of more general Dirac structures by introducing constraint Courant algebroids.

\appendix 
\section{Morphisms of Linear Connections}
\label{sec:morphismsOfLinearConnections}

The goal of this section is the notion of morphism between vector bundles 
which are equipped with a linear connections. 
Note, in this section linear connections admit elements of the tangent bundle 
in their first argument, whereas in the main part of this work we usually use 
partial linear connections, i.e. linear connections whose first argument is 
restricted to a given subbundle of the tangent bundle. 
Due to the linearity in the first argument, this is possible without further 
effort. 

We define morphisms between vector bundles with linear connections in the 
following way.

\begin{definition}[Morphism of linear connections]
	\label{def:MorphLinConnection}
	Let $(E \to M, \nabla^E)$ and $(F \to N, \nabla^F)$ be vector bundles
	with linear connections $\nabla^E \colon \Secinfty(TM) \times \Secinfty(E) \to \Secinfty(E)$
	and
	$\nabla^F \colon \Secinfty(TN) \times \Secinfty(F) \to \Secinfty(F)$
	on $E$ and $F$, respectively.
	A vector bundle morphism $(\Phi, \phi) \colon E \to F$ is called 
	\emph{morphism between vector bundles with linear connections 
	$(E, \nabla^E)$ and $(F, \nabla^F)$}, if 
	\begin{equation}
		\label{Eq:VBMorphismBetweenLinearConnectionsDefiningCondition}
		\left( \nabla^{E^*}_{v_p} \phi^*\alpha \right) (s_p) 
		= \left( \nabla^{F^*}_{T\phi(v_p)} \alpha \right) (\phi(s_p))
	\end{equation}
	holds for $v_p \in T_pM$, $\alpha \in \Secinfty(F^*)$, $s_p \in E_p$ and 
	$p \in M$, where $\nabla^{E^*}$ and $\nabla^{F^*}$ denote the dual linear 
	connections of $\nabla^E$ and $\nabla^F$, respectively. 
\end{definition} 

Indeed, this notion deserves to be called morphism, as the following statement 
shows.

\begin{proposition}
	Let $(E \to M, \nabla^E)$, $(F \to N, \nabla^F)$ and $(G \to X, \nabla^G)$ 
	be vector bundles with linear connections and let $(\Psi, \psi) \colon 
	(E, \nabla^E) \to (F, \nabla^F)$ and $(\Phi, \phi) \colon (F, \nabla^F) 
	\to (G, \nabla^G)$ be morphisms between the vector bundles with linear 
	connections.
	Then 
	\begin{propositionlist}
		\item the composition $(\Phi \circ \Psi, \phi \circ \psi) \colon E 
		\to G$ is a morphism between $(E, \nabla^E)$ and $(G, \nabla^G)$, 
		
		\item $\id_E$ is a morphism on $(E, \nabla^E)$.
	\end{propositionlist} 
\end{proposition}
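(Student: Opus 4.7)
The plan is to verify the defining condition \eqref{Eq:VBMorphismBetweenLinearConnectionsDefiningCondition} in each case by directly chaining the analogous condition for the given morphisms. Both parts reduce to symbol manipulation, and no further geometric input is needed beyond functoriality of pullback of sections and of the tangent map.

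For the identity part, I would just substitute $\phi = \id_M$ into \eqref{Eq:VBMorphismBetweenLinearConnectionsDefiningCondition}: since then $T\phi = \id_{TM}$, $\phi^*\alpha = \alpha$ for every $\alpha \in \Secinfty(E^*)$, and the action of $\id_E$ on fibres is trivial, both sides of the defining equation collapse to $(\nabla^{E^*}_{v_p}\alpha)(s_p)$, which is the identical equality.

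For the composition part, fix $v_p \in T_pM$, $\alpha \in \Secinfty(G^*)$, and $s_p \in E_p$. I would first rewrite $(\phi \circ \psi)^*\alpha = \psi^*(\phi^*\alpha)$ and $T(\phi \circ \psi) = T\phi \circ T\psi$ and then apply the defining condition for $(\Psi,\psi)$ to the covector $\phi^*\alpha \in \Secinfty(F^*)$:
\begin{equation*}
\bigl(\nabla^{E^*}_{v_p} \psi^*(\phi^*\alpha)\bigr)(s_p)
= \bigl(\nabla^{F^*}_{T\psi(v_p)} \phi^*\alpha\bigr)\bigl(\psi(s_p)\bigr).
\end{equation*}
Next I would apply the defining condition for $(\Phi,\phi)$ at the tangent vector $T\psi(v_p) \in T_{\psi(p)}N$ and fibre element $\Psi(s_p) \in F_{\psi(p)}$, yielding
\begin{equation*}
\bigl(\nabla^{F^*}_{T\psi(v_p)} \phi^*\alpha\bigr)\bigl(\Psi(s_p)\bigr)
= \bigl(\nabla^{G^*}_{T\phi(T\psi(v_p))} \alpha\bigr)\bigl(\Phi(\Psi(s_p))\bigr).
\end{equation*}
Concatenating these two identities gives exactly \eqref{Eq:VBMorphismBetweenLinearConnectionsDefiningCondition} for $(\Phi \circ \Psi, \phi \circ \psi)$.

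There is no genuine obstacle here; the only potential pitfall is keeping track of the base points and ensuring that at each step the objects live in the correct fibres (so that the dual connection and the pullback covector are evaluated at compatible points). The bookkeeping is handled automatically by the functoriality of $T$ and of pullback along composable maps, so the argument is essentially formal.
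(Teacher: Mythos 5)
Your proposal is correct: chaining the defining condition \eqref{Eq:VBMorphismBetweenLinearConnectionsDefiningCondition} for $(\Psi,\psi)$ applied to the pulled-back covector $\Phi^*\alpha\in\Secinfty(F^*)$ with the condition for $(\Phi,\phi)$ at $T\psi(v_p)$ and $\Psi(s_p)$, together with $T(\phi\circ\psi)=T\phi\circ T\psi$ and $(\Phi\circ\Psi)^*\alpha=\Psi^*(\Phi^*\alpha)$, is precisely the routine verification the paper leaves implicit (it states the proposition without proof), and your identity case is the same trivial substitution. The only caution is the paper's abuse of notation, writing $\phi^*\alpha$ and $\phi(s_p)$ for the bundle-map pullback $\Phi^*\alpha$ and the image $\Phi(s_p)$, which you have in effect interpreted correctly.
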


In the following we give equivalent conditions for the condition in Equation 
\eqref{Eq:VBMorphismBetweenLinearConnectionsDefiningCondition}. 
Recall that every vector bundle morphism splits into a vector bundle morphism 
over the identity and a vector bundle morphism, which maps as identity between 
the typical fibres.
In detail, if $(\Phi, \phi)$ denotes a vector bundle morphism between the 
vector bundles $E \to M$ and $F \to N$, then there is a uniquely determined 
vector bundle morphism 
\begin{equation}
	\label{eq:universalVBMorphismToPullBackVB}
	(\widehat{\Phi}, \id_M) \colon E \ni s_p \mapsto (p, \Phi(s_p)) \in \phi^\# F
\end{equation}
over $\id_M$ and a 
vector bundle morphism 
\begin{equation} 
	\label{eq:InducedVBMorphismForPullBackVB}
	(\phi^\#, \phi) \colon 
	\phi^\# F \ni (p, r_{\phi(p)}) \mapsto r_{\phi(p)} \in F
\end{equation} 
associated to the pull-back vector bundle $\phi^\# F \to M$, such that 
\begin{equation}
	\label{Eq:SplitOfVectorBundleMorphism}
	\Phi = \phi^\# \circ \widehat{\Phi}.
\end{equation} 

For every $s \in \Secinfty(F)$ we denote the corresponding pull-back section 
by $\phi^\# s \in \Secinfty(\phi^\# F)$. 

For a smooth map $\phi \colon M \to N$ and every linear connection on 
$F \to N$ there is a uniquely determined linear connection on $\phi^\# F$, 
as stated in the following proposition. 

\begin{proposition}[{\cite{fernandes:2021a}}] 
	\label{prop:ExistenceOfPullBackLinearConnection}
	Let $\phi \colon M \to N$ be a smooth map and let $(F \to N, \nabla)$ 
	be a vector bundle with a linear connection.
	Then there is a unique linear $TM$-connection $\phi^\#\nabla$ on 
	$\phi^\# F$ defined by 
	\begin{equation}
		\phi^\#\nabla_{v_p} \phi^\# s  
		= \left( p, \nabla_{T\phi(v_p)}s \right)
	\end{equation}
	for $v_p \in T_pM$, $s \in \Secinfty(F)$ and $p \in M$.
\end{proposition}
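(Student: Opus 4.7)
The plan is to first define $\phi^\#\nabla$ locally on a generating set of sections, verify well-definedness via a Leibniz check, and then glue to a global connection; uniqueness will be immediate from the fact that pull-back sections locally generate $\Secinfty(\phi^\# F)$ as a $\Cinfty(M)$-module together with the required Leibniz rule.

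First I would set up the local picture. Fix a point $p \in M$ and an open neighbourhood $V \subseteq N$ of $\phi(p)$ on which $F$ admits a local frame $e_1, \dotsc, e_k \in \Secinfty(F|_V)$. Set $U = \phi^{-1}(V) \subseteq M$; then $\phi^\# e_1, \dotsc, \phi^\# e_k$ is a local frame of $\phi^\# F$ on $U$, so every $\sigma \in \Secinfty((\phi^\# F)|_U)$ can be uniquely written as $\sigma = \sum_i f^i \, \phi^\# e_i$ with $f^i \in \Cinfty(U)$. Any linear $TM$-connection on $\phi^\# F$ satisfying the prescribed formula on pull-back sections is then \emph{forced} by the Leibniz rule to satisfy
\begin{equation}
\phi^\#\nabla_{v_p}\sigma = \sum_i v_p(f^i) \, (\phi^\# e_i)|_p + \sum_i f^i(p) \, \bigl(p,\nabla_{T\phi(v_p)} e_i\bigr).
\end{equation}
This both proves local uniqueness and gives a concrete candidate definition on $U$.

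Next I would check that this local definition is independent of the chosen frame. If $e_1', \dotsc, e_k'$ is another local frame on $V$ with transition functions $e_j' = \sum_i A^i_j e_i$ for $A^i_j \in \Cinfty(V)$, then $\phi^\# e_j' = \sum_i (\phi^* A^i_j)\, \phi^\# e_i$. A direct calculation, using the Leibniz rule for $\nabla$ on $F$, the chain rule $v_p(\phi^* A^i_j) = (T\phi(v_p))(A^i_j)$, and linearity, shows that the two resulting expressions for $\phi^\#\nabla_{v_p}\sigma$ agree. This step is the main bookkeeping obstacle, but it is essentially a rewriting of the classical computation that the covariant derivative transforms correctly under change of frame. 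Once well-definedness is established on each coordinate patch, the local connections automatically agree on overlaps (again by the uniqueness argument), giving a globally well-defined $\Reals$-bilinear map $\phi^\#\nabla \colon \Secinfty(TM) \times \Secinfty(\phi^\# F) \to \Secinfty(\phi^\# F)$.

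Finally I would verify the connection axioms. $\Cinfty(M)$-linearity in $v_p$ and the Leibniz rule in $\sigma$ are built into the defining formula on each patch, and both properties are local, hence hold globally. The identity $\phi^\#\nabla_{v_p}\phi^\# s = (p, \nabla_{T\phi(v_p)}s)$ for $s \in \Secinfty(F)$ holds by construction, since locally on a patch $V$ where $s = \sum_i s^i e_i$ we obtain
\begin{equation}
\phi^\#\nabla_{v_p}\phi^\# s = \sum_i v_p(\phi^* s^i)(\phi^\# e_i)|_p + \sum_i \phi^*s^i(p)\, (p, \nabla_{T\phi(v_p)} e_i),
\end{equation}
and using $v_p(\phi^* s^i) = (T\phi(v_p))(s^i)$ together with the Leibniz rule for $\nabla$ this collapses to $(p, \nabla_{T\phi(v_p)} s)$. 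Uniqueness has already been recorded in the first step.
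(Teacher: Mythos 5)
The paper does not actually prove this proposition; it is quoted from the literature with the citation \cite{fernandes:2021a} and then used as a black box, so there is no in-paper argument to compare yours against. Your proposal is the standard construction and is essentially correct: express a section of $\phi^\# F$ over $U = \phi^{-1}(V)$ in the pulled-back frame $\phi^\# e_i$, let the Leibniz rule force the local formula (which gives both the candidate and local uniqueness), check frame-independence via the transition functions and the chain rule $v_p(\phi^* A^i_j) = T\phi(v_p)(A^i_j)$, glue, and verify the axioms and the defining identity. The only point you should make explicit is in the uniqueness step: the defining identity is postulated for \emph{global} sections $s \in \Secinfty(F)$, whereas your frame $e_i$ is only defined on $V$, so to conclude that any connection with the stated property is forced to obey your local formula you need the standard locality remark --- a connection is a local operator in its second argument (bump-function argument), and each $e_i$ agrees near $\phi(p)$ with a global section $\chi e_i$, so the identity does apply to $\phi^\# e_i$ near $p$. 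With that one sentence added, the argument is complete.
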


We call the induced linear connection on a pull-back vector bundle 
\emph{pull-back linear connection}. 

Even though it usually is assumed to be general knowledge, we give a detailed 
proof of the following proposition, which is also given in \cite{kern:2023a} 
in a slightly different version. 

\begin{proposition}
	\label{prop:CharacterizationsMorConnection}
	Let $(E \to M, \nabla^E)$ and $(F \to N, \nabla^F)$ be vector bundles with 
	linear connections and let $(\Phi, \phi) \colon E \to F$ be a vector 
	bundle morphism.
	Then the following statements are equivalent:
	\begin{propositionlist}
		\item The morphism $(\Phi, \phi)$ is a morphism between $(E, \nabla^E)$ 
		and $(F, \nabla^F)$.
		
		\item The vector bundle morphism $(\widehat{\Phi}, \id_M)$ is a morphism 
		between $(E, \nabla^E)$ and $(\phi^\# F, \phi^\# \nabla^F)$. 
		Equivalently, this means 
		\begin{equation}
			\nabla^{E^*} \circ \left(\widehat{\Phi}\right)^* 
			= \left(\widehat{\Phi}\right)^* \circ (\phi^\# \nabla^F)^*.
		\end{equation}
		
		\item The linear connections $\nabla^E$ and $\phi^\# \nabla^F$ satisfy
		\begin{equation}
			\widehat{\Phi} \circ \nabla^E 
			= \phi^\# \nabla^F \circ \widehat{\Phi}.
		\end{equation} 
		
		\item The vector bundle morphism $\Phi$ preserves the horizontal 
		bundles $\mathrm{Hor}^E(TM)$ and $\mathrm{Hor}^F(TN)$ of $\nabla^E$ 
		and $\nabla^F$, respectively, i.e. 
		\begin{equation}
			T\Phi \left( \mathrm{Hor}^E(TM) \right) 
			\subseteq \mathrm{Hor}^F(TN). 
		\end{equation}
		
		\item \label{prop:CharacterizationsMorConnection_Parallel}For every open interval $I \subseteq \mathbb{R}$ and every path 
		$\gamma \colon I \to M$ the parallel transports $\Parallel^E$ and 
		$\Parallel^F$ of $\nabla^E$ and $\nabla^F$, respectively, satisfy 
		\begin{equation}
			\Parallel^F_{(\phi \circ \gamma)(t)} \circ \Phi 
			= \Phi \circ \Parallel^E_{\gamma(t)}
		\end{equation}
		for every $t \in I$. 
	\end{propositionlist} 
	
	\begin{proof}
		To show the equivalence of (i) and (ii) we first evaluate the equation 
		in (i).
		For $\alpha \in \Secinfty(F^*)$, $v_p \in T_pM$, $s_p \in E_p$ and $p \in M$ 
		we obtain on the one hand 
		\begin{align*}
			\left( \nabla^{F^*}_{T_p\phi_0(v_p)}\alpha \right) (\Phi(s_p)) 
			&= \left( \phi^\# \left(\phi^\#\nabla^{F^*}_{v_p} \phi^\# \alpha 
				\right)\right) \left( (\phi^\# \circ \widehat{\Phi})(s_p) \right) \\
			&= \left( \phi^\#\nabla^{F^*}_{v_p} \phi^\# \alpha \right) 
			\left( \widehat{\Phi}(s_p) \right) \\
			&= \left( (\widehat{\Phi})^* (\phi^\#\nabla^{F^*}_{v_p} \phi^\# \alpha) \right) (s_p) 
		\end{align*}
		by using Equation \eqref{Eq:SplitOfVectorBundleMorphism} and on the 
		other hand we obtain
		\begin{align*}
			\left( \nabla^{E^*}_{v_p} \Phi^* \alpha \right) (s_p) 
			&= \left( \nabla^{E^*}_{v_p}(\widehat{\Phi})^* (\phi^\# \alpha) \right) (s_p).
		\end{align*}
		By the non-degeneracy of the natural pairing the equivalence of (i) with the 
		equation in (ii) follows. 
		Moreover, since $\widehat{\Phi}$ maps over $\id_M$, the computation 
		\begin{align*}
			\left( \nabla^{E^*}_{v_p}(\widehat{\Phi})^* (\phi^\# \alpha) \right) (s_p)
			= \left( (\widehat{\Phi})^* (\phi^\#\nabla^{F^*}_{v_p} \phi^\# \alpha) \right) (s_p) 
			= \left( (\phi^\#\nabla^{F^*}_{v_p} \phi^\#\alpha \right) \left(\widehat{\Phi}(s_p) \right)  
		\end{align*}
		shows the equivalence of (i) and (ii). 
		
		Next, we show the equivalence between (ii) and (iii).
		On the one hand we have the computation
		\begin{align*}
			\left( \phi^\# \alpha \right) \left(\widehat{\Phi}(\nabla^E_X s) \right) 
			= \Lie_X \left( \phi^\# \alpha \left( \widehat{\Phi}(s) \right) \right)
			- \left( \nabla^{E^*}_X (\widehat{\Phi})^* \phi^\#\alpha \right) (s) 
		\end{align*}
		for $\alpha \in \Secinfty(F^*)$, $X \in \Secinfty(TM)$ and 
		$s \in \Secinfty(E)$, whereas on the other hand we have 
		\begin{align*}
			\left( \phi^\# \alpha \right) \left( \phi^\#\nabla^F_X \widehat{\Phi}(s) \right) 
			= \Lie_X \left( \phi^\# \alpha \left( \widehat{\Phi}(s) \right) \right) 
			- \left( (\widehat{\Phi})^* (\phi^\#\nabla^{F})^*_X \phi^\# \alpha \right) (s). 
		\end{align*}
		Then due to the non-degeneracy of the natural pairing and by identifying 
		$\phi^\#\nabla^{F^*}$ with $(\phi^\# \nabla^F)^*$ the equivalence follows.  
		
		Let us show the equivalence of (i) and (iv).
		Note that the horizontal bundle $\mathrm{Hor}^{E}(TM)$ and the linear 
		connection $\nabla^E$ are related via the horizontal lift by 
		$X^{\nabla^E}(\ell_\varepsilon) = \ell_{\nabla^{E^*}_X \varepsilon}$ for 
		$X \in \Secinfty(TM)$ and $\varepsilon \in \Secinfty(E^*)$,
		where $\ell_\varepsilon$ denotes the fiberwisely linear function 
		corresponding to $\varepsilon$ and $\cdot^{\nabla^E}$ denotes 
		the horizontal lift with respect to $\nabla^E$. 
		Let $v_p \in T_pM$, $s_p \in E_p$, $p \in M$ and 
		$\alpha \in \Secinfty(F^*)$, then on the one hand
		\begin{align*}
			T\Phi \left( \left. v_p^{\nabla^E} \right|_{s_p} \right) \ell_\alpha
			= \left. v_p^{\nabla^E} \right|_{s_p} \ell_{\Phi^* \alpha} 
			= \left( \nabla^{E^*}_{v_p} \Phi^*\alpha \right) (s_p)
		\end{align*}
		and on the other hand
		\begin{align*}
			\left. T\Phi_0(v_p)^{\nabla^F} \right|_{\Phi(s_p)} \ell_{\alpha} 
			= \left( \nabla^{F^*}_{T\phi(v_p)} \alpha \right) (\Phi(s_p)).
		\end{align*}
		Thus (i) is equivalent to 
		$T\Phi\left( \left. v_p^{\nabla^E} \right|_{s_p} \right) 
		= \left. T\phi(v_p)^{\nabla^F} \right|_{\Phi(s_p)}$, which is 
		equivalent to (iv). 
		
		Next, we assume (v) in order to obtain (iv). 
		The horizontal lift of $\nabla^E$ and the corresponding parallel 
		transport are related in the following way.
		For $\varepsilon \in \Secinfty(E^*)$, $r \in I$, a path 
		$\gamma \colon I \to M$ and $s_{\gamma(r)} \in E_{\gamma(r)}$ we obtain 
		\begin{align*}
			\left. \dot{\gamma}(t)^{\nabla^E}(\ell_\varepsilon) \right|
			_{\Parallel^E_{\gamma(t)}(s_{\gamma(r)})} 
			&= \left( \nabla^{E^*}_{\dot{\gamma}(t)}\varepsilon \right) 
				\left( \Parallel^E_{\gamma(t)}(s_{\gamma(r)}) \right) \\ 
			&= \left. \frac{\D}{\D u} \right|_{u = t} \varepsilon 
				\left( \Parallel^E_{\gamma(u)}(s_{\gamma(r)}) \right) 
			-  \varepsilon \left( \nabla^{E}_{\dot{\gamma}(t)}
				\Parallel^E_{\gamma(t)}(s_{\gamma(r)}) \right) \\
			&= \left. \frac{\D}{\D u} \right|_{u = t} \ell_{\varepsilon} 
			\left( \Parallel^E_{\gamma(u)}(s_{\gamma(r)}) \right),
		\end{align*}
		where we used the defining equation 
		$\nabla^{E}_{\dot{\gamma}(t)}\Parallel^E_{\gamma(t)}(s_{\gamma(r)})=0$
		of parallel transport.  
		This means $\Parallel^E_{\gamma(t)}(s_{\gamma(r)})$ is the path, 
		which differentiates to the family of tangent vectors 
		$\left. \dot{\gamma}(t)^{\nabla^E} \right|
		_{\Parallel^E_{\gamma(t)}(s_{\gamma(r)})}$ with $t \in I$. 
		It follows that $\left. \frac{\D}{\D t} \right|_{t = 0} 
		\Parallel^F_{(\phi \circ \gamma)(t)}(\Phi(s_{\gamma(0)})) 
		\in \mathrm{Hor}^F_{\Phi(s_{\gamma(0)})}(TN)$ and by using (v) we 
		obtain 
		\begin{align*}
			T\Phi \left( \left. \frac{\D}{\D t} \right|_{t = 0} 
			\Parallel^E_{\gamma(t)}(s_{\gamma(0)})\right) 
			= \left. \frac{\D}{\D t} \right|_{t = 0} \Phi 
			\left( \Parallel^E_{\gamma(t)}(s_{\gamma(0)})\right) 
			\in \mathrm{Hor}^F_{\Phi(s_{\gamma(0)})}(TN), 
		\end{align*}
		which proves (iv). 
		
		To obtain (v) we assume (iii).
		Let $\gamma \colon I \to M$ be a path with $I \subseteq \mathbb{R}$ an 
		open interval.
		This leads to another path $\phi \circ \gamma \colon I \to N$.
		The pull-back linear connection $\phi^\#\nabla$ can be pulled back to 
		$\nabla^{(\phi \circ \gamma)^\# F}$ on 
		$(\phi \circ \gamma)^\# F \cong \gamma^\# \phi^\# F \to I$, $\nabla^E$ 
		can be pulled back to $\nabla^{\gamma^\# E}$ on $\gamma^\# E \to I$ and 
		$(\widehat{\Phi}, \id_M) \colon E \to \phi^\# F$ restricts to 
		$(\widehat{\Phi}, \id_I) \colon \gamma^\# E \to (\phi \circ \gamma)^\# F$. 
		Note that the parallel transport of $\nabla^E$ along $\gamma$ coincides 
		with the parallel transport of $\nabla^{\gamma^\# E}$ on $I$ and a 
		similar relation applies to the parallel transports of $\nabla^F$ along 
		$\phi \circ \gamma$ and $\nabla^{(\phi \circ \gamma)^\# F}$ on $I$.  
		Since we restrict the base manifolds of $E$ and $F$ to $I$ when 
		considering the pull-back vector bundles $\gamma^\# E$ and 
		$(\phi \circ \gamma)^\# F$, the condition in (iii) leads to 
		$\widehat{\Phi} \circ \nabla^{\gamma^\# E} 
		= \nabla^{(\phi \circ \gamma)^\# F} \circ \widehat{\Phi}$. 
		Then 
		\begin{align*}
			\nabla^{(\phi \circ \gamma)^\# F} \left( \widehat{\Phi} 
			\left( \Parallel^{\gamma^\# E}_t (s_{\gamma(0)})\right)\right)
			= \widehat{\Phi} \circ \nabla^{\gamma^\# E} 
			\left( \Parallel^{\gamma^\# E}_t (s_{\gamma(0)}) \right)
			= 0. 
		\end{align*}
		Thus $\widehat{\Phi} \left( \Parallel^{\gamma^\# E}_t (s_{\gamma(0)})\right)$ 
		is $\nabla^{(\phi \circ \gamma)^\# F}$-parallel, 
		$\Phi \left( \Parallel^{\gamma^\# E}_t (s_{\gamma(0)})\right)$ is 
		$\nabla^F$-parallel and, due to the uniqueness of parallel transport, 
		$\Phi \left( \Parallel^{\gamma^\# E}_t (s_{\gamma(0)})\right)$ is a 
		parallel transport with respect to $\nabla^F$ along 
		$\phi \circ \gamma$. 
		Consequently, 
		\begin{align*}
			\Phi \left( \Parallel^{E}_{\gamma(t)} (s_{\gamma(0)})\right) 
			= \Parallel^F_{(\phi \circ \gamma)(t)}(\Phi(s_{\gamma(0)}))
		\end{align*}
		for every $t \in I$.
	\end{proof}
\end{proposition}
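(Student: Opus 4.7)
The plan is to set up a web of implications among the five conditions, translating each of (i)--(v) into the others by unpacking definitions and using the canonical factorisation $\Phi = \phi^\# \circ \widehat{\Phi}$ of any vector bundle morphism through the pullback bundle. Concretely, I would prove (i)$\Leftrightarrow$(ii), (ii)$\Leftrightarrow$(iii), (i)$\Leftrightarrow$(iv), (v)$\Rightarrow$(iv), and (iii)$\Rightarrow$(v), which together give the full equivalence.

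For (i)$\Leftrightarrow$(ii), the key observation is that for $\alpha \in \Secinfty(F^*)$ we have $\Phi^*\alpha = (\widehat{\Phi})^* \phi^\# \alpha$ because $(\phi^\#, \phi)$ is the identity on fibres. Combined with the defining property $\phi^\# \nabla^{F^*}_{v_p} \phi^\# \alpha = (p, \nabla^{F^*}_{T\phi(v_p)}\alpha)$ of the pullback connection from \autoref{prop:ExistenceOfPullBackLinearConnection}, the pointwise identity in (i) becomes exactly the pointwise version of $\nabla^{E^*} \circ (\widehat{\Phi})^* = (\widehat{\Phi})^* \circ (\phi^\# \nabla^{F})^*$, which is what (ii) asserts (identifying $\phi^\#(\nabla^F)^*$ with $(\phi^\#\nabla^F)^*$). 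The equivalence (ii)$\Leftrightarrow$(iii) is then the standard Leibniz-rule duality: applying $\phi^\#\alpha$ to $\widehat{\Phi}(\nabla^E_X s)$ versus $\phi^\#\nabla^F_X \widehat{\Phi}(s)$ and expanding both sides with the Leibniz rule produces the same scalar derivative term $\Lie_X(\phi^\#\alpha(\widehat{\Phi}(s)))$, so the two sides agree iff the dual equation in (ii) holds; non-degeneracy of the natural pairing closes the argument.

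For (i)$\Leftrightarrow$(iv), I would use the standard relation between a linear connection and its horizontal lift: for $X \in \Secinfty(TM)$ and $\varepsilon \in \Secinfty(E^*)$, the fibrewise-linear function $\ell_\varepsilon$ satisfies $X^{\nabla^E}(\ell_\varepsilon) = \ell_{\nabla^{E^*}_X \varepsilon}$. Applying this to $T\Phi(v_p^{\nabla^E}|_{s_p})\ell_\alpha$ and $T\phi(v_p)^{\nabla^F}|_{\Phi(s_p)}\ell_\alpha$ turns both sides into the two sides of (i), so (i) is equivalent to $T\Phi(v_p^{\nabla^E}|_{s_p}) = T\phi(v_p)^{\nabla^F}|_{\Phi(s_p)}$, which is (iv).

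The loop is then closed by passing through parallel transport. For (v)$\Rightarrow$(iv), differentiating the curve $t \mapsto \Parallel^E_{\gamma(t)}(s_{\gamma(0)})$ yields a horizontal lift of $\dot\gamma$; applying $T\Phi$ and using (v) exhibits its image as the corresponding horizontal lift with respect to $\nabla^F$ along $\phi\circ\gamma$, which gives (iv). For (iii)$\Rightarrow$(v), I restrict to the pullback along $\gamma \colon I \to M$: the connections $\nabla^E$ and $\phi^\#\nabla^F$ pull back to $I$, and their parallel transports along the coordinate direction coincide with $\Parallel^E_{\gamma(\cdot)}$ and $\Parallel^F_{(\phi\circ\gamma)(\cdot)}$ respectively. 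Condition (iii) restricts to $\widehat{\Phi}\circ\nabla^{\gamma^\#E} = \nabla^{(\phi\circ\gamma)^\#F}\circ\widehat{\Phi}$, so $\widehat{\Phi}(\Parallel^E_{\gamma(t)}(s_{\gamma(0)}))$ is $\nabla^{(\phi\circ\gamma)^\#F}$-parallel with initial value $\widehat{\Phi}(s_{\gamma(0)})$; uniqueness of parallel transport forces it to equal $\Parallel^F_{(\phi\circ\gamma)(t)}(\Phi(s_{\gamma(0)}))$, which after applying the bundle map $\phi^\#$ yields (v).

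The main technical obstacle, rather than any single nontrivial step, is keeping the factorisation $\Phi = \phi^\#\circ \widehat{\Phi}$ and the identification $\phi^\#(\nabla^F)^* = (\phi^\#\nabla^F)^*$ consistently in play: each equivalence hinges on moving between the three pictures (a morphism over $\phi$, a morphism over $\id_M$ into $\phi^\# F$, and the canonical map $\phi^\# F \to F$), and it is easy to misplace a pullback or a dual. The (iii)$\Rightarrow$(v) step is the most delicate, since it requires the observation that parallel transport along a curve is an intrinsically one-dimensional notion that pulls back cleanly, so that the global statement (v) can be deduced from (iii) purely via the uniqueness theorem for parallel transport.
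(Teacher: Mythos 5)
Your proposal is correct and follows essentially the same route as the paper's proof: the identical web of implications (i)$\Leftrightarrow$(ii), (ii)$\Leftrightarrow$(iii), (i)$\Leftrightarrow$(iv), (v)$\Rightarrow$(iv), (iii)$\Rightarrow$(v), each established by the same means (the factorisation $\Phi = \phi^\# \circ \widehat{\Phi}$ with the pullback connection, duality via the Leibniz rule and non-degeneracy of the pairing, the horizontal-lift identity for fibrewise-linear functions, and uniqueness of parallel transport after pulling back along the curve). No gaps; nothing to add.
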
 

\begin{corollary}
	Let $(E, \nabla^E)$ and $(F, \nabla^F)$ be vector bundles with linear 
	connections over the base manifold $M$. 
	Then a vector bundle morphism $(\Phi, \id_M) \colon E \to F$ is a morphism 
	between linear connections if and only if  
	\begin{equation}
		\Phi \circ \nabla^E = \nabla^F \circ \Phi.
	\end{equation}
\end{corollary}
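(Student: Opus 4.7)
The plan is to derive this as a direct corollary of \autoref{prop:CharacterizationsMorConnection} by specializing to the case $\phi = \id_M$. Under that specialization, the pull-back bundle $\phi^\# F$ is canonically isomorphic to $F$ itself via the identification $(p, r_p) \mapsto r_p$, and under this identification the universal factorization \eqref{Eq:SplitOfVectorBundleMorphism} collapses to $\Phi = \phi^\# \circ \widehat{\Phi} = \widehat\Phi$. Thus $\widehat\Phi$ coincides with $\Phi$.

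Next I would identify the pull-back connection. By \autoref{prop:ExistenceOfPullBackLinearConnection}, $\phi^\#\nabla^F$ is uniquely determined by
\begin{equation*}
	(\phi^\#\nabla^F)_{v_p} \phi^\# s = (p, \nabla^F_{T\phi(v_p)} s).
\end{equation*}
For $\phi = \id_M$ and under the identification $\phi^\# F \simeq F$, this reduces to $(\phi^\#\nabla^F)_{v_p} s = \nabla^F_{v_p} s$, i.e.\ $\phi^\#\nabla^F = \nabla^F$.

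With these two identifications in hand, condition (iii) of \autoref{prop:CharacterizationsMorConnection}, which reads
\begin{equation*}
	\widehat\Phi \circ \nabla^E = \phi^\#\nabla^F \circ \widehat\Phi,
\end{equation*}
becomes exactly $\Phi \circ \nabla^E = \nabla^F \circ \Phi$. Since the proposition asserts the equivalence of (i) and (iii), and (i) is by definition the condition that $(\Phi, \id_M)$ be a morphism between vector bundles with linear connections, the corollary follows immediately. No further argument is required; there is no real obstacle, since the whole content is carried by the already-established proposition, and the only thing to check is the trivial identification $\phi^\# F \simeq F$ and $\phi^\#\nabla^F \simeq \nabla^F$ when $\phi = \id_M$.
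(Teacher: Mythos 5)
Your proposal is correct and matches the paper's (implicit) argument: the corollary is stated as an immediate consequence of \autoref{prop:CharacterizationsMorConnection}, obtained exactly as you do by setting $\phi = \id_M$, identifying $\phi^\# F \simeq F$, $\phi^\#\nabla^F = \nabla^F$ and $\widehat\Phi = \Phi$, and invoking the equivalence of (i) and (iii).
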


\begin{corollary}
	Let $(F, \nabla)$ be a vector bundle with linear connection over the 
	base manifold $N$ and let $\phi \colon M \to N$ be a map. 
	Then the vector bundle morphism $\phi^\# \colon \phi^\# F \to F$ 
	defined in \eqref{eq:InducedVBMorphismForPullBackVB} is a morphism between 
	linear connections, where $\phi^\# F$ is equipped with the pull-back linear 
	connection $\phi^\# \nabla$. 
\end{corollary}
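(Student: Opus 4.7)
The plan is to apply the characterization of morphisms of linear connections given by \autoref{prop:CharacterizationsMorConnection}~(iii): namely, that $(\Phi,\phi) \colon (E,\nabla^E) \to (F,\nabla^F)$ is a morphism of vector bundles with linear connections if and only if
\begin{equation*}
\widehat{\Phi} \circ \nabla^E = \phi^\# \nabla^F \circ \widehat{\Phi},
\end{equation*}
where $\widehat{\Phi} \colon E \to \phi^\# F$ is the canonical vector bundle morphism over $\id_M$ defined in \eqref{eq:universalVBMorphismToPullBackVB}.

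First I would apply this factorization to the vector bundle morphism $\phi^\# \colon \phi^\# F \to F$ itself. Unwinding \eqref{eq:universalVBMorphismToPullBackVB}, one has
\begin{equation*}
\widehat{\phi^\#}(p, r_{\phi(p)}) = \bigl(p, \phi^\#(p, r_{\phi(p)})\bigr) = (p, r_{\phi(p)}),
\end{equation*}
so that $\widehat{\phi^\#} = \id_{\phi^\# F}$ as a vector bundle morphism $\phi^\# F \to \phi^\# F$ over $\id_M$. Under this identification, the connection on the target side of the criterion is $\phi^\#\nabla$ (the pullback connection on $\phi^\# F$), which is identical to the connection on the source side. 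Hence the condition to verify collapses to
\begin{equation*}
\id_{\phi^\# F} \circ \,\phi^\#\nabla \;=\; \phi^\#\nabla \circ \id_{\phi^\# F},
\end{equation*}
which is a tautology. This immediately yields the claim.

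There is essentially no obstacle: the statement is a direct consequence of the equivalence of (i) and (iii) in \autoref{prop:CharacterizationsMorConnection} together with the observation that the universal factorization of $\phi^\#$ is the identity. Alternatively, one could give a direct proof via \autoref{prop:CharacterizationsMorConnection}~(v): by the defining formula of \autoref{prop:ExistenceOfPullBackLinearConnection}, a section $\phi^\# s$ is $\phi^\#\nabla$-parallel along a curve $\gamma$ in $M$ if and only if $s$ is $\nabla$-parallel along $\phi \circ \gamma$ in $N$, so $\phi^\# \circ \Parallel^{\phi^\# F}_{\gamma(t)} = \Parallel^F_{(\phi\circ\gamma)(t)} \circ \phi^\#$, which is exactly the parallel-transport intertwining required. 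Either route is immediate, so no calculation-heavy step is needed.
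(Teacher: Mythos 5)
Your argument is correct and is exactly the route the paper intends: the corollary is stated as an immediate consequence of \autoref{prop:CharacterizationsMorConnection}, and your observation that the universal factorization $\widehat{\phi^\#}$ of $\phi^\# \colon \phi^\# F \to F$ is $\id_{\phi^\# F}$ makes criterion (iii) (equivalently (ii)) a tautology for the pull-back connection $\phi^\#\nabla$. The alternative check via parallel transport in (v) is also valid, but the first observation already settles it.
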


\begin{corollary}
	Let $(E, \nabla^E)$ and $(F, \nabla^F)$ be vector bundles with linear 
	connections over $M$ and $N$, respectively, and let 
	$(\Phi, \phi) \colon (E, \nabla^E) \to (F, \nabla^F)$ be a vector bundle 
	morphism between linear connections. 
	Then $\Phi$ splits into vector bundle morphisms of linear 
	connections 
	$(\phi^\#, \phi) \colon (\phi^\# F, \phi^\#\nabla^F) \to (F, \nabla^F)$ 
	and 
	$(\widehat{\Phi}, \id_M) \colon (E, \nabla^E) \to (\phi^\# F, \phi^\# \nabla^F)$, 
	such that $\Phi = \phi^\# \circ \widehat{\Phi}$.
\end{corollary}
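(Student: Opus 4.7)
The plan is to observe that the corollary is essentially a repackaging of characterization (ii) in \autoref{prop:CharacterizationsMorConnection} combined with the preceding corollary on the canonical map $\phi^\#$. The splitting $\Phi = \phi^\# \circ \widehat{\Phi}$ as vector bundle morphisms is purely a statement about the universal property of the pull-back bundle and is given by \eqref{Eq:SplitOfVectorBundleMorphism}, with no reference to connections. The content to verify is therefore that both factors are morphisms of linear connections, i.e.\ satisfy \eqref{Eq:VBMorphismBetweenLinearConnectionsDefiningCondition} with respect to the appropriate connections on source and target.

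For the factor $(\phi^\#, \phi) \colon (\phi^\# F, \phi^\# \nabla^F) \to (F, \nabla^F)$, I would simply invoke the preceding corollary, which states exactly this: the canonical map out of a pull-back bundle is always a morphism of linear connections when the pull-back is equipped with the pull-back connection of \autoref{prop:ExistenceOfPullBackLinearConnection}. This step is automatic and does not depend on the assumption on $\Phi$.

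For the factor $(\widehat{\Phi}, \id_M) \colon (E, \nabla^E) \to (\phi^\# F, \phi^\# \nabla^F)$, I would appeal directly to the equivalence of (i) and (ii) in \autoref{prop:CharacterizationsMorConnection}: the hypothesis that $(\Phi, \phi)$ is a morphism of linear connections is, by that proposition, equivalent to the assertion that $(\widehat{\Phi}, \id_M)$ is a morphism of linear connections from $(E, \nabla^E)$ to $(\phi^\# F, \phi^\# \nabla^F)$. This yields the desired property of $\widehat{\Phi}$ for free.

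There is no real obstacle here; the only subtle point is conceptual, namely making sure to cite characterization (ii) rather than trying to re-derive the compatibility condition for $\widehat{\Phi}$ from scratch. Once both factors are identified as morphisms of linear connections, the equality $\Phi = \phi^\# \circ \widehat{\Phi}$ of vector bundle morphisms already established in \eqref{Eq:SplitOfVectorBundleMorphism} completes the proof.
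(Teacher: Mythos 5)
Your proposal is correct and follows exactly the route the paper intends: the splitting $\Phi = \phi^\# \circ \widehat{\Phi}$ from \eqref{Eq:SplitOfVectorBundleMorphism}, the preceding corollary for the factor $(\phi^\#,\phi)$, and the equivalence of (i) and (ii) in \autoref{prop:CharacterizationsMorConnection} for the factor $(\widehat{\Phi},\id_M)$. Nothing is missing.
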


{
	\footnotesize
	\printbibliography[heading=bibintoc]
}

\end{document}